\newtheorem{theorem}{Theorem}
\newtheorem{lemma}{Lemma}
\newtheorem{proposition}{Proposition}
\theoremstyle{remark}
\newtheorem{assumption}{Assumption}
\newtheorem{intassumption}{Assumption}
\numberwithin{intassumption}{assumption}
\theoremstyle{definition}
\newtheorem{definition}{Definition}
\theoremstyle{remark}
\newcommand{\zerob}{\pmb 0}
\newcommand{\R}{\mathbb{R}}
\newcommand{\N}{\mathbb{N}}
\newcommand{\Z}{\mathbb{Z}}
\newcommand{\E}{\mathbb{E}}
\newcommand{\Prob}{\mathbb{P}}
\newcommand{\var}{\text{Var}}
\newcommand{\cov}{\text{Cov}}
\newcommand{\one}{\mathbbm{1}}
\newcommand{\n}{^{(n)}}
\newcommand{\mub}{\boldsymbol{\mu}}
\newcommand{\Upsilonb}{\boldsymbol{\Upsilon}}
\newcommand{\lambdab}{\boldsymbol{\lambda}}
\newcommand{\thetab}{\boldsymbol{\theta}}
\newcommand{\taub}{\boldsymbol{\tau}}
\newcommand{\varthetab}{\boldsymbol{\vartheta}}
\newcommand{\xb}{\boldsymbol{x}}
\newcommand{\hb}{\boldsymbol{h}}
\title{Construction of optimal tests for symmetry on the torus and their quantitative error bounds}
\author{ \href{https://orcid.org/0000-0002-7549-1348}{\includegraphics[scale=0.06]{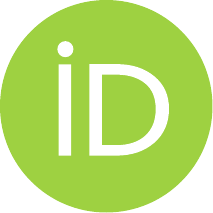}\hspace{1mm}Andreas Anastasiou} \\
    Department of Mathematics and Statistics \\
	University of Cyprus\\
	\texttt{anastasiou.andreas@ucy.ac.cy} \\
	\And
	\href{https://orcid.org/0000-0002-2290-8437}{\includegraphics[scale=0.06]{Plots/orcid.pdf}\hspace{1mm}Christophe Ley} \\
    Department of Mathematics \\
	University of Luxembourg\\
	\texttt{christophe.ley@uni.lu} \\
    \And
	\href{https://orcid.org/0009-0006-1372-7032}{\includegraphics[scale=0.06]{Plots/orcid.pdf}\hspace{1mm}Sophia Loizidou} \\
    Department of Mathematics \\
	University of Luxembourg\\
	\texttt{sophia.loizidou@uni.lu} \\
}
\begin{document}
\maketitle

\begin{abstract}
	In this paper, we develop optimal tests for symmetry on the hyper-dimensional torus, leveraging Le Cam’s methodology. 
We address both scenarios where the center of symmetry is known and where it is unknown. 
These tests are not only valid under a given parametric hypothesis but also under a very broad class of symmetric distributions.
The asymptotic behavior of the proposed tests is studied both under the null hypothesis and local alternatives, and we derive quantitative bounds on the distributional distance between the exact (unknown) distribution of the test statistic and its asymptotic counterpart using Stein’s method. 
The finite-sample performance of the tests is evaluated through simulation studies, and their practical utility is demonstrated via an application to protein folding data. 
Additionally, we establish a broadly applicable result on the quadratic mean differentiability of functions, a key property underpinning the use of Le Cam’s approach.
\end{abstract}

\keywords{Asymptotic theory \and Directional statistics \and Le Cam's theory \and Protein folding \and Quadratic mean differentiability \and Stein's method}

\section{Introduction} \label{sec: Introduction}

Various complex data obtained from the real world can be viewed as data on the $d$-dimensional torus, which is the cartesian product of $d$ circles.
Concrete examples include wind direction measured at different times during the day (\cite{johnson_measures_1977, kato_distribution_2009, shieh_inferences_2005}), directions of steepest descent before and after an earthquake (\cite{rivest_decentred_1997}), direction of animal movement (\cite{mastrantonio_modelling_2022}), shared orthologous genes between circular genomes (\cite{fernandez-duran_modeling_2014}), morphological data from human neurons (\cite{puerta_regularized_2015}) and, in marine biology, the spawning time of a particular fish and the time of the low tide (\cite{kato_circularcircular_2008}). A domain that has given rise in the past two decades to a lot of such {toroidal data} is bioinformatics, where dihedral angles from proteins  can be viewed as data on the torus, see \cite{kato2024versatiletrivariatewrappedcauchy, kato_mobius_2015, mardia_multivariate_2008, mardia_mixtures_2012, mardia_protein_2007, singh_probabilistic_2002}, and these angles play an essential role in the  protein structure prediction problem. Other examples from bioinformatics are RNA data (\cite{nodehi_estimation_2018}) and the circadian clock of two different tissues of a mouse (\cite{liu_phase_2006}).



In the literature there exist a lot of distributions for data on the (hyper-)torus.
Firstly, for one angle, meaning circular data, there is an abundance of literature proposing models, see for example \cite{mardia_directional_2000} for an overview.
The most classical distributions are the von Mises distribution, which arises as a maximum entropy distribution, the wrapped Cauchy, the cardioid, and the wrapped Normal.
Distributions for two angles include the bivariate von Mises distribution (\cite{mardia_statistics_1975}), its submodels, which are the Sine (\cite{singh_probabilistic_2002}) and Cosine (\cite{mardia_protein_2007}) distributions, and the bivariate wrapped Cauchy distribution (\cite{kato_mobius_2015, kato_distribution_2009}). 
The trivariate wrapped Cauchy copula (TWCC) (\cite{kato2024versatiletrivariatewrappedcauchy}) can model three angles, while the multivariate von Mises (\cite{mardia_multivariate_2008}), multivariate wrapped normal (\cite{Baba1981}) and the multivariate non-negative trigonometric sums (MNNTS) (\cite{fernandez-duran_modeling_2014}) models can be used for any dimension.

All aforementioned distributions, except the MNNTS, are symmetric models. 
However, many datasets involve skewed data.
In order to overcome this shortcoming of flexible asymmetric models, \cite{ameijeiras-alonso_sine-skewed_2022} proposed the sine-skewed family of distributions, building on the work of \cite{umbach_building_2009, abe_sine-skewed_2011} which skewed one-dimensional circular distributions.
The proposed transformation, given in \eqref{eq: sine_skewed_density}, can turn any symmetric distribution, of any dimension, into an asymmetric one and has several attractive properties, such as unchanged normalizing constant, easy interpretation and simple data generating mechanism. Thanks to these attractive properties, the sine-skewed construction, in particular in combination with the Sine model, has been implemented in the probabilistic programming languages Pyro and NumPyro (\cite{ronning_time_efficient_2021}). 
This sine-skewing construction raises the following question: when should the simpler symmetric distribution be used and when should the more complicated, yet more flexible skewed version of it be preferred? 
This question was partly answered in \cite{ameijeiras-alonso_sine-skewed_2022}, using likelihood ratio tests.
However, these parametric tests are not enough for an informed decision whether the dataset is symmetric or not.
Generally valid tests are needed to better understand the data without any restrictive parametric assumptions.
For dimension $d=1$, optimal tests are proposed by \cite{ameijeiras-alonso_optimal_2021, ley_simple_2014} for the cases of known and unknown symmetry center, respectively.
The goal of this paper is to derive tests for symmetry for $d$-dimensional toroidal data that are robust to the assumption of the underlying distribution of the data, for both known and unknown symmetry centers. Moreover, we wish our tests to be efficient against the class of sine-skewed alternatives. 

In order to reach our goals we develop  the Le Cam theory of asymptotic experiments \cite{le2012asymptotic} for hyper-toroidal settings, and optimality is to be understood in the Le Cam sense, namely asymptotically (in the sample size) and locally (against local sine-skewed deviations from  symmetry). In particular, we will establish the Uniform Local Asymptotic Normality (ULAN) property for a broad class of symmetric distributions on the hyper-torus. For both known and unknown center scenarios, we start by constructing  optimal parametric tests under a specified symmetric distribution, which we will then turn into semi-parametric tests that are valid under a very broad class of symmetric distributions on the torus satisfying mild regularity conditions. 
Each resulting semiparametric test will not only be robust to the assumption of the underlying distribution but moreover remain optimal under the parametric distribution it was built. Moreover, when the center of symmetry is known, all tests are exactly of the same form, leading to a single test that is uniformly optimal to test symmetry against sine-skewed distributions.  This is a very powerful and quite  rare result. 
The tests are easy to compute numerically, with their computational complexity increasing with the dimension of the data set and the difficulty of inverting the corresponding Fisher Information matrix, which is $O(d^3)$ for a $d\times d$ matrix (\cite{trefethen2022numerical}), as well as the sample size. 
More specifically, the computational complexity of the algorithm in the case of the specified symmetry center case is $O(n^2d^9)$, where $d$ is the dimension of the problem and $n$ is the sample size of the dataset under consideration. 
In the case of the unspecified symmetry center, it becomes $O(n^6d^{23})$.

Since the rejection rules of our new tests will be based on their asymptotic distribution, there will inevitably be approximation errors as real data naturally have a finite sample size. The larger this sample size, the smaller the approximation error between the intractable exact distribution of our test statistics and their simple asymptotic law. This is a reality for the overwhelming majority of existing hypothesis testing procedures. Unlike most of the literature, in this paper we wish to quantify this approximation error, and we will tackle this difficult problem through the suit of several technical calculations and proofs. One part of our proofs will rely on Stein's Method, which is an important tool in applied and theoretical probability, whose principal aim is to provide quantitative assessments in distributional approximation problems of the form $W \approx Z$,
where $Z$ follows a simple distribution  and $W$ is the object of interest (\cite{stein1972bound}). 
Stein's Method is known to be effective for numerous approximation problems, including the normal, the Poisson, the exponential and the Beta, among many others (\cite{ley_reinert_swan2017, ross_stein2011}). It has successively been used to assess approximation errors for maximum likelihood estimators (\cite{anastasiou2017bounds},
\cite{anastasiou2017_MLE_m},
\cite{anastasiou2018_MLE},  \cite{anastasiou_gaunt2021}), the likelihood ratio statistic (\cite{reinert_anastasiou2020}), Pearson's chi-square statistic (\cite{gaunt_pickett_reinert2017}), Friedman's statistic (\cite{gaunt_reinert_Friedman2023}), and neural networks (\cite{favaro_quantitative_2025}), to name but a few. For a general overview, we refer the reader to \cite{anastasiou_stein2023}. 

The novelty of our paper does not just arise from defining symmetry tests for the first time on the torus.
It is also, to the best of our knowledge, the first paper that combines Le Cam's theory of asymptotic experiments and Stein's Method in the context of optimal testing for symmetry.
Moreover, in Proposition~\ref{prop: QMD}, we provide a general result for quadratic mean differentiability of products of functions, a necessary condition for the ULAN property to be proved for our setting but which also applies to many other settings from the literature. Hence, besides recovering various existing results from the literature, this proposition also paves the way for establishing ULAN in  future papers for many distinct settings since quadratic mean differentiability is often a main difficulty in establishing ULAN.
Finally, our paper is the first statistical confirmation that dihedral angles from proteins are asymmetric, which provides further insights in the structure of the proteins.

The outline of the paper is as follows. 
In Section~\ref{sec: ULAN property}, through Proposition~\ref{prop: QMD}, we establish the ULAN property of the family of sine-skewed distributions, as defined in \eqref{eq: sine_skewed_density}.
In  Sections~\ref{sec:knowncenter} and \ref{sec:unknowncenter}, we respectively propose optimal tests for symmetry on the $d$-dimensional torus in the cases that the symmetry center is known or unknown and needs to be estimated. 
The theoretical properties of both tests are investigated, including the asymptotic distribution under the null hypothesis of symmetry, which turns out to be chi-squared  with $d$ degrees of freedom, and under local alternatives, which is non-central chi-squared  with $d$ degrees of freedom, where $d$ is the dimensionality of the problem.
Bounds between the complicated finite-sample distribution of the test statistics and their simple limiting distributions are  derived using Stein's Method.
Extensive numerical simulations, as well as a real data application using protein folding data can be found in Sections~\ref{sec: simulations} and \ref{sec: real data}, respectively. Section~\ref{sec: conclusion} concludes the paper with a summary of the most important findings.
{The supplementary material contains all proofs of the theorems and lemmas presented in the main paper, along with some further results that are used for the proofs,
as well as some further simulation results.
The implementation of the tests can be found in \url{https://doi.org/10.5281/zenodo.17224138} and \url{https://github.com/Sophia-Loizidou/Symmetry_test_on_hypertorus}.}

\section{Family of distributions and the ULAN property} \label{sec: ULAN property}

\subsection{A technical result about Quadratic Mean Differentiability}\label{sec:QMD}

A common analytical condition in order to establish a ULAN property is quadratic mean differentiability (QMD) of the square root of densities (see Section~\ref{sec:ULAN} for details in our context). Since sine-skewed densities correspond to a product or composition of two functions, we provide in this section a general statement for the QMD property under function composition. This result goes beyond the mere setting of this paper and actually holds for very general settings. We therefore also consider a general support $\R^m$ for the pdfs.
The proof can be found in Section~\ref{appendix: proof prop QMD} of the supplementary material.
For the proposition, and the rest of the paper, we denote the usual supremum norm of a function $\lVert h \rVert = \lVert h \rVert_\infty = \sup_{x\in \R} |h(x)|$.

\begin{proposition} \label{prop: QMD}
    Consider the functions $f(\xb;\mub), g(\xb;\mub,\lambdab)$, $f, g: \mathcal{X}\subset \R^m \rightarrow \R^+$ with parameters $\mub \in \R^k, \lambdab \in \R^m$. Assume that $f(\xb; \mub)$ and $f(\xb; \mub)g(\xb; \mub, \lambdab)$ are probability density functions and that $f$ and $g$ satisfy the following conditions:
    \begin{enumerate}[(i)]
        \item $g(\xb; \mub, \lambdab)$ is almost everywhere (a.e.) $\mathcal{C}^1$ over $\mathcal{X}$ with respect to $\lambdab$;

         \item $||\nabla_{\lambdab} g^{1/2}(\xb; \mub, \lambdab)|_{\lambdab = 0}|| \leq C \ell(\xb;\mub)$ for some $C\in\R^+$ independent of both $\mub, \lambdab$ and some function $\ell(\xb;\mub)$ that is in $L^2(f(\xb;\mub)d\xb)$;

        \item $f(\xb; \mub), g(\xb; \mub, \lambdab) > 0$ a.e. over $\mathcal{X}$;
        
        \item $g(\xb; \mub, \boldsymbol{0}) = 1$ and $\nabla_{\mub} g(\xb; \mub, \lambdab) | _{\lambdab = 0} = 0$;

        \item For $\mub = (\mu_1, \ldots, \mu_k)'$ and $i \in \{1, \ldots , k\}$, $\int_{\mathcal{X}} \left( \frac{\partial}{\partial_{\mu_i}} f^{1/2}(\xb; \mub) \right)^2 {\mathrm d}\xb < \infty$; 

        \item For $\lambdab = (\lambda_1, \ldots, \lambda_m)'$ and $j \in \{1, \ldots , m\}$, $\int_{\mathcal{X}} \left( \frac{\partial}{\partial_{\lambda_j}} g^{1/2}(\xb; \mub, \lambdab)|_{\lambdab = 0} \right)^2 f(\xb; \mub) {\mathrm d} \xb < \infty$.
    \end{enumerate}
    Then  $f(\xb; \mub)$ is QMD with quadratic mean $\nabla_{\mub} f^{1/2}(\xb; \mub)$ if and only if $f(\xb; \mub) g(\xb; \mub, \lambdab)$ is QMD at $\lambdab = 0$ with quadratic mean $\begin{pmatrix}
        \nabla_{\mub} f^{1/2}(\xb; \mub) \\ f^{1/2}(\xb; \mub) \nabla_{\lambdab} g^{1/2}(\xb; \mub, \lambdab)|_{\scriptscriptstyle\lambda=0}
    \end{pmatrix}$.
\end{proposition}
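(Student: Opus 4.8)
The plan is to prove both implications by exploiting the product structure $\sqrt{f(\xb;\mub)g(\xb;\mub,\lambdab)}=f^{1/2}(\xb;\mub)\,g^{1/2}(\xb;\mub,\lambdab)$ and reducing matters, as far as possible, to the QMD of $f$. Throughout I write $\phi=f^{1/2}$ and $\psi=g^{1/2}$, and I read every $L^2$ statement with respect to Lebesgue measure $\mathrm d\xb$ on $\mathcal X$. The two structural facts I would exploit come from (iii)--(iv): since $g(\xb;\mub,\boldsymbol 0)=1$ for every $\mub$ we have $\psi(\xb;\mub,\boldsymbol 0)=1$, and since moreover $\nabla_\mub g|_{\lambdab=\boldsymbol 0}=\boldsymbol 0$ we get $\nabla_\mub\psi|_{\lambdab=\boldsymbol 0}=\tfrac12 g^{-1/2}\nabla_\mub g|_{\lambdab=\boldsymbol 0}=\boldsymbol 0$. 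These are exactly what make the announced block form of the quadratic mean correct: at the base point the $\mub$-derivative of $\phi\psi$ collapses to $\nabla_\mub\phi$ and its $\lambdab$-derivative to $\phi\,\nabla_\lambdab\psi|_{\lambdab=\boldsymbol 0}$, while conditions (v)--(vi) ensure that both blocks lie in $L^2(\mathrm d\xb)$, so that the candidate quadratic mean is well defined.

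I would dispose of the easy reverse implication first. If $\phi\psi$ is QMD at $(\mub,\boldsymbol 0)$, I restrict the perturbation to the slice $\taub=\boldsymbol 0$: because $\psi(\xb;\mub+\hb,\boldsymbol 0)=1$ the $\taub$-block of the quadratic mean drops out and the defining inequality becomes $\int\big(\phi(\xb;\mub+\hb)-\phi(\xb;\mub)-\hb'\nabla_\mub\phi\big)^2\,\mathrm d\xb=o(\|\hb\|^2)$, which is precisely QMD of $f$ with quadratic mean $\nabla_\mub f^{1/2}$.

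For the forward implication I would form the remainder $R(\xb;\hb,\taub)=(\phi\psi)(\mub+\hb,\taub)-\phi(\mub)-\hb'\nabla_\mub\phi-\taub'\phi(\mub)\nabla_\lambdab\psi|_{\lambdab=\boldsymbol 0}$ and, using $\psi(\mub+\hb,\boldsymbol 0)=1$, split it into three pieces:
\begin{align}
R &= \big[\phi(\mub+\hb)-\phi(\mub)-\hb'\nabla_\mub\phi\big]\,\psi(\mub+\hb,\taub)\nonumber\\
&\quad+\big(\hb'\nabla_\mub\phi\big)\big[\psi(\mub+\hb,\taub)-1\big]+\phi(\mub)\big[\psi(\mub+\hb,\taub)-1-\taub'\nabla_\lambdab\psi|_{\lambdab=\boldsymbol 0}\big].\nonumber
\end{align}
The first bracket is the QMD remainder of $f$, which is $o(\|\hb\|)$ in $L^2$ by hypothesis; here the multiplicative weight $\psi(\mub+\hb,\taub)$ tends to $1$ near the base point. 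For the remaining two pieces I would use the a.e.\ $\mathcal C^1$ regularity (i) and the fundamental theorem of calculus to write $\psi(\mub+\hb,\taub)-1=\int_0^1\taub'\nabla_\lambdab\psi(\mub+\hb,u\taub)\,\mathrm du$; after Cauchy--Schwarz the second piece is then $O(\|\hb\|\,\|\taub\|)$ in $L^2$, hence $o(\sqrt{\|\hb\|^2+\|\taub\|^2})$, while the third equals $\phi(\mub)\,\taub'\int_0^1\big[\nabla_\lambdab\psi(\mub+\hb,u\taub)-\nabla_\lambdab\psi(\mub,\boldsymbol 0)\big]\,\mathrm du$, whose $L^2$ norm is bounded by $\|\taub\|$ times the $L^2(f\,\mathrm d\xb)$-modulus of continuity of $\nabla_\lambdab\psi$ at $(\mub,\boldsymbol 0)$.

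The main obstacle is exactly this last continuity statement: upgrading the pointwise a.e.\ convergence $\nabla_\lambdab\psi(\mub+\hb,u\taub)\to\nabla_\lambdab\psi(\mub,\boldsymbol 0)$, which follows from (i) and (iii), to convergence in $L^2(f\,\mathrm d\xb)$ uniformly in $u\in[0,1]$, and likewise controlling the weight $\psi(\mub+\hb,\taub)$ in the first piece. This is where (ii) and (vi) do the work: the square-integrable dominating function $\ell(\xb;\mub)\in L^2(f\,\mathrm d\xb)$ permits a generalized (Pratt/Vitali) dominated convergence argument to pass the limit inside the integral, and (vi) identifies the limit as the genuine $\lambdab$-block of the quadratic mean. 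Assembling the three $o(\cdot)$ bounds yields $\|R\|_{L^2}=o(\sqrt{\|\hb\|^2+\|\taub\|^2})$, i.e.\ QMD of $fg$ at $\lambdab=\boldsymbol 0$ with the stated quadratic mean, which completes the nontrivial direction.
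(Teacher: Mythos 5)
Your overall architecture — disposing of the reverse implication by restricting to the slice $\taub=\boldsymbol{0}$ (where $\psi(\xb;\mub+\hb,\boldsymbol{0})=1$ kills the $\lambdab$-block), then splitting the forward remainder into three pieces and closing them with a mean-value/FTC representation in $\lambdab$, the domination $C\ell$ from (ii) (which you, like the paper, must read as holding along the whole segment $\{u\taub\}$, not only at $\lambdab=\boldsymbol{0}$), and dominated convergence — is the same as the paper's. But your particular decomposition introduces cross terms that the hypotheses (i)--(vi) do not control, and this is a genuine gap rather than a presentational difference. In your first piece you multiply the $f$-remainder $r_{\hb}(\xb)=\phi(\xb;\mub+\hb)-\phi(\xb;\mub)-\hb'\nabla_{\mub}\phi$ by the weight $\psi(\xb;\mub+\hb,\taub)$. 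You know $\lVert r_{\hb}\rVert_{L^2(\mathrm{d}\xb)}=o(\lVert\hb\rVert)$ and, pointwise, $|\psi-1|\leq C\lVert\taub\rVert\,\ell(\xb;\mub+\hb)$ with $\ell$ only in $L^2(f\,\mathrm{d}\xb)$; hence $\psi$ may be unbounded in $\xb$, your claim that the weight ``tends to $1$ near the base point'' holds only pointwise a.e., and $\int r_{\hb}^2\,\psi^2\,\mathrm{d}\xb=o(\lVert\hb\rVert^2)$ does not follow. In your second piece, Cauchy--Schwarz on $(\hb'\nabla_{\mub}\phi)\,(\psi-1)$ leads to $\int \lVert\nabla_{\mub}f^{1/2}\rVert^2\,\ell^2(\xb;\mub+\hb)\,\mathrm{d}\xb$, i.e.\ a joint moment of $\lVert\nabla\log f\rVert^2\ell^2$ against $f$, which is implied by none of (ii), (v), (vi). Your third piece has the analogous mismatch: the base-point weight $\phi(\xb;\mub)$ is paired with the shifted dominating function $\ell(\xb;\mub+\hb)$, and $\int f(\xb;\mub)\,\ell^2(\xb;\mub+\hb)\,\mathrm{d}\xb<\infty$ is not assumed. (All three integrals are harmless in the paper's own application, where the support is bounded and $\ell\equiv 1$, but the Proposition is stated in much greater generality — that generality is its whole point.)

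The paper's proof avoids these mismatches by choosing the splitting so that every $g$-factor is weighted by $f^{1/2}$ evaluated at the \emph{same} $\mub$-argument and the $f$-remainder carries no weight at all: it bounds the QMD remainder by $3(A_1+A_2+A_3)$, with $A_1$ the bare remainder $\int r_{\hb_1}^2\,\mathrm{d}\xb$, $A_2=\int f(\xb;\mub_0+\hb_1)\bigl(\psi(\xb;\mub_0+\hb_1,\hb_2)-1-\hb_2'\nabla_{\lambdab}\psi|_{(\mub_0+\hb_1,\boldsymbol{0})}\bigr)^2\,\mathrm{d}\xb$, and $A_3$ the derivative-shift term $\int\bigl(\hb_2'\bigl[\phi\,\nabla_{\lambdab}\psi|_{(\mub_0+\hb_1,\boldsymbol{0})}-\phi\,\nabla_{\lambdab}\psi|_{(\mub_0,\boldsymbol{0})}\bigr]\bigr)^2\,\mathrm{d}\xb$, each $\phi$ taken at the matching argument. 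Then the mean value theorem in $\lambdab$, assumption (ii) applied along the segment, and dominated convergence close $A_2$ and $A_3$ with dominating functions of the matched, integrable form $f(\cdot\,;\mub)\,\ell^2(\cdot\,;\mub)$. The repair to your write-up is mechanical: drop the weight on your first piece (take $r_{\hb}$ bare), absorb your second piece by expanding in $\lambdab$ at the shifted point $\mub+\hb$ rather than at $\mub$, i.e.\ replace your third piece by $\phi(\mub+\hb)\bigl[\psi(\mub+\hb,\taub)-1-\taub'\nabla_{\lambdab}\psi|_{(\mub+\hb,\boldsymbol{0})}\bigr]$ plus the shift term $\taub'\bigl[\phi(\mub+\hb)\nabla_{\lambdab}\psi|_{(\mub+\hb,\boldsymbol{0})}-\phi(\mub)\nabla_{\lambdab}\psi|_{(\mub,\boldsymbol{0})}\bigr]$ — the cross term then vanishes identically, every domination is matched, and your FTC/dominated-convergence machinery goes through as intended.
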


In order to avoid any confusion, we note that we denote by QMD both `quadratic mean differentiability' and `quadratic mean differentiable'.
Proposition~\ref{prop: QMD} is very powerful as it gives a general proof for such modulated distributions on $\R^m$ (to the best of our knowledge, this terminology goes back to \cite{JUPP2016107}).  It retrieves in a single sweep several QMD results established in the literature for such distributions, for instance in the circular case for testing symmetry against $k$-sine-skewed alternatives in  Theorem 2.1 of \cite{ley_simple_2014}, in the spherical case for testing rotational symmetry against skew-rotationally-symmetric alternatives in  Theorem 1 of \cite{ley2017skew}, in the multivariate Euclidean setting for testing elliptical symmetry against skew-elliptical alternatives in Theorem 2.1 of \cite{babic2021optimal} (under some mild regularity assumption on their skewing function $\Pi$), to cite but these. Our result is also directly applicable for a range of modulated distribution models such  as weighted distributions (see \cite{nakhaei2025weighting} and references therein), cosine perturbation on the circle (\cite{abe2011symmetric}), etc. Since QMD is at the core of the ULAN property, Proposition~\ref{prop: QMD} not only retrieves results from the literature but paves the way to obtain quite immediately ULAN results for modulated distributions and, hence, avoids an ad hoc proof in each case. Finally, note that in Assumption (ii) of Proposition \ref{prop: QMD} the function $\ell(\xb;\mub)$ can coincide with 1 when the support of the density is bounded, as will be the case for the family of sine-skewed distributions we describe next.

\subsection{Family of sine-skewed distributions}

As already mentioned in Section~\ref{sec: Introduction}, the family of distributions we are interested in was proposed by \cite{ameijeiras-alonso_sine-skewed_2022}.
For a $d-$dimensional angular vector $\thetab \in [-\pi, \pi)^d$, we consider sine-skewed densities of the form  
\begin{align}\label{eq: sine_skewed_density}
    \thetab\mapsto f_{\mub, \lambdab}(\thetab; \Upsilonb) := f_0(\thetab - \mub; \Upsilonb)\left(1+\sum_{j=1}^{d} \lambda_j \sin (\theta_j-\mu_j)\right) \quad \mbox{subject to}\,\, 
    \sum_{j=1}^{d} |\lambda_j| \leq 1,
\end{align}
where $\mub \in [-\pi, \pi)^d$ is the hyper-toroidal location parameter, $\lambdab \in \R^d$ the skewness parameter, and $f_0$ is any symmetric density on the $d$-dimensional torus with $\Upsilonb$ the corresponding set of non-location parameters. 
We introduce the notation $S^d:=\left\{\lambdab\in\R^d\,\mbox{such that}\,\sum_{j=1}^{d} |\lambda_j| \leq 1\right\}$ for the domain of the skewness parameter.
For the sake of readability, we will omit writing $\Upsilonb$ in the developments that follow. 
In this paper, we require $f_0$ to be a component-wise $2\pi$-periodic, unimodal, symmetric density, ensuring that the symmetry center $\mub$ is uniquely defined. 
More formally, we require $f_0 \in \mathcal{F}$, where $\mathcal{F}$ is defined as
\begin{align}\label{eq: def: F}
    \mathcal{F} := \left\{\begin{array}{ll}
& f_0(\thetab) > 0 \,\mbox{a.e.}\, \ \forall \thetab \in [-\pi, \pi)^d, \\
& f_0(\theta_1 + 2\pi k_1, \ldots, \theta_d + 2\pi k_d) = f_0(\theta_1, \ldots, \theta_d) \ \forall k_i\in\Z, i=1,\ldots,d,\\
f_0(\thetab): & f_0(\thetab) = f_0(-\thetab)\ \forall \thetab \in [-\pi, \pi)^d, \\
& f_0 \text{ unimodal in } \thetab \in [-\pi,\pi)^d \text{ with a mode at } \boldsymbol{0}, \\
& \int_{[-\pi,\pi)^d} f_0(\thetab) {\mathrm d} \thetab = 1.
\end{array} \right\}.
\end{align}
Most known models from the literature, such as the  Sine, Cosine, bivariate wrapped Cauchy, trivariate wrapped Cauchy with specified marginals, or the multivariate von Mises, satisfy this requirement under certain restrictions on the parameter space to ensure unimodality. Whenever $\lambdab=\pmb0$, we retrieve the original symmetric density $f_0$, otherwise the resulting density is sine-skewed. This naturally leads us to test the null hypothesis $\lambdab=\pmb0$ against $\lambdab\neq\pmb0$. Depending on whether the center of symmetry is known and on whether we assume $f_0$ to be known or not, we will have a distinct notation for the hypothesis testing problem, which we will introduce in Sections~\ref{sec:knowncenter} and~\ref{sec:unknowncenter}.

\subsection{ULAN property}\label{sec:ULAN}

As already mentioned, we are interested in the symmetry  hypothesis testing problem, that is, testing the null hypothesis $\lambdab=\zerob$ against the alternative $\lambdab\neq\zerob$. 
We derive four tests, depending on what we consider to be a nuisance parameter.
Parametric tests are derived in the cases that the symmetry center is known (no nuisance parameters) and unknown ($\mub$ is a nuisance parameter).
Similarly, semi-parametric tests are also derived, when the symmetry center is known ($f_0$ is a nuisance parameter) and unknown (both $f_0$ and $\mub$ are nuisance parameters).

We denote by $P^{(n)}_{\mub, \lambdab; f_0}$, with $f_0 \in \mathcal{F}$ as in \eqref{eq: def: F}, the joint distribution of a sequence $\{ \thetab_i \}_{i=1}^n = \{(\theta_{1i}, \ldots, \theta_{di})' \}_{i=1}^n$ of independent and identically distributed (iid) hyper-toroidal random observations with density \eqref{eq: sine_skewed_density}. Any $f_0$ then induces a toroidal location-skewness model 
$$
P^{(n)}_{f_0}:=\left\{P^{(n)}_{\mub, \lambdab; f_0}:\mub\in[-\pi,\pi)^d,\lambdab\in S^d\right\}.
$$
In order to derive tests for symmetry that are optimal in the Le Cam sense, we will establish the ULAN property in the vicinity of symmetry, meaning at $\lambdab=\pmb0$,  of the parametric model $P^{(n)}_{f_0}$. 
The ULAN property means that the distributions not only enjoy the local asymptotic normality (LAN) property, but also asymptotic linearity. 
For the ULAN property, we require the following mild assumption which is related to our general proposition from Section~\ref{sec:QMD}.

\stepcounter{assumption}

\begin{intassumption} \label{assumption_on_f0_a}
The mapping $\thetab \mapsto f_{0}^{1/2}\left( \thetab\right)$ is QMD over $\left[ -\pi, \pi \right)^d$ with quadratic mean or weak vector derivative $({f_{0}^{1/2})}\dot{}\left( \thetab\right)$ and, letting ${\pmb\psi}^{f_0}(\thetab)=-2\frac{(f_{0}^{1/2})\dot{}\left( \thetab\right)}{f_{0}^{1/2}\left( \thetab\right)}$, the Fisher information matrix for location ${\pmb I}^{f_0}_{\mub\mub}= \int_{[-\pi,\pi)^d}  {\pmb\psi}^{f_0}(\thetab)({\pmb\psi}^{f_0}(\thetab))^{\prime}  f_{0}\left( \thetab\right) {\mathrm d}\thetab$ has finite elements.
\end{intassumption}
All our subsequent developments work under this mild assumption of weak differentiability. However, in view of notational simplicity and since all existing models from the literature actually possess regular derivatives, we shall henceforth work under the slightly stronger Assumption \ref{assumption_on_f0} below.

\begin{intassumption} \label{assumption_on_f0}
The mapping $\thetab \mapsto f_{0}^{1/2}\left( \thetab\right)$ is continuously differentiable  over $\left[ -\pi, \pi \right)^d$ with continuous derivative
\begin{equation*}
    \nabla_{\mub} f^{1/2}_{0}\left( \thetab - \mub \right) 
    = \begin{pmatrix}
        \frac{\frac{\partial} {\partial\mu_1}  f_{0}\left( \thetab - \mub \right) } {2 f^{1/2}_{0}\left( \thetab - \mub \right) } \\
        \vdots \\
        \frac{\frac{\partial} {\partial\mu_d}  f_{0}\left( \thetab - \mub \right) } {2 f^{1/2}_{0}\left( \thetab - \mub \right) }
    \end{pmatrix}
    = \frac{1}{2} \begin{pmatrix}
         \phi^{f_0}_1(\thetab - \mub) \\
         \vdots \\
         \phi^{f_0}_d(\thetab - \mub)
    \end{pmatrix} f^{1/2}_{0}\left( \thetab - \mub \right),
\end{equation*}
where 
\begin{equation} \label{eq: def: phi}
    \phi^{f_0}_j(\thetab - \mub) 
    = -\frac{\frac{\partial}{\partial \theta_j}f_0(\thetab -\mub)}{f_0(\thetab - \mub)}, \textrm{ for } j \in \{ 1, \ldots, d \}.
\end{equation}
\end{intassumption}

From Theorem~12.2.2 of \cite{lehmann_quadratic_2005}, quadratic mean differentiability of $\thetab \mapsto f_{0}^{1/2}\left( \thetab\right)$ is implied by continuous differentiability.  Note that ${\psi}^{f_0}_i(\thetab)$ in Assumption \ref{assumption_on_f0_a} simplifies to $\phi^{f_0}_i(\thetab)$ under Assumption~\ref{assumption_on_f0}. Moreover,  the Fisher Information elements $I^{f_0}_{\mu_i\mu_i} = \int_{[-\pi,\pi)^d} \left( \phi^{f_0}_i(\thetab) \right)^2 f_{0}\left( \thetab\right) {\mathrm d}\thetab$ are finite for $i\in\{1,\ldots, d\}$ by the assumption of continuous differentiability over a bounded domain. As already said, all known models from the literature satisfy Assumption~\ref{assumption_on_f0}. To avoid any misunderstandings, we provide a general definition of QMD in Definition~\ref{def: QMD} of the supplementary material. In view of the latter, it becomes clear that Assumption~\ref{assumption_on_f0}  leads to quadratic mean differentiability of $P^{(n)}_{f_0}$ with respect to the location parameter at $\lambdab=\pmb0$. As we shall see in the proof of Proposition~\ref{prop: ULAN}, we need to establish QMD with respect to both location and skewness parameters to obtain the ULAN property. This will be a direct consequence of our general Proposition~\ref{prop: QMD}.

Let $\varthetab =(\mub^\prime,\lambdab^\prime)^\prime$ and $\varthetab_0=(\mub^\prime, \boldsymbol{0}^\prime)^\prime$. We now state the ULAN property of the family $P^{(n)}_{f_0}$ in the vicinity of symmetry, that is, at $\varthetab_0$.
\begin{proposition}\label{prop: ULAN}
Let $f_0 \in \mathcal{F}$ satisfy Assumption~\ref{assumption_on_f0}. 
Then the sine-skewed family $P^{(n)}_{f_0}$ is ULAN at $\varthetab_0$. More precisely, for any $\mu_i^{(n)}=\mu_i + O(n^{-1/2}), i=1,\ldots,d,$ and  for any bounded sequence $\taub^{(n)}:=(\taub^{(n)\prime}_{\mu}, \taub^{(n)\prime}_{\lambda})'\in\R^{2d}$ with $\taub^{(n)}_{\mu} = (\tau^{(n)}_1, \ldots, \tau^{(n)}_d)'\in\R^{d}$, $\taub^{(n)}_{\lambda} = (\tau^{(n)}_{d+1}, \ldots, \tau^{(n)}_{2d})'\in\R^{d}$ such that $\mu^{(n)}_i + n^{-1/2} \tau^{(n)}_i$, $i=1, \ldots, d,$ remains in $[-\pi, \pi)$, and $n^{-1/2}\tau^{(n)}_j \in [-1,1]$, $j=d+1, \dots, 2d$ and $\sum_{j=d+1}^{2d} |\tau^{(n)}_j|  < n^{1/2}$, we have
\begin{align} \label{eq: def: loglikelihood ULAN}
    \Lambda^{(n)} 
    & := \log \Biggl( \frac{P^{(n)}_{\mub^{(n)} + n^{-1/2} \taub^{(n)}_{\mu}, n^{-1/2} \taub^{(n)}_{\lambda}; f_0} }{P^{(n)}_{\mub^{(n)}, \boldsymbol{0}; f_0}} \Biggr) = \taub^{(n)}{'} \Delta_{f_0}^{(n)}(\mub^{(n)}) - \frac{1}{2} \taub^{(n)}{'} \Gamma_{f_0} \taub^{(n)} + o_P(1)
\end{align}
as $n\to\infty$, and the central sequence
\begin{equation} \label{eq: def: central sequence}
    \Delta_{f_0}^{(n)}(\mub) 
    := \begin{pmatrix}
        \Delta_{f_0;\mu}^{(n)}(\mub)\\
        \Delta_{\lambda}^{(n)}(\mub)
    \end{pmatrix} =\frac{1}{\sqrt{n}}\sum_{i=1}^n \Delta_{f_0; i}^{(n)}(\mub) 
    = \frac{1}{\sqrt{n}}\sum_{i=1}^n \begin{pmatrix}
        \phi^{f_0}_1(\thetab_{i} - \mub) \\
        \vdots \\
        \phi^{f_0}_d(\thetab_{i} - \mub) \\
        \sin(\theta_{1i}-\mu_1) \\
        \vdots \\
        \sin(\theta_{di}-\mu_d) 
    \end{pmatrix},
\end{equation}
for $\phi^{f_0}_j(\thetab-\mub)$ as in \eqref{eq: def: phi},
converges to $\mathcal{N}_{2d}(\zerob,\Gamma_{f_0})$ under $P^{(n)}_{\mub^{(n)}, \zerob; f_0}$ as $n\to\infty$, where the Fisher information matrix $\Gamma_{f_0}$ is given by
\begin{equation} \label{eq: def: Fisher Information}
    \Gamma_{f_0} = 
    \begin{pmatrix}
        I^{f_0}_{\mu_1\mu_1} & \hdots & I^{f_0}_{\mu_1\mu_d} & I^{f_0}_{\mu_1\lambda_1} & \hdots & I^{f_0}_{\mu_1\lambda_d} \\
        \vdots & \ddots & \vdots & \vdots & \ddots & \vdots \\
        I^{f_0}_{\mu_1\mu_d} & \hdots & I^{f_0}_{\mu_d\mu_d} & I^{f_0}_{\mu_d\lambda_1} & \hdots & I^{f_0}_{\mu_d\lambda_d} \\
        I^{f_0}_{\mu_1\lambda_1} & \hdots & I^{f_0}_{\mu_d\lambda_1} & I^{f_0}_{\lambda_1\lambda_1} & \hdots & I^{f_0}_{\lambda_1\lambda_d} \\
        \vdots & \ddots & \vdots & \vdots & \ddots & \vdots \\
        I^{f_0}_{\mu_1\lambda_d} & \hdots & I^{f_0}_{\mu_d\lambda_d} & I^{f_0}_{\lambda_1\lambda_d} & \hdots & I^{f_0}_{\lambda_d\lambda_d}
    \end{pmatrix}
\end{equation}
with
    \begin{align} \label{eq: def: I}
        & I^{f_0}_{\mu_j\mu_k} = \int_{[-\pi,\pi)^d} \phi_j^{f_0}(\boldsymbol{\theta}) \phi_k^{f_0}(\boldsymbol{\theta}) f_0(\boldsymbol{\theta}) {\mathrm d}\boldsymbol{\theta}, \nonumber, \qquad  I^{f_0}_{\lambda_j\lambda_k} = \int_{[-\pi,\pi)^d} \sin(\theta_j) \sin(\theta_k) f_0(\boldsymbol{\theta}) {\mathrm d}\boldsymbol{\theta}, \\
        & I^{f_0}_{\mu_j\lambda_j} = \int_{[-\pi,\pi)^d} \phi_j^{f_0}(\boldsymbol{\theta} ) \sin(\theta_j) f_0(\boldsymbol{\theta}) {\mathrm d}\boldsymbol{\theta}, \qquad I^{f_0}_{\mu_j\lambda_k} = 0, \text{ for } j\neq k, 
    \end{align}
where $j,k \in \{1,\ldots,d\}$.
\end{proposition}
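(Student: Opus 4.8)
The plan is to reduce the entire statement to the joint quadratic mean differentiability (QMD) of the sine-skewed densities at $\lambdab=\zerob$, and then to run the classical Le Cam machinery (QMD $\Rightarrow$ LAN), supplemented by a central limit argument and an asymptotic-linearity step to obtain the \emph{uniform} version. The first and decisive step is to invoke Proposition~\ref{prop: QMD} with the identification $f(\thetab;\mub)=f_0(\thetab-\mub)$ and $g(\thetab;\mub,\lambdab)=1+\sum_{j=1}^{d}\lambda_j\sin(\theta_j-\mu_j)$, so that $fg=f_{\mub,\lambdab}$ as in \eqref{eq: sine_skewed_density}. I would check conditions (i)--(vi): $g$ is affine in $\lambdab$, hence $\mathcal{C}^1$, giving (i); at $\lambdab=\zerob$ one has $g\equiv 1$, so $\nabla_{\lambdab}g^{1/2}|_{\lambdab=0}=\tfrac12(\sin(\theta_1-\mu_1),\ldots,\sin(\theta_d-\mu_d))'$, which is bounded, so (ii) holds with $\ell\equiv 1$ as noted after Proposition~\ref{prop: QMD}; positivity (iii) follows from $f_0>0$ a.e.\ and $g\ge 1-\sum_j|\lambda_j|\ge 0$ with $\{g=0\}$ of measure zero; (iv) is immediate since $g(\thetab;\mub,\zerob)=1$ and $\partial_{\mu_j}g|_{\lambdab=0}=-\lambda_j\cos(\theta_j-\mu_j)|_{\lambdab=0}=0$; and (v)--(vi) are finite because under Assumption~\ref{assumption_on_f0} each $\phi^{f_0}_j$ is continuous on the compact torus and $\sin$ is bounded. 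Since the hypothesis that $f_0^{1/2}$ is QMD holds under Assumption~\ref{assumption_on_f0}, Proposition~\ref{prop: QMD} then delivers joint QMD of $f_{\mub,\lambdab}$ at $\lambdab=\zerob$ with quadratic mean $\tfrac12 f_0^{1/2}(\thetab-\mub)$ times the score vector appearing in \eqref{eq: def: central sequence}.

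Given QMD at $\varthetab_0$, the quadratic expansion of $\Lambda^{(n)}$ is the standard consequence of Le Cam's lemmas: the central sequence is $n^{-1/2}\sum_i$ of twice the quadratic mean divided by $f_0^{1/2}(\thetab_i-\mub)$, which is exactly $\Delta^{(n)}_{f_0}(\mub)$, and the information matrix is $\Gamma_{f_0}=\int(\text{score})(\text{score})'f_0\,{\mathrm d}\thetab$, matching \eqref{eq: def: Fisher Information}. I would then read off the blocks $I^{f_0}_{\mu_j\mu_k}$, $I^{f_0}_{\lambda_j\lambda_k}$ and the diagonal cross term $I^{f_0}_{\mu_j\lambda_j}$ directly from these integrals, while the vanishing $I^{f_0}_{\mu_j\lambda_k}=0$ for $j\neq k$ follows by writing $\phi^{f_0}_j(\thetab)f_0(\thetab)=-\partial_{\theta_j}f_0(\thetab)$ (recall \eqref{eq: def: phi}) and integrating by parts in $\theta_j$: since $\sin\theta_k$ does not depend on $\theta_j$ when $k\neq j$, only a boundary term remains, and it cancels by the $2\pi$-periodicity of $f_0$.

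For the stated Gaussian limit I would work under the sequence of null laws $P^{(n)}_{\mub^{(n)},\zerob;f_0}$, where the observations are iid and symmetric about $\mub^{(n)}$. Each summand of $\Delta^{(n)}_{f_0}(\mub^{(n)})$ then has mean zero---for the sine coordinates because $\sin$ is odd and $f_0$ symmetric, and for the $\phi^{f_0}_j$ coordinates because $\int\phi^{f_0}_j f_0=-\int\partial_{\theta_j}f_0=0$ by periodicity---and its covariance equals $\Gamma_{f_0}$. Since all score components are bounded on the compact torus, the Lindeberg condition for the triangular array (which depends on $n$ through $\mub^{(n)}$) is trivially met, and the multivariate Lindeberg--Feller CLT yields convergence in distribution to $\mathcal{N}_{2d}(\zerob,\Gamma_{f_0})$.

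The remaining and, I expect, hardest part is promoting LAN to ULAN, i.e.\ making the expansion hold uniformly along the perturbed sequences $\mu_i^{(n)}=\mu_i+O(n^{-1/2})$ with a further local shift $n^{-1/2}\tau^{(n)}_i$. This is the asymptotic-linearity content of ULAN: I would show that $\Delta^{(n)}_{f_0}$ evaluated at the shifted location equals $\Delta^{(n)}_{f_0}(\mub^{(n)})$ minus a deterministic drift given by the location block of $\Gamma_{f_0}$ applied to $\taub^{(n)}_\mu$, up to $o_P(1)$, uniformly over bounded $\taub^{(n)}$. The natural route combines the QMD in the location direction with a contiguity argument (Le Cam's third lemma) and an $L^2$/CLT control of the stochastic remainder; the continuity and boundedness of the $\phi^{f_0}_j$ on the compact torus are precisely what render that remainder $o_P(1)$ and make the linearization uniform. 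Collecting the quadratic expansion, the CLT, and this asymptotic-linearity statement then establishes ULAN at $\varthetab_0$.
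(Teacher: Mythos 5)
Your first two steps match the paper's actual strategy: the paper likewise feeds $f(\thetab;\mub)=f_0(\thetab-\mub)$ and $g(\thetab;\mub,\lambdab)=1+\sum_j\lambda_j\sin(\theta_j-\mu_j)$ into Proposition~\ref{prop: QMD} (your verification of conditions (i)--(vi), including $\ell\equiv 1$ and $\nabla_{\mub}g|_{\lambdab=0}=0$, is exactly right), and your computation of the blocks of $\Gamma_{f_0}$, including the integration by parts giving $I^{f_0}_{\mu_j\lambda_k}=0$ for $j\neq k$, agrees with the paper. Two minor inaccuracies in the CLT step: the $\phi^{f_0}_j$ are \emph{not} guaranteed bounded on the torus (Assumption~\ref{assumption_on_f0} gives continuity of $\nabla f_0^{1/2}$ but $f_0>0$ only a.e., so $\phi^{f_0}_j=-2\,\partial_j f_0^{1/2}/f_0^{1/2}$ may blow up where $f_0$ vanishes); what saves you is square-integrability, $\int(\phi^{f_0}_j)^2f_0=4\int(\partial_j f_0^{1/2})^2<\infty$, together with translation equivariance: under $P^{(n)}_{\mub^{(n)},\zerob;f_0}$ the shifted data $\thetab_i-\mub^{(n)}$ have the fixed law $f_0$, so the triangular array is iid with an $n$-independent distribution and the ordinary multivariate CLT applies --- no Lindeberg verification via boundedness is needed or available.

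The genuine gap is your final step, the promotion of LAN to ULAN. You propose to establish uniformity by proving asymptotic linearity of the central sequence ($\Delta^{(n)}_{f_0}$ at the shifted location equals $\Delta^{(n)}_{f_0}(\mub^{(n)})$ minus $\Gamma_{f_0}$ applied to the shift, up to $o_P(1)$) using contiguity and Le Cam's third lemma. This is circular as sketched: asymptotic linearity is standardly a \emph{consequence} of ULAN (indeed the paper obtains it in Section~\ref{sec:unknowncenter} from the ULAN property via Lemma 4.4 of Kreiss), and you supply no independent argument --- no stochastic equicontinuity bound for the score process, nor the equivariance reduction that would make uniformity in $\mub^{(n)}$ immediate. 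The paper avoids this issue entirely: it does not pass through a pointwise QMD$\Rightarrow$LAN theorem at all, but instead verifies the seven conditions of Lemma 2.3 of Garel and Hallin (a Swensen-type martingale-difference lemma) directly along \emph{arbitrary} local sequences $\mub^{(n)}=\mub+O(n^{-1/2})$ --- QMD (your Proposition~\ref{prop: QMD} step) enters only as condition (i), with the remaining conditions handled by the law of large numbers, dominated convergence, and the martingale/centering structure --- so the expansion \eqref{eq: def: loglikelihood ULAN} comes out uniform in one sweep. To repair your route without changing its architecture, the cleanest fix is to observe that the family is location-equivariant, $f_{\mub,\lambdab}(\thetab)=f_{\mub+c,\lambdab}(\thetab+c)$, so the law of $\Lambda^{(n)}$ under $P^{(n)}_{\mub^{(n)},\zerob;f_0}$ does not depend on $\mub^{(n)}$, and LAN at a single center transfers verbatim to every root-$n$ sequence of centers; absent that observation, your last paragraph does not constitute a proof of the uniform statement.
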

The proof of the ULAN property relies on Lemma 2.3 of \cite{garel_local_1995}, which is a modification of Lemma 1 of \cite{swensen_asymptotic_1985} and is provided in Section~\ref{appendix: proof_ULAN} of the Supplementary Material. It is straightforward to see that the skewness parts of the Fisher information matrix are finite by bounding $|\sin(\cdot)|$ by 1; the same holds true for the location-skewness parts thanks to our assumptions on $f_0$. In the next two sections we will derive our optimal tests for symmetry for both the case of known and unknown centers thanks to this ULAN property.

\section{Optimal tests for symmetry about a known  center} \label{sec:knowncenter}

In this section, we assume that the center of symmetry  is known and  fixed to $\mub \in [-\pi, \pi)^d$.
Our goal is to derive semi-parametrically optimal tests for the following hypothesis testing problem:
\begin{equation}\label{eq: hypotheses known median}
    \mathcal{H}^{(n)}_{0; \mu}:= \bigcup_{f_0 \in\mathcal{F}} P^{(n)}_{\mub, 0; f_0}
    \quad \text{vs} \quad
    \mathcal{H}^{(n)}_{1; \mu}:= \bigcup_{\lambdab \in S^d \setminus \{\boldsymbol{0} \} } \bigcup_{f_0 \in\mathcal{F}} P^{(n)}_{\mub, \lambdab; f_0}.
\end{equation}
Since $\mub$ is known, we focus on the parts of the central sequence and Fisher Information matrix that only depend on $\lambdab$. We recall that
\begin{equation}\label{eq: def: central sequence and FI for known median}
    \Delta_{\lambda}^{(n)}(\mub) = \frac{1}{\sqrt{n}}\sum_{i=1}^n 
    \begin{pmatrix}
        \sin(\theta_{1i}-\mu_1) \\
        \vdots \\
        \sin(\theta_{di}-\mu_d)
    \end{pmatrix} 
    \quad \text{and define} \quad
    \Gamma_{f_0; \lambda} =  
    \begin{pmatrix}
        I^{f_0}_{\lambda_1\lambda_1} & \hdots & I^{f_0}_{\lambda_1\lambda_d} \\
        \vdots & \ddots & \vdots \\
        I^{f_0}_{\lambda_1\lambda_d} & \hdots & I^{f_0}_{\lambda_d\lambda_d}
    \end{pmatrix}
\end{equation}
where $I^{f_0}_{\lambda_i\lambda_j}$ for $i,j=1,\ldots,d$ is defined in \eqref{eq: def: I}.
Using the ULAN property established in Proposition~\ref{prop: ULAN}, we can establish optimal (in the Le Cam sense) parametric tests for symmetry under fixed $f_0$, that is, for the problem $\mathcal{H}^{(n)}_{0; \mu, f_0}:= P^{(n)}_{\mub, 0; f_0}$ vs $\mathcal{H}^{(n)}_{1; \mu, f_0}:= \bigcup_{\lambdab \in S^d \setminus \{\boldsymbol{0} \} } P^{(n)}_{\mub, \lambdab; f_0}$. To this end, we build the $f_0$-parametric test $\phi^{(n); \mu}_{f_0}$ that rejects $\mathcal{H}^{(n)}_{0; \mu, f_0}$ at asymptotic level $\alpha$ whenever the statistic
\begin{equation} \label{eq: parametric test specified}
    Q_{f_0}^{(n); \mub} := \left(\Delta_{\lambda}^{(n)}(\mub)\right)' \left(\Gamma_{f_0; \lambda} \right)^{-1} \Delta_{\lambda}^{(n)}(\mub)
\end{equation}
exceeds $\chi^2_{d; \alpha}$,  the $\alpha$-upper quantile of the $\chi^2$ distribution with $d$ degrees of freedom.
From the Le Cam theory, it follows that this test is locally and asymptotically maximin for testing the null $\mathcal{H}^{(n)}_{0; \mu, f_0}$ against the alternative $\mathcal{H}^{(n)}_{1; \mu, f_0}$.
However, this holds only when the true density of the data, $f_0$, is known, and does not hold when it is misspecified. So, we consider the studentized version of the test, denoted by $\phi^{\ast (n); \mu}$, that rejects the much broader null hypothesis $\mathcal{H}^{(n)}_{0; \mu}$ against the alternative $\mathcal{H}^{(n)}_{1; \mu}$ at asymptotic level $\alpha$ whenever the statistic
\begin{equation} \label{eq: semi-parametric test specified}
    Q^{\ast(n); \mub} = \left(\Delta_{\lambda}^{(n)}(\mub)\right)' \left(\hat{\Gamma}_{\lambda} \right)^{-1} \Delta_{\lambda}^{(n)}(\mub)
\end{equation}
exceeds $\chi^2_{d; \alpha}$ (an asymptotic result which we formally establish in Theorem~\ref{thm: optimality specified median}) where 
\begin{equation}\label{eq: Fisher Information specified estimate}
    \hat{\Gamma}_{\lambda} = 
    \begin{pmatrix}
        \hat{I}_{\lambda_1\lambda_1} & \hdots & \hat{I}_{\lambda_1\lambda_d} \\
        \vdots & \ddots & \vdots \\
        \hat{I}_{\lambda_1\lambda_d} & \hdots & \hat{I}_{\lambda_d\lambda_d}
    \end{pmatrix},
    \ \hat{I}_{\lambda_j\lambda_k} = \frac{1}{n}\sum_{i=1}^n \sin(\theta_{ji} - \mu_j) \sin(\theta_{ki} - \mu_k) \text{  for  } j,k=1,\ldots,d.
\end{equation}
The latter quantity is the empirical estimate of the Fisher Information matrix $\Gamma_{f_0; \lambda}$ defined in \eqref{eq: def: central sequence and FI for known median}. 
Calculating the inverse of it requires numerical methods for high dimensions. We attract the reader's attention to the interesting fact that $Q^{\ast(n); \mub}$ does \emph{not} depend on $f_0$ thanks to the estimator $\hat{\Gamma}_{\lambda}$. This is the reason why we omit $f_0$ as an index in both $Q^{\ast(n); \mub}$ and $\phi^{\ast (n); \mu}$, and implies that any $f_0$-parametric test leads to the same studentized test, a rare and very powerful result. Indeed, the test $\phi^{\ast (n); \mu}$ inherits the optimality properties from its parametric antecedents (see Theorem~\ref{thm: optimality specified median}) and, consequently, becomes a universally optimal test.
The asymptotic results concerning the test statistic \eqref{eq: semi-parametric test specified} are summarized in the following theorem, whose proof can be found in Section~\ref{appendix: proof optimality specified} of the Supplementary Material.

\begin{theorem} \label{thm: optimality specified median}
For $Q^{\ast(n); \mub}$ as defined in \eqref{eq: semi-parametric test specified}, the following hold under Assumption \ref{assumption_on_f0}:
    \begin{enumerate}[(i)]
        \item Under $\mathcal{H}^{(n)}_{0; \mu}$, we have that
        \begin{equation} \label{eq: asymptotic_result_specified}
            Q^{\ast(n); \mub}   \xrightarrow{\mathcal{D}} 
            \chi^2_d
        \end{equation}
        as $n \rightarrow \infty$, so that the test $\phi^{\ast (n); \mu}$, defined above \eqref{eq: semi-parametric test specified}, has asymptotic level $\alpha$ under the null.

        \item Under $P^{(n)}_{\mub, n^{-1/2} \taub^{(n)}_\lambda; f_0}$ for $f_0 \in \mathcal{F}$ and $\taub^{(n)}_\lambda$ as defined in Proposition~\ref{prop: ULAN}, it holds that
    \begin{equation}\label{eq: dist_under_alternative d dimensions}
        Q^{\ast(n); \mub} \xrightarrow{\mathcal{D}} 
        \chi^2_d(\taub'_\lambda \Gamma_{f_0; \lambda} \taub_\lambda)
    \end{equation}
     as $n \rightarrow \infty$, where $\chi^2_\nu(\kappa)$ denotes a non-central $\chi^2$ distribution with $\nu$ degrees of freedom and non-centrality parameter $\kappa$, with $\taub_\lambda = \lim_{n\rightarrow\infty} \taub^{(n)}_\lambda$.

     \item The test $\phi^{\ast (n); \mu}$ is universally (in $f_0$) locally and asymptotically maximin at level $\alpha$ when testing $\mathcal{H}^{(n)}_{0; \mu}$ against $\mathcal{H}^{(n)}_{1; \mu}$.
    \end{enumerate}
\end{theorem}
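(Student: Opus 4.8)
The plan is to derive all three claims from the ULAN structure of Proposition~\ref{prop: ULAN} together with standard Slutsky-type arguments, exploiting the fact that in the known-center setting there is no nuisance parameter to eliminate, so that $\Delta_\lambda^{(n)}(\mub)$ is already the efficient central sequence for $\lambdab$ and $\Gamma_{f_0;\lambda}$ the corresponding efficient information. For part (i), I would first note that under $\mathcal{H}^{(n)}_{0;\mu}$ the data come from some $f_0\in\mathcal{F}$ with $\lambdab=\zerob$, so the summands $\sin(\theta_{ji}-\mu_j)$ are iid with mean zero by the symmetry of $f_0$ about $\mub$; the multivariate CLT then gives $\Delta_\lambda^{(n)}(\mub)\xrightarrow{\mathcal{D}}\mathcal{N}_d(\zerob,\Gamma_{f_0;\lambda})$. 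Simultaneously, the law of large numbers yields $\hat{\Gamma}_\lambda\xrightarrow{P}\Gamma_{f_0;\lambda}$, since each $\hat{I}_{\lambda_j\lambda_k}$ is a sample average with expectation $I^{f_0}_{\lambda_j\lambda_k}$. Assuming $\Gamma_{f_0;\lambda}$ invertible, the continuous mapping theorem gives $\hat{\Gamma}_\lambda^{-1}\xrightarrow{P}\Gamma_{f_0;\lambda}^{-1}$, and Slutsky's theorem yields $Q^{\ast(n);\mub}\xrightarrow{\mathcal{D}}Z'\Gamma_{f_0;\lambda}^{-1}Z$ with $Z\sim\mathcal{N}_d(\zerob,\Gamma_{f_0;\lambda})$, which is $\chi^2_d$. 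The crucial observation is that this limit does not depend on the particular $f_0$, so the level is asymptotically $\alpha$ uniformly over the composite null.

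For part (ii), I would invoke Le Cam's third lemma. The ULAN expansion \eqref{eq: def: loglikelihood ULAN}, specialized to the known-center case by setting the location perturbation to zero, shows that $(\Delta_\lambda^{(n)}(\mub),\Lambda^{(n)})$ is jointly asymptotically normal under the null, with the linear part of $\Lambda^{(n)}$ equal to $\taub'_\lambda\Delta_\lambda^{(n)}(\mub)$. Consequently the asymptotic covariance between $\Delta_\lambda^{(n)}(\mub)$ and the log-likelihood ratio is $\Gamma_{f_0;\lambda}\taub_\lambda$, so under the contiguous local alternatives $P^{(n)}_{\mub,n^{-1/2}\taub^{(n)}_\lambda;f_0}$ the central sequence satisfies $\Delta_\lambda^{(n)}(\mub)\xrightarrow{\mathcal{D}}\mathcal{N}_d(\Gamma_{f_0;\lambda}\taub_\lambda,\Gamma_{f_0;\lambda})$. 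Contiguity, guaranteed by ULAN through Le Cam's first lemma, ensures that the consistency $\hat{\Gamma}_\lambda\xrightarrow{P}\Gamma_{f_0;\lambda}$ carries over from the null to these alternatives. A final application of Slutsky's theorem gives $Q^{\ast(n);\mub}\xrightarrow{\mathcal{D}}W'\Gamma_{f_0;\lambda}^{-1}W$ with $W\sim\mathcal{N}_d(\Gamma_{f_0;\lambda}\taub_\lambda,\Gamma_{f_0;\lambda})$; writing $Y=\Gamma_{f_0;\lambda}^{-1/2}W\sim\mathcal{N}_d(\Gamma_{f_0;\lambda}^{1/2}\taub_\lambda,I_d)$ identifies the limit of $Q^{\ast(n);\mub}=Y'Y$ as $\chi^2_d(\taub'_\lambda\Gamma_{f_0;\lambda}\taub_\lambda)$, exactly as stated.

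For part (iii), I would first recall that, by the general Le Cam theory for ULAN families with no nuisance parameter for the tested component, the parametric test $\phi^{(n);\mu}_{f_0}$ based on $Q_{f_0}^{(n);\mub}$ in \eqref{eq: parametric test specified} is locally and asymptotically maximin at level $\alpha$ for $\mathcal{H}^{(n)}_{0;\mu,f_0}$ versus $\mathcal{H}^{(n)}_{1;\mu,f_0}$, its asymptotic power against $\taub_\lambda$ being governed by the non-centrality $\taub'_\lambda\Gamma_{f_0;\lambda}\taub_\lambda$. The studentized statistic $Q^{\ast(n);\mub}$ differs from $Q_{f_0}^{(n);\mub}$ only by replacing $\Gamma_{f_0;\lambda}$ with $\hat{\Gamma}_\lambda$; since $\hat{\Gamma}_\lambda\xrightarrow{P}\Gamma_{f_0;\lambda}$ under the true $f_0$ and its contiguous alternatives, the two statistics are asymptotically equivalent, so $\phi^{\ast(n);\mu}$ shares the same asymptotic power function as $\phi^{(n);\mu}_{f_0}$ for each $f_0\in\mathcal{F}$. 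Combined with the uniform level control established in part (i), this shows $\phi^{\ast(n);\mu}$ attains the per-$f_0$ maximin power bound simultaneously for every $f_0$, i.e. it is universally maximin.

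I would expect the distributional statements (i)--(ii) to be essentially routine once the ULAN structure is in hand. The main obstacle is the careful justification of part (iii), namely transferring the maximin optimality of the parametric antecedents to the studentized test: this requires verifying the asymptotic equivalence $Q^{\ast(n);\mub}=Q_{f_0}^{(n);\mub}+o_P(1)$ under \emph{both} the null and the contiguous alternatives (hence the reliance on Le Cam's first lemma for contiguity), and then arguing that simultaneous attainment of the per-$f_0$ maximin bound, together with validity over the entire composite null $\mathcal{H}^{(n)}_{0;\mu}$, constitutes universal optimality. A secondary technical point worth isolating is the invertibility of $\Gamma_{f_0;\lambda}$ and of its estimator $\hat{\Gamma}_\lambda$ with probability tending to one, which underpins every Slutsky step above.
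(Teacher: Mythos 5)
Your proposal is correct and follows essentially the same route as the paper's proof: the multivariate CLT together with the consistency of $\hat{\Gamma}_{\lambda}$ and Slutsky's theorem for part (i), joint asymptotic normality of $\bigl(\Delta_{\lambda}^{(n)}(\mub),\Lambda^{(n)}\bigr)$ combined with the Third Le Cam Lemma under contiguity for part (ii), and the asymptotic equivalence $Q^{\ast(n);\mub}=Q_{f_0}^{(n);\mub}+o_P(1)$ plus the $f_0$-independence of the studentized statistic to transfer parametric maximin optimality in part (iii). Your additional remarks on the mean-zero property of the summands under symmetry and on the invertibility of $\Gamma_{f_0;\lambda}$ and $\hat{\Gamma}_{\lambda}$ make explicit two points the paper leaves implicit, but the argument is the same.
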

This theorem formally shows what we announced before, namely that we have a semi-parametric test that is valid under the entire null hypothesis $\mathcal{H}^{(n)}_{0; \mu}$ and uniformly (in $f_0$) optimal against the alternative $\mathcal{H}^{(n)}_{1; \mu}$. 
To these remarkable features we can add that our test statistic is extremely simple, as it is just based on trigonometric moments. 
Moreover, by point (ii), we are able to write down the explicit asymptotic power of the test against the local alternatives $P^{(n)}_{\mub, n^{-1/2} \boldsymbol{\tau}^{(n)}_\lambda; f_0}$ as $n \rightarrow \infty$, resulting in the expression $\Prob\Bigl( Q \geq \chi^2_{d;\alpha} \Bigr)$ for $Q \sim \chi^2_d(\boldsymbol{\tau}'_\lambda \Gamma_{f_0; \lambda} \boldsymbol{\tau}_\lambda)$.
For $\kappa = \boldsymbol{\tau}'_\lambda \Gamma_{f_0; \lambda} \boldsymbol{\tau}_\lambda$, the expression of the asymptotic power can be equivalently written as
\begin{equation*}
    \Prob\Bigl( Q \geq \chi^2_{d;\alpha} \Bigr) = 1 - e^{-\kappa/2} \sum_{j=0}^\infty \frac{\left( \kappa/2 \right)^j}{j!}\frac{\gamma \left( d/2, \chi^2_{d;\alpha}/2 \right)}{\Gamma \left( d/2 \right)},
\end{equation*}
where $\gamma(s,t)$ is the lower incomplete gamma function.
As a special case, when $d=1$, we retrieve the results of \cite{ley_simple_2014}.

For any real life dataset, there will only be a limited number of observations available.
In the following theorem, working under the null hypothesis, $\mathcal{H}^{(n)}_{0; \mu}$, we derive a bound for the distance (integral probability metric) between $Q^{\ast(n); \mub}$ and its limiting chi-square distribution with $d$ degrees of freedom.
This gives information on how well the limiting distribution approximates the true distribution, in the case of finite observations.
The proof of the theorem is based on Stein's Method, and it can be found 
in Section~\ref{appendix: proof thm steins specified} of the supplementary material.
The notation $C^k_b(I)$ denotes the class of real-valued functions defined on $I\subset\R$ whose partial derivatives of order $k$ all exist and are bounded.

\begin{theorem} \label{thm: steins method specified}
For $i,j\in\{1,\ldots,d\}$, define  
$\gamma_{ij} = \left( \Gamma_{f_0; \lambda}^{-1} \right)_{ij}$ and $\hat{\gamma}_{ij} =  \left( \hat{\Gamma}_{\lambda}^{-1} \right)_{ij}$,
where $A_{ij}$ denotes the entry of a matrix $A$ in row $i$ and column $j$.
Then, 
for any $h \in C^6_b(\R)$, working under Assumption \ref{assumption_on_f0},
    \begin{align} \label{eq: steins method}
    \Bigl|\E & \left[ h(Q^{\ast(n); \mub}) \right] -\E \left[ h\left(\chi^2_d\right) \right] \Bigr| \nonumber \\
    & \leq 
    \frac{1}{n} \Biggl[ C_1 + 
    \lVert h^{(1)} \rVert
    \sum_{i=1}^d \sum_{j=1}^d \Biggl( \E \left[ \left( \hat{\gamma}_{ij} - \gamma_{ij} \right)^2 \right] \biggl( \ n \ \E \left[ \sin^2\left( \theta_{i1} - \mu_i \right) \sin^2\left( \theta_{j1} - \mu_j \right) \right] \nonumber \\
    & \hspace{1.5cm} + 2n(n-1) \Bigl( \E \left[ \sin\left( \theta_{i1} - \mu_i \right) \sin\left( \theta_{j2} - \mu_j \right) \right] \Bigr)^2 \nonumber\\
    & \hspace{1.5cm} + n(n-1) \E \left[ \sin^2\left( \theta_{i1} - \mu_i \right) \right] \E \left[ \sin^2\left( \theta_{j2} - \mu_j \right) \right]  \biggr) \Biggr)^{1/2} \Biggr], 
\end{align}
where $h^{(\ell)}$ denotes the $\ell^\text{th}$ derivative of $h$ and $C_1$ is the constant as given in  Theorem 2.4 of \cite{gaunt_rate_2023}; more precisely, $C_1$ depends on the $10^\textrm{th}$ order absolute moment of $X_{lj}$, for $X_{lj} = \sum_{k=1}^{d} a_{kj}\sin(\theta_{kl} - \mu_k)$
for $l\in\{1,\ldots,n\}$ and $a_{ij} = \left( \left( \Gamma_{f_0; \lambda}^{-1} \right)^{1/2}\right)_{ij}$,
as well as on $h_4$ and $ h_6$,
with $h_m = \sum_{\ell=1}^m \begin{Bmatrix}
    m \\ \ell
\end{Bmatrix} \lVert h^{(\ell)} \rVert$, for $\begin{Bmatrix}
    m \\ \ell
\end{Bmatrix} = \frac{1}{\ell!} \sum_{l=0}^\ell \left( -1 \right)^{\ell-l} \begin{pmatrix}
    \ell \\ l
\end{pmatrix} l^m$ being the Stirling numbers of the second kind.
\end{theorem}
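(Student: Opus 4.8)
The plan is to insert the ``idealized'' statistic $Q_{f_0}^{(n);\mub}$ of \eqref{eq: parametric test specified}, which replaces the empirical matrix $\hat\Gamma_\lambda$ in $Q^{\ast(n);\mub}$ by the true Fisher information $\Gamma_{f_0;\lambda}$, and to split the error by the triangle inequality,
\begin{align*}
\bigl|\E[h(Q^{\ast(n);\mub})]-\E[h(\chi^2_d)]\bigr|
&\le \bigl|\E[h(Q^{\ast(n);\mub})]-\E[h(Q_{f_0}^{(n);\mub})]\bigr|\\
&\quad+\bigl|\E[h(Q_{f_0}^{(n);\mub})]-\E[h(\chi^2_d)]\bigr|.
\end{align*}
The second summand is a pure chi-square approximation and will produce the term $C_1/n$; the first absorbs the estimation of the Fisher information and will produce the $\lVert h^{(1)}\rVert$ term.

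For the chi-square part, I would write $Q_{f_0}^{(n);\mub}=\lVert n^{-1/2}\sum_{l=1}^n W_l\rVert^2=\sum_{j=1}^d\bigl(n^{-1/2}\sum_{l=1}^n X_{lj}\bigr)^2$, with $V_l=(\sin(\theta_{1l}-\mu_1),\ldots,\sin(\theta_{dl}-\mu_d))'$, $W_l=\Gamma_{f_0;\lambda}^{-1/2}V_l$ and $X_{lj}=(W_l)_j$ as in the statement. Under $\mathcal{H}^{(n)}_{0;\mu}$ the symmetry of $f_0$ gives $\E[\sin(\theta_{il}-\mu_i)]=0$ and $\E[V_lV_l']=\Gamma_{f_0;\lambda}$, whence $\cov(W_l)$ is the $d\times d$ identity matrix. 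Thus $Q_{f_0}^{(n);\mub}$ is exactly the squared norm of a standardized sum of iid vectors treated in Theorem~2.4 of \cite{gaunt_rate_2023}, and since $|\sin(\cdot)|\le1$ forces every absolute moment of the $X_{lj}$ (in particular the tenth) to be finite, that theorem applies verbatim and delivers the bound $C_1/n$ with the stated constant.

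For the plug-in part, $h\in C^6_b(\R)\subset C^1_b(\R)$, so the mean value theorem gives $\bigl|h(Q^{\ast(n);\mub})-h(Q_{f_0}^{(n);\mub})\bigr|\le\lVert h^{(1)}\rVert\,\bigl|Q^{\ast(n);\mub}-Q_{f_0}^{(n);\mub}\bigr|$. Setting $Y_i:=\bigl(\Delta_\lambda^{(n)}(\mub)\bigr)_i=n^{-1/2}\sum_{l=1}^n\sin(\theta_{il}-\mu_i)$, the difference of the two quadratic forms factorizes as $Q^{\ast(n);\mub}-Q_{f_0}^{(n);\mub}=\sum_{i=1}^d\sum_{j=1}^d(\hat\gamma_{ij}-\gamma_{ij})Y_iY_j$. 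Taking expectations, bringing the absolute value inside the sum and applying Cauchy--Schwarz termwise yields $\E\bigl|(\hat\gamma_{ij}-\gamma_{ij})Y_iY_j\bigr|\le(\E[(\hat\gamma_{ij}-\gamma_{ij})^2])^{1/2}(\E[Y_i^2Y_j^2])^{1/2}$. It then remains to evaluate $\E[Y_i^2Y_j^2]$ by expanding the fourfold sum over observation indices and using independence together with $\E[\sin(\theta_{il}-\mu_i)]=0$ to discard every term in which some observation appears only once; the surviving fully-coincident and paired-index patterns reproduce precisely the three contributions inside the square root, and the prefactor $n^{-2}$ of $\E[Y_i^2Y_j^2]$ becomes the overall $n^{-1}$ of the bound after taking the square root.

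The main obstacle is the first summand, i.e. controlling the data-dependent inverse $\hat\Gamma_\lambda^{-1}$. The mean value theorem linearizes the dependence on the random matrix and reduces everything to an $L^1$ bound on the bilinear form $\sum_{i,j}(\hat\gamma_{ij}-\gamma_{ij})Y_iY_j$; the delicate point is the combinatorial computation of the mixed fourth moment $\E[Y_i^2Y_j^2]$, where one must carefully track which coincidences of observation indices survive the mean-zero property to obtain exactly the three-term bracket. By contrast, verifying the hypotheses of \cite{gaunt_rate_2023} is immediate precisely because $\sin$ is bounded, so no separate moment estimates are needed there.
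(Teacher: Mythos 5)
Your proposal is correct and follows essentially the same route as the paper's own proof: the identical triangle-inequality split through $Q_{f_0}^{(n);\mub}$, Theorem~2.4 of \cite{gaunt_rate_2023} applied to the standardized sum with the same $X_{lj}$ for the $C_1/n$ term, and a first-order Taylor bound $\lVert h^{(1)}\rVert\,\E\bigl|Q^{\ast(n);\mub}-Q_{f_0}^{(n);\mub}\bigr|$ followed by termwise Cauchy--Schwarz and the index-coincidence expansion of the mixed fourth moment. Your bookkeeping of the surviving index patterns and of how the $n^{-2}$ prefactor becomes the overall $n^{-1}$ matches the paper's calculation exactly.
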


In order to be able to calculate the order of the bound in \eqref{eq: steins method}, we need to bound $\E \left[ \left( \hat{\gamma}_{ij} - \gamma_{ij} \right)^2 \right]$.
Using Lemma~\ref{lemma: rate of diff of gammas}, which is stated below, the order of the bound in \eqref{eq: steins method} is $O\left( \frac{1}{\sqrt{n}} \right)$.
This result not only quantifies the distance between $Q^{\ast(n); \mub}$ and its limiting chi-square distribution with $d$ degrees of freedom, but also serves as an alternative proof of Theorem~\ref{thm: optimality specified median}(i).
\begin{lemma}\label{lemma: rate of diff of gammas}
    For $i,j \in \{1,\ldots,d\}$ and $\gamma_{ij}$ and $\hat{\gamma}_{ij}$ as defined in Theorem~\ref{thm: steins method specified}, it holds that 
    \begin{equation}\label{eq: rate of diff of gammas}
        \E \left[ \left( \hat{\gamma}_{ij} - \gamma_{ij} \right)^2 \right] \leq C_2/n
    \end{equation} 
    for a universal constant $C_2$, which depends on the true underlying distribution $f_0$.   
\end{lemma}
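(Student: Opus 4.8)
The plan is to reduce the mean-square control of the inverse entries to the elementary mean-square control of the entries of $\hat{\Gamma}_{\lambda}$ themselves, and to bridge the two by a matrix-perturbation argument, splitting the expectation according to whether $\hat{\Gamma}_{\lambda}$ is well-conditioned. First I would record the elementary ingredient: each entry $\hat{I}_{\lambda_j\lambda_k} = \frac1n\sum_{i=1}^n \sin(\theta_{ji}-\mu_j)\sin(\theta_{ki}-\mu_k)$ is an average of $n$ i.i.d.\ terms bounded by $1$ in absolute value, with mean $I^{f_0}_{\lambda_j\lambda_k}$ under the symmetric null model of Theorem~\ref{thm: steins method specified}. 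Hence
\[
\E\!\left[\bigl(\hat{I}_{\lambda_j\lambda_k} - I^{f_0}_{\lambda_j\lambda_k}\bigr)^2\right] = \tfrac1n\,\var\!\bigl(\sin(\theta_{j1}-\mu_j)\sin(\theta_{k1}-\mu_k)\bigr) \le \tfrac1n.
\]
Summing over the $d^2$ entries and using that the spectral norm is dominated by the Frobenius norm gives $\E[\lVert \hat{\Gamma}_{\lambda} - \Gamma_{f_0;\lambda}\rVert_{\mathrm{op}}^2] \le \E[\lVert \hat{\Gamma}_{\lambda} - \Gamma_{f_0;\lambda}\rVert_F^2] \le d^2/n$.

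Next I would exploit that $\Gamma_{f_0;\lambda}$, being the covariance matrix of the nondegenerate vector $(\sin\theta_1,\ldots,\sin\theta_d)'$ under $f_0$, is positive definite; set $c_\Gamma := \lambda_{\min}(\Gamma_{f_0;\lambda}) > 0$. On the good event $B_n := \{\lVert \hat{\Gamma}_{\lambda} - \Gamma_{f_0;\lambda}\rVert_{\mathrm{op}} \le c_\Gamma/2\}$, Weyl's inequality yields $\lambda_{\min}(\hat{\Gamma}_{\lambda}) \ge c_\Gamma/2$, so $\hat{\Gamma}_{\lambda}$ is invertible with $\lVert\hat{\Gamma}_{\lambda}^{-1}\rVert_{\mathrm{op}} \le 2/c_\Gamma$, and the resolvent identity $\hat{\Gamma}_{\lambda}^{-1}-\Gamma_{f_0;\lambda}^{-1} = -\hat{\Gamma}_{\lambda}^{-1}(\hat{\Gamma}_{\lambda} - \Gamma_{f_0;\lambda})\Gamma_{f_0;\lambda}^{-1}$ gives
\[
\lVert \hat{\Gamma}_{\lambda}^{-1}-\Gamma_{f_0;\lambda}^{-1}\rVert_{\mathrm{op}} \le \lVert\hat{\Gamma}_{\lambda}^{-1}\rVert_{\mathrm{op}}\,\lVert\Gamma_{f_0;\lambda}^{-1}\rVert_{\mathrm{op}}\,\lVert \hat{\Gamma}_{\lambda} - \Gamma_{f_0;\lambda}\rVert_{\mathrm{op}} \le \frac{2}{c_\Gamma^2}\lVert \hat{\Gamma}_{\lambda} - \Gamma_{f_0;\lambda}\rVert_{\mathrm{op}}.
\]
Since $|\hat{\gamma}_{ij}-\gamma_{ij}| \le \lVert \hat{\Gamma}_{\lambda}^{-1}-\Gamma_{f_0;\lambda}^{-1}\rVert_{\mathrm{op}}$, combining with the first step yields $\E[(\hat{\gamma}_{ij}-\gamma_{ij})^2\,\one_{B_n}] \le \frac{4}{c_\Gamma^4}\,\E[\lVert\hat{\Gamma}_{\lambda}-\Gamma_{f_0;\lambda}\rVert_{\mathrm{op}}^2] \le \frac{4d^2}{c_\Gamma^4}\cdot\frac1n$, which is already of the desired order.

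The main obstacle is the complementary event $B_n^c$, on which $\hat{\Gamma}_{\lambda}$ may be arbitrarily close to singular and $\hat{\gamma}_{ij}$ is genuinely unbounded, so the crude perturbation bound breaks down. I would handle this contribution by Cauchy--Schwarz,
\[
\E\!\left[(\hat{\gamma}_{ij}-\gamma_{ij})^2\,\one_{B_n^c}\right] \le \bigl(\E[(\hat{\gamma}_{ij}-\gamma_{ij})^4]\bigr)^{1/2}\bigl(\Prob(B_n^c)\bigr)^{1/2},
\]
and show each factor is harmless. Because the entries of $\hat{\Gamma}_{\lambda}$ are bounded i.i.d.\ averages, Hoeffding's inequality gives $\Prob(B_n^c) \le \sum_{j,k}\Prob(|\hat{I}_{\lambda_j\lambda_k}-I^{f_0}_{\lambda_j\lambda_k}|>c_\Gamma/(2d)) \le 2d^2 e^{-c' n}$, an exponentially small factor. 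It then remains to bound the fourth moment $\E[(\hat{\gamma}_{ij}-\gamma_{ij})^4]$, equivalently to control $\E[\lVert\hat{\Gamma}_{\lambda}^{-1}\rVert_{\mathrm{op}}^4]$ at most polynomially in $n$. This negative-moment bound on the smallest eigenvalue of the sample second-moment matrix $\hat{\Gamma}_{\lambda} = \frac1n\sum_{i=1}^n v_iv_i'$, with $v_i=(\sin(\theta_{1i}-\mu_1),\ldots,\sin(\theta_{di}-\mu_d))'$, is the delicate step; I expect to obtain it from the nondegeneracy of the law of $v_i$, which is in turn guaranteed because $f_0$ is continuous and strictly positive on the compact torus (by Assumption~\ref{assumption_on_f0}) and hence bounded below by some $c_0>0$. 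With the fourth moment thus controlled and the exponentially small probability, the $B_n^c$ contribution is negligible compared with the good-event term, and adding the two gives $\E[(\hat{\gamma}_{ij}-\gamma_{ij})^2] \le C_2/n$ with $C_2$ depending on $f_0$ through $c_\Gamma$, $c_0$ and $d$, as claimed.
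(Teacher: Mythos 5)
Your good-event half is sound: the entrywise variance bound, the Weyl/resolvent perturbation step, and the Hoeffding bound $\Prob(B_n^c)\leq 2d^2e^{-c'n}$ are all correct. But the bad-event half contains a genuine gap, and it sits exactly where you say you ``expect'' to close it: the bound $\E\left[\left(\hat{\gamma}_{ij}-\gamma_{ij}\right)^4\right]\leq \mathrm{poly}(n)$, equivalently a negative-moment bound $\E\left[\lambda_{\min}(\hat{\Gamma}_{\lambda})^{-4}\right]\leq \mathrm{poly}(n)$. This does not follow from ``nondegeneracy of the law of $v_i$.'' For $n<d$ the matrix $\hat{\Gamma}_{\lambda}=\frac{1}{n}\sum_i v_iv_i'$ is singular almost surely and the fourth moment is infinite, and even for $n\geq d$ finiteness of $\E[\lambda_{\min}^{-4}]$ requires a quantitative small-ball estimate of the form $\Prob\left(\lambda_{\min}(\hat{\Gamma}_{\lambda})\leq t\right)\lesssim t^{4+\delta}$ with controlled $n$-dependence; absolute continuity of the law of $v_1$ alone gives $\Prob(\lambda_{\min}=0)=0$ but no rate, and proving the rate is a Rudelson--Vershynin-type anti-concentration argument that your sketch does not supply. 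Your supporting claim that $f_0$ is ``bounded below by some $c_0>0$'' is also not implied by the paper's assumptions: the class $\mathcal{F}$ in \eqref{eq: def: F} only requires $f_0>0$ almost everywhere, and a continuous unimodal density on the torus may vanish at its antimode, so no uniform lower bound is available. Cram\'er's rule does not rescue you either: the cofactor numerator of $\hat{\gamma}_{ij}$ is bounded (all entries of $\hat{\Gamma}_{\lambda}$ lie in $[-1,1]$), but the determinant in the denominator can still be arbitrarily small on $B_n^c$, which is the same obstruction in different clothing.

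It is worth noting that the paper's own proof takes a route that avoids inverse moments of random matrices altogether, which is precisely why this difficulty never arises there. The paper writes $\hat{\gamma}_{ij}=(-1)^{i+j}\hat{d}^{\lambda_d}_{ij}/\hat{d}^{\lambda_d}$ and $\gamma_{ij}=(-1)^{i+j}d^{\lambda_d}_{ij}/d^{\lambda_d}$ as ratios of cofactor determinants, Taylor-expands $1/\hat{d}^{\lambda_d}$ about $d^{\lambda_d}$, and reduces everything to showing $\E\left[\left(\hat{d}^{\lambda_d}_{ij}-d^{\lambda_d}_{ij}\right)^2\right]=O(1/n)$ by a combinatorial expansion of the determinant as a sum over products of bounded empirical averages: terms with a repeated observation index carry at most $2d-3$ free sums and are $O(1/n)$, while the leading all-distinct-index term vanishes because $\E[\hat{J}_{ik;j}]=J_{ik}$. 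The price of that route is the inexplicit constant $C_2$ (the Taylor remainder is only controlled as a little-$o$ term), which the paper acknowledges. To repair your approach you would need to actually prove the small-ball estimate for $\lambda_{\min}(\hat{\Gamma}_{\lambda})$ (which can plausibly be done, for $n$ larger than a multiple of $d$, from the fact that $\Gamma_{f_0;\lambda}\succeq c_\Gamma I_d$ and $\lVert v_1\rVert\leq\sqrt{d}$ via Paley--Zygmund plus a net argument, giving $\Prob(\lambda_{\min}\leq c_\Gamma/4)\leq Ce^{-cn}$, and then converting the exponential tail into moment bounds after an a priori truncation), but as written the bad-event contribution is not controlled and the proof is incomplete.
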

The constant $C_2$ in Lemma~\ref{lemma: rate of diff of gammas} cannot be calculated explicitly, which is a consequence of multiplying with the inverse of $\hat{\Gamma}_{\lambda}$ in the expression of $Q^{\ast(n); \mub}$.
In the proof, this can be seen from \eqref{eq: steins_method_taylor_expansion_determinant} where the bound can only be derived with a universal constant because of the last term of the equation.
The proof of the lemma is deferred to Section~\ref{appendix: proof lemma steins specified} of the supplementary material.


\section{Optimal tests for unknown symmetry center} \label{sec:unknowncenter}

In this section, we derive semi-parametrically optimal tests for the case that the symmetry center is unknown; in formal terms we consider the testing problem
\begin{equation}\label{eq: hypotheses unknown median}
    \mathcal{H}_0^{(n)}:= \bigcup_{\mub \in [-\pi, \pi)^d} \bigcup_{g_0 \in\mathcal{F}} P^{(n)}_{\mub, \zerob; g_0}
    \quad \text{vs} \quad
    \mathcal{H}_1^{(n)}:= \bigcup_{\lambdab \in (-1,1)^d \setminus \{\boldsymbol{0} \} } \bigcup_{\mub \in [-\pi, \pi)^d} \bigcup_{g_0 \in\mathcal{F}} P^{(n)}_{\mub, \lambdab; g_0}.
\end{equation}
The main change compared to the previous section is the fact that we now need to estimate the nuisance parameter $\mub$ by some estimator $\hat{\mub}^{(n)}$. We will henceforth assume that such location estimators  satisfy the following assumption.
\begin{assumption} \label{assumption: mu_estimate}
    The estimators $\hat{\mub}^{(n)} = \left( \hat{\mu}^{(n)}_1, \ldots, \hat{\mu}^{(n)}_d \right)'$ for $\mub$ are 
    \begin{enumerate}[(i)]
        \item $\sqrt{n}$-consistent, i.e. $\sqrt{n} \left( \hat{\mub}^{(n)} - \mub \right) = O_P(1)$ 
        as $n \rightarrow \infty$ under $\cup_{g_0 \in \mathcal{F}} P^{(n)}_{\mub,\zerob;g_0}$, and 

    \item locally asymptotic discrete, i.e. for all $i\in\{1,\ldots,d\}$, $\mu_i \in [-\pi, \pi)$, 
    and $c > 0$ there exists $M = M(c) > 0$ such that the number of possible values of $\hat{\mu}^{(n)}_i$ in intervals of the form $\{t \in [-\pi, \pi) : \sqrt{n} |t - \mu_i| \leq c\}$,
    is bounded by $M$, uniformly as $n \rightarrow \infty$.
    \end{enumerate}
\end{assumption}
Assumption \ref{assumption: mu_estimate} will allow us to use Lemma 4.4 from \cite{kreiss_adaptive_1987}, which is an integral step in obtaining all asymptotic results provided in this section. Part (i) is a typical assumption on estimators, while Part (ii) is purely a technical requirement, with little practical
implication. Indeed, for a fixed sample size, any estimator can be considered part of a
locally asymptotically discrete sequence: see, for instance, \cite{le2000asymptotics}.
At a first glance, this assumption seems to not respect the periodic nature of the data.
However, since  the estimators and location parameter de facto lie on the surface of the torus, we can simply choose the starting point of our intervals on a different place such that the periodicity is not violated.


For an estimator $\hat{\mub}^{(n)}$ that satisfies Assumption~\ref{assumption: mu_estimate}, using the ULAN property of Proposition \ref{prop: ULAN} and Lemma 4.4 from \cite{kreiss_adaptive_1987}, we get the following asymptotic linearity property, under any distribution $f_0\in\mathcal{F}$:
\begin{equation}\label{eq: asymptotic linearity}
    \begin{pmatrix}
        \Delta_{f_0; \mu}^{(n)}(\hat{\mub}^{(n)}) \\
        \Delta_{\lambda}^{(n)}(\hat{\mub}^{(n)})
    \end{pmatrix} = 
    \begin{pmatrix}
        \Delta_{f_0;\mu}^{(n)} (\mub) \\
        \Delta_{\lambda}^{(n)} (\mub) 
    \end{pmatrix} - \Gamma_{f_0} 
    \begin{pmatrix}
        \sqrt{n} (\hat{\mub}^{(n)} - \mub ) \\ \boldsymbol{0}
    \end{pmatrix} + o_P(1) \one_{2d\times 1}
\end{equation}
where $\one_{n\times m}$ is the $n\times m$ matrix of ones.
From \eqref{eq: asymptotic linearity} and the non-diagonality of the Fisher information matrix $\Gamma_{f_0}$, it is clear that the asymptotic cost of estimating  $\mub$ is not zero.
This fact needs to be taken into account when constructing the test.
We do this by calculating a new central sequence for skewness, the efficient central sequence $\Delta_{f_0;\lambda}^{(n)\ast} (\mub)$, which is orthogonal to $\Delta_{f_0;\mu}^{(n)} (\mub)$ under $f_0$.

By orthogonal projection, the efficient central sequence for skewness can be calculated as
\begin{align} \label{eq: new score function}
     \Delta_{f_0;\lambda}^{(n)\ast} (\mub)
     & = \Delta_{\lambda}^{(n)}(\mub) - \cov_{f_0}\left( \Delta_{\lambda}^{(n)}(\mub), \Delta_{f_0;\mu}^{(n)}(\mub) \right) \var_{f_0} \left( \Delta_{f_0;\mu}^{(n)} (\mub)\right)^{-1} \Delta_{f_0;\mu}^{(n)}(\mub)  
\end{align}
where
\begin{equation*}
    \cov_{f_0} \left( \Delta_{\lambda}^{(n)}(\mub), \Delta_{f_0;\mu}^{(n)}(\mub) \right) = \begin{pmatrix}
        I^{f_0}_{\mu_1\lambda_1} & \cdots & 0 \\
        \vdots & \ddots & \vdots \\
        0 & \cdots & I^{f_0}_{\mu_d\lambda_d}
    \end{pmatrix}, \ 
    \var_{f_0} \left( \Delta_{f_0;\mu}^{(n)} (\mub)\right) 
    = \begin{pmatrix}
        I^{f_0}_{\mu_1\mu_1} & \cdots & I^{f_0}_{\mu_1\mu_d} \\
        \vdots & \ddots & \vdots \\
        I^{f_0}_{\mu_1\mu_d} & \cdots & I^{f_0}_{\mu_d\mu_d}
    \end{pmatrix}
\end{equation*}
and in our notation we emphasize the fact that both $\var(\cdot)$ and $\cov(\cdot)$ are calculated with respect to the density $f_0$.
This calculation can also be viewed as finding new expressions for $\mub$.
In order to illustrate this, we consider the case $d=2$.
The exact expression of $\Delta_{f_0;\lambda}^{(n)\ast} (\mub)$ in this case becomes
\begin{equation} \label{eq: new central sequence parametric}
    \Delta_{f_0;\lambda}^{(n)\ast} (\mub)
    = \frac{1}{\sqrt{n}}\sum_{i=1}^n
    \begin{pmatrix}
        \sin(\theta_{1i}-\mu_1) - \frac{I^{f_0}_{\mu_1\lambda_1}I^{f_0}_{\mu_2\mu_2} \phi^{f_0}_1(\thetab_{i} - \mub) -I^{f_0}_{\mu_1\lambda_1}I^{f_0}_{\mu_1\mu_2} \phi^{f_0}_2(\thetab_{i} - \mub)}{d^{f_0}_{\mu_1\mu_2}}
        \\
        \sin(\theta_{2i}-\mu_2) - \frac{-I^{f_0}_{\mu_2\lambda_2} I^{f_0}_{\mu_1\mu_2} \phi^{f_0}_1(\thetab_{i} - \mub)  +
        I^{f_0}_{\mu_2\lambda_2} I^{f_0}_{\mu_1\mu_1} \phi^{f_0}_2 (\thetab_{i} - \mub) }{d^{f_0}_{\mu_1\mu_2}} 
    \end{pmatrix},
\end{equation}
where $d^{f_0}_{\mu_1\mu_2} = I^{f_0}_{\mu_1\mu_1} I^{f_0}_{\mu_2\mu_2} - \left( I^{f_0}_{\mu_1\mu_2} \right)^2$. 
The expression in \eqref{eq: new central sequence parametric} is the score function of observations $\{\theta_{1i}, \theta_{2i}\}_{i=1}^n$ with density 
\begin{equation*}
    f_0 \Bigl( \theta_1 - \mu_1(\lambda_1, \lambda_2), \theta_2 - \mu_2(\lambda_1, \lambda_2)\Bigr)(1+\lambda_1 \sin(\theta_1-\mu_1(\lambda_1, \lambda_2)) + \lambda_2 \sin(\theta_2 - \mu_2(\lambda_1, \lambda_2))),
\end{equation*}
where $\mu_1(\lambda_1, \lambda_2) = (I^{f_0}_{\mu_1\lambda_1} I^{f_0}_{\mu_2\mu_2} \lambda_1 - I^{f_0}_{\mu_2\lambda_2} I^{f_0}_{\mu_1\mu_2} \lambda_2)/d_{\mu_1\mu_2}$ and $\mu_2(\lambda_1, \lambda_2) = (-I^{f_0}_{\mu_1\lambda_1} I^{f_0}_{\mu_1\mu_2} \lambda_1 + I^{f_0}_{\mu_2\lambda_2}I^{f_0}_{\mu_1\mu_1} \lambda_2) / d_{\mu_1\mu_2}$.

Going back to the derivation of the parametric test for symmetry, we have
\begin{align*}
    & \var_{f_0} \left( \Delta_{f_0; \lambda}^{(n)\ast}(\mub) \right)  = \ \var_{f_0} \left( \Delta_{\lambda}^{(n)}(\mub)  \right) \\
    & \hspace{1.5cm} - \cov_{f_0} \left( \Delta_{\lambda}^{(n)}(\mub), \Delta_{f_0; \mu}^{(n)}(\mub)  \right) \var_{f_0} \left( \Delta_{f_0;\mu}^{(n)}(\mub) \right)^{-1} \cov_{f_0} \left( \Delta_{\lambda}^{(n)}(\mub), \Delta_{f_0; \mu}^{(n)}(\mub) \right)
\end{align*}
and, replacing the unknown $\mub$ with an estimator satisfying Assumption~\ref{assumption: mu_estimate}, the $f_0$-parametric test $\phi^{(n)}_{f_0}$ for symmetry rejects $\mathcal{H}^{(n)}_{0; f_0}:= \bigcup_{\mub \in [-\pi, \pi)^d} P^{(n)}_{\mub, \zerob; f_0}$ at asymptotic level $\alpha$ whenever the statistic
\begin{equation} \label{eq: parametric test unspecified}
    Q_{f_0}^{(n)} := \left(\Delta_{f_0; \lambda}^{(n)\ast}(\hat{\mub}^{(n)})  \right)' \left(\var_{f_0} \left( \Delta_{f_0; \lambda}^{(n)\ast}(\hat{\mub}^{(n)}) \right) \right)^{-1} \Delta_{f_0; \lambda}^{(n)\ast}(\hat{\mub}^{(n)})
\end{equation}
exceeds $\chi^2_{d; \alpha}$.
This asymptotic result can be proven using Lemma 4.4 from \cite{kreiss_adaptive_1987} and the fact that, 
under $\mathcal{H}^{(n)}_0$,
$\Delta_{f_0; \lambda}^{(n)\ast}(\mub) \xrightarrow{\mathcal{D}} \mathcal{N}_d \left( \boldsymbol{0}, \var_{f_0} \left( \Delta_{f_0; \lambda}^{(n)\ast}(\mub) \right) \right)
$ as $n\rightarrow\infty$.

In order to derive the parametric test in \eqref{eq: parametric test unspecified}, the efficient central sequence was calculated under the assumption that the true underlying distribution of $\{\thetab_i\}_{i=1}^n$ is $f_0$.
Now, we want to derive a more generally valid  test, and consider to that end any density $g_0 \in \mathcal{F}$ that satisfies the same assumptions as $f_0$ and for the rest of this section we assume that $g_0$ is the true underlying distribution of $\{\thetab_i\}_{i=1}^n$. 
The orthogonalization as was done in \eqref{eq: new score function} needs to be adjusted accordingly, which is done as follows:
\begin{align}\label{eq: param test unspecified mu f,g}
    \Tilde{\Delta}_{f_0;g_0;\lambda}^{(n)} (\mub)
     & = \Delta_{\lambda}^{(n)}(\mub) - C^{g_0}_{\mu; \lambda} \left( C^{f_0; g_0}_{\mu; \mu} \right) ^{-1} \Delta_{f_0; \mu}^{(n)}(\mub),
\end{align}
where 
\begin{equation} \label{eq: def: cov matrix f0 g0}
    C^{f_0; g_0}_{\mu; \mu}
    := \cov_{g_0} \left(\Delta_{f_0; \mu}^{(n)}(\mub), \Delta_{g_0; \mu}^{(n)}(\mub) \right) 
    = \begin{pmatrix}
        I^{f_0;g_0}_{\mu_1\mu_1} & \cdots & I^{f_0;g_0}_{\mu_1\mu_d} \\
        \vdots & \ddots & \vdots \\
        I^{f_0;g_0}_{\mu_d\mu_1} & \cdots & I^{f_0;g_0}_{\mu_d\mu_d}
    \end{pmatrix}
\end{equation}
for $I^{f_0;g_0}_{\mu_j\mu_k} = \int_{[-\pi,\pi)^d} \phi_j^{f_0}(\thetab - \mub) \phi_k^{g_0}(\thetab - \mub) g_0(\thetab - \mub) {\mathrm d} \thetab$, $j,k\in\{1,\ldots,d\}$, and 
\begin{equation} \label{eq: def: cov matrix mu lambda}
    C^{g_0}_{\mu; \lambda}
    := \cov_{g_0} \left( \Delta_{\lambda}^{(n)}(\mub), \Delta_{g_0; \mu}^{(n)}(\mub) \right) 
    = \begin{pmatrix}
        I^{g_0}_{\mu_1\lambda_1} & \cdots & 0 \\
        \vdots & \ddots & \vdots \\
        0 & \cdots & I^{g_0}_{\mu_d\lambda_d}
    \end{pmatrix}
\end{equation}
for $I^{g_0}_{\mu_j\lambda_j} = \int_{[-\pi,\pi)^d} \sin(\theta_j - \mu_j) \phi_j^{g_0}(\thetab - \mub) g_0(\thetab - \mub) {\mathrm d} \thetab$, $j\in\{1,\ldots,d\}$.
Note that \eqref{eq: def: cov matrix f0 g0} is a symmetric matrix, something that can easily be shown by integration by parts. In addition, \eqref{eq: def: cov matrix mu lambda} is a diagonal matrix since, as given in \eqref{eq: def: I}, $I^{f_0}_{\mu_j\lambda_k} = 0$ for $j\neq k$.

Since $g_0$ is unknown, we need a semi-parametric version of the central sequence, meaning in particular that we need to estimate the aforementioned covariances. Integrating by parts and using the periodicity of $f_0$ and $g_0$, we obtain
\begin{align}
    I^{g_0}_{\mu_j\lambda_j} & \ = \int_{[-\pi,\pi)^d} \sin(\theta_j-\mu_j)\phi_j^{g_0}(\thetab - \mub) g_0(\thetab - \mub) {\mathrm d} \thetab \nonumber \\
    & \ = \int_{[-\pi,\pi)^d} \cos(\theta_j - \mu_j) g_0 (\thetab - \mub) {\mathrm d} \thetab \ = \ \E_{g_0}\left[ \cos(\theta_j - \mu_j) \right] \label{eq:Imulambda}
\end{align}
for $j\in\{1,\ldots,d\}$ and
\begin{align*}
    I^{f_0;g_0}_{\mu_j\mu_k} 
    & = \int_{[-\pi,\pi)^d} \phi_j^{f_0} (\thetab - \mub) \phi_k^{g_0} (\thetab - \mub) g_0 (\thetab - \mub) {\mathrm d} \thetab \\
    & = - \int_{[-\pi,\pi)^d} \phi_j^{f_0} (\thetab - \mub) \frac{\partial}{\partial \theta_k} g_0 (\thetab - \mub) {\mathrm d} \thetab \\
    & = - \int_{[-\pi,\pi)^{d-1}} \phi_j^{f_0} (\thetab - \mub) g_0 (\thetab - \mub) \Big\vert_{-\pi}^\pi {\mathrm d} \thetab_{(k)} + \int_{[-\pi,\pi)^d} \frac{\partial}{\partial \theta_k} \phi_j^{f_0} (\thetab - \mub) g_0 (\thetab - \mub) {\mathrm d} \thetab\\
    & = \E_{g_0}\left[ \frac{\partial}{\partial\theta_k} \phi_j^{f_0} (\thetab - \mub) \right],
\end{align*}
for any combination of $j,k\in\{1,\ldots,d\}$, where ${\mathrm d} \thetab_{(k)}$ denotes integrating with respect to $\theta_1,\ldots,\theta_{k-1},\theta_{k+1},\ldots,\theta_d$.
In order to derive consistent estimators of the unknown quantities $I^{g_0}_{\mu_j\lambda_j}$ and $I^{f_0;g_0}_{\mu_j\mu_k}$, a further assumption is needed on $f_0$.

\begin{assumption} \label{assumption_on_f0_unspecified_median}
    The mapping $\thetab \mapsto f_0(\thetab - \mub)$ is $\mathcal{C}^2$ a.e. on $\left[-\pi, \pi \right)^d$.
\end{assumption}
Combining Assumption~\ref{assumption_on_f0_unspecified_median} with the fact that $ f_0(\thetab - \mub) > 0$ a.e. leads to $\phi_i^{f_0}(\thetab - \mub)$ being differentiable a.e. over $[-\pi, \pi)^d$.

\begin{proposition} \label{prop: estimate quantities}
    Suppose that $f_0, g_0 \in \mathcal{F}$ and that Assumptions \ref{assumption_on_f0}, \ref{assumption: mu_estimate} and \ref{assumption_on_f0_unspecified_median} hold. Then, for $j,k \in \{ 1,\ldots,d \}$, it holds that
    \begin{align}
    & I^{g_0}_{\mu_j\lambda_j} - \hat{I}_{\mu_j\lambda_j} = o_P(1), 
    \text{  for  }
    \hat{I}_{\mu_j\lambda_j} = \frac{1}{n}\sum_{i=1}^n \cos(\theta_{ji} - \hat{\mu}^{(n)}_j), \label{eq: prop estimate quantities eq1} \\
    & I^{f_0;g_0}_{\mu_j\mu_k} - \hat{I}^{f_0}_{\mu_j\mu_k} = o_P(1), 
    \text{  for  }
    \hat{I}^{f_0}_{\mu_j\mu_k} =  \frac{1}{n}\sum_{i=1}^n \frac{\partial}{\partial\theta_k}\phi_j^{f_0}(\thetab_{i} - \hat{\mub}^{(n)}) \label{eq: prop estimate quantities eq2}
\end{align}
as $n\rightarrow\infty$ under $P^{(n)}_{\mub, \boldsymbol{0}; g_0}$.
\end{proposition}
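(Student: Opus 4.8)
The plan is to establish both limits by the same two-step decomposition. For each estimator I would split the estimation error into a \emph{substitution term}, in which the data-dependent $\hat{\mub}^{(n)}$ is replaced by the true location $\mub$, and a \emph{centered sample-average term}, which is $o_P(1)$ by the weak law of large numbers (WLLN) once combined with the integration-by-parts identities $I^{g_0}_{\mu_j\lambda_j} = \E_{g_0}[\cos(\theta_j - \mu_j)]$ and $I^{f_0;g_0}_{\mu_j\mu_k} = \E_{g_0}[\partial_{\theta_k}\phi_j^{f_0}(\thetab - \mub)]$ derived above. The two parts differ only in how hard the substitution term is to control.

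For \eqref{eq: prop estimate quantities eq1}, write
\begin{align*}
    \hat{I}_{\mu_j\lambda_j} - I^{g_0}_{\mu_j\lambda_j}
    &= \frac{1}{n}\sum_{i=1}^n \bigl[ \cos(\theta_{ji} - \hat{\mu}^{(n)}_j) - \cos(\theta_{ji} - \mu_j) \bigr] \\
    &\quad + \Bigl( \frac{1}{n}\sum_{i=1}^n \cos(\theta_{ji} - \mu_j) - \E_{g_0}[\cos(\theta_j - \mu_j)] \Bigr).
\end{align*}
The second bracket is $o_P(1)$ by Khintchine's WLLN, since the $\cos(\theta_{ji}-\mu_j)$ are iid and bounded. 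For the first bracket, the mean value theorem together with $|\sin|\le 1$ yields $|\cos(\theta_{ji} - \hat{\mu}^{(n)}_j) - \cos(\theta_{ji} - \mu_j)| \le |\hat{\mu}^{(n)}_j - \mu_j|$ uniformly in $i$, so the whole term is bounded by $|\hat{\mu}^{(n)}_j - \mu_j| = o_P(1)$ thanks to the $\sqrt{n}$-consistency in Assumption~\ref{assumption: mu_estimate}(i). This proves \eqref{eq: prop estimate quantities eq1}.

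For \eqref{eq: prop estimate quantities eq2}, set $G_{jk}^{\mathbf m}(\thetab) := \partial_{\theta_k}\phi_j^{f_0}(\thetab - \mathbf m)$ and apply the same split. The centered average $\frac1n\sum_i G_{jk}^{\mub}(\thetab_i) - \E_{g_0}[G_{jk}^{\mub}(\thetab_1)]$ is again $o_P(1)$ by the WLLN, using that $G_{jk}^{\mub}(\thetab_1) \in L^1(g_0)$, which holds under Assumptions~\ref{assumption_on_f0} and \ref{assumption_on_f0_unspecified_median} — the same regularity that renders $I^{f_0;g_0}_{\mu_j\mu_k}$ finite. The genuine difficulty lies in the substitution term
\[
    \frac{1}{n}\sum_{i=1}^n \bigl[ G_{jk}^{\hat{\mub}^{(n)}}(\thetab_i) - G_{jk}^{\mub}(\thetab_i) \bigr],
\]
because, unlike the cosine, the map $\partial_{\theta_k}\phi_j^{f_0}$ is only $\mathcal{C}^1$ \emph{a.e.}\ (Assumption~\ref{assumption_on_f0_unspecified_median}) and need be neither bounded nor Lipschitz, so no uniform pointwise bound of the kind used for \eqref{eq: prop estimate quantities eq1} is available. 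This is the main obstacle.

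To handle it I would invoke the discretization device underlying Lemma~4.4 of \cite{kreiss_adaptive_1987}. By Assumption~\ref{assumption: mu_estimate}(ii) the estimator is locally asymptotically discrete and by (i) it is $\sqrt n$-consistent; hence for every $\varepsilon>0$ there is $M$ so that, with probability at least $1-\varepsilon$ for large $n$, $\hat{\mub}^{(n)}$ takes one of only finitely many values in $\{\mathbf m : \sqrt n\,\lVert \mathbf m - \mub\rVert \le M\}$. It therefore suffices to prove, for each fixed bounded deterministic sequence $\mathbf t^{(n)}$ with $\mub^{(n)} := \mub + n^{-1/2}\mathbf t^{(n)}$, that $\frac1n\sum_i [ G_{jk}^{\mub^{(n)}}(\thetab_i) - G_{jk}^{\mub}(\thetab_i) ] = o_P(1)$, and then to paste the finitely many cases together by a union bound and let $\varepsilon\downarrow 0$. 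For such a deterministic $\mub^{(n)}\to\mub$, the a.e.\ continuity of $\partial_{\theta_k}\phi_j^{f_0}$ (Assumption~\ref{assumption_on_f0_unspecified_median}) gives $G_{jk}^{\mub^{(n)}}(\thetab)\to G_{jk}^{\mub}(\thetab)$ a.e.; under the local domination $\sup_{\lVert \mathbf m-\mub\rVert\le\delta}|G_{jk}^{\mathbf m}| \in L^1(g_0)$ afforded by the regularity of $f_0$, dominated convergence yields $\E_{g_0}\bigl| G_{jk}^{\mub^{(n)}}(\thetab_1) - G_{jk}^{\mub}(\thetab_1) \bigr| \to 0$, and since the summands are iid the expected absolute value of their average is bounded by this same quantity. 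Hence the substitution term converges to $0$ in $L^1$, a fortiori in probability, and combining it with the centered average yields \eqref{eq: prop estimate quantities eq2}. The crux, as flagged, is trading the unavailable uniform bound on $\partial_{\theta_k}\phi_j^{f_0}$ for a pointwise-in-$\mathbf t$ control, which is exactly what the local asymptotic discreteness in Assumption~\ref{assumption: mu_estimate}(ii) permits.
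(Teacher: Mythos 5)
Your proof is correct and follows the same backbone as the paper's: split each estimator into a centered sample average (handled by the WLLN together with the integration-by-parts identities) and a substitution term in which $\hat{\mub}^{(n)}$ replaces $\mub$, with the latter controlled by reducing to deterministic local sequences $\mub + n^{-1/2}\taub^{(n)}$ via Lemma 4.4 of \cite{kreiss_adaptive_1987} and then applying dominated convergence. Two points of genuine divergence are worth recording. First, for \eqref{eq: prop estimate quantities eq1} you bypass the discretization entirely: the global $1$-Lipschitz property of the cosine gives the pathwise bound $\bigl|\frac{1}{n}\sum_{i=1}^n [\cos(\theta_{ji}-\hat{\mu}^{(n)}_j)-\cos(\theta_{ji}-\mu_j)]\bigr| \le |\hat{\mu}^{(n)}_j-\mu_j| = o_P(1)$ directly from $\sqrt{n}$-consistency, whereas the paper routes this case through Lemma 4.4 and dominated convergence as well; your argument is more elementary and needs only Assumption~\ref{assumption: mu_estimate}(i) for this half (with the angular distance understood modulo $2\pi$, which is harmless since the cosine difference depends only on it). Second, for \eqref{eq: prop estimate quantities eq2} the paper simply asserts that, by Assumption~\ref{assumption_on_f0_unspecified_median}, $\sup_n \frac{\partial}{\partial\theta_k}\phi_j^{f_0}(\thetab - \mub^{(n)})$ is bounded on the compact domain and invokes dominated convergence, while you replace this by the weaker local domination $\sup_{\lVert \mathbf m-\mub\rVert\le\delta}\lvert \partial_{\theta_k}\phi_j^{f_0}(\cdot-\mathbf m)\rvert \in L^1(g_0)$ combined with a.e.\ continuity; since Assumption~\ref{assumption_on_f0_unspecified_median} only guarantees $\mathcal{C}^2$ regularity almost everywhere, your formulation is arguably the more careful of the two (boundedness on a compact set does not follow from a.e.\ smoothness alone), though both ultimately rest on the same unverified regularity claim about $f_0$. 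Your reconstruction of the discretization-plus-union-bound mechanism behind Lemma 4.4 is also accurate, merely spelled out where the paper cites the lemma as a black box.
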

The proof of the proposition can be found in Section~\ref{appendix: proof estimate quantities} of the Supplementary Material.
Using these estimators, the new efficient semi-parametric central sequence can be written as 
\begin{align}\label{eq: semi-param test unspecified mu f,g}
    \Tilde{\Delta}_{f_0; \lambda}^{(n)\ast} (\hat{\mub}^{(n)})
    = \Delta_{\lambda}^{(n)}(\hat{\mub}^{(n)}) - \widehat{C}_{\mu; \lambda}
    \left( \widehat{C}^{f_0}_{\mu; \mu} \right)^{-1} \Delta_{f_0; \mu}^{(n)}(\hat{\mub}^{(n)})
\end{align}
where 
\begin{equation} \label{eq: def: cov_estimated}
    \widehat{C}^{f_0}_{\mu; \mu} 
    = \begin{pmatrix}
        \hat{I}^{f_0}_{\mu_1\mu_1} & \cdots & \hat{I}^{f_0}_{\mu_1\mu_d} \\
        \vdots & \ddots & \vdots \\
        \hat{I}^{f_0}_{\mu_1\mu_d} & \cdots & \hat{I}^{f_0}_{\mu_d\mu_d}
    \end{pmatrix}
    \quad \text{and} \quad
    \widehat{C}_{\mu; \lambda}
    = \begin{pmatrix}
        \hat{I}_{\mu_1\lambda_1} & \cdots & 0 \\
        \vdots & \ddots & \vdots \\
        0 & \cdots & \hat{I}_{\mu_d\lambda_d}
    \end{pmatrix}
\end{equation}
for $\hat{I}_{\mu_j\lambda_j}$, $j\in\{1,\ldots,d\},$ and $\hat{I}^{f_0}_{\mu_j\mu_k}$, $j,k\in\{1,\ldots,d\},$ as defined in \eqref{eq: prop estimate quantities eq1} and \eqref{eq: prop estimate quantities eq2}, respectively.
Using the fact that $\E_{g_0}\left[ \Tilde{\Delta}_{f_0;g_0;\lambda}^{(n)} (\mub)\right] = \boldsymbol{0}$, the variance of the efficient central sequence is given by
\begin{equation} \label{eq: variance}
    V_{g_0}^{f_0} \left( \mub \right)  
    = \var_{g_0} \left( \Tilde{\Delta}_{f_0;g_0;\lambda}^{(n)} (\mub) \right) 
    = \E_{g_0}\left[ \Tilde{\Delta}_{f_0;g_0;\lambda} (\mub) \Tilde{\Delta}_{f_0;g_0;\lambda} (\mub)' \right],
\end{equation}
where
\begin{equation*}
    \Tilde{\Delta}_{f_0;g_0;\lambda} (\mub)
     = \Delta_{\lambda}(\mub) - C^{g_0}_{\mu; \lambda} \left( C^{f_0; g_0}_{\mu; \mu} \right) ^{-1} \Delta_{f_0; \mu}(\mub), 
\end{equation*}
for
\begin{equation*}
     \Delta_{f_0; \mu}(\mub)
     = \begin{pmatrix}
        \phi^{f_0}_1(\thetab - \mub) \\
        \vdots \\
        \phi^{f_0}_d(\thetab - \mub)
        \end{pmatrix} \text{ and }
        \Delta_{\lambda}(\mub) = 
        \begin{pmatrix}
        \sin(\theta_{1}-\mu_1) \\
        \vdots \\
        \sin(\theta_{d}-\mu_d) 
    \end{pmatrix},
\end{equation*}
where $\thetab \overset{\mathrm{d}}{=} \thetab_i$.
This can be estimated using 
\begin{equation} \label{eq: variance estimate}
    \hat{V}_{f_0}\left(\hat{\mub}^{(n)} \right) 
    = \frac{1}{n} \sum_{i=1}^n \tilde{\Delta}^{(n)\ast}_{f_0; \lambda; i} \left(\hat{\mub}^{(n)} \right) \tilde{\Delta}^{(n) \ast}_{f_0; \lambda; i} \left( \hat{\mub}^{(n)} \right)' 
\end{equation}
for $\Tilde{\Delta}_{f_0; \lambda; i}^{(n)\ast} (\hat{\mub}^{(n)})$ defined via
\begin{equation*}
    \Tilde{\Delta}_{f_0; \lambda}^{(n)\ast} (\hat{\mub}^{(n)})
    = \frac{1}{\sqrt{n}} \sum_{i=1}^n \Tilde{\Delta}_{f_0; \lambda; i}^{(n)\ast} (\hat{\mub}^{(n)}).
\end{equation*}
In the following proposition, we show that the semi-parametric central sequence and the estimate of its variance converge, in probability, to the parametric ones.
Its proof can be found in Section~\ref{appendix: proof unspecified_median_esimates} of the Supplementary Material.
\begin{proposition} \label{prop: unspecified_median_esimates}
    Suppose that $f_0, g_0 \in \mathcal{F}$ and Assumptions \ref{assumption_on_f0},  \ref{assumption: mu_estimate} and \ref{assumption_on_f0_unspecified_median} hold. Then, as $n\rightarrow\infty$ under $P^{(n)}_{\mub, \boldsymbol{0}; g_0}$, it holds that 
    \begin{enumerate}[(i)]
        \item $\Tilde{\Delta}_{f_0; \lambda}^{(n)\ast} (\hat{\mub}^{(n)}) - \Tilde{\Delta}_{f_0;g_0;\lambda}^{(n)} (\mub) = o_P(1) \one_{d \times 1}$ and 

        \item $\hat{V}_{f_0}\left(\hat{\mub}^{(n)} \right)  - V^{f_0}_{g_0}\left(\mub \right) = o_P(1)\one_{d \times d}$
    \end{enumerate} 
    where $\one_{n\times m}$ is the $n\times m$ matrix of ones.
\end{proposition}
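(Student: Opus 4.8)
The plan is to prove the two statements in turn; both rely on the same machinery, namely an asymptotic-linearity expansion of the $f_0$-based central sequences in the center, Proposition~\ref{prop: estimate quantities} for the consistency of the estimated covariance blocks, and the local asymptotic discreteness of Assumption~\ref{assumption: mu_estimate}(ii) to handle the fact that $\hat{\mub}^{(n)}$ depends on the whole sample. Throughout I write $B := C^{g_0}_{\mu;\lambda}\big(C^{f_0;g_0}_{\mu;\mu}\big)^{-1}$ and $\widehat{B} := \widehat{C}_{\mu;\lambda}\big(\widehat{C}^{f_0}_{\mu;\mu}\big)^{-1}$ for the projection coefficient and its estimate, the matrix $C^{f_0;g_0}_{\mu;\mu}$ being invertible as implicit in the construction of the efficient central sequence.

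For part (i), I would first record the $g_0$-version of the asymptotic linearity \eqref{eq: asymptotic linearity}: applying Proposition~\ref{prop: ULAN} with $g_0$ as the true law, together with Lemma 4.4 of \cite{kreiss_adaptive_1987}, yields $\Delta_{f_0;\mu}^{(n)}(\hat{\mub}^{(n)}) = \Delta_{f_0;\mu}^{(n)}(\mub) - C^{f_0;g_0}_{\mu;\mu}\sqrt{n}\big(\hat{\mub}^{(n)}-\mub\big) + o_P(1)$ and $\Delta_{\lambda}^{(n)}(\hat{\mub}^{(n)}) = \Delta_{\lambda}^{(n)}(\mub) - C^{g_0}_{\mu;\lambda}\sqrt{n}\big(\hat{\mub}^{(n)}-\mub\big) + o_P(1)$. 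The slopes are exactly the cross-information blocks because, under $g_0$, the expected $\mu_k$-derivative of $\phi_j^{f_0}(\thetab-\mub)$ equals $I^{f_0;g_0}_{\mu_j\mu_k}$ and that of $\sin(\theta_j-\mu_j)$ equals $I^{g_0}_{\mu_j\lambda_j}$, precisely the computations carried out around \eqref{eq:Imulambda}. Substituting both expansions into the definition \eqref{eq: semi-param test unspecified mu f,g} of $\Tilde{\Delta}_{f_0;\lambda}^{(n)\ast}(\hat{\mub}^{(n)})$ and replacing $\widehat{B}$ by $B$ — legitimate since $\widehat{B}-B=o_P(1)$ by Proposition~\ref{prop: estimate quantities} and continuity of matrix inversion, while the central sequences are $O_P(1)$ — the coefficient of $\sqrt{n}\big(\hat{\mub}^{(n)}-\mub\big)$ collapses to $-C^{g_0}_{\mu;\lambda} + B\,C^{f_0;g_0}_{\mu;\mu} = -C^{g_0}_{\mu;\lambda}+C^{g_0}_{\mu;\lambda}=\boldsymbol{0}$. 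This exact cancellation of the estimation drift is the very purpose of the orthogonal projection defining the efficient central sequence, and it leaves $\Tilde{\Delta}_{f_0;\lambda}^{(n)\ast}(\hat{\mub}^{(n)}) = \Tilde{\Delta}_{f_0;g_0;\lambda}^{(n)}(\mub) + o_P(1)\one_{d\times 1}$, which is claim (i).

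For part (ii), I would compare the empirical second-moment matrix \eqref{eq: variance estimate} with the population variance \eqref{eq: variance} through three successive perturbations. At the true center and with the true coefficient $B$, the per-observation summands $\Delta_{\lambda;i}(\mub)-B\,\Delta_{f_0;\mu;i}(\mub)$ are i.i.d.\ with finite second moments — the $\sin$-entries are bounded and the $\phi^{f_0}$-entries are square-integrable under $g_0$ by Assumption~\ref{assumption_on_f0} and finiteness of the cross-information — so the weak law of large numbers gives that $\frac{1}{n}\sum_{i=1}^n\big(\Delta_{\lambda;i}(\mub)-B\,\Delta_{f_0;\mu;i}(\mub)\big)\big(\Delta_{\lambda;i}(\mub)-B\,\Delta_{f_0;\mu;i}(\mub)\big)'$ converges in probability to $V^{f_0}_{g_0}(\mub)$ (recall that under the null $\E_{g_0}[\Tilde{\Delta}_{f_0;g_0;\lambda}(\mub)]=\boldsymbol{0}$, so the second moment equals the variance). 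Replacing $B$ by $\widehat{B}$ alters the average by $o_P(1)$, since $\widehat{B}-B=o_P(1)$ while the empirical cross-moments of the $\Delta_{\lambda;i}$ and $\Delta_{f_0;\mu;i}$ are $O_P(1)$. Finally, replacing $\mub$ by $\hat{\mub}^{(n)}$ inside each summand: by Assumption~\ref{assumption_on_f0_unspecified_median} the map $\phi^{f_0}$ is continuously differentiable a.e., so a first-order Taylor expansion in the center with increment $\hat{\mub}^{(n)}-\mub=O_P(n^{-1/2})$ bounds the induced change in the outer-product average by $O_P(n^{-1/2})$ times sample averages of the relevant first derivatives, which are $O_P(1)$; hence this contribution is $o_P(1)$ as well. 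Combining the three steps gives $\hat{V}_{f_0}(\hat{\mub}^{(n)}) - V^{f_0}_{g_0}(\mub) = o_P(1)\one_{d\times d}$, which is claim (ii).

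The main obstacle in both parts is identical: $\hat{\mub}^{(n)}$ is a functional of the entire sample, so plugging it into an i.i.d.\ average destroys independence and obstructs both a direct law of large numbers and a clean Taylor control. I would resolve this exactly as in the proof of Proposition~\ref{prop: estimate quantities}, using the local asymptotic discreteness of Assumption~\ref{assumption: mu_estimate}(ii) through Lemma 4.4 of \cite{kreiss_adaptive_1987} to reduce to the finitely many deterministic values that $\hat{\mub}^{(n)}$ can take in each $\sqrt{n}$-neighborhood of $\mub$; once the center is frozen at a grid point, all summands become genuinely i.i.d., and the Taylor bounds, the continuous-mapping arguments for the inverse covariance, and the drift cancellation are then entirely routine.
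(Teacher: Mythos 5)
Your proposal is correct and follows essentially the same route as the paper's proof: the same two-step decomposition (first replacing the estimated projection matrix $\widehat{C}_{\mu;\lambda}(\widehat{C}^{f_0}_{\mu;\mu})^{-1}$ by its population counterpart via Proposition~\ref{prop: estimate quantities}, then replacing $\hat{\mub}^{(n)}$ by $\mub$ via Lemma 4.4 of Kreiss under local asymptotic discreteness), the same $g_0$-asymptotic-linearity expansions with slopes $-C^{g_0}_{\mu;\lambda}$ and $-C^{f_0;g_0}_{\mu;\mu}$ (the latter obtained by Taylor expansion plus the law of large numbers), the same exact cancellation of the estimation drift, and the same perturbation scheme for the variance estimate in part (ii). The only deviations are cosmetic: you invoke the continuous mapping theorem to get $\widehat{B}-B=o_P(1)$ where the paper verifies \eqref{eq: convergence of matrices} by an explicit cofactor/determinant expansion, and you swap the order of the center-replacement and matrix-replacement steps in part (ii).
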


{It is worth noting that in the proof of Proposition~\ref{prop: unspecified_median_esimates}, we prove that
\begin{equation*}
    \Delta_{f_0; \mu}^{(n)} (\mub^{(n)})-\Delta_{f_0; \mu}^{(n)} (\mub)
    = - C^{f_0; g_0}_{\mu; \mu}\taub_\mu + o_P(1) \one_{d\times1}
\end{equation*}
as $n\rightarrow \infty$, which is a result that provides the asymptotic linearity of the central sequence when the expectations are evaluated under $g_0$.
This is not a straightforward result and is sometimes assumed to hold in situations where Le Cam theory is used to build optimal tests.}

Using Propositions \ref{prop: estimate quantities} and \ref{prop: unspecified_median_esimates}, we propose the locally and asymptotically optimal test for symmetry $\phi^{\ast (n)}_{f_0}$  that rejects $\mathcal{H}^{(n)}_{0}$ at asymptotic level $\alpha$ whenever the statistic
\begin{equation} \label{eq: semi-parametric test unspecified}
    Q_{f_0}^{\ast (n)} := \left(\Tilde{\Delta}_{f_0; \lambda}^{(n)\ast} (\hat{\mub}^{(n)}) \right)' \left(\hat{V}_{f_0} (\hat{\mub}^{(n)} ) \right)^{-1} \Tilde{\Delta}_{f_0; \lambda}^{(n)\ast} (\hat{\mub}^{(n)})
\end{equation}
exceeds $\chi^2_{d; \alpha}$.
The following theorem states the asymptotic properties of $Q_{f_0}^{\ast (n)}$.
Its proof can be found in Section~\ref{appendix: proof optimality unspecified median} of the Supplementary Material.
\begin{theorem} \label{thm: optimality unspecified median}
For $Q_{f_0}^{\ast(n)}$ as defined in \eqref{eq: semi-parametric test unspecified} and working under Assumptions \ref{assumption_on_f0}, \ref{assumption: mu_estimate} and \ref{assumption_on_f0_unspecified_median}, the following hold
    \begin{enumerate}[(i)]
        \item Under $\mathcal{H}^{(n)}_{0}$, 
        \begin{equation}
            Q_{f_0}^{\ast(n)}   \xrightarrow{\mathcal{D}} 
            \chi^2_d
        \end{equation}
        as $n \rightarrow \infty$ so that the test $\phi^{\ast (n)}_{f_0}$, defined above \eqref{eq: semi-parametric test unspecified}, has asymptotic level $\alpha$ under the same hypothesis.

        \item Under $\cup_{\mub \ \in [-\pi,\pi)^d}P^{(n)}_{\mub, n^{-1/2} \boldsymbol{\tau}^{(n)}_\lambda; g_0}$ with $g_0 \in \mathcal{F}$,
\begin{equation}\label{eq: dist_under_alternative unspecified}
    Q_{f_0}^{\ast (n)} \xrightarrow{\mathcal{D}} 
    \chi^2_d \Bigl( \boldsymbol{\tau}'_\lambda C_{g_0}^{f_0}(\mub) V_{g_0}^{f_0}(\mub)^{-1} C_{g_0}^{f_0}(\mub) \boldsymbol{\tau}_\lambda \Bigr)
\end{equation}
as $n \rightarrow \infty$ with $\boldsymbol{\tau}_\lambda = \lim_{n\rightarrow\infty} \boldsymbol{\tau}^{(n)}_\lambda$, $V_{g_0}^{f_0}(\mub)$ is defined in \eqref{eq: variance}
and
\begin{equation*}
    C_{g_0}^{f_0}(\mub) = \cov_{g_0}\left( \Tilde{\Delta}^{(n)}_{f_0;g_0;\lambda} (\mub), \Delta^{(n)}_{\lambda} (\mub) \right).
\end{equation*}

     \item The test $\phi^{\ast (n)}_{f_0}$ is locally and asymptotically maximin at level $\alpha$ when testing $\mathcal{H}^{(n)}_0$ against $\cup_{\mu \ \in [-\pi,\pi)^d}P^{(n)}_{\mub, n^{-1/2} \boldsymbol{\tau}^{(n)}_\lambda; f_0}$.
    \end{enumerate}
\end{theorem}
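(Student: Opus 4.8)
The three assertions mirror the known-center results of Theorem~\ref{thm: optimality specified median}, the essential new features being that the location nuisance $\mub$ is replaced by an estimator $\hat{\mub}^{(n)}$ and that the efficient central sequence $\tilde{\Delta}_{f_0;g_0;\lambda}^{(n)}(\mub)$ of \eqref{eq: param test unspecified mu f,g} is used in place of $\Delta_{\lambda}^{(n)}(\mub)$. My plan is to assemble the theorem from the ULAN property of Proposition~\ref{prop: ULAN}, the asymptotic linearity \eqref{eq: asymptotic linearity}, Lemma~4.4 of \cite{kreiss_adaptive_1987}, and the two consistency statements of Propositions~\ref{prop: estimate quantities} and~\ref{prop: unspecified_median_esimates}; most of the analytic burden has deliberately been deferred to those auxiliary results, so the theorem itself reduces to CLT/Slutsky arguments together with Le Cam's third lemma.

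For part (i), I would first note that $\tilde{\Delta}_{f_0;g_0;\lambda}^{(n)}(\mub)$ is an $n^{-1/2}$-normalized sum of i.i.d.\ vectors that are centred under $g_0$, so the multivariate CLT gives $\tilde{\Delta}_{f_0;g_0;\lambda}^{(n)}(\mub)\xrightarrow{\mathcal D}\mathcal N_d(\zerob, V_{g_0}^{f_0}(\mub))$ with covariance as in \eqref{eq: variance}. The orthogonalization in \eqref{eq: param test unspecified mu f,g} is exactly what enforces $\cov_{g_0}(\tilde{\Delta}_{f_0;g_0;\lambda}(\mub),\Delta_{g_0;\mu}(\mub))=\zerob$, which is the algebraic reason why replacing the true centre by $\hat{\mub}^{(n)}$ entails no first-order asymptotic cost. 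Concretely, Proposition~\ref{prop: unspecified_median_esimates}(i) lets me replace the computable statistic $\tilde{\Delta}_{f_0;\lambda}^{(n)\ast}(\hat{\mub}^{(n)})$ of \eqref{eq: semi-param test unspecified mu f,g} by $\tilde{\Delta}_{f_0;g_0;\lambda}^{(n)}(\mub)$ up to $o_P(1)$, and Proposition~\ref{prop: unspecified_median_esimates}(ii) replaces $\hat V_{f_0}(\hat{\mub}^{(n)})$ by $V_{g_0}^{f_0}(\mub)$; since the normalizing matrix thus equals the asymptotic covariance, Slutsky's theorem and the continuous mapping theorem yield $Q_{f_0}^{\ast(n)}\xrightarrow{\mathcal D}\chi^2_d$, whence asymptotic level $\alpha$.

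For part (ii), I would exploit contiguity. By the ULAN expansion \eqref{eq: def: loglikelihood ULAN} applied to the true family $P^{(n)}_{g_0}$ and restricted to the skewness direction ($\taub_\mu=\zerob$), the log-likelihood ratio between $P^{(n)}_{\mub,n^{-1/2}\taub^{(n)}_\lambda;g_0}$ and $P^{(n)}_{\mub,\zerob;g_0}$ is asymptotically $\taub_\lambda'\Delta_{\lambda}^{(n)}(\mub)-\tfrac12\taub_\lambda'\var_{g_0}(\Delta_{\lambda}(\mub))\taub_\lambda$, so the two sequences are mutually contiguous and every $o_P(1)$ term from part (i) persists under the alternative. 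Applying Le Cam's third lemma to the jointly asymptotically Gaussian pair $(\tilde{\Delta}_{f_0;g_0;\lambda}^{(n)}(\mub),\Lambda^{(n)})$ shows that under the local alternative $\tilde{\Delta}_{f_0;g_0;\lambda}^{(n)}(\mub)$ stays asymptotically normal with unchanged covariance $V_{g_0}^{f_0}(\mub)$ but with mean shifted by $\cov_{g_0}(\tilde{\Delta}_{f_0;g_0;\lambda}(\mub),\Delta_{\lambda}(\mub))\taub_\lambda=C_{g_0}^{f_0}(\mub)\taub_\lambda$. Feeding this shifted normal into the quadratic form produces a non-central $\chi^2_d$ with non-centrality $(C_{g_0}^{f_0}(\mub)\taub_\lambda)'V_{g_0}^{f_0}(\mub)^{-1}(C_{g_0}^{f_0}(\mub)\taub_\lambda)$, which, using the symmetry of $C_{g_0}^{f_0}(\mub)$ inherited from the integration-by-parts identities behind \eqref{eq: def: cov matrix f0 g0} and \eqref{eq:Imulambda}, is exactly \eqref{eq: dist_under_alternative unspecified}.

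Part (iii) then follows from the general Le Cam theory of \cite{le2012asymptotic}: because $\tilde{\Delta}_{f_0;g_0;\lambda}^{(n)}$ is the residual of the skewness score after projecting out the location tangent directions, the test $\phi^{\ast (n)}_{f_0}$ attains the optimal local power bound along the least-favourable direction and is therefore locally and asymptotically maximin at level $\alpha$. The step I expect to be genuinely delicate is not any of the above assembly but the asymptotic-linearity identity under the true density $g_0$ highlighted in the remark following Proposition~\ref{prop: unspecified_median_esimates}, namely $\Delta_{f_0;\mu}^{(n)}(\mub^{(n)})-\Delta_{f_0;\mu}^{(n)}(\mub)=-C^{f_0;g_0}_{\mu;\mu}\taub_\mu+o_P(1)\one_{d\times1}$: here the expectation is taken under $g_0$ while the scores are built from the misspecified $f_0$, so the usual ULAN linearity (which holds under $f_0$) must be re-derived with the cross-information matrix $C^{f_0;g_0}_{\mu;\mu}$, and it is precisely this mismatch between the working density and the true density that propagates, through Propositions~\ref{prop: estimate quantities} and~\ref{prop: unspecified_median_esimates}, into every part of the argument.
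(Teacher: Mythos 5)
Your proposal follows the paper's own proof essentially step for step: part (i) via the CLT for $\tilde{\Delta}_{f_0;g_0;\lambda}^{(n)}(\mub)$ combined with Proposition~\ref{prop: unspecified_median_esimates} and Slutsky, part (ii) via joint asymptotic normality of the efficient central sequence and the ULAN log-likelihood ratio together with contiguity and the Third Le Cam Lemma (giving the mean shift $C_{g_0}^{f_0}(\mub)\taub_\lambda$ and hence the stated non-centrality), and part (iii) by reducing $Q_{f_0}^{\ast(n)}$ to the parametric statistic up to $o_P(1)$ and invoking the parametric test's maximin optimality. The only cosmetic deviation is that you set $\taub_\mu=\zerob$ in the ULAN expansion, whereas the paper keeps a general $\taub_\mu$ and uses the orthogonality $\cov_{g_0}\bigl(\tilde{\Delta}_{f_0;g_0;\lambda}(\mub),\Delta_{g_0;\mu}(\mub)\bigr)=\zerob$ to annihilate its contribution to the covariance with $\Lambda$; both suffice for the alternative as stated.
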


By point (ii), we can write down the explicit asymptotic power of the test against the local alternatives $\cup_{\mu \ \in [-\pi,\pi)^d} P^{(n)}_{\mub, n^{-1/2} \boldsymbol{\tau}^{(n)}_\lambda; f_0}$ as $n \rightarrow \infty$ under the form $\Prob\Bigl( Q \geq \chi^2_{d;\alpha} \Bigr)$ for $Q \sim \chi^2_d \Bigl( \boldsymbol{\tau}'_\lambda C_{g_0}^{f_0}(\mub) V_{g_0}^{f_0}(\mub)^{-1} C_{g_0}^{f_0}(\mub) \boldsymbol{\tau}_\lambda \Bigr)$.
As a special case, when $d=1$, we retrieve the results of \cite{ameijeiras-alonso_optimal_2021}.

From our construction, it is clear that we obtain a different test for each distribution $f_0$ it is based on, and that each test is valid under the entire null hypothesis $\mathcal{H}^{(n)}_{0}$. A finite-sample comparison of the performances of these distinct tests is given in the following section. Practitioners have the advantage to choose which form of the test they prefer, based on their own preferences such as simplicity of the density $f_0$, optimality under a certain distribution, etc. We however draw the reader's attention to the fact that singularity issues arise when using the Cosine distribution as $f_0$.
In the one-dimensional case, the von Mises distribution has singular Fisher Information matrix, as noted in \cite{ameijeiras-alonso_optimal_2021}. 
So, our observation is expected, since the Cosine distribution is a submodel of the bivariate von Mises distribution.
However, as our model is robust to the assumption of the underlying distribution, $f_0$, the tests can be built with other distributions, such as the bivariate wrapped Cauchy in the two-dimensional case.
Thus, we do not investigate this issue further.
Interestingly, the test built using the Sine model is invariant to the choice of the value of the parameters.

As with the case of a known symmetry center, we are interested in deriving a bound for the distance (integral probability metric) between $Q_{f_0}^{\ast(n)}$ and its limiting distribution; such a bound quantifies the quality of the approximation in case of finite observations.
The proof of Theorem~\ref{theorem: stein's method unspecified} is based on Stein's Method, and it can be found in Section~\ref{sec: proof steins method unspecified} of the Supplementary Material.

\begin{theorem}\label{theorem: stein's method unspecified}
For $i,j\in\{1,\ldots,d\}$, define  $\gamma_{ij} =  \left( V_{g_0}^{f_0} \left( \mub \right)^{-1} \right)_{ij}$ and $\hat{\gamma}_{ij} =  \left(\hat{V}_{f_0}\left(\mub \right)^{-1} \right)_{ij}$.
    Then, for $h \in C^6_b(\R)$ and working under Assumptions \ref{assumption_on_f0}, \ref{assumption: mu_estimate}, \ref{assumption_on_f0_unspecified_median}, and \ref{assumption: MSE},
    \begin{align} \label{eq: steins method unspecified}
        \Bigl|\E_{g_0} \left[ h\left(Q_{f_0}^{\ast (n)} \right) \right] & - \E_{g_0} \left[ h\left(\chi^2_d\right) \right] \Bigr| \nonumber\\
        \leq
        \frac{C_3}{n} 
        + \lVert h^{(1)} \rVert & \left( \E_{g_0} \left[ \Big\lvert\Big\lvert \left( \nabla_{\mub} Q_{f_0}^{\ast (n)} \right)' \Big\rvert\Big\rvert_2^2 \right] \E_{g_0} \left[ \Big\lVert \hat{\mub}^{(n)} - \mub \Big\rVert_2^2 \right] \right)^{1/2} \nonumber \\
        + \frac{1}{n} \lVert h^{(1)} \rVert & \sum_{i=1}^d \sum_{j=1}^d \left\{ 
        \left( \E_{g_0} \left[ \left( \hat{\gamma}_{ij} \right)^2 \right] \right)^{1/2}
        \left( \E_{g_0} \left[ \left( \sum_{k=1}^n \sum_{\ell=1}^n \left( \Tilde{\Delta}_{ik}^{\ast} \Tilde{\Delta}_{j\ell}^{\ast} - \Tilde{\Delta}_{ik} \Tilde{\Delta}_{j\ell} \right) \right)^2 \right] \right)^{1/2} \right. \nonumber \\
        & \left. + \ \left( \E_{g_0} \left[ \left( \hat{\gamma}_{ij} - \gamma_{ij} \right)^2 \right] \right)^{1/2} 
        \left( \E_{g_0} \left[ \left( \sum_{k=1}^n \sum_{\ell=1}^n \Tilde{\Delta}_{ik} \Tilde{\Delta}_{j\ell} \right)^2 \right] \right)^{1/2} \right\}, 
    \end{align}
where $C_3$ is the same constant as in Theorem~\ref{thm: steins method specified} for $X_{lj} = \begin{pmatrix}
        a_{j1} & \cdots & a_{jd}
    \end{pmatrix}\Tilde{\Delta}_{f_0;g_0;\lambda;l}^{(n)} (\mub)$, 
    $l\in\{1,\ldots,n\}$, 
    $a_{ij} = \left( \left( V_{g_0}^{f_0} \left( \mub \right)^{-1} \right)^{1/2}\right)_{ij}$, $\Tilde{\Delta}_{f_0;g_0;\lambda;l}^{(n)} (\mub)$ is $\Tilde{\Delta}_{f_0;g_0;\lambda}^{(n)} (\mub)$ evaluated at the $l^\text{th}$ observation, 
    and $V_{g_0}^{f_0} \left( \mub \right)$ is as in \eqref{eq: variance}. 
    $\Tilde{\Delta}_{ik}$ and $\Tilde{\Delta}_{ik}^{\ast}$ denote the $i^\text{th}$ component of the vectors $\Tilde{\Delta}_{f_0;g_0;\lambda}^{(n)} (\mub)$ and $\Tilde{\Delta}_{f_0; \lambda}^{(n)\ast} (\mub)$, respectively, evaluated at the $k^\text{th}$ observation, as defined in \eqref{eq: semi_parametric_central_seq_ij}.
    \end{theorem}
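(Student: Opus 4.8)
The plan is to mirror the proof of Theorem~\ref{thm: steins method specified} for the known-centre case, but now with two extra sources of approximation error: the replacement of the true location $\mub$ by the estimator $\hat{\mub}^{(n)}$, and the estimation of the nuisance covariances entering the efficient central sequence and its variance. First I would introduce the idealized statistic
\[
\bar{Q} := \left(\Tilde{\Delta}_{f_0;g_0;\lambda}^{(n)} (\mub)\right)' \left(V_{g_0}^{f_0}(\mub)\right)^{-1} \Tilde{\Delta}_{f_0;g_0;\lambda}^{(n)} (\mub),
\]
built from the exact efficient central sequence and its true variance at the true $\mub$. Writing $Q_{f_0}^{\ast(n)}(\mub)$ for the test statistic with $\hat{\mub}^{(n)}$ replaced by $\mub$, I would decompose the target quantity by the triangle inequality through the two intermediate objects $Q_{f_0}^{\ast(n)}(\mub)$ and $\bar{Q}$, bounding it by the sum of $\lvert\E_{g_0}[h(Q_{f_0}^{\ast(n)}(\hat{\mub}^{(n)}))-h(Q_{f_0}^{\ast(n)}(\mub))]\rvert$, $\lvert\E_{g_0}[h(Q_{f_0}^{\ast(n)}(\mub))-h(\bar{Q})]\rvert$ and $\lvert\E_{g_0}[h(\bar{Q})-h(\chi^2_d)]\rvert$. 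Each summand will produce exactly one of the three terms on the right-hand side of \eqref{eq: steins method unspecified}.

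For the last summand I would observe that, under $P^{(n)}_{\mub,\zerob;g_0}$, the per-observation efficient scores $\Tilde{\Delta}_{f_0;g_0;\lambda;l}^{(n)}(\mub)$ are iid with mean $\zerob$ (by the orthogonal projection defining them) and common covariance $V_{g_0}^{f_0}(\mub)$, so that $\bar{Q}$ is precisely the squared Euclidean norm of the standardized sum $\bigl(V_{g_0}^{f_0}(\mub)\bigr)^{-1/2}\Tilde{\Delta}_{f_0;g_0;\lambda}^{(n)}(\mub)$. This places $\bar{Q}$ in the framework of Theorem~2.4 of \cite{gaunt_rate_2023} for the chi-square approximation of a quadratic form in an iid sum, yielding the $C_3/n$ contribution; the required finiteness of the tenth absolute moment of $X_{lj}$ holds because the sine terms are bounded and, under Assumption~\ref{assumption_on_f0}, the functions $\phi_j^{f_0}$ are continuous over the compact torus and hence bounded, so all moments of the efficient scores are finite.

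The first summand is handled by a mean value argument in $\mub$: since $Q_{f_0}^{\ast(n)}$ depends on $\hat{\mub}^{(n)}$ both through $\Tilde{\Delta}_{f_0;\lambda}^{(n)\ast}$ and through $\hat{V}_{f_0}$, the total-derivative bound $\lvert Q_{f_0}^{\ast(n)}(\hat{\mub}^{(n)})-Q_{f_0}^{\ast(n)}(\mub)\rvert \le \lVert(\nabla_{\mub}Q_{f_0}^{\ast(n)})'\rVert_2\,\lVert\hat{\mub}^{(n)}-\mub\rVert_2$, combined with $\lvert h(x)-h(y)\rvert\le\lVert h^{(1)}\rVert\lvert x-y\rvert$ and the Cauchy--Schwarz inequality, produces the gradient-times-MSE term, with Assumption~\ref{assumption: MSE} ensuring the two expected squares are finite and of the right order. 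For the middle summand I would expand both quadratic forms in components, using $\Tilde{\Delta}_{f_0;\lambda}^{(n)\ast}(\mub)=\tfrac{1}{\sqrt n}\sum_k\Tilde{\Delta}_{\cdot k}^{\ast}$ and likewise for $\Tilde{\Delta}_{f_0;g_0;\lambda}^{(n)}(\mub)$, add and subtract $\hat{\gamma}_{ij}\Tilde{\Delta}_{ik}\Tilde{\Delta}_{j\ell}$ inside the double sum over observations to obtain
\[
Q_{f_0}^{\ast (n)}(\mub) - \bar{Q} = \frac{1}{n}\sum_{i,j}\hat{\gamma}_{ij}\sum_{k,\ell}\left(\Tilde{\Delta}_{ik}^{\ast}\Tilde{\Delta}_{j\ell}^{\ast}-\Tilde{\Delta}_{ik}\Tilde{\Delta}_{j\ell}\right) + \frac{1}{n}\sum_{i,j}\left(\hat{\gamma}_{ij}-\gamma_{ij}\right)\sum_{k,\ell}\Tilde{\Delta}_{ik}\Tilde{\Delta}_{j\ell},
\]
and then apply $\lVert h^{(1)}\rVert$ together with Cauchy--Schwarz to each of the two resulting terms, the first isolating the discrepancy $\Tilde{\Delta}_{ik}^{\ast}$ versus $\Tilde{\Delta}_{ik}$ and the second the discrepancy $\hat{\gamma}_{ij}-\gamma_{ij}$ between $\hat{V}_{f_0}(\mub)^{-1}$ and $V_{g_0}^{f_0}(\mub)^{-1}$.

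The main obstacle I expect lies in the middle summand, namely controlling the inverse-variance discrepancy $\hat{\gamma}_{ij}-\gamma_{ij}$ and the discrepancy $\Tilde{\Delta}_{ik}^{\ast}-\Tilde{\Delta}_{ik}$ quantitatively rather than merely qualitatively. Because $\hat{V}_{f_0}(\hat{\mub}^{(n)})$ is the inverse of an empirical matrix and the efficient central sequence itself carries the estimated nuisance blocks $\widehat{C}_{\mu;\lambda}$ and $\widehat{C}^{f_0}_{\mu;\mu}$, establishing a second-moment rate analogous to Lemma~\ref{lemma: rate of diff of gammas} is delicate and, as in the known-centre case, will only deliver a universal non-explicit constant owing to the Taylor expansion of the matrix inverse. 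The consistency statements of Propositions~\ref{prop: estimate quantities} and~\ref{prop: unspecified_median_esimates} supply the qualitative convergence, but upgrading them to the moment bounds needed here, together with controlling the double-sum second moments $\E_{g_0}\bigl[(\sum_{k,\ell}\Tilde{\Delta}_{ik}\Tilde{\Delta}_{j\ell})^2\bigr]$ uniformly in $n$, is where the bulk of the technical effort resides.
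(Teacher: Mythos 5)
Your proposal is correct and follows essentially the same route as the paper's proof: the same triangle-inequality decomposition through $Q_{f_0}^{\ast(n)}(\mub)$ and the parametric statistic $Q_{f_0;g_0}^{(n)}(\mub)$ (your $\bar Q$), the same application of Theorem~2.4 of \cite{gaunt_rate_2023} for the $C_3/n$ term, the same first-order Taylor/mean-value plus Cauchy--Schwarz treatment of the $\hat{\mub}^{(n)}$-substitution term, and the same add-and-subtract decomposition with Cauchy--Schwarz for the middle term. You also correctly locate the remaining technical work (the second-moment rates for $\hat{\gamma}_{ij}-\gamma_{ij}$, $\Tilde{\Delta}^{\ast}_{ik}-\Tilde{\Delta}_{ik}$ and the double sums) where the paper defers it, namely to Lemma~\ref{lemma: rate of diff of gammas unspecified}, which is not needed for the bound \eqref{eq: steins method unspecified} itself.
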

In order to be able to calculate the order of the bound in \eqref{eq: steins method unspecified}, we need a further assumption on the estimator of the location parameter, given in Assumption~\ref{assumption: MSE} below.
\begin{assumption}\label{assumption: MSE}
    It holds that the mean squared error of $\hat\mu^{(n)}_j$ as an estimator of $\mu_j$ converges to 0 with rate $n^{-1}$, meaning $\E_{g_0} \left[ (\hat{\mu}^{(n)}_j - \mu_j)^2 \right] = O \left( \frac{1}{n} \right)$. 
\end{assumption}
\noindent This assumption is satisfied by various estimators. 
Namely, it is satisfied by any MLE estimator under some assumptions that guarantee asymptotic normality.
For example, in the circular case, the sample mean is the MLE of the von Mises distribution. Lemma~\ref{lemma: rate of diff of gammas unspecified} below provides upper bounds that are, as well, useful for the discussion on the order of the bound in \eqref{eq: steins method unspecified}. The proof is in Section S1.7 of the online supplement.

\begin{lemma}\label{lemma: rate of diff of gammas unspecified}
For $i,j \in \{1,\ldots,d\}$, $k\in\{1,\ldots,n\}$ and $\Tilde{\Delta}_{ik}$, $\Tilde{\Delta}_{ik}^{\ast}$, $\gamma_{ij}$ and $\hat{\gamma}_{ij}$ as defined in Theorem~\ref{theorem: stein's method unspecified}, it holds that 
    \begin{enumerate}[(i)]
    \item $\E_{g_0} \left[ \Big\lVert \left( \nabla_{\mub} Q_{f_0}^{\ast (n)} \right)' \Big\rVert_2^2 \right] \leq C_4$;
        \item $\E_{g_0} \left[ \left( \sum_{k=1}^n \sum_{\ell=1}^n \Tilde{\Delta}_{ik} \Tilde{\Delta}_{j\ell} \right)^2 \right] \leq n^2C_5$;
        \item $\E_{g_0} \left[ \left( \sum_{k=1}^n \sum_{\ell=1}^n \left( \Tilde{\Delta}_{ik}^{\ast} \Tilde{\Delta}_{j\ell}^{\ast} - \Tilde{\Delta}_{ik} \Tilde{\Delta}_{j\ell} \right) \right)^2 \right] \leq nC_6$;
        \item $\E_{g_0} \left[ \left( \hat{\gamma}_{ij} - \gamma_{ij} \right)^2 \right] \leq C_7/n$;
        \item $\E_{g_0} \left[ \left( \hat{\gamma}_{ij} \right)^2 \right] \leq C_8$,
    \end{enumerate} 
    for a large enough constant $C_4$ as given in \eqref{eq: bound for grad Q} and universal constants $C_5, C_6, C_7, C_8$, which depend on $f_0$ and $g_0$.   
\end{lemma}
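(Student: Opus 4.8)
The plan is to prove the five bounds separately while exploiting a structure common to all of them: each central-sequence vector is a normalized sum $\frac{1}{\sqrt n}\sum_{k=1}^n$ of i.i.d., mean-zero (under $g_0$) summands built from $\sin(\theta_{jk}-\mu_j)$ (bounded by $1$) and the location scores $\phi^{f_0}_j(\thetab_k-\mub)$, which on the compact torus possess finite moments of every order we need by the regularity in Assumptions~\ref{assumption_on_f0} and~\ref{assumption_on_f0_unspecified_median}. I would first record the elementary estimate that for i.i.d.\ mean-zero $X_1,\dots,X_n$ with finite fourth moment, $\E_{g_0}\big[(\tfrac{1}{\sqrt n}\sum_k X_k)^4\big]=\tfrac1n\E[X_1^4]+3\tfrac{n-1}{n}(\E[X_1^2])^2\le C$ uniformly in $n$; applied coordinatewise this gives bounded fourth moments for the components of $\Tilde{\Delta}_{f_0;g_0;\lambda}^{(n)}(\mub)$. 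For (ii) I would then use the factorization $\sum_{k=1}^n\sum_{\ell=1}^n \Tilde{\Delta}_{ik}\Tilde{\Delta}_{j\ell}=\big(\sum_k\Tilde{\Delta}_{ik}\big)\big(\sum_\ell\Tilde{\Delta}_{j\ell}\big)=n\,(\Tilde{\Delta}^{(n)})_i(\Tilde{\Delta}^{(n)})_j$, where $(\Tilde{\Delta}^{(n)})_i$ is the $i$-th component of $\Tilde{\Delta}_{f_0;g_0;\lambda}^{(n)}(\mub)$; squaring, taking expectations, and one Cauchy--Schwarz step combined with the fourth-moment bound delivers $n^2C_5$.

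For (iii), denote the $k$-th location-score summand by $\Delta_{f_0;\mu;k}$, so $\Delta_{f_0;\mu}^{(n)}=\frac{1}{\sqrt n}\sum_k\Delta_{f_0;\mu;k}$, and write the perturbation $\Tilde{\Delta}^{\ast}_{ik}-\Tilde{\Delta}_{ik}=-\,B_{i\cdot}\,\Delta_{f_0;\mu;k}$, where $B:=\widehat{C}_{\mu;\lambda}(\widehat{C}^{f_0}_{\mu;\mu})^{-1}-C^{g_0}_{\mu;\lambda}(C^{f_0;g_0}_{\mu;\mu})^{-1}$ collects the difference between the empirical and population coefficient matrices. Expanding $\Tilde{\Delta}^{\ast}_{ik}\Tilde{\Delta}^{\ast}_{j\ell}-\Tilde{\Delta}_{ik}\Tilde{\Delta}_{j\ell}$ into two terms linear in $B$ and one quadratic in $B$, and summing over $k,\ell$, every term factorizes into a product of single sums, so the double sum equals
\[
-n\big(B_{i\cdot}\Delta_{f_0;\mu}^{(n)}\big)(\Tilde{\Delta}^{(n)})_j-n(\Tilde{\Delta}^{(n)})_i\big(B_{j\cdot}\Delta_{f_0;\mu}^{(n)}\big)+n\big(B_{i\cdot}\Delta_{f_0;\mu}^{(n)}\big)\big(B_{j\cdot}\Delta_{f_0;\mu}^{(n)}\big).
\]
Since $\E_{g_0}[\lVert B\rVert^2]=O(1/n)$ (the empirical covariances converge at the parametric rate), while $\Delta_{f_0;\mu}^{(n)}$ and $\Tilde{\Delta}^{(n)}$ have bounded second and fourth moments, repeated Cauchy--Schwarz shows the first two terms are $O(\sqrt n)$ and the last is $O(1)$ in $L^2$; squaring and taking expectations yields the $nC_6$ bound.

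For (iv) and (v) I would follow the inverse-expansion route used for Lemma~\ref{lemma: rate of diff of gammas}, writing $\hat V_{f_0}(\mub)^{-1}-V_{g_0}^{f_0}(\mub)^{-1}=-\,\hat V_{f_0}(\mub)^{-1}\big(\hat V_{f_0}(\mub)-V_{g_0}^{f_0}(\mub)\big)V_{g_0}^{f_0}(\mub)^{-1}$ and noting that each entry of $\hat V_{f_0}(\mub)-V_{g_0}^{f_0}(\mub)$ is a centered sample mean of variance $O(1/n)$; controlling the factor $\hat V_{f_0}(\mub)^{-1}$ in mean square via the determinant/adjugate expansion referenced in~\eqref{eq: steins_method_taylor_expansion_determinant} gives (iv) with a universal (non-explicit) constant $C_7$, and the same argument bounds $\E_{g_0}[(\hat\gamma_{ij})^2]$ by a constant for (v). Finally, (i) follows from an explicit chain-rule differentiation of $Q_{f_0}^{\ast(n)}=(\Tilde{\Delta}_{f_0;\lambda}^{(n)\ast})'\hat V_{f_0}^{-1}\Tilde{\Delta}_{f_0;\lambda}^{(n)\ast}$ in $\mub$: differentiating $\Tilde{\Delta}^{\ast}$ (which brings down $\frac{\partial}{\partial\theta}\phi^{f_0}$, finite a.e.\ by Assumption~\ref{assumption_on_f0_unspecified_median}) and $\hat V^{-1}$, then bounding the second moment of each resulting term, producing the constant $C_4$ of~\eqref{eq: bound for grad Q}.

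The main obstacle is the mean-square control of the inverse empirical matrices entering (iii), (iv) and (v). Unlike convergence in probability, an $L^2$ bound on $\hat V_{f_0}(\mub)^{-1}$ is delicate because the inverse is unbounded near singular matrices; the expansion isolates a leading term of the correct order but leaves remainder terms containing $\hat V_{f_0}(\mub)^{-1}$ itself, which cannot be bounded deterministically. This is precisely what forces the non-explicit universal constants $C_6,C_7,C_8$ and mirrors the phenomenon already flagged after Lemma~\ref{lemma: rate of diff of gammas}; handling it rigorously requires splitting on a high-probability event on which $\hat V_{f_0}(\mub)$ stays uniformly positive definite and separately bounding the small-probability complement.
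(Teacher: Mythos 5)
Your overall architecture matches the paper's: (i) and (v) are handled identically, and for (ii)--(iv) you use the same machinery of moment bounds, Cauchy--Schwarz, and adjugate/determinant control of empirical inverse matrices. Your factorization of the double sums, $\sum_{k,\ell}\Tilde{\Delta}_{ik}\Tilde{\Delta}_{j\ell}=n\,(\Tilde{\Delta}^{(n)})_i(\Tilde{\Delta}^{(n)})_j$, is a cleaner packaging than the paper's term-by-term index-counting expansions (Lemma~\ref{lemma: used_for_unspecified_steins}(i)--(ii)) and is valid since the sums do factor; likewise, your suggestion to split on a high-probability event where $\hat{V}_{f_0}(\mub)$ is uniformly positive definite is a sound (arguably more rigorous) alternative to the paper's Taylor expansion of $1/\hat{d}^{v}$ with its $o(|\hat{d}^{v}-d^{v}|)$ remainder inside expectations.

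There is, however, a genuine gap in your treatment of (iv). You assert that each entry of $\hat{V}_{f_0}(\mub)-V^{f_0}_{g_0}(\mub)$ is ``a centered sample mean of variance $O(1/n)$''. It is not: $\hat{V}_{f_0}(\mub)$ is built from the starred residuals $\Tilde{\Delta}^{\ast}_{ik}$, which carry the random coefficients $\Tilde{I}_{\mu_i\lambda_i}\Tilde{\beta}_{i\ell}$ common to all observations $k$. The summands are therefore dependent across $k$, and they are biased, with $\E_{g_0}\big[\Tilde{\Delta}^{\ast}_{ik}\Tilde{\Delta}^{\ast}_{jk}\big]-\E_{g_0}\big[\Tilde{\Delta}_{i1}\Tilde{\Delta}_{j1}\big]=O(n^{-1/2})$. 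This is exactly the point the paper isolates in Lemma~\ref{lemma: used_for_unspecified_steins}(vii) and explicitly flags as the biggest difference from the specified-center case of Theorem~\ref{thm: steins method specified}: the determinant expansion must be carried through with this $O(n^{-1/2})$ perturbation of the means, and the rate $O(1/n)$ survives only because the squared bias is again $O(1/n)$. As written, your argument for (iv) would be correct only in the specified-center setting where the empirical Fisher information entries are genuine i.i.d.\ averages with the right mean.

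Two smaller repairs are also needed. In (iii), your Cauchy--Schwarz chain requires fourth-moment control of the coefficient difference, $\E_{g_0}\big[\lVert B\rVert^4\big]=O(n^{-2})$ (the paper's Lemma~\ref{lemma: used_for_unspecified_steins}(iv) and (vi)): the second-moment rate $\E_{g_0}[\lVert B\rVert^2]=O(1/n)$ you invoke cannot bound $\E_{g_0}\big[(B_{i\cdot}\Delta^{(n)}_{f_0;\mu})^2(\Tilde{\Delta}^{(n)})_j^2\big]$, since both factors are random and a further Cauchy--Schwarz produces $\big(\E_{g_0}[\lVert B\rVert^4]\big)^{1/2}$. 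This is provable by the same determinant arguments, but it must be stated, as the fourth-moment rate is not a formal consequence of the second-moment one. Finally, a notational slip: $B$ should be formed from $\tilde{C}^{f_0}_{\mu;\mu}$ and $\tilde{C}_{\mu;\lambda}$, the empirical matrices evaluated at the true $\mub$, not $\widehat{C}^{f_0}_{\mu;\mu}$ at $\hat{\mub}^{(n)}$; the effect of estimating $\mub$ was already extracted in the Taylor step yielding \eqref{eq: rate estimated mu statistic}, and all quantities in the present lemma live at the true center.
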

\noindent Similarly to Lemma~\ref{lemma: rate of diff of gammas}, the constants $C_5, C_6, C_7, C_8$ cannot be calculated explicitly due to multiplying with inverses of matrices in the expressions of the efficient central sequence and the test statistic.

Under Assumption~\ref{assumption: MSE}, $\E_{g_0} \left[ \Big\lVert \hat{\mub}^{(n)} - \mub \Big\rVert_2^2 \right] = O \left( \frac{1}{n} \right)$ and based on the results from Lemma~\ref{lemma: rate of diff of gammas unspecified}, straightforward calculations yield that the order of the bound in \eqref{eq: steins method unspecified} is $O\left( \frac{1}{\sqrt{n}} \right)$.

\section{Simulation results} \label{sec: simulations}
In this section we present the simulation results for our new tests, $\phi^{\ast (n); \mu}$ for the specified symmetry center case, and $\phi^{\ast (n)}_{f_0}$ for the unspecified symmetry center case.
We will consider various distinct settings, and for each setting we conduct a simulation study with 1000 replications.
The Monte Carlo estimates of the probability of rejection of the respective null hypotheses ($\mathcal{H}^{(n)}_{0; \mu}$ and $\mathcal{H}^{(n)}_{0}$) at the $5\%$ level of significance are presented.
For $d=2$, the distributions used to generate data are the Sine, Cosine and bivariate wrapped Cauchy distributions, denoted by $S_{\kappa_1;\kappa_2;\rho}$, $C_{\kappa_1;\kappa_2;\rho}$, and $BWC_{\xi_1;\xi_2;\Tilde{\rho}}$, respectively, where $\kappa_1, \kappa_2 \geq 0$, $\xi_1,\xi_2\in[0,1)$ are concentration parameters and $\rho\in\R$, $\Tilde{\rho}\in(-1,1)$ are dependence parameters.
For the Sine and Cosine models the parameters are chosen to satisfy the conditions of Theorems 3 and 4 of \cite{mardia_protein_2007} such that the distributions are unimodal.
For $d=3$, the trivariate wrapped Cauchy (TWC) copula \cite{kato2024versatiletrivariatewrappedcauchy} is used with marginal distributions chosen to be the wrapped Cauchy distribution. 
This distribution is denoted by $TWC_{\rho_{12};\rho_{13};\rho_{23}}^ {\beta_1; \beta_2; \beta_3}$, where $\rho_{12}, \rho_{13}, \rho_{23}\in\R\setminus\{0\}$ are the copula parameters satisfying Equation (8) in \cite{kato2024versatiletrivariatewrappedcauchy} and $\beta_1, \beta_2, \beta_3>0$ are the parameters for the marginal distributions.
For any dimension $d$, data can be generated by assuming independence between the marginals or using the multivariate non-negative trigonometric sums model (MNNTS) introduced in \cite{fernandez-duran_modeling_2014}.
For the independent model, the wrapped Cauchy distribution is used and the model is denoted by $I_{\beta_1; \ldots; \beta_d}$ where $\beta_1,\ldots,\beta_d > 0 $ are the marginal concentration parameters.
Data of dimension up to $d=20$ are generated using this model.
The MNNTS model is a general toroidal model
for which, in order to ensure that the data are generated from a symmetric, unimodal distribution for any $d$, we set $M_1=\cdots=M_d=1$ (as defined in Equation (7) of \cite{fernandez-duran_modeling_2014}) and the parameters are chosen to be equal real numbers, and more specifically, equal to $1/(4\pi)^{d/2}$.
To simplify the notation, we simply denote this by MNNTS$_d$.
Due to the high computational complexity of generating data from MNNTS in higher dimensions, the model is used for $d \leq 6$.

Considering firstly the symmetry specified case, the simulation results for our test $\phi^{\ast (n); \mu}$ are indicated in Tables~\ref{tab: simulations_2_known}, \ref{tab: simulations_3_known} and \ref{tab: simulations_456_known}  for dimensions $d=2, d=3$ and $d=5,10,20$, respectively.
Further simulation results are provided in Section~\ref{sec: supp_simulations} of the Supplementary Material.
More specifically, Tables~\ref{tab: simulations_2_known_supp}, \ref{tab: simulations_3_known_supp} and \ref{tab: simulations_456_known_supp} contain results for $d=2, d=3$ and $d=4,6$, respectively.
As expected, the Type I error of the test is around $5\%$ for $\lambdab = \boldsymbol{0}$ and for $\lambdab \neq \boldsymbol{0}$, the power increases as either the sample size or $||\lambdab||$ increase.
Similar conclusions hold for higher dimensions, which we however do not show here for the sake of presentation. We remark that from dimension 10 on, there are some empty entries in Table~\ref{tab: simulations_456_known} due to the fact that the sum of the components of the skewness parameter exceeds 1, which is not permitted for the distribution to exist.

Turning our attention now to the tests $\phi^{\ast (n)}_{f_0}$ when the symmetry center is not specified, we need to provide some notational explanations. The data are generated from the distribution $g_0$, which can be found on the top of each table, and the test is built using the distribution $f_0$ which is presented on the first column of each table.
Due to the singularity issues arising for the Cosine model, it is not used in the simulations for the unspecified symmetry center case. 
{
Whenever possible, we provide results for the test built using as $f_0$ the family of $g_0$, with both true parameter values and other values.
In the case of data generated from the Sine distribution, we only report the results obtained using one set of parameters, as the test is invariant to that choice.}
Table~\ref{tab: simulations_2_unknown_sine} contains results for $d=2$, while Tables~\ref{tab: simulations_3_unknown}, \ref{tab: simulations_3_unknown_twcc} concern dimension $d=3$.
Further simulation results, including higher dimensions, can be found in Section~\ref{sec: supp_simulations} of the Supplementary Material. 
More specifically, simulations for $d=2$ can be found in Tables~\ref{tab: simulations_2_unknown_ind}, \ref{tab: simulations_2_unknown_bwc} and \ref{tab: simulations_2_unknown_mnnts} for different distributions $g_0$. 
Tables~\ref{tab: simulations_3_unknown_iid} and \ref{tab: simulations_3_unknown_supp} concern $d=3$ while Tables~\ref{tab: simulations_456_unknown_mnnts} and \ref{tab: simulations_456_unknown_ind} contain results for $d=4,5,6$ and $d=4,5,6,10,20$, respectively.

Similarly to the symmetry-specified situation, the tests satisfy the significance level and their power increases with the degree of skewness and sample size. 
However, here a larger sample size is required to see clearly that the power converges to 1.
This is expected as the median direction is not known and needs to be calculated.
It can be observed that building tests from some distributions results in slower convergence of the power, see for example Tables~\ref{tab: simulations_2_unknown_sine} and Tables~\ref{tab: simulations_2_unknown_bwc}, \ref{tab: simulations_2_unknown_mnnts} in the Supplementary Material.
The simulation results show that the test has higher power when it is built using the distribution $g_0$ from which data are generated, which confirms Theorem~\ref{thm: optimality unspecified median}, see for example Table~\ref{tab: simulations_3_unknown_twcc} and Table~\ref{tab: simulations_2_unknown_bwc} in the Supplementary Material.

After these extensive and general observations, we now investigate certain settings in more detail. To this end, Figure~\ref{fig: sim_twcc} shows plots of the Monte Carlo estimates of the probability of rejection of three different null hypotheses under specified symmetry center as the sample size increases. 
The blue line represents the power of the test for testing $\mathcal{H}^{(n)}_{0; \mu}: \lambdab = \boldsymbol{0}$, while
the black, red and green lines represent the power of the test for testing for symmetry on each of the directions separately, meaning $\mathcal{H}^{(n)}_{0; \mu;i}: \lambda_i=0$ for $i=1,2,3$. 
The data are generated from the TWC, with parameters $\rho_{12}=\rho_{23}=1, \rho_{13}=0.25$ and wrapped Cauchy marginal distributions with parameters $\beta_1=0.1, \beta_2=0.2, \beta_3=0.3$.
The three components are not independent, which explains why in the left plot there is some power of the test corresponding to $\lambda_3$, even though $\lambda_3 = 0$.
In the first two plots, it is clear that the power of the test when testing all three components at the same time is higher than the power of the other tests.
In the last plot, $\lambda_1$ is the only non-zero skewness parameter and thus the power of the test only for $\lambda_1$ is the most powerful.
However, the power of the test for all three components is only slightly less powerful.
These plots provide evidence in favor of using a test for symmetry on all the dimensions of the data at hand, instead of testing for symmetry for each component separately.

Figure~\ref{fig: theor_power_3d} is a plot of the theoretical power of the test, as given in Theorems~\ref{thm: optimality specified median} and \ref{thm: optimality unspecified median}, in light blue color, and the simulated power, as obtained by Monte Carlo simulations, in red color.
For these plots we consider only $d=2$ and denote $\taub_\lambda = (\tau_3, \tau_4)'$.
For the simulations, 1.000 repetitions were used for each value of $\lambdab = \taub/\sqrt{n}$, and the sample size for each replication was $n=10.000$.
In Figure~\ref{fig: theor_power_3d}(a), the results for the test for the specified symmetry center are plotted, with data being generated from the BWC distribution with parameters $\xi_1 = 0.1, \xi_2=0.5$ and $\Tilde{\rho} = 0.4$.
We observe that the theoretical and estimated powers are almost identical.
In Figures~\ref{fig: theor_power_3d}(b)-(d), the test for unspecified symmetry center is considered.
In (b), the true and assumed densities $f_0$ and $g_0$ are chosen to be the same while, for (c) and (d), they are different.
For (b), the distribution is chosen as the BWC with parameters $\xi_1 = 0.9, \xi_2=0.9$ and $\Tilde{\rho} = 0.4$.
For small values of the skewness parameter the simulated power is lower than the theoretical one, as expected.
For (c), the data are generated from the BWC with $\xi_1 = 0.3, \xi_2=0.3$ and $\rho = 0.4$ while the test is built using the same distribution with $\xi_1 = 0.8, \xi_2=0.8$ and $\Tilde{\rho} = 0.6$.
Finally, for (d), the data are generated from the Sine model with $\kappa_1 = 0.1, \kappa_2=0.1$ and $\rho = 0.1$ and the test is built using the Sine model with $\kappa_1 = 0.2, \kappa_2=0.2$ and $\rho = 0.4$.
The simulated powers of the test in (c) and (d) are also lower than the theoretical ones.
This can be improved by considering a larger $n$, which we did not attempt due to the computational complexity of the simulations. 
It is important to note that the high computational complexity refers to the resources needed to generate data from the underlying distributions, and not to the calculation of the test statistic, which can always be evaluated within seconds.
We note that the difference between theoretical and practical power is the smallest when $f_0=g_0$.

\begin{table}[tbp]\centering
\caption{Percentage of rejections for $\phi^{\ast (n); \mu}$ when $d=2$. The data are generated using the model mentioned, $n$ represents the sample size and $\lambdab$ the value of the skewness parameter.\label{tab: simulations_2_known}}
\begin{tabular}{@{}rrrrrrrr@{}} 
\toprule
& \multicolumn{1}{c}{$\lambdab$} & {$(0,0)$} & {$(0.1,0)$} &
{$(0.1,0.1)$} & {$(0.2,0.1)$} & {$(0.2,0.2)$}\\ 
{Model} & \multicolumn{1}{c}{$n$} & \\ \midrule
& 200 & 0.058 & 0.118 & 0.225 & 0.488 & 0.717 \\
$I_{0.1;0.1}$ & 500 & 0.056 & 0.253 & 0.501 & 0.893 & 0.985 \\
& 1000 & 0.042 & 0.492 & 0.804 & 1.000 & 1.000 \\
\\
& 200 & 0.051 & 0.114 & 0.202 & 0.448 & 0.720 \\
$S_{1; 1; 0.1}$ & 500 & 0.043 & 0.280 & 0.472 & 0.886 & 0.977 \\
& 1000 & 0.045 & 0.484 & 0.808 & 0.993 & 1.000 \\
\\
& 200 & 0.047 & 0.118 & 0.194 & 0.456 & 0.710 \\
$C_{1; 1; 0.1}$ & 500 & 0.046 & 0.269 & 0.459 & 0.870 & 0.975 \\
& 1000 & 0.047 & 0.456 & 0.808 & 0.996 & 1.000 \\
\\
& 200 & 0.048 & 0.121 & 0.242 & 0.569 & 0.780 \\
$BWC_{0.1; 0.5; 0.3}$ & 500 & 0.045 & 0.292 & 0.551 & 0.923 & 0.992 \\
& 1000 & 0.047 & 0.494 & 0.857 & 1.000 & 1.000 \\
\\
& 200 & 0.044 & 0.144 & 0.224 & 0.512 & 0.730 \\
$MNNTS_2$ & 500 & 0.051 & 0.268 & 0.521 & 0.895 & 0.984
\\
& 1000 & 0.048 & 0.502 & 0.829 & 0.996 & 1.000 \\
\bottomrule
\end{tabular}
\end{table}

\begin{table}[tbp]\centering
\caption{Percentage of rejections for $\phi^{\ast (n); \mu}$ when $d=3$. The data are generated using the model mentioned, $n$ represents the sample size and $\lambdab$ the value of the skewness parameter.\label{tab: simulations_3_known}}
\begin{tabular}{@{}rrrrrrrr@{}} 
\toprule
& \multicolumn{1}{c}{$\lambdab$} & {$(0,0,0)$} & {$(0.1,0,0)$} &
{$(0.2,0.1,0)$} & {$(0.2,0.2,0.2)$} \\ 
{Model} & \multicolumn{1}{c}{$n$} & \\ \midrule
& 200 & 0.041 & 0.109 & 0.414 & 0.821\\
$I_{0.1; 0.1; 0.1}$ & 500 & 0.054 & 0.238 & 0.854 & 0.999\\
& 1000 & 0.063 & 0.451 & 0.994 & 1.000 \\
\\
& 200 & 0.066 & 0.116 & 0.440 & 0.834 \\
$TWC_{5; 2; 0.1}^{0.1; 0.1; 0.1}$ & 500 & 0.048 & 0.229 & 0.852 & 0.998 \\
& 1000 & 0.047 & 0.448 & 0.992 & 1.000 \\
\\
& 200 & 0.070 & 0.116 & 0.440 & 0.861 \\
$MNNTS_3$ & 500 & 0.046 & 0.220 & 0.864 & 1.000 \\
& 1000 & 0.061 & 0.446 & 0.995 & 1.000 \\
\bottomrule
\end{tabular}
\end{table}

\begin{table}[tbp]\centering
\caption{Percentage of rejections for $\phi^{\ast (n); \mu}$ when $d=5,10,20$. The data are generated using the model mentioned, $n$ represents the sample size and $\lambdab$ the value of the skewness parameter. Note that the missing entries correspond to non-allowed settings where the sum of the components of $\lambdab$ would exceed 1.\label{tab: simulations_456_known}}
\begin{tabular}{@{}rrrrrrrr@{}} 
\toprule
& & \multicolumn{1}{c}{$\lambdab$} & {$(0,\ldots,0)$} & {$(0.05,\ldots,0.05)$} &
{$(0.1,\ldots,0.1)$} & {$(0.15,\ldots,0.15)$} \\ 
{Model} & $d$ & \multicolumn{1}{c}{$n$} & \\ \midrule
& & 200 & 0.041 & 0.134 & 0.350 & 0.705 \\
$I_{0.1; \ldots; 0.1}$ & $5$ & 500 & 0.049 & 0.206 & 0.773 & 0.998 \\
& & 1000 & 0.049 & 0.442 & 0.979 & 1.000 \\
\\
& & 200 & 0.047 & 0.080 & 0.212 & 0.516 \\
$I_{0.6; \ldots; 0.6}$ & $5$ & 500 & 0.051 & 0.131 & 0.534 & 0.925 \\
& & 1000 & 0.040 & 0.278 & 0.884 & 0.997 \\
\\
& & 200 & 0.049 & 0.118 & 0.346 & 0.709 \\
$MNNTS_5$ & 5 & 500 & 0.050 & 0.222 & 0.780 & 0.992 \\
& & 1000 & 0.066 & 0.447 & 0.982 & 1.000  \\
\\
& & 200 & 0.051 & 0.147 & 0.484 & - \\
$I_{0.1; \ldots; 0.1}$ & 10 & 500 & 0.047 & 0.321 & 0.937 & - \\
& & 1000 & 0.052 & 0.640 & 1.000 & - \\
\\
& & 200 & 0.040 & 0.089 & 0.316 & - \\
$I_{0.6; \ldots; 0.6}$ & 10 & 500 & 0.044 & 0.197 & 0.773 & - \\
& & 1000 & 0.050 & 0.440 & 0.991 & - \\
\\
& & 200 & 0.049 & 0.162 & - & - \\
$I_{0.1; \ldots; 0.1}$ & 20 & 500 & 0.047 & 0.515 & - & - \\
& & 1000 & 0.055 & 0.866 & - & - \\
& & 1500 & 0.041 & 0.983 & - & - \\
\\
& & 200 & 0.043 & 0.088 & - & - \\
$I_{0.6; \ldots; 0.6}$ & 20 & 500 & 0.046 & 0.284 & - & - \\
& & 1000 & 0.055 & 0.623 & - & - \\
& & 1500 & 0.050 & 0.859 & - & - \\
\bottomrule
\end{tabular}
\end{table}

\begin{table}[tbp]\centering
\caption{Percentage of rejections for $\phi^{\ast (n)}_{f_0}$ when $d=2$. The data are generated using the model $g_0$ and the test statistic is evaluated using the model $f_0$, $n$ represents the sample size and $\lambdab$ the value of the skewness parameter.\label{tab: simulations_2_unknown_sine}}
\begin{tabular}{@{}rrrrrrrr@{}} 
\toprule
\multicolumn{2}{c}{$g_0 = S_{1; 1; 0.7}$} \\
\midrule
& \multicolumn{1}{c}{$\lambdab$} & {$(0,0)$} & {$(0.1,0)$} &
{$(0.1,0.1)$} & {$(0.2,0.1)$} & {$(0.2,0.2)$}\\ 
$f_0$ & \multicolumn{1}{c}{$n$} & \\
\midrule
& 500 & 0.058  & 0.069 & 0.147 & 0.290 & 0.387 \\
$S_{0.5; 0.5; 0.1}$ & 1000 & 0.050  & 0.108 & 0.255 & 0.494 & 0.673 \\
& 5000 & 0.059  & 0.402 & 0.865 & 0.996 & 1.000 \\
\midrule
& \multicolumn{1}{c}{$\lambdab$} & {$(0,0)$} & {$(0.2,0)$} &
{$(0.4,0)$} & {$(0.6,0)$} & {$(0.8,0)$}\\ 
\midrule
& 500 & 0.067 & 0.068 & 0.055 & 0.099 & 0.409 \\
$I_{0.1;0.1}$ & 1000 & 0.045 &  0.061  & 0.069  & 0.139 &  0.690 \\
& 5000 & 0.062  & 0.115  & 0.097 &  0.522  & 1.000 \\
\\
& 500 & 0.049 &  0.043  & 0.061  & 0.123  & 0.254 \\
$BWC_{0.5; 0.5; 0.3}$ & 1000 & 0.048 &  0.069 &  0.118  & 0.217  & 0.454 \\
& 5000 & 0.054 &  0.127 &  0.371 &  0.823  & 0.998 \\
\bottomrule
\end{tabular}
\end{table}

\begin{table}[tbp]\centering
\caption{Percentage of rejections for $\phi^{\ast (n)}_{f_0}$ when $d=3$. The data are generated using the model $g_0$ and the test statistic is evaluated using the model $f_0$, $n$ represents the sample size and $\lambdab$ the value of the skewness parameter.\label{tab: simulations_3_unknown}}
\begin{tabular}{@{}rrrrrrrr@{}} 
\toprule
\multicolumn{2}{c}{$g_0 = MNNTS_3$}\\
\midrule
& \multicolumn{1}{c}{$\lambdab$} & {$(0,0,0)$} & {$(0.1,0,0)$} &
{$(0.2,0.1,0)$} & {$(0.2,0.2,0.2)$} \\ 
$f_0$ & \multicolumn{1}{c}{$n$} & \\ 
\midrule
& 200 & 0.078 & 0.095 & 0.142 & 0.210 \\
$I_{0.1; 0.1; 0.1}$ & 500 & 0.052 & 0.089 & 0.202 & 0.397 \\
& 1000 & 0.058 & 0.109 & 0.324 & 0.646 \\
\\
& 200 & 0.056 & 0.077 & 0.213 & 0.644 \\
$TWC_{1; 1.2; 0.5}^{0.1; 0.2; 0.3}$ & 500 & 0.054 & 0.097 & 0.265 & 0.967 \\
& 1000 & 0.059 & 0.088 & 0.446 & 1.000 \\
\bottomrule
\end{tabular}
\end{table}

\begin{table}[tbp]\centering
\caption{Percentage of rejections for $\phi^{\ast (n)}_{f_0}$ when $d=3$. The data are generated using the model $g_0$ and the test statistic is evaluated using the model $f_0$,  $n$ represents the sample size and $\lambdab$ the value of the skewness parameter.\label{tab: simulations_3_unknown_twcc}}
\begin{tabular}{@{}rrrrrrrr@{}} 
\toprule
\multicolumn{2}{c}{$g_0 = TWC_{5; 2; 0.1}^{0.1; 0.1; 0.1}$}\\
\midrule
& \multicolumn{1}{c}{$\lambdab$} & {$(0,0,0)$} & {$(0.1,0,0)$} &
{$(0.1,0.1,0)$} & {$(0.1,0.1,0.1)$} \\ 
$f_0$ & \multicolumn{1}{c}{$n$} & \\ 
\midrule
& 500 & 0.042 & 0.232 & 0.440 & 0.596 \\
$I_{0.1; 0.1; 0.1}$ & 1000 & 0.052 & 0.436 & 0.742 & 0.906 \\
& 5000 & 0.056 & 0.982 & 1.000 & 1.000 \\
\\
& 500 & 0.062 & 0.226 & 0.405 & 0.530 \\
$I_{0.1; 0.2; 0.3}$ & 1000 & 0.064 & 0.424 & 0.720 & 0.860 \\
& 5000 & 0.061 & 0.990 & 1.000 & 1.000 \\
\\
& 500 & 0.035 & 0.201 & 0.384 & 0.585 \\
$TWC_{5; 2; 0.1}^{0.1; 0.1; 0.1}$ & 1000 & 0.052 & 0.389 & 0.746 & 0.896 \\
& 5000 & 0.047 & 0.984 & 1.000 & 1.000 \\
\\
& 500 & 0.044 & 0.171 & 0.248 & 0.499 \\
$TWC_{1; 1.2; 0.5}^{0.1; 0.2; 0.3}$ & 1000 & 0.051 & 0.228 & 0.429 & 0.785 \\
& 5000 & 0.040 & 0.461 & 0.899 & 1.000 \\
\bottomrule
\end{tabular}
\end{table}

\begin{figure}[!tbp]
\begin{minipage}{4.5cm}
    \centering
\includegraphics[scale=0.55]{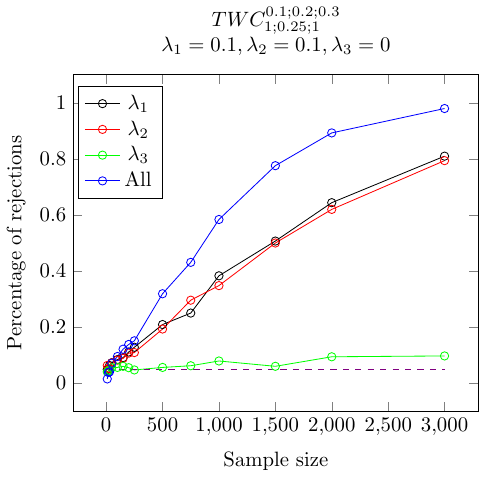}
\end{minipage}%
\begin{minipage}{4.5cm}
    \centering
\includegraphics[scale=0.55]{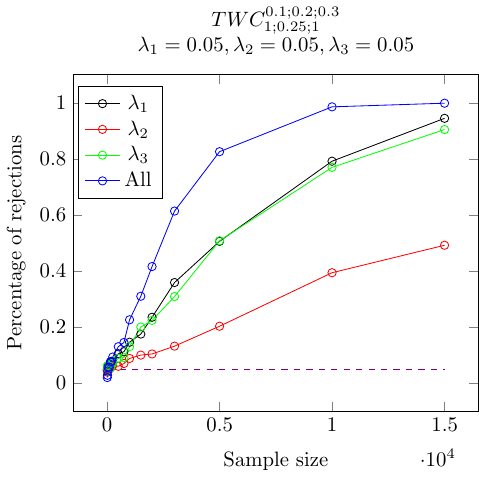}
\end{minipage}%
\begin{minipage}{4.5cm}
    \centering
\includegraphics[scale=0.55]{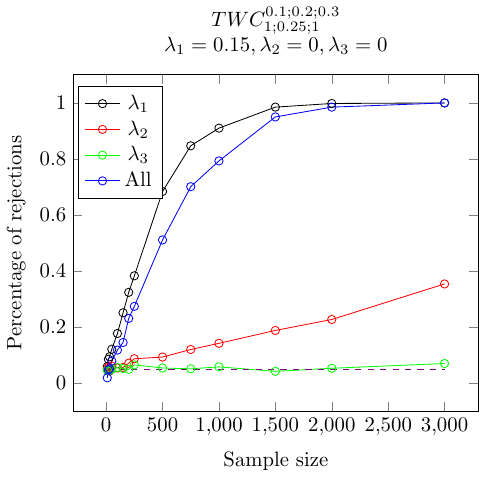}
\end{minipage}

\caption{\label{fig: sim_twcc} Results of Monte Carlo estimates of percentage of rejection of different null hypotheses of specified center symmetry, with increasing sample size and $f_0$ being the TWC with parameters $\rho_{12}=\rho_{23}=1, \rho_{13}=0.25$ and wrapped Cauchy marginal distributions with parameters $\beta_1=0.1, \beta_2=0.2, \beta_3=0.3$. The black line corresponds to the test $\mathcal{H}^{(n)}_{0; \mu;1}:\lambda_1=0 \text{ vs } \mathcal{H}^{(n)}_{1; \mu;1}:\lambda_1\neq0$, the red and green lines correspond to the analogous tests for $\lambda_2$ and $\lambda_3$ and the blue line corresponds to the test $\mathcal{H}^{(n)}_{0; \mu}:\boldsymbol{\lambda} = 0 \text{ vs } \mathcal{H}^{(n)}_{1; \mu}:\boldsymbol{\lambda} \neq 0$. The purple dotted line indicates 0.05. }
\end{figure}

\setlength\tabcolsep{0.03cm}
\begin{figure}[tbp]
	\begin{tabular}{@{}cccc@{}}
		\centering 
		\includegraphics[scale = 0.4]{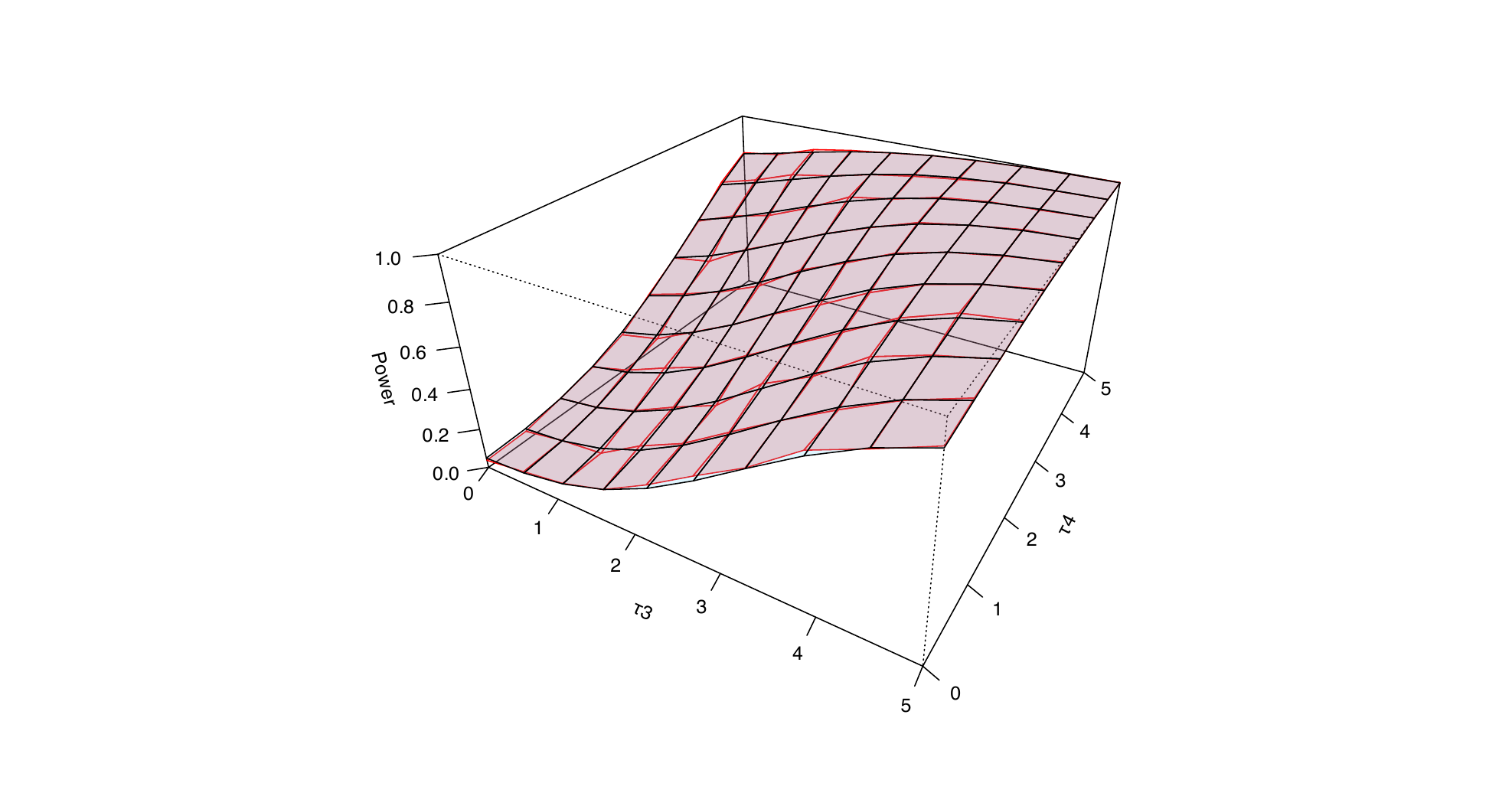}&
   \includegraphics[scale = 0.45]{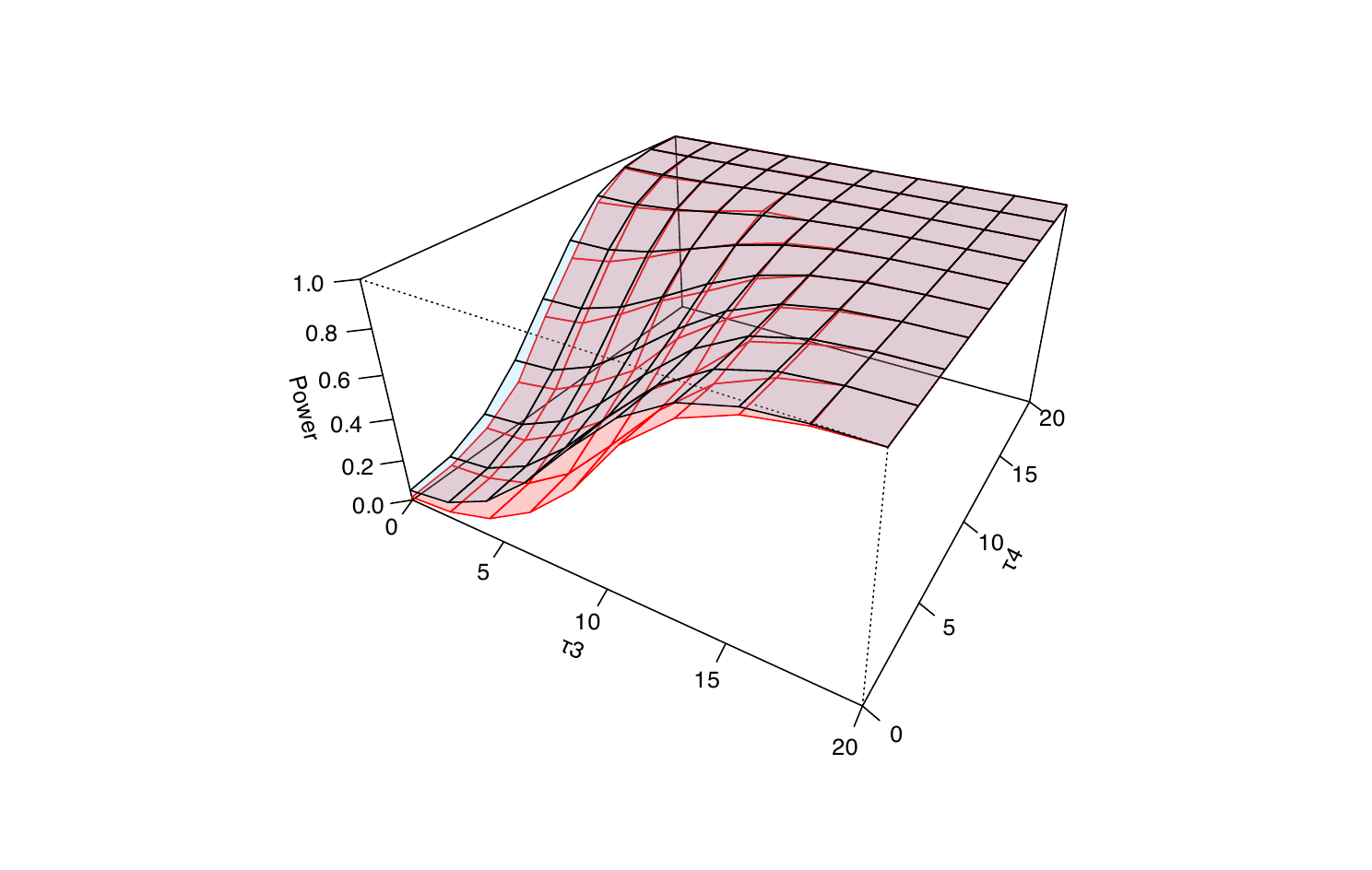} \\
		\hspace{0.3cm} (a) & \hspace{0.45cm}(b) \vspace{0.2cm}\\ 
		\includegraphics[scale = 0.45]{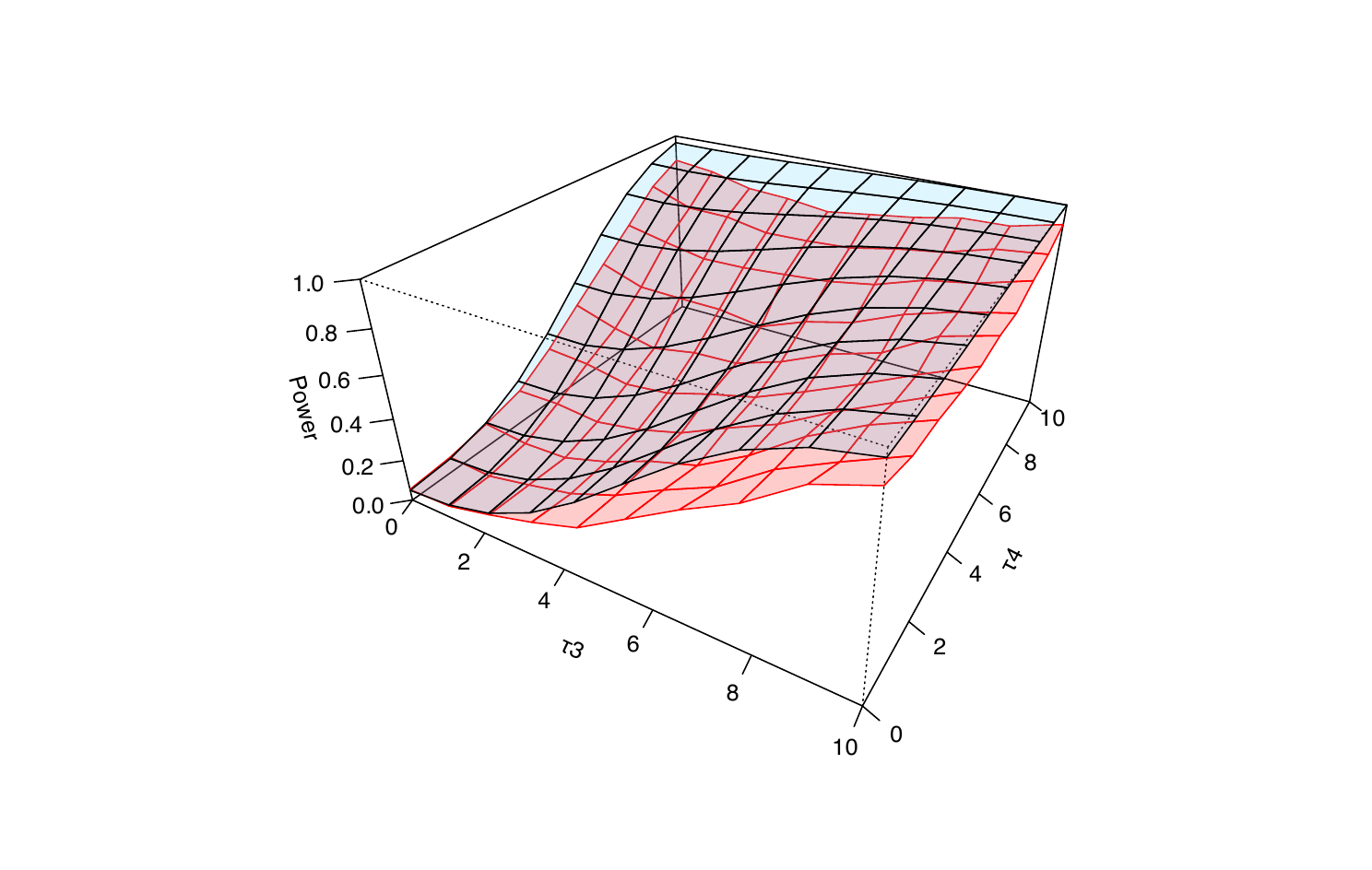} &
		\includegraphics[scale = 0.45]{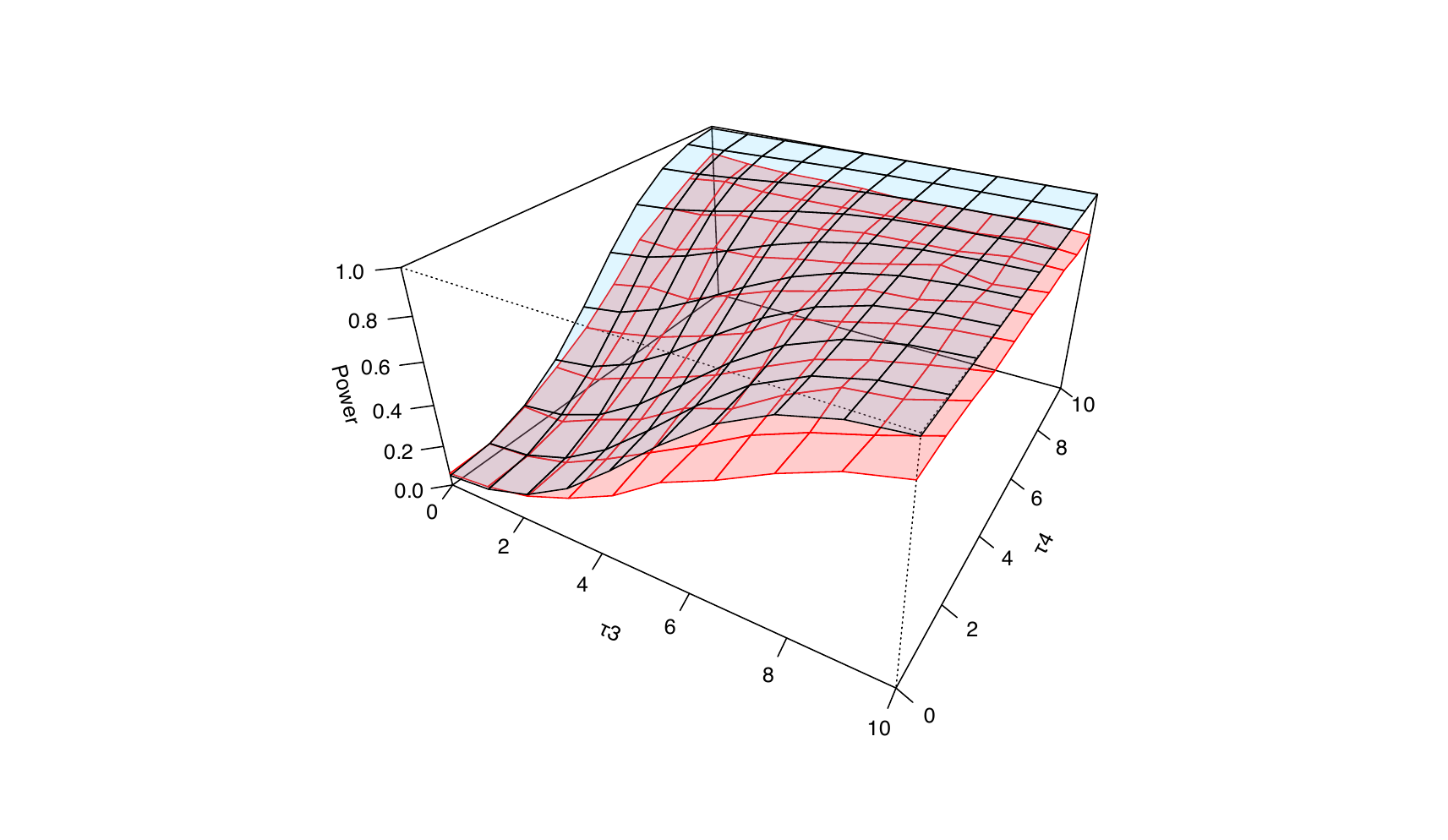} \\	
		\hspace{0.45cm}(c) & \hspace{0.45cm}(d) 
\end{tabular}
\caption{\label{fig: theor_power_3d} Plots of the theoretical (light blue) and simulated (red) power of the test for (a) $\phi^{\ast (n); \mu}$ with $f_0 = BWC_{0.1, 0.5, 0.4}$, (b) $\phi^{\ast (n)}_{f_0}$ with $f_0 = g_0 = BWC_{0.9, 0.9, 0.4}$, (c) $\phi^{\ast (n)}_{f_0}$ with $f_0 = BWC_{0.8, 0.8, 0.6}$, $g_0 = BWC_{0.3, 0.3, 0.4}$ and (d) $\phi^{\ast (n)}_{f_0}$ with $f_0 = S_{0.2, 0.2, 0.4}$, $g_0 = S_{0.1, 0.1, 0.1}$.}
\end{figure}

\section{Application: protein data} \label{sec: real data}

In this section, our new tests are applied to a real-world dataset.
The chosen dataset includes data from the protein folding problem, which involves predicting the three-dimensional structure of a protein using the sequence of amino acids that make up the protein.
This is an important and challenging problem with implications to vaccine and medicine development.
Deep learning models like AlphaFold, whose lead scientists won the Nobel Prize in Chemistry in 2024, have drastically improved accuracy, solving protein structures that were unknown for decades. 
However, the problem is not yet completely solved.
The open questions include dynamics, mutants, accuracy (\cite{lopez-sagaseta_severe_2025, chakravarty_alphafold_2024}).
A natural extension of protein folding is RNA folding, for which even the most advanced protein prediction algorithms fall short (\cite{kwon_rna_2025}).

Many papers in the literature model datasets relating to the protein folding problem using distributions on the torus, see for example \cite{kato2024versatiletrivariatewrappedcauchy, mardia_multivariate_2008, mardia_mixtures_2012}.
\cite{Mardia13} reviews statistical advances in some major active areas of protein structural bioinformatics, including structure prediction.
Most efforts have employed symmetric distributions.
Based on visual inspection of such datasets, biologists believe that asymmetric distributions should be used (\cite{Mardia13}).
Our aim here is to provide a statistical confirmation of this fact.

Proteins are made up of amino acids, which in turn are made up of the backbone and the sidechains.
In this analysis, we only focus on the backbone but one could also consider the angles formed in the sidechains.
The backbone consists of two flexible chemical bonds, NH-C$\alpha$ and C$\alpha$-CO, where C$\alpha$ denotes the central Carbon atom.
The angle that can be found at the beginning of each residue is the NH-C$\alpha$ torsion angle, denoted by $\phi$ and at the end is the C$\alpha$-CO torsion angle, denoted by $\psi$.
The peptide bond, CO-NH, is a torsion angle that cannot rotate freely, due to the physiochemical properties and is denoted by $\omega$.
These angles need to be studied as specific combinations of them allow the favourable hydrogen bonding patterns, while others can `result in clashes within the backbone or between adjacent sidechains' (\cite{jacobsen_introduction_2023}).

For the present data analysis, we consider position 55 at 2000 randomly selected times in the molecular dynamic trajectory of the SARS-CoV-2 spike domain from \cite{genna_sars-cov-2_2020}.
The position occurs in $\alpha$-helix throughout the trajectory. DPPS \cite{kabsch_dictionary_1983} is used to compute the secondary structure and \cite{cock_biopython_2009} to verify the chains. 
Since we only use data from an $\alpha$-helix, the data used have a unique mode, which can also be observed by the plot of the angles in Figure~\ref{fig: rose_plots}. 
It is a known fact in biology (see \cite{tooze_introduction_1998}) that the $\alpha$-helix occurs when the consecutive ($\phi$, $\psi$) angle pairs are around $(-1.05 \text{ rads}, -0.87 \text{ rads})$ (or equivalently $(-60^{\circ}, -50^{\circ})$), for the shape of the helix to come up. 
The angle $\omega$ can only take the values $0$ and $\pi$, but usually the values are recorded with noise. In the case of our data, from the plot of Figure~\ref{fig: rose_plots}, the theoretical value is $\omega = \pi$ rads. Thus the theoretical symmetry center for our specified-center test  will be $(-1.05, -0.87, \pi)$. 
In Figure~\ref{fig: rose_plots}, the theoretical value of the mean is plotted in red and the estimated one in blue, for each angle.

\begin{figure}[tbp]
	\begin{tabular}{ccc}
		\centering 
         \includegraphics[bb= 40 140 1000 450, clip=true, scale = 0.5]{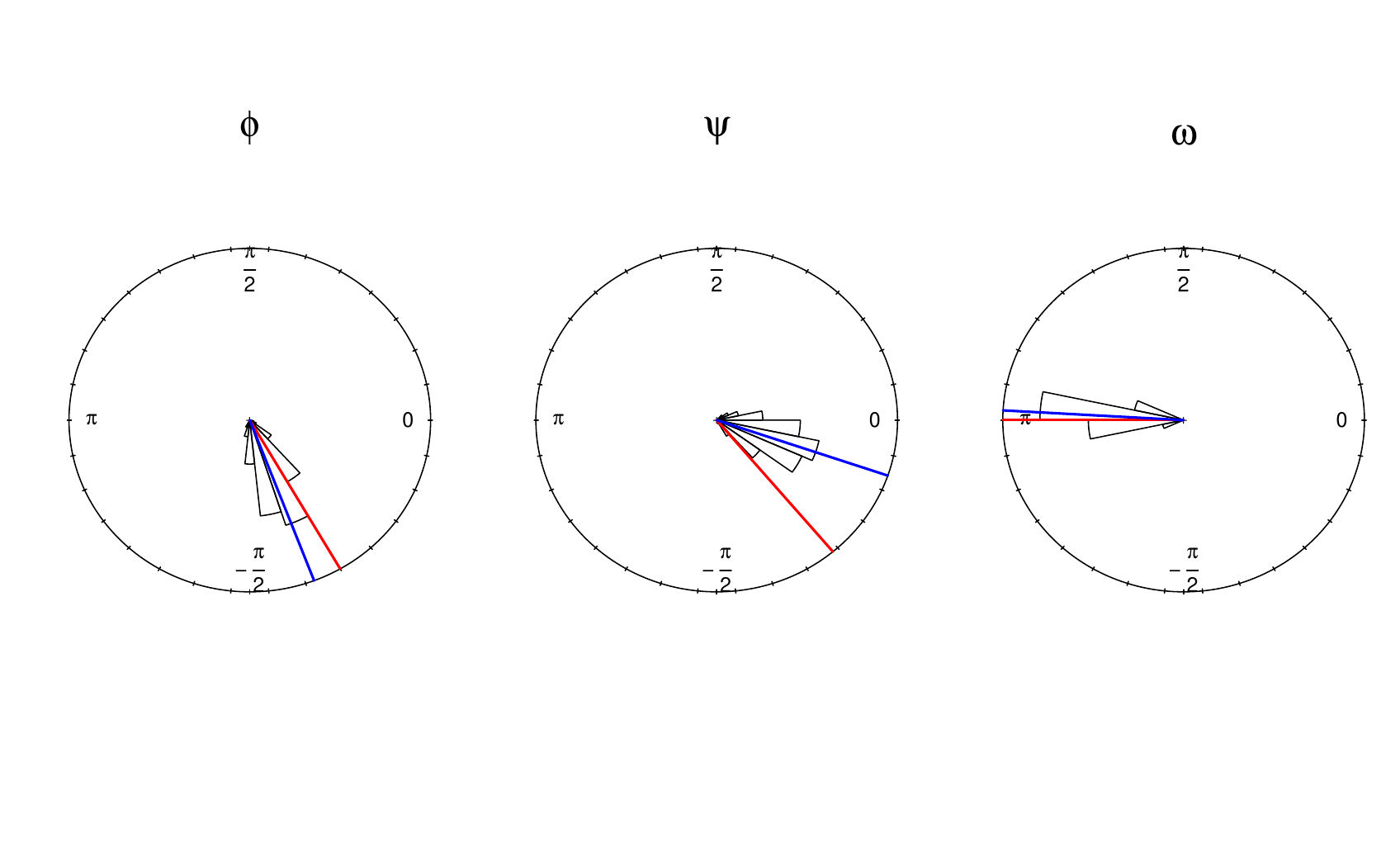}
\end{tabular}
\caption[]{Rose plots of the protein data. The theoretical mean $(-1.05, -0.87, \pi)$ is plotted in red and the estimated one $(-1.21, -0.33, 3.08)$ is plotted in blue.}\label{fig: rose_plots}
\end{figure}

As expected and confirming biologists' observations, the test for specified symmetry center $Q_{f_0}^{\ast(n); \mub}$ rejects the null hypothesis of symmetry $\mathcal{H}^{(n)}_{0;\mu}$ at all commonly used levels of significance.
We next applied the test for unspecified symmetry center, $Q_{f_0}^{\ast (n)}$ with  the TWCC as $f_0$, which  also rejects the null hypothesis of symmetry $\mathcal{H}^{(n)}_{0}$ at all commonly used levels of significance.
This supports the biologists' belief that the data is not symmetric, and thus must be modeled using skewed distributions.
The test can also be performed for only $(\phi, \psi)$, a dihedral pair which is commonly modeled in the literature (see for example \cite{mardia_mixtures_2012, kato_mobius_2015, singh_probabilistic_2002}), and the conclusion remains the same.

\section{Conclusion} \label{sec: conclusion}
In this paper we have proposed tests for symmetry for data on a hyper-torus, using the Le Cam approach for building optimal tests.
Two different tests are proposed, depending on whether the symmetry center is specified or needs to be calculated.
In both cases, the distribution of the test statistic under the null hypothesis of symmetry is the chi-squared distribution, with $d$ degrees of freedom.
The asymptotic distribution under local alternatives is a non-central chi-square distribution with $d$ degrees of freedom, with different centrality parameter for the two tests.
For both cases, we also provide, using Stein's Method, bounds on the distributional distance between the distribution of the test statistic and its limiting distribution.

The finite sample performance of the proposed tests is also investigated using simulation studies. 
Data from different distributions were generated, in different dimensions $d$, and, in the case of the test corresponding to the case of the unspecified median, different distributions were used to build the tests.
Our tests' power was also exhibited in a real world scenario, using data from the protein structure prediction problem.

The tests are built to test for symmetry in the sine-skewed family \eqref{eq: sine_skewed_density}.
It can be proven that the tests are also optimal for testing for symmetry in the following skewed family, with alternative skewing transformation:
\begin{align}\label{eq: sine_skewed_density_new_proposal}
    \thetab\mapsto f_{\mub, \lambdab}(\thetab; \varthetab) := f_0(\thetab - \mub; \varthetab) \prod_{j=1}^{d} \left(1 + \lambda_j \sin (\theta_j-\mu_j)\right) \quad \mbox{subject to}\,\, 
    |\lambda_j| \leq 1, j = 1,\ldots,d
\end{align}
since the central sequence and the Fisher Information matrix will be the same as the model under consideration in this paper.
This implies that the ULAN property will also be the same and so all results mentioned in this paper hold for \eqref{eq: sine_skewed_density_new_proposal} as well.



\textbf{Acknowledgments: }
 The authors would like to thank Thomas Hamelryck and Ola R\o nning for the protein data and Ivan Nourdin for constructive conversations.

\textbf{Funding:}
The third author was funded by the Luxembourg National Research Fund (FNR), grant reference PRIDE/21/16747448/MATHCODA.


\bibliographystyle{abbrv}
\bibliography{references}  

\begin{thebibliography}{10}

\bibitem{abe_sine-skewed_2011}
T.~Abe and A.~Pewsey.
\newblock Sine-skewed circular distributions.
\newblock {\em Statistical Papers}, 52:683--707, 2011.
\newblock Publisher: Springer.

\bibitem{abe2011symmetric}
T.~Abe and A.~Pewsey.
\newblock Symmetric circular models through duplication and cosine
  perturbation.
\newblock {\em Computational Statistics \& Data Analysis}, 55(12):3271--3282,
  2011.

\bibitem{ameijeiras-alonso_sine-skewed_2022}
J.~Ameijeiras-Alonso and C.~Ley.
\newblock Sine-skewed toroidal distributions and their application in protein
  bioinformatics.
\newblock {\em Biostatistics}, 23(3):685--704, jul 2022.

\bibitem{ameijeiras-alonso_optimal_2021}
J.~Ameijeiras-Alonso, C.~Ley, A.~Pewsey, and T.~Verdebout.
\newblock On optimal tests for circular reflective symmetry about an unknown
  central direction.
\newblock {\em Statistical Papers}, 62(4):1651--1674, aug 2021.

\bibitem{anastasiou2017_MLE_m}
A.~Anastasiou.
\newblock Bounds for the normal approximation of the maximum likelihood
  estimator from $m$-dependent random variables.
\newblock {\em Statistics and Probability Letters}, 129:pp. 171--181, 2017.

\bibitem{anastasiou2018_MLE}
A.~Anastasiou.
\newblock Assessing the multivariate normal approximation of the maximum
  likelihood estimator from high-dimensional, heterogeneous data.
\newblock {\em Electronic Journal of Statistics}, 12(2):pp. 3794--3828, 2018.

\bibitem{anastasiou_stein2023}
A.~Anastasiou, A.~Barp, F.-X. Briol, B.~Ebner, R.~E. Gaunt, F.~Ghaderinezhad,
  J.~Gorham, A.~Gretton, C.~Ley, Q.~Liu, L.~Mackey, C.~J. Oates, G.~Reinert,
  and Y.~Swan.
\newblock {Stein’s Method Meets Computational Statistics: A Review of Some
  Recent Developments}.
\newblock {\em Statistical Science}, 38(1):120 -- 139, 2023.

\bibitem{anastasiou_gaunt2021}
A.~Anastasiou and R.~E. Gaunt.
\newblock Wasserstein distance error bounds for the multivariate normal
  approximation of the maximum likelihood estimator.
\newblock {\em Electronic Journal of Statistics}, 15(2):pp. 5758--5810, 2021.

\bibitem{anastasiou2017bounds}
A.~Anastasiou and G.~Reinert.
\newblock {Bounds for the normal approximation of the maximum likelihood
  estimator}.
\newblock {\em Bernoulli}, 23(1):191 -- 218, 2017.

\bibitem{reinert_anastasiou2020}
A.~Anastasiou and G.~Reinert.
\newblock Bounds for the asymptotic distribution of the likelihood ratio.
\newblock {\em The Annals of Applied Probability}, 30(2):pp. 608--643, 2020.

\bibitem{Baba1981}
Y.~Baba.
\newblock Statistics of angular data: wrapped normal distribution model (in
  japanese).
\newblock {\em Proceedings of the Institute of Statistical Mathematics},
  28:41--54, 1981.

\bibitem{babic2021optimal}
S.~Babi{\'c}, L.~Gelbgras, M.~Hallin, and C.~Ley.
\newblock Optimal tests for elliptical symmetry: specified and unspecified
  location.
\newblock {\em Bernoulli}, 27(4):2189--2216, 2021.

\bibitem{chakravarty_alphafold_2024}
D.~Chakravarty, J.~W. Schafer, E.~A. Chen, J.~F. Thole, L.~A. Ronish, M.~Lee,
  and L.~L. Porter.
\newblock {AlphaFold} predictions of fold-switched conformations are driven by
  structure memorization.
\newblock {\em Nature Communications}, 15(1):7296, aug 2024.

\bibitem{cock_biopython_2009}
P.~J.~A. Cock, T.~Antao, J.~T. Chang, B.~A. Chapman, C.~J. Cox, A.~Dalke,
  I.~Friedberg, T.~Hamelryck, F.~Kauff, B.~Wilczynski, and M.~J.~L. de~Hoon.
\newblock Biopython: freely available {Python} tools for computational
  molecular biology and bioinformatics.
\newblock {\em Bioinformatics (Oxford, England)}, 25(11):1422--1423, jun 2009.

\bibitem{favaro_quantitative_2025}
S.~Favaro, B.~Hanin, D.~Marinucci, I.~Nourdin, and G.~Peccati.
\newblock Quantitative {CLTs} in deep neural networks.
\newblock {\em Probability Theory and Related Fields}, 191(3):933--977, Apr.
  2025.

\bibitem{fernandez-duran_modeling_2014}
J.~J. Fernández-Durán and M.~M. Gregorio-Domínguez.
\newblock Modeling angles in proteins and circular genomes using multivariate
  angular distributions based on multiple nonnegative trigonometric sums.
\newblock {\em Statistical Applications in Genetics and Molecular Biology},
  13(1):1--18, feb 2014.
\newblock Publisher: De Gruyter.

\bibitem{garel_local_1995}
B.~Garel and M.~Hallin.
\newblock Local asymptotic normality of multivariate {ARMA} processes with a
  linear trend.
\newblock {\em Annals of the Institute of Statistical Mathematics},
  47(3):551--579, sep 1995.

\bibitem{gaunt_pickett_reinert2017}
R.~E. Gaunt, A.~M. Pickett, and G.~Reinert.
\newblock {Chi-square approximation by Stein’s method with application to
  Pearson’s statistic}.
\newblock {\em The Annals of Applied Probability}, 27(2):720 -- 756, 2017.

\bibitem{gaunt_reinert_Friedman2023}
R.~E. Gaunt and G.~Reinert.
\newblock {Bounds for the chi-square approximation of Friedman’s statistic by
  Stein’s method}.
\newblock {\em Bernoulli}, 29(3):2008 -- 2034, 2023.

\bibitem{gaunt_rate_2023}
R.~E. Gaunt and G.~Reinert.
\newblock The rate of convergence of some asymptotically chi-square distributed
  statistics by {Stein}'s method, may 2023.
\newblock arXiv:1603.01889 [math].

\bibitem{genna_sars-cov-2_2020}
V.~Genna, A.~Hospital, and M.~Orozco.
\newblock {SARS}-{CoV}-2 {Inhibition}, {Host} {Selection} and {Next}-{Move}
  {Prediction} {Through} {High}-{Performance} {Computing}, feb 2020.

\bibitem{jacobsen_introduction_2023}
A.~Jacobsen, E.~van Dijk, H.~Mouhib, B.~Stringer, O.~Ivanova,
  J.~Gavaldá-Garciá, L.~Hoekstra, K.~A. Feenstra, and S.~Abeln.
\newblock Introduction to {Protein} {Structure}, jul 2023.
\newblock arXiv:2307.02169 [q-bio].

\bibitem{johnson_measures_1977}
R.~A. Johnson and T.~E. Wehrly.
\newblock {Measures and Models for Angular Correlation and Angular–Linear
  Correlation}.
\newblock {\em Journal of the Royal Statistical Society, Series B
  (Methodological)}, 39(2):222--229, 1977.

\bibitem{JUPP2016107}
P.~Jupp, G.~Regoli, and A.~Azzalini.
\newblock A general setting for symmetric distributions and their relationship
  to general distributions.
\newblock {\em Journal of Multivariate Analysis}, 148:107--119, 2016.

\bibitem{kabsch_dictionary_1983}
W.~Kabsch and C.~Sander.
\newblock Dictionary of protein secondary structure: {Pattern} recognition of
  hydrogen-bonded and geometrical features.
\newblock {\em Biopolymers}, 22(12):2577--2637, 1983.
\newblock \_eprint:
  https://onlinelibrary.wiley.com/doi/pdf/10.1002/bip.360221211.

\bibitem{kato_distribution_2009}
S.~Kato.
\newblock A distribution for a pair of unit vectors generated by {Brownian}
  motion.
\newblock {\em Bernoulli}, 15:898--921, 2009.

\bibitem{kato2024versatiletrivariatewrappedcauchy}
S.~Kato, C.~Ley, S.~Loizidou, and K.~V. Mardia.
\newblock A versatile trivariate wrapped cauchy copula with applications to
  toroidal and cylindrical data, 2024.
\newblock arXiv:2401.10824 [stat.ME].

\bibitem{kato_mobius_2015}
S.~Kato and A.~Pewsey.
\newblock A {Mobius} transformation-induced distribution on the torus.
\newblock {\em Biometrika}, 102, may 2015.

\bibitem{kato_circularcircular_2008}
S.~Kato, K.~Shimizu, and G.~S. Shieh.
\newblock A {Circular}–{Circular} {Regression} {Model}.
\newblock {\em Statistica Sinica}, 18(2):633--645, 2008.
\newblock Publisher: Institute of Statistical Science, Academia Sinica.

\bibitem{kreiss_adaptive_1987}
J.-P. Kreiss.
\newblock On {Adaptive} {Estimation} in {Stationary} {ARMA} {Processes}.
\newblock {\em The Annals of Statistics}, 15(1):112--133, mar 1987.
\newblock Publisher: Institute of Mathematical Statistics.

\bibitem{kwon_rna_2025}
D.~Kwon.
\newblock {RNA} function follows form – why is it so hard to predict?
\newblock {\em Nature}, 639(8056):1106--1108, mar 2025.

\bibitem{le2012asymptotic}
L.~Le~Cam.
\newblock {\em {Asymptotic Methods in Statistical Decision Theory}}.
\newblock Springer Science \& Business Media, 2012.

\bibitem{le2000asymptotics}
L.~Le~Cam and G.~L. Yang.
\newblock {\em {Asymptotics in Statistics: Some Basic Concepts}}.
\newblock Springer Science \& Business Media, 2000.

\bibitem{lehmann_quadratic_2005}
E.~L. Lehmann and J.~P. Romano.
\newblock {\em {Testing} {Statistical} {Hypotheses}}.
\newblock Springer, 2005.

\bibitem{ley_reinert_swan2017}
C.~Ley, G.~Reinert, and Y.~Swan.
\newblock {Stein’s method for comparison of univariate distributions}.
\newblock {\em Probability Surveys}, 14(none):1 -- 52, 2017.

\bibitem{ley_simple_2014}
C.~Ley and T.~Verdebout.
\newblock Simple {Optimal} {Tests} for {Circular} {Reflective} {Symmetry}
  {About} a {Specified} {Median} {Direction}.
\newblock {\em Statistica Sinica}, 24(3):1319--1339, 2014.
\newblock Publisher: Institute of Statistical Science, Academia Sinica.

\bibitem{ley2017skew}
C.~Ley and T.~Verdebout.
\newblock Skew-rotationally-symmetric distributions and related efficient
  inferential procedures.
\newblock {\em Journal of Multivariate Analysis}, 159:67--81, 2017.

\bibitem{liu_phase_2006}
D.~Liu, S.~Peddada, L.~Li, and C.~R. Weinberg.
\newblock Phase analysis of circadian-related genes in two tissues.
\newblock {\em BMC Bioinformatics}, 7:87, 2006.

\bibitem{lopez-sagaseta_severe_2025}
J.~López-Sagaseta and A.~Urdiciain.
\newblock Severe deviation in protein fold prediction by advanced {AI}: a case
  study.
\newblock {\em Scientific Reports}, 15(1):4778, feb 2025.

\bibitem{mardia_statistics_1975}
K.~V. Mardia.
\newblock {Statistics of Directional Data}.
\newblock {\em Journal of the Royal Statistical Society: Series B
  (Methodological)}, 37:349--371, 1975.
\newblock Publisher: Wiley Online Library.

\bibitem{Mardia13}
K.~V. Mardia.
\newblock {Statistical Approaches to Three Key Challenges in Protein Structural
  Bioinformatics}.
\newblock {\em Journal of the Royal Statistical Society: Series C (Applied
  Statistics)}, 62:487--514, 2013.

\bibitem{mardia_multivariate_2008}
K.~V. Mardia, G.~Hughes, C.~C. Taylor, and H.~Singh.
\newblock A {Multivariate} {Von} {Mises} {Distribution} with {Applications} to
  {Bioinformatics}.
\newblock {\em The Canadian Journal of Statistics / La Revue Canadienne de
  Statistique}, 36(1):99--109, 2008.
\newblock Publisher: [Statistical Society of Canada, Wiley].

\bibitem{mardia_directional_2000}
K.~V. Mardia and P.~E. Jupp.
\newblock {\em Directional {Statistics}}.
\newblock John Wiley \& Sons, Chichester, United Kingdom, 2000.

\bibitem{mardia_mixtures_2012}
K.~V. Mardia, J.~T. Kent, Z.~Zhang, C.~C. Taylor, and T.~Hamelryck.
\newblock Mixtures of concentrated multivariate sine distributions with
  applications to bioinformatics.
\newblock {\em Journal of Applied Statistics}, nov 2012.
\newblock Publisher: Taylor \& Francis.

\bibitem{mardia_protein_2007}
K.~V. Mardia, C.~C. Taylor, and G.~K. Subramaniam.
\newblock {Protein Bioinformatics and Mixtures of Bivariate von Mises
  Distributions for Angular Data}.
\newblock {\em Biometrics}, 63(2):505--512, jun 2007.

\bibitem{mastrantonio_modelling_2022}
G.~Mastrantonio.
\newblock The {Modelling} of {Movement} of {Multiple} {Animals} that {Share}
  {Behavioural} {Features}.
\newblock {\em Journal of the Royal Statistical Society Series C: Applied
  Statistics}, 71(4):932--950, aug 2022.

\bibitem{nakhaei2025weighting}
N.~Nakhaei~Rad, C.~Ley, and A.~Bekker.
\newblock The weighting approach on the circle.
\newblock In {\em Directional and Multivariate Statistics: A Volume in Honour
  of Ashis SenGupta}, pages 25--41. Springer, 2025.

\bibitem{nodehi_estimation_2018}
A.~Nodehi, M.~Golalizadeh, M.~Maadooliat, and C.~Agostinelli.
\newblock Estimation of {Multivariate} {Wrapped} {Models} for {Data} in
  {Torus}, nov 2018.
\newblock arXiv:1811.06007 [stat].

\bibitem{rivest_decentred_1997}
L.-P. Rivest.
\newblock A decentred predictor for circular-circular regression.
\newblock {\em Biometrika}, 84:717--726, 1997.

\bibitem{puerta_regularized_2015}
L.~Rodriguez-Lujan, C.~Bielza, and P.~Larrañaga.
\newblock Regularized {Multivariate} von {Mises} {Distribution}.
\newblock In J.~M. Puerta, J.~A. Gámez, B.~Dorronsoro, E.~Barrenechea,
  A.~Troncoso, B.~Baruque, and M.~Galar, editors, {\em Advances in {Artificial}
  {Intelligence}}, volume 9422, pages 25--35. Springer International
  Publishing, Cham, 2015.
\newblock Series Title: Lecture Notes in Computer Science.

\bibitem{ross_stein2011}
N.~Ross.
\newblock {Fundamentals of Stein’s method}.
\newblock {\em Probability Surveys}, 8(none):210 -- 293, 2011.

\bibitem{ronning_time_efficient_2021}
O.~Rønning, C.~Ley, K.~V. Mardia, and T.~Hamelryck.
\newblock Time-efficient {Bayesian} {Inference} for a ({Skewed}) {Von} {Mises}
  {Distribution} on the {Torus} in a {Deep} {Probabilistic} {Programming}
  {Language}.
\newblock In {\em 2021 {IEEE} {International} {Conference} on {Multisensor}
  {Fusion} and {Integration} for {Intelligent} {Systems} ({MFI})}, pages 1--8,
  Sept. 2021.

\bibitem{shieh_inferences_2005}
G.~S. Shieh and R.~A. Johnson.
\newblock Inferences based on a bivariate distribution with von {Mises}
  marginals.
\newblock {\em Annals of the Institute of Statistical Mathematics},
  57(4):789--802, dec 2005.

\bibitem{singh_probabilistic_2002}
H.~Singh, V.~Hnizdo, and E.~Demchuk.
\newblock Probabilistic {Model} for {Two} {Dependent} {Circular} {Variables}.
\newblock {\em Biometrika}, 89(3):719--723, 2002.
\newblock Publisher: [Oxford University Press, Biometrika Trust].

\bibitem{stein1972bound}
C.~Stein.
\newblock A bound for the error in the normal approximation to the distribution
  of a sum of dependent random variables.
\newblock In {\em Proceedings of the sixth Berkeley symposium on mathematical
  statistics and probability, volume 2: Probability theory}, volume~6, pages
  583--603. University of California Press, 1972.

\bibitem{swensen_asymptotic_1985}
A.~R. Swensen.
\newblock The asymptotic distribution of the likelihood ratio for
  autoregressive time series with a regression trend.
\newblock {\em Journal of Multivariate Analysis}, 16(1):54--70, feb 1985.

\bibitem{tooze_introduction_1998}
C.~I.~B. Tooze, John.
\newblock {\em Introduction to {Protein} {Structure}}.
\newblock Garland Science, New York, 2 edition, dec 1998.

\bibitem{trefethen2022numerical}
L.~N. Trefethen and D.~Bau.
\newblock {\em Numerical linear algebra}.
\newblock SIAM, 2022.

\bibitem{umbach_building_2009}
D.~Umbach and S.~R. Jammalamadaka.
\newblock Building asymmetry into circular distributions.
\newblock {\em Statistics \& Probability Letters}, 79:659--663, 2009.

\bibitem{verdebout_modern_2017}
T.~Verdebout and C.~Ley.
\newblock {\em Modern {Directional} {Statistics}}.
\newblock Chapman and Hall/CRC, New York, aug 2017.

\end{thebibliography}


\begin{thebibliography}{1}

\bibitem{garel_local_1995}
Bernard Garel and Marc Hallin.
\newblock Local asymptotic normality of multivariate {ARMA} processes with a
  linear trend.
\newblock {\em Annals of the Institute of Statistical Mathematics},
  47(3):551--579, sep 1995.

\bibitem{gaunt_rate_2023}
Robert~E. Gaunt and Gesine Reinert.
\newblock The rate of convergence of some asymptotically chi-square distributed
  statistics by {Stein}'s method, may 2023.
\newblock arXiv:1603.01889 [math].

\bibitem{kreiss_adaptive_1987}
Jens-Peter Kreiss.
\newblock On {Adaptive} {Estimation} in {Stationary} {ARMA} {Processes}.
\newblock {\em The Annals of Statistics}, 15(1):112--133, mar 1987.
\newblock Publisher: Institute of Mathematical Statistics.

\bibitem{lehmann_quadratic_2005}
Erich~Leo Lehmann and Joseph~P Romano.
\newblock {\em {Testing} {Statistical} {Hypotheses}}.
\newblock Springer, 2005.

\bibitem{verdebout_modern_2017}
Thomas Verdebout and Christophe Ley.
\newblock {\em Modern {Directional} {Statistics}}.
\newblock Chapman and Hall/CRC, New York, aug 2017.

\end{thebibliography}






\clearpage
\begingroup
\renewcommand{\thepage}{S\arabic{page}} 
\setcounter{page}{1}                    
\renewcommand{\thesection}{S\arabic{section}} 
\setcounter{section}{0}
\renewcommand{\thefigure}{S\arabic{figure}}
\renewcommand{\thetable}{S\arabic{table}}
\renewcommand{\theequation}{S\arabic{equation}}
\renewcommand{\thetheorem}{S\arabic{theorem}}
\renewcommand{\thelemma}{S\arabic{lemma}}
\renewcommand{\theassumption}{S\arabic{assumption}}
\renewcommand{\theremark}{S\arabic{remark}}

\begin{center}
  \LARGE \textbf{Supplement to ``Construction of optimal tests for symmetry on the torus and their quantitative error bounds''} \\[1em]
  \normalsize
  Andreas Anastasiou\textsuperscript{1}, Christophe Ley\textsuperscript{2}, Sophia Loizidou\textsuperscript{2} \\
  \textsuperscript{1}Department of Mathematics and Statistics, University of Cyprus\\
  \textsuperscript{2}Department of Mathematics, University of Luxembourg \\
\end{center}
\vspace{2em}
\begin{abstract}
    Section~\ref{sec: proof_supplement} of this supplement contains the proofs of all propositions, theorems and lemmas of the main paper. Some further simulation results can be found in Section~\ref{sec: supp_simulations}.
\end{abstract}

\section{Proofs}\label{sec: proof_supplement}

\subsection{Proof of Proposition~\ref{prop: QMD}} \label{appendix: proof prop QMD}

Before the proof, we state for the sake of readability the definition of quadratic mean differentiable functions (see Definition 12.2.1 from \cite{lehmann_quadratic_2005}), which is needed to prove the proposition.
\begin{definition} \label{def: QMD}
    The model $\left( P_\theta: \theta\in\Theta \right)$ is said to be \textit{differentiable in quadratic mean} if at an inner point $\theta_0\in\Theta$  there exists a measurable function $\dot{\ell}_{\theta_0}: \mathcal{X}\rightarrow \R^k$ such that 
    \begin{equation} \label{eq: definition QMD}
        \int_\mathcal{X} \left\{ \sqrt{p_{\theta_0+h}} - \sqrt{p_{\theta_0}} - \frac{1}{2}h'\dot{\ell}_{\theta_0}\sqrt{p_{\theta_0}} \right\}^2 {\mathrm d} \mu = o\left(||h||^2 \right) \quad \text{as } ||h||\rightarrow 0,
    \end{equation}
    where $p_{\theta}$ is the density of $P_\theta$ with respect to some measure $\mu$ and $\frac{1}{2}\dot{\ell}_{\theta_0}\sqrt{p_{\theta_0}}$ is the quadratic mean derivative at $\theta_0$.
\end{definition}

\begin{proof}[Proof of Proposition~\ref{prop: QMD}]
``$\Leftarrow$" is immediate by taking $\hb = (h_1^1, \ldots, h_1^k, 0, \ldots, 0)'$ in \eqref{eq: definition QMD}. \\

\noindent ``$\Rightarrow$" Assume that $\mub\mapsto f(\xb; \mub)$ is QMD. 
Using \eqref{eq: definition QMD} and defining
\begin{align*}
    A := \int_{\mathcal{X}} \Biggl(  
f^{1/2}(\xb; \mub_0 + \hb_1) & g^{1/2}(\xb; \mub_0 + \hb_1, \hb_2) - f^{1/2}(\xb; \mub_0) \\
- & \hb'\left. \begin{pmatrix}
        \nabla_{\mub} f^{1/2}(\xb; \mub) \\ f^{1/2}(\xb; \mub) \nabla_{\lambdab} g^{1/2}(\xb; \mub, \lambdab)
    \end{pmatrix}
    \right|_{
 \substack{\scriptscriptstyle{\mub=\mub_0} \\ \scriptscriptstyle{\lambdab = 0}}} \Biggr)^2 {\mathrm d} \xb 
\end{align*}
for $\hb = (\hb_1', \hb_2')'$, $\hb_1 \in\R^k$ and $\hb_2 \in\R^m$, we need to prove that $A = o(||\hb|| ^2)$. 
Condition (iii) of the statement of the proposition ensures that the partial derivatives of the square roots of the functions are well defined.
We can bound $A$  by $3(A_1 + A_2 + A_3)$, where
\begin{align*}
     A_1 : & = \int_{\mathcal{X}} \Biggl(  
f^{1/2}(\xb; \mub_0 + \hb_1) - f^{1/2}(\xb; \mub_0) - \hb_1'\left.
        \nabla_{\mub} f^{1/2}(\xb; \mub)
    \right|_{
{\scriptscriptstyle{\mub=\mub_0}}} \Biggr)^2 {\mathrm d} \xb, \\
 A_2 :& = \int_{\mathcal{X}} \Biggl(  
f^{1/2}(\xb; \mub_0 + \hb_1)g^{1/2}(\xb;\mub_0 + \hb_1, \hb_2) 
- f^{1/2}(\xb; \mub_0 + \hb_1) \\
& \hspace{3cm} - \hb_2'\left.
        f^{1/2}(\xb; \mub_0 + \hb_1) \nabla_{\lambdab} g^{1/2}(\xb; \mub, \lambdab)
    \right|_{
 \substack{\scriptscriptstyle{\mub = \mub_0 + \hb_1} \\ \scriptscriptstyle{\lambdab = 0}}} \Biggr)^2 {\mathrm d} \xb, \\
A_3 : & = \int_{\mathcal{X}} \Biggl( \hb_2' \biggl( \left.
        f^{1/2}(\xb; \mub_0 + \hb_1) \nabla_{\lambdab} g^{1/2}(\xb;\mub,\lambdab)
    \right|_{
 \substack{\scriptscriptstyle{\mub = \mub_0 + \hb_1} \\ \scriptscriptstyle{\lambdab = 0}}} \\
 & \hspace{3cm} 
- f^{1/2}(\xb; \mub_0) \left. \nabla_{\lambdab} g^{1/2}(\xb;\mub,\lambdab)
    \right|_{
 \substack{\scriptscriptstyle{\mub = \mub_0} \\ \scriptscriptstyle{\lambdab = 0}}} \biggr) \Biggr)^2 {\mathrm d} \xb. 
\end{align*}
Showing that $A_1 = o(\parallel \hb_1 \parallel^2)$ and $A_2 = o(\parallel \hb_2 \parallel^2) = A_3$ implies that $A = o(||\hb|| ^2)$. 
Note that conditions (v) and (vi) in the statement of the proposition are needed for $A_1$, $A_2$ and $A_3$ to be finite.
The result concerning $A_1$ follows immediately from Definition \ref{def: QMD} and the fact that $\mub\mapsto f(\xb; \mub)$ is QMD. For the part involving $A_2$, we rewrite it  as
\begin{equation*}
    A_2 = \int_{\mathcal{X}} f(\xb; \mub_0 + \hb_1) \Biggl( g^{1/2}(\xb; \mub_0 + \hb_1, \hb_2) - 1 - \hb_2'\left.
        \nabla_{\lambdab} g^{1/2}(\xb; \mub, \lambdab)
    \right|_{
    \substack{\scriptscriptstyle{\mub = \mub_0 + \hb_1} \\ \scriptscriptstyle{\lambdab = 0}}} \Biggr)^2 {\mathrm d} \xb. 
\end{equation*}
From assumption (i), $g$ is a.e. $\mathcal{C}^1$ in $\lambdab$, hence by the Mean Value Theorem it holds that 
\begin{align*}
    \hb_2' \nabla_{\lambdab} \left.g^{1/2}(\xb; \mub, \lambdab) \right|&_{ \substack{\scriptscriptstyle{\mub = \mub_0 + \hb_1} \\ \scriptscriptstyle{\lambdab = \lambdab^{\ast}_{\hb_2}}}} \\
    = g^{1/2} & (\xb; \mub_0 + \hb_1, \hb_2) - g^{1/2}(\xb; \mub_0 + \hb_1, \boldsymbol{0})
    = g^{1/2}(\xb; \mub_0 + \hb_1, \hb_2) - 1
\end{align*}
for some $\lambdab^{\ast}_{\hb_2}$ which lies on the line connecting $\boldsymbol{0}$ and $\hb_2$, where the last equality follows from assumption (iv). 
This entails that
\begin{align*}
    A_2 & = \int_{\mathcal{X}} f(\xb; \mub_0 + \hb_1)\Biggl(  
 \hb_2' \left(\nabla_{\lambdab} \left.g^{1/2}(\xb; \mub, \lambdab) \right|_{ \substack{\scriptscriptstyle{\mub = \mub_0 + \hb_1} \\ \scriptscriptstyle{\lambdab = \lambdab^{\ast}_{\hb_2}}}} - \left.
        \nabla_{\lambdab} g^{1/2}(\xb; \mub, \lambdab)
    \right|_{
    \substack{\scriptscriptstyle{\mub = \mub_0 + \hb_1} \\ \scriptscriptstyle{\lambdab = 0}}} \right)\Biggr)^2 {\mathrm d} \xb \\
& \leq ||\hb_2||^2\int_{\mathcal{X}} f(\xb; \mub_0 + \hb_1)\Biggl|\Biggl|  
 \nabla_{\lambdab} \left.g^{1/2}(\xb; \mub, \lambdab) \right|_{ \substack{\scriptscriptstyle{\mub = \mub_0 + \hb_1} \\ \scriptscriptstyle{\lambdab = \lambdab^{\ast}_{\hb_2}}}} - \left.
        \nabla_{\lambdab} g^{1/2}(\xb; \mub, \lambdab)
    \right|_{
    \substack{\scriptscriptstyle{\mub = \mub_0 + \hb_1} \\ \scriptscriptstyle{\lambdab = 0}}} \Biggr|\Biggr|^2 {\mathrm d}\xb, \\ 
\end{align*}
where the last inequality follows by Cauchy-Schwarz. The triangle inequality combined with assumption (ii) yields 
\begin{equation*}
    \Biggl|\Biggl| \nabla_{\lambdab} \left.g^{1/2}(\xb; \mub, \lambdab)\right|_{ \substack{\scriptscriptstyle{\mub = \mub_0 + \hb_1} \\ \scriptscriptstyle{\lambdab = \lambdab^{\ast}_{\hb_2}}}} - \left.
        \nabla_{\lambdab} g^{1/2}(\xb; \mub, \lambdab)
    \right|_{
 \substack{\scriptscriptstyle{\mub = \mub_0 + \hb_1} \\ \scriptscriptstyle{\lambdab = 0}}} \Biggr|\Biggr| \leq 2 C \ell(\xb;\mub_0+\hb_1)
\end{equation*}
for $C \in \R^{+}$  independent of $\lambda^{\ast}_{\hb_2}$ and hence of $\hb_2$. The Dominated Convergence Theorem allows us to conclude  that $A_2 = o(\parallel \hb_2\parallel^2)$.\\

\noindent Finally for $A_3$, we also have by the triangle inequality and assumption (ii) that 
\begin{align*}
    &\Biggl|\Biggl| \left.
        f^{1/2}(\xb; \mub_0 + \hb_1) \nabla_{\lambdab} g^{1/2}(\xb; \mub, \lambdab)
    \right|_{
 \substack{\scriptscriptstyle{\mub = \mub_0 + \hb_1} \\ \scriptscriptstyle{\lambdab = 0}}}
- f^{1/2}(\xb; \mub_0) \left. \nabla_{\lambdab} g^{1/2}(\xb; \mub, \lambdab)
    \right|_{
 \substack{\scriptscriptstyle{\mub = \mub_0} \\ \scriptscriptstyle{\lambdab = 0}}}\Biggr|\Biggr| \\
&\leq f^{1/2}(\xb; \mub_0 + \hb_1) \Biggl|\Biggl| \left. \nabla_{\lambdab} g^{1/2}(\xb; \mub, \lambdab)
    \right|_{
 \substack{\scriptscriptstyle{\mub = \mub_0 + \hb_1} \\ \scriptscriptstyle{\lambdab = 0}}} \Biggr|\Biggr|
+ f^{1/2}(\xb; \mub_0) \Biggl|\Biggl| \left. \nabla_{\lambdab} g^{1/2}(\xb; \mub, \lambdab)
    \right|_{
 \substack{\scriptscriptstyle{\mub = \mub_0} \\ \scriptscriptstyle{\lambdab = 0}}}\Biggr|\Biggr|\\
&\leq C \left(f^{1/2}(\xb;\mub_0 + \hb_1)\ell(\xb; \mub_0+\hb_1)
+ f^{1/2}(\xb; \mub_0)\ell(\xb; \mub_0)\right)
\end{align*}
which is an integrable function not depending on $\hb_2$. The Dominated Convergence Theorem thus applies and gives $A_3 = o(\parallel \hb_2 \parallel^2)$.
\end{proof}

\subsection{Proof of Proposition~\ref{prop: ULAN}}\label{appendix: proof_ULAN}
For the sake of a better understanding, we recall here Lemma 2.3 of \cite{garel_local_1995}, on which the proof is based.
\begin{lemma} \label{lemma_LAN}
Denote by $P^{(n)}_1$ and $P^{(n)}_0$ two sequences of probability measures on measurable spaces $\left( \mathcal{X}^{(n)}, \mathcal{A}^{(n)} \right)$.
For all $n$, let $\mathcal{A}_t^{(n)} \subset \mathcal{A}_{t+1}^{(n)}$ be a filtration such that $\mathcal{A}_n^{(n)} = \mathcal{A}^{(n)}$, and denote by $P^{(n)}_{1,t}$ and $P^{(n)}_{0,t}$ the restrictions to $\mathcal{A}_t^{(n)}$ of $P^{(n)}_1$ and $P^{(n)}_0$, respectively.
Assuming that $P^{(n)}_{1,t}$ is absolutely continuous (on $\mathcal{A}_t^{(n)}$) with respect to $P^{(n)}_{0,t}$, let $\alpha_0^{(n)} = 1, \alpha_t^{(n)} = dP^{(n)}_{1,t} / dP^{(n)}_{0,t}$ and $\xi_t^{(n)} = \left( \alpha_t^{(n)} / \alpha_{t-1}^{(n)} \right)^{1/2}-1$. 
    Assume that the random variables $\zeta_t^{(n)}$ satisfy the following conditions (all convergences are in $P^{(n)}_0$-probability, as $n\rightarrow\infty$; expectations are also taken with respect to $P^{(n)}_0$):
    \begin{enumerate}[(i)]
        \item $\E \left[ \sum_{t=1}^n \left( \zeta_t^{(n)} - \xi_t^{(n)} \right)^2 \right] \rightarrow 0$;
        
        \item $\sup_n \E \left[ \sum_{t=1}^n \left( \zeta_t^{(n)} \right)^2 \right] < \infty$;
        
        \item $\max_{1\leq t\leq n} \left| \zeta_t^{(n)} \right| \rightarrow 0$;
        
        \item $\sum_{t=1}^n \left( \zeta_t^{(n)} \right)^2 - \frac{\left( \uptau^{(n)} \right)^2}{4} \rightarrow 0 $ for some non-random sequence $\left( \uptau^{(n)}, n \in \N \right)$ such that $\sup_n\left( \uptau^{(n)} \right)^2 < \infty$;
        
        \item $\sum_{t=1}^n \E \left\{ \left( \zeta_t^{(n)} \right)^2 I \left[ \left| \zeta_t^{(n)} \right| > \frac{1}{2} \right] \Bigg| \mathcal{A}^{(n)}_{t-1} \right\} \rightarrow 0$;

        \item $\E \left[ \zeta_t^{(n)} | \mathcal{A}^{(n)}_{t-1} \right] = 0$;

        \item $\sum_{t=1}^n \E \left[ \left( \xi_t^{(n)} \right)^2 + 2\xi_t^{(n)} | \mathcal{A}^{(n)}_{t-1}\right] \rightarrow 0$.
    \end{enumerate}
    Then, under $P_0^{(n)}$, as $n \rightarrow \infty$,
    \begin{equation*}
        \Lambda^{(n)} 
        = \log \left( dP_1^{(n)} / P_0^{(n)} \right) 
        = 2 \sum_{t=1}^n \zeta_t^{(n)} - \left( \uptau^{(n)} \right)^2 / 2 + o_P(1),
    \end{equation*}
    and the distribution of $\left[ \Lambda^{(n)} + \left( \uptau^{(n)} \right)^2 / 2 \right] / \uptau^{(n)}$ is asymptotically standard normal.
\end{lemma}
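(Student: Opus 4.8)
The plan is to establish the asymptotic quadratic expansion by expanding the log-likelihood ratio as a telescoping product over the filtration and then replacing the increments $\xi_t^{(n)}$ by the martingale differences $\zeta_t^{(n)}$. Since $\alpha_n^{(n)} = \prod_{t=1}^n (\alpha_t^{(n)}/\alpha_{t-1}^{(n)}) = \prod_{t=1}^n (1 + \xi_t^{(n)})^2$, I would begin by writing $\Lambda^{(n)} = 2\sum_{t=1}^n \log(1 + \xi_t^{(n)})$ and applying the second-order Taylor expansion $\log(1+x) = x - \tfrac{1}{2}x^2 + r(x)$ with $|r(x)| \le C|x|^3$ valid on $|x| \le 1/2$. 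On the event where all increments are small this gives $\Lambda^{(n)} = 2\sum_t \xi_t^{(n)} - \sum_t (\xi_t^{(n)})^2 + 2\sum_t r(\xi_t^{(n)})$, and the whole argument reduces to handling the three sums separately.

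Second, I would control the remainder and pass from $\xi$ to $\zeta$. Conditions (iii) and (v) (uniform asymptotic negligibility and the Lindeberg-type truncation) let me restrict to the event $\{\max_t |\xi_t^{(n)}| \le 1/2\}$ up to a $P_0^{(n)}$-negligible contribution; on this event the cubic remainder is bounded by $C \max_t |\xi_t^{(n)}| \sum_t (\xi_t^{(n)})^2$, which is $o_P(1)$ since the maximum tends to $0$ while $\sum_t (\xi_t^{(n)})^2 = O_P(1)$. For the quadratic term, condition (i) gives $\E\sum_t (\zeta_t^{(n)} - \xi_t^{(n)})^2 \to 0$, so together with the uniform $L^2$ bound (ii) and Cauchy--Schwarz one obtains $\sum_t (\xi_t^{(n)})^2 = \sum_t (\zeta_t^{(n)})^2 + o_P(1)$, which equals $(\uptau^{(n)})^2/4 + o_P(1)$ by (iv).

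Third comes the crucial identification of the drift. The quadratic term already supplies $-(\uptau^{(n)})^2/4$, so the linear term $2\sum_t \xi_t^{(n)}$ must produce the remaining $-(\uptau^{(n)})^2/4$ alongside the leading $2\sum_t \zeta_t^{(n)}$. I would split $\xi_t^{(n)}$ into its $\mathcal{A}_{t-1}^{(n)}$-conditional mean and a martingale difference. For the martingale part, using $\zeta_t^{(n)} = \zeta_t^{(n)} - \E[\zeta_t^{(n)}\mid\mathcal{A}_{t-1}^{(n)}]$ from (vi), orthogonality of martingale differences combined with (i) yields $\sum_t\big((\xi_t^{(n)} - \E[\xi_t^{(n)}\mid\mathcal{A}_{t-1}^{(n)}]) - \zeta_t^{(n)}\big) \to 0$ in $L^2$. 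For the predictable part, condition (vii) reads $\sum_t \E[(\xi_t^{(n)})^2 + 2\xi_t^{(n)}\mid\mathcal{A}_{t-1}^{(n)}] \to 0$, converting $2\sum_t \E[\xi_t^{(n)}\mid\mathcal{A}_{t-1}^{(n)}]$ into $-\sum_t \E[(\xi_t^{(n)})^2\mid\mathcal{A}_{t-1}^{(n)}] + o_P(1)$; a weak law of large numbers for the martingale-difference array $(\xi_t^{(n)})^2 - \E[(\xi_t^{(n)})^2\mid\mathcal{A}_{t-1}^{(n)}]$, justified by (ii) and the truncation (v), then replaces this by $-\sum_t (\xi_t^{(n)})^2 + o_P(1) = -(\uptau^{(n)})^2/4 + o_P(1)$. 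Collecting terms gives $2\sum_t \xi_t^{(n)} = 2\sum_t \zeta_t^{(n)} - (\uptau^{(n)})^2/4 + o_P(1)$ and hence the announced expansion $\Lambda^{(n)} = 2\sum_t \zeta_t^{(n)} - (\uptau^{(n)})^2/2 + o_P(1)$.

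Finally, the asymptotic normality of $[\Lambda^{(n)} + (\uptau^{(n)})^2/2]/\uptau^{(n)}$ follows by applying the martingale central limit theorem to $(2/\uptau^{(n)})\sum_t \zeta_t^{(n)}$: the array $\{\zeta_t^{(n)}\}$ consists of martingale differences by (vi), its conditional-variance sum converges to $(\uptau^{(n)})^2/4$ by (iv) together with the same weak-law argument, and the Lindeberg condition is precisely (v), so the normalized sum is asymptotically standard normal. I expect the main obstacle to be the bookkeeping in the third step: obtaining the drift $-(\uptau^{(n)})^2/2$ exactly requires combining the predictable-variation correction arising from (vii) with the conditional-to-unconditional replacement of $\sum_t (\xi_t^{(n)})^2$, while simultaneously ensuring that the cubic remainder and the large-increment events are negligible under a single consistent truncation.
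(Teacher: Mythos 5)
The paper contains no proof of this lemma: it is restated verbatim from Lemma 2.3 of \cite{garel_local_1995} (itself a modification of Lemma 1 of \cite{swensen_asymptotic_1985}) and imported as an external tool for the proof of the ULAN property, so there is no in-paper argument to compare against. Your sketch correctly reproduces the classical Le Cam--Swensen proof used in that cited literature --- the telescoping expansion $\Lambda^{(n)}=2\sum_t\log(1+\xi_t^{(n)})$ with cubic-remainder control on the event $\{\max_t|\xi_t^{(n)}|\le 1/2\}$ (which in fact follows from (i) and (iii), with (v) entering only in the truncated weak law and the CLT step), the $L^2$ replacement of $\xi_t^{(n)}$ by $\zeta_t^{(n)}$ via (i)--(ii) and martingale orthogonality, the drift identification through (vii) plus the conditional-variance weak law yielding the two $-(\uptau^{(n)})^2/4$ contributions, and a martingale central limit theorem for $2\sum_t\zeta_t^{(n)}/\uptau^{(n)}$ --- so your route coincides in structure and in its use of the hypotheses with the proof in the source the paper cites.
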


Before proceeding to the proof of Proposition~\ref{prop: ULAN}, we match the  notation from this lemma with that in our paper:
\begin{align*}
    & \alpha_t^{(n)} 
    = \frac{dP^{(n)}_{1,t}} {dP^{(n)}_{0,t}}
    = \prod_{i=1}^t \frac{f_{\mub^{(n)} + n^{-1/2} \taub^{(n)}_{\mu}, n^{-1/2} \taub^{(n)}_{\lambda}}(\thetab_{i})}{f_{\mub^{(n)}, \boldsymbol{0}}(\thetab_{i})} \\
    & \xi_t^{(n)}
    = \left( \frac{f_{\mub^{(n)} + n^{-1/2} \taub^{(n)}_{\mu}, n^{-1/2} \taub^{(n)}_{\lambda}}(\thetab_{t})}{f_{\mub^{(n)}, \boldsymbol{0}}(\thetab_{t})} \right)^{1/2} - 1,  \\
    & \zeta_t^{(n)}
    = \frac{1}{2\sqrt{n}} \taub^{(n)\prime} \Delta_{f_0; t}^{(n)}(\mub^{(n)})
    = \frac{1}{2\sqrt{n}} \Biggl( \sum_{j=1}^d \tau_j^{(n)} \phi^{f_0}_j(\thetab_{t} - \mub^{(n)})
    + \sum_{j=d+1}^{2d} \tau_j^{(n)} \sin(\theta_{tj}-\mu_j^{(n)}) \Biggr), \\
    & \left( \uptau^{(n)} \right)^2 
    = \taub^{(n)\prime}
    \Gamma_{f_0} 
    \taub^{(n)}. 
\end{align*}
Note that $\sup_n \left( \uptau^{(n)} \right) < \infty$, so the assumption in (iv) is satisfied. Now we can prove Proposition~\ref{prop: ULAN}.

\begin{proof}[Proof of Proposition~\ref{prop: ULAN}]
\noindent Proving that Lemma \ref{lemma_LAN} holds in our setting for family $P^{(n)}_{f_0}$ implies that Proposition~\ref{prop: ULAN} holds. 
Thus, we need to check that the conditions of Lemma \ref{lemma_LAN} hold.\\

\textbf{(i)} Straightforward calculations yield
\begin{align} \label{eq: condition_1}
    \E & \left[ \sum_{t=1}^n \left( \zeta_t^{(n)} - \xi_t^{(n)} \right)^2 \right] 
    = n \ \E \left[\left( \frac{\taub^{(n)\prime}} {2\sqrt{n}} \Delta_{f_0}^{(n)}(\mub^{(n)}) - \frac{f^{1/2}_{\mub^{(n)} + n^{-1/2} \taub^{(n)}_{\mu}, n^{-1/2} \taub^{(n)}_{\lambda}}(\thetab)} {f^{1/2}_{\mub^{(n)}, \boldsymbol{0}}(\thetab)} + 1 \right)^2 \right] \nonumber\\
    & = n \int_{[-\pi, \pi)^d} \Biggl( \frac{\taub^{(n)\prime}} {2\sqrt{n}} \nabla \log \left( f_{\mub^{(n)}, \boldsymbol{0}}(\thetab) \right) - \frac{f^{1/2}_{\mub^{(n)} + n^{-1/2} \taub^{(n)}_{\mu}, n^{-1/2} \taub^{(n)}_{\lambda}}(\thetab)}{f^{1/2}_{\mub^{(n)}, \boldsymbol{0}}(\thetab)} + 1 \Biggr)^2 f_{\mub^{(n)}, \boldsymbol{0}}(\thetab) \ {\mathrm d} \thetab \nonumber\\
    & = n \int_{[-\pi, \pi)^d} \left( f^{1/2}_{\mub^{(n)} + n^{-1/2} \taub^{(n)}_{\mu}, n^{-1/2} \taub^{(n)}_{\lambda}}(\thetab) - f^{1/2}_{\mub^{(n)}, \boldsymbol{0}}(\thetab) - \frac{\taub^{(n)\prime}}{2\sqrt{n}} \frac{\nabla f_{\mub^{(n)}, \boldsymbol{0}}(\thetab)} {f^{1/2}_{\mub^{(n)}, \boldsymbol{0}}(\thetab)} \right)^2 {\mathrm d} \thetab \nonumber \\
    & = n \int_{[-\pi, \pi)^d} \left( f^{1/2}_{\mub^{(n)} + n^{-1/2} \taub^{(n)}_{\mu}, n^{-1/2} \taub^{(n)}_{\lambda}} (\thetab) - f^{1/2}_{\mub^{(n)}, \boldsymbol{0}}(\thetab) - \frac{\taub^{(n)\prime}} {\sqrt{n}} \nabla f^{1/2}_{\mub^{(n)}, \boldsymbol{0}}(\thetab) \right)^2 {\mathrm d} \thetab.
\end{align}
Note that in these calculations the gradient is taken with respect to $(\mub',\lambdab')'$ and then evaluated at $(\mub^{(n)\prime}, \boldsymbol{0}')'$, but for the sake of readability we used a simplified notation.
By Proposition \ref{prop: QMD} and Assumption \ref{assumption_on_f0}, $\varthetab\mapsto f_{\varthetab}(\thetab)$ is QMD  at $(\mub^{(n)\prime}, \boldsymbol{0}')'$, which entails that
$\eqref{eq: condition_1} = o(n^{-1})$. 

\textbf{(ii)} Reminding that the Fisher information matrix is the variance of the central sequence, we get
\begin{equation*}
    \E \left[ \sum_{t=1}^n \left( \zeta_t^{(n)} \right)^2 \right]
    = n \int_{[-\pi, \pi)^d} \left( \zeta_t^{(n)} \right)^2 f_{\mub^{(n)}, \boldsymbol{0}}(\thetab) {\mathrm d} \thetab
    = \frac{1}{4} \left( \uptau^{(n)} \right)^2
\end{equation*}
and, by assumption, $\sup_n\left( \uptau^{(n)} \right)^2 < \infty$. 

\textbf{(iii)} By definition, \begin{align*}
    \left| \zeta_t^{(n)} \right| 
    &= \frac{1}{2\sqrt{n}} \Biggl| \sum_{j=1}^d \tau_j^{(n)} \phi^{f_0}_j(\thetab_{t} - \mub^{(n)})
    + \sum_{j=d+1}^{2d} \tau_j^{(n)} \sin(\theta_{tj}-\mu_j^{(n)}) \Biggr|\\
    &\leq \frac{1}{2\sqrt{n}} \Biggl| \sum_{j=1}^d \tau_j^{(n)} \phi^{f_0}_j(\thetab_{t} - \mub^{(n)})\Biggr|
    + \frac{1}{2\sqrt{n}} \Biggr|\sum_{j=d+1}^{2d} \tau_j^{(n)} \sin(\theta_{tj}-\mu_j^{(n)}) \Biggr|.
\end{align*}
Since $\taub^{(n)}$ is bounded and $\sin(x) \in \left[ -1,1 \right]$, the second summand goes to zero as $n\to\infty$. For any $\thetab_{t}, \mub \in \left[-\pi, \pi \right)^d$, since $f_0(\cdot) > 0$ a.e. and continuous, $\phi^{f_0}_j(\thetab_{t} - \mub^{(n)})$ is finite for $j=1, \ldots, d$, implying that the first term also converges to 0. 
Consequently  $\left| \zeta_t^{(n)} \right| \rightarrow 0$ as $n\to\infty$.

\textbf{(iv)}
Bearing in mind that the variance of the central sequence is the Fisher information matrix $\Gamma_{f_0}$, we get by the Law of Large Numbers that
\begin{align*}
    \sum_{t=1}^n \left( \zeta_t^{(n)} \right)^2 
    =\frac{1}{4n} \taub^{(n)\prime}\sum_{t=1}^n
   \Delta_{f_0; t}^{(n)}(\mub^{(n)})\left(\Delta_{f_0; t}^{(n)}(\mub^{(n)})\right)'
    \taub^{(n)} \\
    = \frac{1}{4}\taub^{(n)\prime}
    \Gamma_{f_0}
     \taub^{(n)}  + o_P(1)
    = \frac{\left( \uptau^{(n)} \right)^2}{4} + o_P(1)
\end{align*}
as $n\to\infty$.

\textbf{(v)} 
Using the same argument as in \textbf{(iii)}, $\left| \zeta_t^{(n)} \right| \rightarrow 0$ as $n\rightarrow\infty$, so there exists $M\in\N$ such that, $\forall n > M, \left| \zeta_t^{(n)} \right| \leq \frac{1}{2}$. Consequently 
\begin{align*}
    \lim_{n\rightarrow\infty} \sum_{t=1}^n \E & \left\{ \left( \zeta_t^{(n)} \right)^2 I \left[ \left| \zeta_t^{(n)} \right| > \frac{1}{2} \right] \Bigg| \mathcal{A}^{(n)}_{t-1} \right\} \\
    & = \lim_{n\rightarrow\infty} \sum_{t=1}^n \E \left\{ \left( \zeta_t^{(n)} \right)^2 I \left[ \left| \zeta_t^{(n)} \right| > \frac{1}{2} \right] \right\} \\
    & = \lim_{n\rightarrow\infty} \sum_{t=1}^n \E \left\{ \left( \frac{1}{2\sqrt{n}} \taub^{(n)} \Delta_{f_0; t}^{(n)}(\mub^{(n)}) \right)^2 I \left[ \left| \zeta_t^{(n)} \right| > \frac{1}{2} \right] \right\} \\
    & = \lim_{n\rightarrow\infty} \sum_{t=1}^n \frac{1}{4n} \E \left\{ \left( \taub^{(n)} \Delta_{f_0;t}^{(n)}(\mub^{(n)}) \right)^2 I \left[ \left| \zeta_t^{(n)} \right| > \frac{1}{2} \right] \right\} \\
    & = \lim_{n\rightarrow\infty} \frac{1}{4} \E \left\{ \left( \taub^{(n)} \Delta_{f_0;1}(\mub) \right)^2 I \left[ \left| \zeta_t^{(n)} \right| > \frac{1}{2} \right] \right\} \\
    & = \frac{1}{4} \E \left\{ \lim_{n \rightarrow \infty} \left( \taub^{(n)} \Delta_{f_0}(\mub) \right)^2 I \left[ \left| \zeta_t^{(n)} \right| > \frac{1}{2} \right] \right\} = 0,
\end{align*}
where in the third last line the dependence on $n$ in $\Delta_{f_0;1}^{(n)}(\mub\n)$ only comes from $\mub\n$ which can be replaced by $\mub$ by a simple change of variables and where in the last line the Dominated Convergence Theorem allows us to enter the limit inside the integral since $\left( \taub^{(n)} \Delta_{f_0;1}(\mub) \right)^2 I \left[ \left| \zeta_t^{(n)} \right| > \frac{1}{2} \right]\leq \sup_n \left| \taub^{(n)} \right|^2\Delta_{f_0;1}(\mub)$ which is integrable.

\textbf{(vi)} By independence we readily have $
    \E \left[ \zeta_t^{(n)} | \mathcal{A}^{(n)}_{t-1} \right] 
    = \E \left[ \zeta_t^{(n)} \right]
    = 0.$
    
\textbf{(vii)} We have
\begin{align*}
    \sum_{t=1}^n \E \left[ \left( \xi_t^{(n)} \right)^2 + 2\xi_t^{(n)} | \mathcal{A}^{(n)}_{t-1}\right]
    & = \sum_{t=1}^n \E \left[ \frac{f_{\mub^{(n)} + n^{-1/2} \taub^{(n)}_{\mu}, n^{-1/2} \taub^{(n)}_{\lambda}}(\thetab_{t})}{f_{\mub^{(n)}, \boldsymbol{0}}(\thetab_{t})} - 1 \right] \\
    & = n \int_{[-\pi, \pi)^d} \left( f_{\mub^{(n)} + n^{-1/2} \taub^{(n)}_{\mu}, n^{-1/2} \taub^{(n)}_{\lambda}}(\thetab) - f_{\mub^{(n)}, \zerob}(\thetab) \right) {\mathrm d} \thetab \\
    & = 0.
\end{align*}
\end{proof}

\subsection{Proof of Theorem~\ref{thm: optimality specified median}}\label{appendix: proof optimality specified}

\begin{proof}
\textbf{(i)} Using the Central Limit Theorem (CLT), $\Delta_{\lambda}^{(n)}(\mub) \xrightarrow{\mathcal{D}} \mathcal{N}_d\left( \boldsymbol{0}, \Gamma_{f_0; \lambda} \right)$ as $n\rightarrow\infty$.
So, for $Z \sim \mathcal{N}_d\left( \boldsymbol{0}, I_d \right)$, with $I_d$ being the $d\times d$ identity matrix, and using properties of $o_P(\cdot)$, we have that
\begin{align}
    Q^{\ast(n); \mub} 
    & = \left(\Delta_{\lambda}^{(n)}(\mub)\right)' \left(\hat{\Gamma}_{\lambda} \right)^{-1} \Delta_{\lambda}^{(n)}(\mub) \nonumber \\
    & = \left(\Delta_{\lambda}^{(n)}(\mub)\right)' \left(\Gamma_{f_0; \lambda} \right)^{-1} \Delta_{\lambda}^{(n)}(\mub) + o_P(1) \label{contig}\\
    & = \left(\Delta_{\lambda}^{(n)}(\mub)\right)' \left(\Gamma_{f_0; \lambda} \right)^{-1/2} \left(\Gamma_{f_0; \lambda} \right)^{-1/2} \Delta_{\lambda}^{(n)}(\mub) + o_P(1) \nonumber\\
    & = Z' Z + o_P(1) \nonumber
\end{align}
as $n\rightarrow\infty$, which implies the desired convergence.

\textbf{(ii)} Consider the quantity
\begin{align*}
    \Lambda'_d :& = \log \frac{dP^{(n)}_{\mub, n^{-1/2} \taub^{(n)}_\lambda; f_0}}{dP^{(n)}_{\mub,\boldsymbol{0};f_0}} 
    = \left(\taub^{(n)}_\lambda \right)'
    \Delta_{\lambda}^{(n)}(\mub) - \frac{1}{2}
    \left(\taub^{(n)}_\lambda \right)'
    \Gamma_{f_0; \lambda}
    \taub^{(n)}_\lambda 
    + o_P(1)
\end{align*}
as $n\rightarrow\infty$.
Under $P^{(n)}_{\mub,\boldsymbol{0};f_0}$, we, firstly, deduce from Proposition~\ref{prop: ULAN} via a simple projection that 
$
    \Lambda'_d \xrightarrow{\mathcal{D}} \mathcal{N} \Bigl(- \frac{1}{2}
    \taub'_\lambda
    \Gamma_{f_0; \lambda}
    \taub_\lambda , \ 
    \taub'_\lambda
    \Gamma_{f_0; \lambda}
    \taub_\lambda \Bigr)
$ 
and, secondly, we readily obtain that
    $\text{Cov}\Bigl(\Delta_{\lambda}^{(n)}(\mub), \Lambda'_d \Bigr)
    \xrightarrow{n \rightarrow\infty} 
    \Gamma_{f_0; \lambda}
    \taub_\lambda$
as $n\rightarrow\infty$, and so 
\begin{equation*}
    \begin{pmatrix}
        \Delta_{\lambda}^{(n)}(\mub) \\ \Lambda'_d
    \end{pmatrix}
        \xrightarrow{\mathcal{D}} \mathcal{N}_{d+1} \Biggl( 
        \begin{pmatrix}
            \boldsymbol{0}_d \\ - \frac{1}{2}
    \taub'_\lambda
    \Gamma_{f_0; \lambda}
    \taub_\lambda
        \end{pmatrix}, \quad 
    \begin{pmatrix}
        \Gamma_{f_0; \lambda} & \Gamma_{f_0; \lambda}
    \taub_\lambda \\
    \taub'_\lambda\Gamma_{f_0; \lambda}
     & \taub'_\lambda \Gamma_{f_0; \lambda}
    \taub_\lambda
    \end{pmatrix}
        \Biggr)
\end{equation*}
as $n\rightarrow\infty$. 
Now, since $P^{(n)}_{\mub, 0; f_0}$  and $P^{(n)}_{\mub, n^{-1/2} \taub^{(n)}_\lambda; f_0}$ are mutually contiguous, using the \textit{Third Le Cam Lemma}, which can be found in \cite{verdebout_modern_2017} (Proposition 5.2.2) and holds thanks to the ULAN property, we obtain
\begin{equation*}
    \Delta_{\lambda}^{(n)}(\mub)
    \xrightarrow{\mathcal{D}} \mathcal{N}_{d}
    \left( 
    \Gamma_{f_0; \lambda}
    \taub_\lambda , 
    \Gamma_{f_0; \lambda}
    \right)
\end{equation*}
under $P^{(n)}_{\mub, n^{-1/2} \taub^{(n)}_\lambda; f_0}$ as $n \rightarrow \infty$. Also by contiguity, \eqref{contig} holds, and by setting $W = \hat{\Gamma}_{\lambda}^{-1/2} \Delta_{\lambda}^{(n)}(\mub)$, we have
$W \xrightarrow{\mathcal{D}} \mathcal{N}_d \left( \Gamma_{f_0; \lambda}^{1/2}
    \taub_\lambda, 
    I_d \right)$ as $n \rightarrow \infty$. This  entails that, as $n \rightarrow \infty$, 
\begin{equation*}
    Q^{\ast(n);\mub}
    = W' W 
    \xrightarrow{\mathcal{D}} 
    \chi^2_d(\kappa),
\end{equation*}
where $\kappa = (\Gamma_{f_0; \lambda}^{1/2}
    \taub_\lambda)' \Gamma_{f_0; \lambda}^{1/2}
    \taub_\lambda = \taub'_\lambda \Gamma_{f_0; \lambda} \taub_\lambda$.

\textbf{(iii)} In (i) we proved that $Q^{\ast(n);\mub} = Q^{(n);\mub}_{f_0} + o_P(1)$ asymptotically, so the result follows from the optimality of the $f_0$-parametric test $\phi^{(n); \mu}_{f_0}$. The $f_0$-universal optimality is obtained directly thanks to the fact that $Q^{\ast(n);\mub}$ does not depend on $f_0$.
\end{proof}

\subsection{Proof of Theorem~\ref{thm: steins method specified}} \label{appendix: proof thm steins specified}
\begin{proof}[Proof of Theorem~\ref{thm: steins method specified}]
Using the triangle inequality,
\begin{align}
    \left| \E \left[ h(Q^{\ast(n); \mub}) \right] - \E \left[ h\left(\chi^2_d\right) \right] \right| 
    \ \leq \ & \left|\E \left[ h(Q_{f_0}^{(n); \mub}) \right] -\E \left[ h\left(\chi^2_d\right) \right] \right| \label{eq: steins method parametric specified} \\
    & + \left|\E \left[ h(Q^{\ast(n); \mub}) \right] -\E \left[ h(Q_{f_0}^{(n); \mub}) \right] \right|. \label{eq: difference of statistics specified}
\end{align}
The quantity in \eqref{eq: steins method parametric specified} can be bounded using Theorem 2.4 of \cite{gaunt_rate_2023} by taking in their result the even function $g(\boldsymbol{W}) = \boldsymbol{W}' \boldsymbol{W}$, where $\boldsymbol{W} = (W_1, \ldots, W_d)'$ and $W_j = \frac{1}{\sqrt{n}} \sum_{l=1}^n X_{lj}$ for $j=1,\ldots,d$, with $X_{lj}$ as defined in the statement of Theorem~\ref{thm: steins method specified}.
For the constants used in \cite{gaunt_rate_2023}, it can be derived that in our case $A = 2^{3}$, $B_1 = \cdots = B_d = 2^{6}$ and $r_d = 6$.
The bound obtained is 
\begin{equation}\label{eq: difference of statistics specified bound}
    \left|\E \left[ h(Q^{\ast(n); \mub}) \right] -\E \left[ h(Q_{f_0}^{(n); \mub}) \right] \right| \leq C_1/n
\end{equation}
with $C_1$ as in the statement of Theorem \ref{thm: steins method specified}. 

The expression in \eqref{eq: difference of statistics specified} requires more work. 
For ease of presentation, for the rest of the proof we denote $Q^{\ast(n); \mub}$ by $Q^{\ast}$ and $Q_{f_0}^{(n); \mub}$ by $Q$.
Firstly, from a first-order Taylor expansion, we have that $h(Q^{\ast}) = h(Q) + \left( Q^{\ast} - Q \right) h^{(1)} (\tilde{Q})$, where $\tilde{Q}$ is between $Q^{\ast}$ and $Q$. Therefore, \eqref{eq: difference of statistics specified} can be written as 
\begin{equation*}
    \left|\E \left[ h(Q^{\ast}) \right] -\E \left[ h(Q) \right] \right|
    = \left|\E \left[ \left( Q^{\ast} - Q \right) h^{(1)} (\tilde{Q}) \right] \right|
    \leq \lVert h^{(1)} \rVert \E \left| Q^{\ast} - Q \right|.
\end{equation*}
So, our aim is to find an upper bound for $\E \left| Q^{\ast} - Q \right|$. 
For $\Gamma_{f_0; \lambda}$ and $\hat{\Gamma}_{\lambda}$ as defined in \eqref{eq: def: central sequence and FI for known median} and \eqref{eq: Fisher Information specified estimate}, respectively, denote by
\begin{equation*}
    \hat{\Gamma}_{\lambda}^{-1}  = 
    \begin{pmatrix}
        \hat{\gamma}_{11} & \hdots & \hat{\gamma}_{1d} \\
        \vdots & \ddots & \vdots \\
        \hat{\gamma}_{d1} & \hdots & \hat{\gamma}_{dd}
    \end{pmatrix}
    \quad \text{and} \quad
    \Gamma_{f_0; \lambda}^{-1}  = 
    \begin{pmatrix}
        {\gamma}_{11} & \hdots & {\gamma}_{1d} \\
        \vdots & \ddots & \vdots \\
        {\gamma}_{d1} & \hdots & {\gamma}_{dd}
    \end{pmatrix}
\end{equation*}
as in the statement of the theorem.
Then
\begin{equation*}
    Q^{\ast} = \frac{1}{n} \sum_{i=1}^d \sum_{j=1}^d \hat{\gamma}_{ij} \sum_{k=1}^n \sum_{\ell=1}^n \sin (\theta_{ik} - \mu_i)\sin (\theta_{j\ell} - \mu_j)
\end{equation*}
and, similarly,
\begin{equation*}
    Q = \frac{1}{n} \sum_{i=1}^d \sum_{j=1}^d \gamma_{ij} \sum_{k=1}^n \sum_{\ell=1}^n \sin (\theta_{ik} - \mu_i)\sin (\theta_{j\ell} - \mu_j).
\end{equation*}
Using the triangle inequality and the Cauchy-Schwarz inequality leads to
\begin{align}
    & \E \left| Q^{\ast} - Q \right| 
\leq \frac{1}{n} \sum_{i=1}^d \sum_{j=1}^d \E \left| \left(\hat{\gamma}_{ij} - \gamma_{ij}\right)\sum_{k=1}^n \sum_{\ell=1}^n \sin (\theta_{ik} - \mu_i)\sin (\theta_{j\ell} - \mu_j) \right| \nonumber\\
    & \leq \frac{1}{n} \sum_{i=1}^d \sum_{j=1}^d \left(\E \left[ \left(\hat{\gamma}_{ij} - \gamma_{ij}\right)^2 \right]  \E \left[ \left( \sum_{k=1}^n \sum_{\ell=1}^n \sin (\theta_{ik} - \mu_i)\sin (\theta_{j\ell} - \mu_j) \right) ^2 \right] \right)^{1/2} \label{eq: stein d cauchy schwarz}.
\end{align}
A long but straightforward calculation yields
\begin{align}
\E & \left[ \left( \sum_{k=1}^n \sum_{\ell=1}^n \sin (\theta_{ik} - \mu_i)\sin (\theta_{j\ell} - \mu_j) \right) ^2 \right] \nonumber\\
= & \ n \ \E \left[ \sin^2\left( \theta_{i1} - \mu_i \right) \sin^2\left( \theta_{j1} - \mu_j \right) \right] + 2n(n-1) \Bigl( \E \left[ \sin\left( \theta_{i1} - \mu_i \right) \sin\left( \theta_{j2} - \mu_j \right) \right] \Bigr)^2 \nonumber\\
& \hspace{1cm} + n(n-1) \E \left[ \sin^2\left( \theta_{i1} - \mu_i \right) \right] \E \left[ \sin^2\left( \theta_{j2} - \mu_j \right) \right] \label{eq: steins lemma second term}.
\end{align}
The statement of the theorem follows by combining \eqref{eq: difference of statistics specified bound}, \eqref{eq: stein d cauchy schwarz} and \eqref{eq: steins lemma second term}.
\end{proof}

\subsection{Proof of Lemma~\ref{lemma: rate of diff of gammas}} \label{appendix: proof lemma steins specified}

\begin{proof}[Proof of Lemma~\ref{lemma: rate of diff of gammas}]
For $i,j\in\{1,\ldots,d\}$ it holds that $\hat{\gamma}_{ij} = \hat{\gamma}_{ji} = (-1)^{i+j}{\hat{d}_{ij}^{\lambda_d}} /{\hat{d}^{\lambda_d}}$, ${\gamma}_{ij} = {\gamma}_{ji} = (-1)^{i+j}{d_{ij}^{\lambda_d}}/{d^{\lambda_d}}$ for
\begin{align}
    \hat{d}^{\lambda_d} = \det\left( 
    \hat{\Gamma}_{\lambda}
    \right), \qquad & \qquad \hat{d}_{ij}^{\lambda_d} = \det \left( 
    \left( \hat{\Gamma}_{\lambda} \right)_{(ij)}
    \right), \label{eq: det est removing}\\
    {d}^{\lambda_d} = \det\left( 
    \Gamma_{f_0; \lambda}
    \right), \qquad & \qquad {d}_{ij}^{\lambda_d} = \det \left( 
    \left( \Gamma_{f_0; \lambda} \right)_{(ij)}
    \right), \label{eq: det removing}
\end{align}
where $A_{(ij)}$ denotes the matrix $A$ after removing the $i^{th}$ row and $j^{th}$ column.
Notice that we can write
\begin{equation*}
    \frac{\hat{d}^{\lambda_d}_{ij}}{\hat{d}^{\lambda_d}} 
    = \frac{\hat{d}^{\lambda_d}_{ij} - d^{\lambda_d}_{ij}}{\hat{d}^{\lambda_d}} + \frac{d^{\lambda_d}_{ij}}{\hat{d}^{\lambda_d}} 
\end{equation*}
and applying the Taylor expansion 
to $1/{\hat{d}^{\lambda_d}}$ about ${d^{\lambda_d}}$ along with straightforward manipulations gives us
\begin{align} \label{eq: steins_method_taylor_expansion_determinant}
    \frac{\hat{d}^{\lambda_d}_{ij}}{\hat{d}^{\lambda_d}} 
    = \frac{d^{\lambda_d}_{ij}}{d^{\lambda_d}} 
    + \frac{\hat{d}^{\lambda_d}_{ij} - d^{\lambda_d}_{ij}}{d^{\lambda_d}} 
    - \frac{d^{\lambda_d}_{ij} \left( \hat{d}^{\lambda_d} - d^{\lambda_d} \right)}{\left(d^{\lambda_d}\right)^2} 
    - \frac{\left( \hat{d}^{\lambda_d}_{ij} - d^{\lambda_d}_{ij} \right) \left( \hat{d}^{\lambda_d} - d^{\lambda_d} \right)}{\left(d^{\lambda_d} \right)^2}
    + o\left( \left| \hat{d}^{\lambda_d} - d^{\lambda_d} \right| \right).
\end{align}
Therefore, using also that $(\sum_{i=1}^{4}\alpha_i)^2 \leq 4\sum_{i=1}^{4}\alpha_i^2$, for any $\alpha_i \in \mathbb{R}$, yields
\begin{align}\label{mid_step_equation}
\nonumber \E \left[ \left( \hat{\gamma}_{ij} - \gamma_{ij} \right)^2 \right] 
    = & \ \E \left[\left( \frac{\hat{d}^{\lambda_d}_{ij}}{\hat{d}^{\lambda_d}} - \frac{d^{\lambda_d}_{ij}}{d^{\lambda_d}} \right)^2 \right] \\
    \leq & \ 4 \Biggl( \frac{1}{\left(d^{\lambda_d}\right)^2} \E \left[\left(\hat{d}^{\lambda_d}_{ij} - d^{\lambda_d}_{ij} \right)^2\right] 
    + \frac{\left(d^{\lambda_d}_{ij} \right) ^2}{\left(d^{\lambda_d}\right)^4} \E \left[ \left( \hat{d}^{\lambda_d} - d^{\lambda_d} \right)^2 \right] \\
    & + \frac{1}{\left(d^{\lambda_d}\right)^4} \E \left[ \left( \hat{d}^{\lambda_d}_{ij} - d^{\lambda_d}_{ij} \right)^2 \left( \hat{d}^{\lambda_d} - d^{\lambda_d} \right)^2 \right] + o \left( \E \left[ \left( \hat{d}^{\lambda_d} - d^{\lambda_d} \right)^2 \right] \right) \Biggr). \nonumber
\end{align}
We will focus on proving that $\E \left[\left(\hat{d}^{\lambda_d}_{ij} - d^{\lambda_d}_{ij} \right)^2\right] = O \left(\frac{1}{n} \right)$, as the result follows from there. Indeed, apart from showing the order of the first term of the bound in \eqref{mid_step_equation}, such a result yields two more outcomes. Firstly, it also implies that the second term is $O\left(\frac{1}{n}\right)$ because $\hat{d}^{\lambda_d}$ and $d^{\lambda_d}$ can be expressed as $\hat{d}^{\lambda_{d+1}}_{d+1,d+1}$ and $ d^{\lambda_{d+1}}_{d+1,d+1}$, respectively. Secondly, showing that $\E \left[\left(\hat{d}^{\lambda_d}_{ij} - d^{\lambda_d}_{ij} \right)^2\right] = O \left(\frac{1}{n} \right)$ means that the last two terms of the bound in \eqref{mid_step_equation} can be upper-bounded by an $O\left( \frac{1}{n} \right)$ term, which then proves
\eqref{eq: rate of diff of gammas}.

\noindent The focus is put on the order of the determinant, and not on its exact expression, because the last term of \eqref{mid_step_equation} already prevents us from calculating an explicit constant for the bound of \eqref{eq: rate of diff of gammas}.
The determinant of a $d\times d$ matrix is given by $d!$ summands, each being a product of $d$ elements of the matrix.
For $\alpha, \beta \in \{1,\ldots,d\}$
we can write \eqref{eq: det removing} as 
\begin{equation}
\label{eq:d_ab}
    d^{\lambda_d}_{\alpha \beta} = \sum_{k = 1}^{(d-1)!} \prod_{i=1}^{d-1} (-1)^{k} J_{ik}
\end{equation}
where $J_{ik} = I^{f_0}_{\lambda_{h_k(i)} \lambda_{\ell_k(i)}}$, with $h_k(i)$ and $\ell_k(i)$ being (possibly different) linear functions of $i$ that take values in $\{1,\ldots,d\}$. 
These functions are of the form $\{c\pm i\}\mod d + 1$ for $c\in \{1,\ldots,d\}$.
Similarly, \eqref{eq: det est removing} can be written as 
\begin{align*}
    \hat{d}^{\lambda_d}_{\alpha \beta} 
    = & \sum_{k=1}^{(d-1)!} \prod_{i = 1}^{d-1} \frac{1}{n} \sum_{j=1}^n (-1)^{k} \hat{J}_{ik;j}
    = \ \frac{1}{n^{d-1}} \sum_{k = 1}^{(d-1)!}
    \sum\limits_{ \substack{ \scriptscriptstyle{j_\eta=1} \\ \scriptscriptstyle{\eta \in B} }}^n \prod_{i \in B} (-1)^{k} \hat{J}_{ik;j_\eta}
\end{align*}
where $\hat{J}_{ik;j} = \sin(\theta_{h_k(i) j} - \mu_{h_k(i)}) \sin(\theta_{\ell_k(i) j} - \mu_{\ell_k(i)})$ and $B = \{1,\ldots,d-1\}$.
It holds that $\E[\hat{J}_{ik;j}] = J_{ik}$ for $j=1,\ldots,n$.
These general expressions for the determinant allow us to calculate the necessary orders without using the explicit expressions of $h_k(i)$ and $\ell_k(i)$.
Note that the sum over $\eta$ and $j_\eta$ is a product of $(d-1)$ sums of $n$ summands each.
Define $B[-i]$ to be the set $B$ without the $i^\text{th}$ element, then,
\begin{align}
    & \E \left[\left(\hat{d}^{\lambda_d}_{\alpha\beta} - d^{\lambda_d}_{\alpha\beta} \right)^2\right] \nonumber\\
    & = \ \E \left[ \left( \frac{1}{n^{d-1}} \sum_{k = 1}^{(d-1)!} \sum\limits_{ \substack{ \scriptscriptstyle{j_\eta=1} \\ \scriptscriptstyle{\eta \in B}}}^n \left( \prod_{i\in B} (-1)^k \hat{J}_{ik;j_\eta} 
    - \frac{d^{\lambda_d}_{\alpha\beta}}{(d-1)!} \right) \right)^2 \right] \nonumber\\
    & = \ \frac{1}{n^{2d-2}} \sum_{k_1,k_2 = 1}^{(d-1)!} 
    \sum\limits_{ \substack{ \scriptscriptstyle{j_{\eta_1},l_{\eta_2}=1} \\ \scriptscriptstyle{\eta_1,\eta_2 \in B}}}^n \E \Biggl[ \left( \prod_{i\in B} (-1)^k \hat{J}_{ik_1;j_{\eta_1}} 
    - \frac{d^{\lambda_d}_{\alpha\beta}}{(d-1)!} \right) 
    \left( \prod_{i\in B} (-1)^k \hat{J}_{ik_2;l_{\eta_2}} 
    - \frac{d^{\lambda_d}_{\alpha\beta}}{(d-1)!} \right) \Biggr] \nonumber\\
    & = \ \frac{1}{n^{2d-2}} \sum_{k_1,k_2 = 1}^{(d-1)!} 
    \sum\limits_{ \substack{\scriptscriptstyle{j_{\eta_1},l_{\eta_2}=1} \\ 
    \scriptscriptstyle{\eta_1, \eta_2 \in B} \\ 
    \scriptscriptstyle{l_1 \neq l_{2}}}}^n
    \E \Biggl[ \left( \prod_{i\in B} (-1)^k \hat{J}_{ik_1;j_{\eta_1}} 
    - \frac{d^{\lambda_d}_{\alpha\beta}}{(d-1)!} \right) 
    \left( \prod_{i\in B} (-1)^k \hat{J}_{ik_2;l_{\eta_2}} 
    - \frac{d^{\lambda_d}_{\alpha\beta}}{(d-1)!} \right) \Biggr] \nonumber \\
    & \hspace{0.2cm} + \frac{1}{n^{2d-2}} \sum_{k_1,k_2 = 1}^{(d-1)!}
    \sum\limits_{\substack{ \scriptscriptstyle{j_{\eta_1},l_{\eta_2}=1} \\ \scriptscriptstyle{\eta_1 \in B} \\ \scriptscriptstyle{\eta_2 \in B[-1]} \\ \scriptscriptstyle{l_{1} = l_{2}}}}^n
    \E \Biggl[ \left( \prod_{i\in B} (-1)^k \hat{J}_{ik_1;j_{\eta_1}} 
    - \frac{d^{\lambda_d}_{\alpha\beta}}{(d-1)!} \right)
    \left( \prod_{i\in B} (-1)^k \hat{J}_{ik_2;l_{\eta_2}} 
    - \frac{d^{\lambda_d}_{\alpha\beta}}{(d-1)!} \right) \Biggr] \label{eq: expectation_order_1n_term} 
\end{align}
The second term in \eqref{eq: expectation_order_1n_term} is the product of $2d-3$ sums from $1$ up to $n$. 
So, this term is $O\left(\frac{1}{n}\right)$. 
Similarly, for all other terms that involve indices that are equal to each other, we see that they can contain at most $2d-3$ sums from $1$ up to $n$ and so are at most $O(\frac{1}{n})$. 
It remains to consider the case where all indices are not equal to each other. To this end, defining 
\begin{equation} \label{eq: set_c}
    C := \left\{(j_{\eta_1}, l_{\eta_2}): \eta_1, \eta_2 \in B, 
    \hspace{-1cm}\begin{array}{c}
       j_{1} \in \{1,\ldots,n\}, \\
       j_{2} \in \{1,\ldots,n\} \setminus \left\{j_{1}\right\}, \\
       \vdots \\
       j_{d-1} \in \{1,\ldots,n\} \setminus \left\{j_{1}, \ldots, j_{d-2}\right\}  \\
        l_{1} \in \{1,\ldots,n\} \setminus \left\{j_{1}, \ldots, j_{d-1} \right\}, \\ 
        \vdots \\
        l_{d-1} \in \{1,\ldots,n\} \setminus \left\{j_{1}, \ldots, j_{d-1}, l_{1}, \ldots, l_{d-2}\right\} 
    \end{array}
    \right\},
\end{equation}
we have that
\begin{align} \label{eq: bounding multivariate det}
    & \E \left[\left(\hat{d}^{\lambda_d}_{\alpha\beta} - d^{\lambda_d}_{\alpha\beta} \right)^2\right] \nonumber \\
    & = \ \frac{1}{n^{2d-2}} \sum_{k_1,k_2 = 1}^{(d-1)!} \sum_{j_{\eta_1}, l_{\eta_2} \in C} \ \E \Biggl[ \left( \prod_{i\in B} (-1)^k \hat{J}_{ik_1;j_{\eta_1}}
    - \frac{d^{\lambda_d}_{\alpha\beta}}{(d-1)!} \right) \nonumber \\ 
    & \hspace{1cm} \times \left( \prod_{i\in B} (-1)^k \hat{J}_{ik_2;l_{\eta_2}} 
    - \frac{d^{\lambda_d}_{\alpha\beta}}{(d-1)!} \right) \Biggr] 
    + O\left( \frac{1}{n} \right) \nonumber \\
    & = \ \frac{1}{n^{2d-2}} \sum_{k_1,k_2 = 1}^{(d-1)!} \sum_{j_{\eta_1}, l_{\eta_2} \in C} \left( \prod_{i\in B} (-1)^k \E \left[\hat{J}_{ik_1;j_{\eta_1}}\right] 
    - \frac{d^{\lambda_d}_{\alpha\beta}}{(d-1)!} \right) \nonumber \\ 
    & \hspace{4.5cm} \times \left( \prod_{i\in B} (-1)^k \E \left[ \hat{J}_{ik_2;l_{\eta_2}} \right] 
    - \frac{d^{\lambda_d}_{\alpha\beta}}{(d-1)!} \right) + O\left( \frac{1}{n} \right) \nonumber \\
    & = \ \frac{n(n-1) \ldots (n-2d+3)}{n^{2d-2}} \left( \sum_{k = 1}^{(d-1)!} \prod_{i\in B} (-1)^k J_{ik} 
    - d^{\lambda_d}_{\alpha\beta} \right)^2 + O\left( \frac{1}{n} \right) \nonumber \\
    & = O\left( \frac{1}{n} \right),
\end{align}
where the last equality is a result of the definition of $d^{\lambda_d}_{\alpha\beta}$ as in \eqref{eq:d_ab}. Thus \eqref{eq: rate of diff of gammas} holds.
\end{proof}

\subsection{Proof of Proposition~\ref{prop: estimate quantities}} \label{appendix: proof estimate quantities}
\begin{proof}
As in Proposition \ref{prop: ULAN}, we set ${\mub}^{(n)} = \mub + n^{-1/2}\taub^{(n)}_\mu$ for a bounded sequence $\taub^{(n)}_\mu$ such that $\mub^{(n)}$ remains in $[-\pi, \pi)^d$. 
We first prove that \eqref{eq: prop estimate quantities eq1} holds.
For estimator $\hat{\mub}^{(n)}$ that satisfies Assumption \ref{assumption: mu_estimate}, using Lemma 4.4 from \cite{kreiss_adaptive_1987}, it suffices to show that 
\begin{equation*}
    \E_{g_0}\left[ \cos(\theta_j - \mu_j) \right] - \frac{1}{n}\sum_{i=1}^n \cos(\theta_{ji} - \mu^{(n)}_j) = o_P(1)
\end{equation*}
as $n\rightarrow\infty$ under $P^{(n)}_{\mub, \boldsymbol{0}; g_0}$.
By the law of large numbers, 
\begin{equation*}
    \E_{g_0}\left[ \cos(\theta_j - \mu_j) \right] - \frac{1}{n}\sum_{i=1}^n \cos(\theta_{ji} - \mu_j) = o_P(1)
\end{equation*}
as $n\rightarrow\infty$ under $P^{(n)}_{\mub, \boldsymbol{0}; g_0}$, so it remains to show that
\begin{equation*}
    S_n = \frac{1}{n}\sum_{i=1}^n \left( \cos(\theta_{ji} - \mu^{(n)}_j) -\cos(\theta_{ji} - \mu_j) \right) = o_P(1)
\end{equation*}
as $n\rightarrow\infty$ under $P^{(n)}_{\mub, \boldsymbol{0}; g_0}$.
By the definition of $\mu^{(n)}_j$, $\lim_{n\rightarrow\infty}\mu^{(n)}_j=\mu_j$ and so $\lim_{n\rightarrow\infty}\cos(\theta_{ji} - \mu^{(n)}_j)=\cos(\theta_{ji} - \mu_j)$.
Since the $\theta_{ji}$ are iid, using the triangle inequality
\begin{align*}
    \E_{g_0} \left[ \left| S_n \right| \right] 
    & \leq \frac{1}{n}\sum_{i=1}^n \E_{g_0} \left[ \left| \cos(\theta_{ji} - \mu^{(n)}_j) -\cos(\theta_{ji} - \mu_j) \right| \right] \\
    & = \E_{g_0} \left[ \left| \cos(\theta_{j} - \mu^{(n)}_j) -\cos(\theta_{ji} - \mu_j) \right| \right] 
\end{align*}
Now, since $\left| \cos(\theta_{j} - \mu^{(n)}_j) -\cos(\theta_{ji} - \mu_j) \right| \leq 2$, we can use dominated convergence to conclude that 
\begin{align*}
    \lim_n \E_{g_0} \left[ \left| S_n \right| \right] 
    \leq \E_{g_0} \left[ \lim_n \left| \cos(\theta_{j} - \mu^{(n)}_j) -\cos(\theta_{ji} - \mu_j) \right| \right] = 0 \\
    \Rightarrow \lim_{n\rightarrow\infty} \E_{g_0} \left[ \left| S_n \right| \right] = 0.
\end{align*}
Thus, we have convergence in the first moment, which implies convergence in probability.\\

\noindent Now, we prove \eqref{eq: prop estimate quantities eq2}. The proof follows along the same lines as for \eqref{eq: prop estimate quantities eq1} and relies on showing that 
\begin{equation*}
    \E_{g_0}\left[ \frac{\partial}{\partial\theta_k}\phi_j^{f_0} (\thetab - \mub)  \right] - \frac{1}{n}\sum_{i=1}^n \frac{\partial}{\partial\theta_k}\phi_j^{f_0}(\thetab_{i} - \mub^{(n)}) = o_P(1)
\end{equation*}
as $n\rightarrow\infty$ under $P^{(n)}_{\mub, \boldsymbol{0}; g_0}$. By the law of large numbers and the triangular inequality, this is obtained if

\begin{align*}
    \E \left[ \left| \frac{\partial}{\partial\theta_k}\left( \frac{\frac{\partial}{\partial\theta_j} f_0(\thetab - \mub)}{f_0(\thetab - \mub)}\right) -\frac{\partial}{\partial\theta_k}\left( \frac{\frac{\partial}{\partial\theta_j} f_0(\thetab - \mub^{(n)})}{f_0(\thetab - \mub^{(n)})}\right) \right| \right] 
\end{align*}
converges to zero under $P^{(n)}_{\mub, \boldsymbol{0}; g_0}$. 
By Assumption \ref{assumption_on_f0_unspecified_median}, $\sup_n \frac{\partial}{\partial\theta_1}\phi_1^{f_0}(\thetab - \mub^{(n)})$ is bounded on the bounded interval $[-\pi, \pi)$. The result follows by applying dominated convergence. 
\end{proof}

\subsection{Proof of Proposition~\ref{prop: unspecified_median_esimates}}\label{appendix: proof unspecified_median_esimates}
\begin{proof}
\noindent  For this proof, we need to define the following notation
    \begin{equation*}
        \Delta_{\lambda; i} \left( \mub \right) = 
        \begin{pmatrix}
            \sin(\theta_{1i} - \mu_1) \\ 
            \vdots \\
            \sin(\theta_{di} - \mu_d)
        \end{pmatrix} \quad \text{ and } \quad 
        \Delta_{f_0; \mu; i} \left( \mub \right)= 
        \begin{pmatrix}
            \phi_1^{f_0}(\thetab_{i} - \mub) \\ 
            \vdots \\
            \phi_d^{f_0}(\thetab_{i} - \mub) 
        \end{pmatrix}.
    \end{equation*} Let us now prove the two statements.
    
    (i) The result follows if we show that 
        \begin{equation}\label{eq: efficient_central_seq_res1}
        \Tilde{\Delta}_{f_0; \lambda}^{(n)\ast} (\hat{\mub}^{(n)}) - \Tilde{\Delta}_{f_0;g_0; \lambda}^{(n)} (\hat{\mub}^{(n)}) = o_P(1) \one_{d\times1}
        \end{equation}
        and 
        \begin{equation}\label{eq: efficient_central_seq_res2}
        \Tilde{\Delta}_{f_0;g_0; \lambda}^{(n)} (\hat{\mub}^{(n)}) - \Tilde{\Delta}_{f_0;g_0; \lambda}^{(n)} (\mub) = o_P(1) \one_{d\times1}
        \end{equation}
        under $P^{(n)}_{\mub, \boldsymbol{0}; g_0}$ as $n\rightarrow\infty$.
        Using Assumption~\ref{assumption: mu_estimate} and Lemma 4.4 from \cite{kreiss_adaptive_1987}, \eqref{eq: efficient_central_seq_res2} follows from 
        \begin{equation}\label{eq: efficient_central_seq_res3}
            \Tilde{\Delta}_{f_0;g_0; \lambda}^{(n)} (\mub^{(n)}) - \Tilde{\Delta}_{f_0;g_0; \lambda}^{(n)} (\mub) = o_P(1) \one_{d\times1}
        \end{equation}
        under $P^{(n)}_{\mub, \boldsymbol{0}; g_0}$ as $n\rightarrow\infty$ where, as defined in Proposition~\ref{prop: ULAN}, $\mub^{(n)} = \mub + n^{-1/2} \taub^{(n)}_\mu$. We will prove \eqref{eq: efficient_central_seq_res3} in what follows. From asymptotic linearity and the ULAN property, we know that
        \begin{equation}\label{eq: asymptotic_linearity_under_g0_lambda}    
            \Delta_{\lambda}^{(n)} (\mub^{(n)} ) - \Delta_{\lambda}^{(n)} (\mub )=-C^{g_0}_{\mu; \lambda}\taub_\mu + o_P(1)\one_{d\times1}
        \end{equation}
        under $P^{(n)}_{\mub, \boldsymbol{0}; g_0}$ as $n \rightarrow \infty$.
        Applying Taylor series expansion, the Law of Large Numbers and Slutsky's Lemma, we find that
        For some $\mub^\ast$ between $\mub^{(n)}$ and $\mub+n^{-1/2}\taub^{(n)}_\mu$,
        \begin{align} \label{eq: asymptotic_linearity_under_g0_mu}
           \Delta_{f_0; \mu}^{(n)} (\mub^{(n)})-\Delta_{f_0; \mu}^{(n)} (\mub)&= \frac{1}{\sqrt{n}}\sum_{i=1}^n\left(\Delta_{f_0;\mu;i}(\mub+n^{-1/2}\taub^{(n)}_\mu)-\Delta_{f_0;\mu;i}(\mub)\right) \nonumber \\
           &=\frac{1}{\sqrt{n}}\sum_{i=1}^n \left( \nabla_{\mub} \Delta_{f_0;\mu;i}( \mub) n^{-1/2}\taub^{(n)}_\mu + \nabla^2_{\mub} \Delta_{f_0;\mu;i} (\mub^\ast) \left( n^{-1/2}\taub^{(n)}_\mu \right)^2 \right) \nonumber \\
           & = \E_{g_0}[\nabla_{\mub} \Delta_{f_0;\mu;1}(\mub)] \taub_\mu + o_P(1) \one_{d\times1}
           = - C^{f_0; g_0}_{\mu; \mu}\taub_\mu + o_P(1) \one_{d\times1}
        \end{align}
        under $P^{(n)}_{\mub, \boldsymbol{0}; g_0}$ as $n \rightarrow \infty$. 
        Using \eqref{eq: asymptotic_linearity_under_g0_lambda} and \eqref{eq: asymptotic_linearity_under_g0_mu}, we  have
        \begin{align*}
            \Tilde{\Delta}_{f_0;g_0; \lambda}^{(n)} ({\mub}^{(n)}) 
            = \ & \Delta_{\lambda}^{(n)} ({\mub}^{(n)} ) - 
            C^{g_0}_{\mu; \lambda}
            \left( C^{f_0; g_0}_{\mu; \mu} \right)^{-1}
            \Delta_{f_0; \mu}^{(n)} ({\mub}^{(n)}) \\
            = \ & \Delta_{\lambda}^{(n)}(\mub) - 
            C^{g_0}_{\mu; \lambda}
            \taub_{\mu}  - 
            C^{g_0}_{\mu; \lambda}
            \left( C^{f_0; g_0}_{\mu; \mu} \right)^{-1} 
            \left[ \Delta_{f_0; \mu}^{(n)}(\mub) - 
            C^{f_0; g_0}_{\mu; \mu}
                \taub_{\mu} \right]+ o_P(1) \one_{d\times1}\\
            = \ & \Delta_{\lambda}^{(n)}(\mub) - 
            C^{g_0}_{\mu; \lambda}
            \left( C^{f_0; g_0}_{\mu; \mu} \right)^{-1}  \Delta_{f_0; \mu}^{(n)}(\mub) + o_P(1) \one_{d\times1}\\
            = \ & \Tilde{\Delta}_{f_0;g_0; \lambda}^{(n)} (\mub) + o_P(1) \one_{d\times1}
        \end{align*}
    under $P^{(n)}_{\mub, \boldsymbol{0}; g_0}$ as $n\rightarrow\infty$. 
    It remains to prove \eqref{eq: efficient_central_seq_res1}.
    It holds that
    \begin{align*}
        \Tilde{\Delta}_{f_0; \lambda}^{(n)\ast} & (\hat{\mub}^{(n)}) - \Tilde{\Delta}_{f_0;g_0; \lambda}^{(n)} (\hat{\mub}^{(n)} ) 
        = 
        - \left[\widehat{C}_{\mu; \lambda}
        \left( \widehat{C}^{f_0}_{\mu; \mu} \right)^{-1} -
        C^{g_0}_{\mu; \lambda}
            \left( C^{f_0; g_0}_{\mu; \mu} \right)^{-1}   \right] \Delta_{f_0; \mu}^{(n)} (\hat{\mub}^{(n)} ).
    \end{align*}
    By Assumption \ref{assumption: mu_estimate}, $\sqrt{n}\left(\hat{\mub}^{(n)} - \mub \right) = O_P(1) \one_{d\times1}$.
    Using the CLT, $\Delta_{f_0; \mu}^{(n)}(\mub) \xrightarrow{\mathcal{D}}N(\boldsymbol{0}, \Gamma_{g_0; \lambda})$ so $\forall \epsilon>0 \exists M>0: \Prob \left( \left| \Delta_{f_0; \mu}^{(n)}(\mub) \right|>M \right) < \epsilon$ as $n\rightarrow\infty$, i.e. $\Delta_{f_0; \mu}^{(n)}(\mub) = O_P(1) \one_{d\times1}$.
    Using Assumption~\ref{assumption: mu_estimate}, Lemma 4.4 from \cite{kreiss_adaptive_1987} and \eqref{eq: asymptotic_linearity_under_g0_mu}, it holds that
    \begin{equation*}
        \Delta_{f_0; \mu}^{(n)} \left( \hat{\mub}^{(n)} \right) = \Delta_{f_0; \mu}^{(n)}(\mub) - 
            C^{f_0; g_0}_{\mu; \mu}
               \sqrt{n} \left( \hat{\mub}^{(n)} - \mub \right) + o_P(1) \one_{d \times 1}
    \end{equation*}
    so $\Delta_{f_0; \mu}^{(n)} (\hat{\mub}^{(n)} ) = O_P(1) \one_{d\times1}$ under $P^{(n)}_{\mub, \boldsymbol{0}; g_0}$ as $n\rightarrow\infty$. 
    Therefore, if we show that
    \begin{equation}\label{eq: convergence of matrices}
        \widehat{C}_{\mu; \lambda}
        \left( \widehat{C}^{f_0}_{\mu; \mu} \right)^{-1} -
        C^{g_0}_{\mu; \lambda}
            \left( C^{f_0; g_0}_{\mu; \mu} \right)^{-1} = o_P(1) \one_{d\times d}
    \end{equation}
    under $P^{(n)}_{\mub, \boldsymbol{0}; g_0}$ as $n\rightarrow\infty$, then the result follows.
    In order to show this, we write out the expressions of the matrices.
    For the following calculations, we use the fact that the elements of the inverse of a $d\times d$ matrix are given by the cofactors which are the determinants of  $(d-1)\times (d-1)$ matrices and those consist of $(d-1)!$ summands, each one being a product of $d-1$ elements of the original matrix.
    There exist (possibly different) functions $\ell_j(\cdot), h_j(\cdot) : \{1,\ldots,d-1\} \rightarrow \{2,\ldots,d\}$ for $j=1,\ldots,(d-1)!$ such the entry at position $[1,1]$ of the resulting matrix in \eqref{eq: convergence of matrices} is equal to
    \begin{align*}
        \sum_{j=1}^{(d-1)!} \Biggl\{  & \frac{\hat{I}_{\mu_1\lambda_1}}{\hat{d}^{f_0}_\mu} \prod_{i=1}^{d-1} (-1)^{j+1} \hat{I}^{f_0}_{\mu_{h_j(i)}\mu_{\ell_j(i)}}
        - \frac{I^{g_0}_{\mu_1\lambda_1}}{d^{f_0; g_0}_\mu} \prod_{i=1}^{d-1} (-1)^{j+1} I^{f_0; g_0}_{\mu_{h_j(i)}\mu_{\ell_j(i)}} \Biggr\} \\
        = \ & \sum_{j=1}^{(d-1)!} (-1)^{j+1} \Biggl\{ \frac{\hat{I}_{\mu_1\lambda_1}}{\hat{d}^{f_0}_\mu} \prod_{i=1}^{d-1} \hat{I}^{f_0}_{\mu_{h_j(i)}\mu_{\ell_j(i)}} 
        - \frac{\hat{I}_{\mu_1\lambda_1}}{d^{f_0; g_0}_\mu} \prod_{i=1}^{d-1} I^{f_0; g_0}_{\mu_{h_j(i)}\mu_{\ell_j(i)}}  \\
        & \hspace{3cm} 
        + \frac{\hat{I}_{\mu_1\lambda_1}}{d^{f_0; g_0}_\mu} \prod_{i=1}^{d-1} I^{f_0; g_0}_{\mu_{h_j(i)}\mu_{\ell_j(i)}} 
        - \frac{I^{g_0}_{\mu_1\lambda_1}}{d^{f_0; g_0}_\mu} \prod_{i=1}^{d-1} I^{f_0; g_0}_{\mu_{h_j(i)}\mu_{\ell_j(i)}} \Biggr\} \\
        = \ & \sum_{j=1}^{(d-1)!} (-1)^{j+1} \Biggl\{ \hat{I}_{\mu_1\lambda_1} \left( \frac{1}{\hat{d}^{f_0}_\mu} \prod_{i=1}^{d-1} \hat{I}^{f_0}_{\mu_{h_j(i)}\mu_{\ell_j(i)}}
        - \frac{1}{d^{f_0; g_0}_\mu} \prod_{i=1}^{d-1} I^{f_0; g_0}_{\mu_{h_j(i)}\mu_{\ell_j(i)}} \right)  \\
        & \hspace{3cm} 
        + \frac{1}{d^{f_0; g_0}_\mu} \prod_{i=1}^{d-1} I^{f_0; g_0}_{\mu_{h_j(i)}\mu_{\ell_j(i)}}
        \left( \hat{I}_{\mu_1\lambda_1} - I^{g_0}_{\mu_1\lambda_1} \right)
        \Biggr\} \\
        = \ & \sum_{j=1}^{(d-1)!} (-1)^{j+1} \Biggl\{ \frac{\hat{I}_{\mu_1\lambda_1}}{\hat{d}^{f_0}_\mu} \left( \prod_{i=1}^{d-1} \hat{I}^{f_0}_{\mu_{h_j(i)}\mu_{\ell_j(i)}}
        - \prod_{i=1}^{d-1} I^{f_0; g_0}_{\mu_{h_j(i)}\mu_{\ell_j(i)}} \right) \\
        &
        - \frac{\hat{I}_{\mu_1\lambda_1}}{\hat{d}^{f_0}_\mu d^{f_0; g_0}_\mu} \prod_{i=1}^{d-1} I^{f_0; g_0}_{\mu_{h_j(i)}\mu_{\ell_j(i)}} \left( \hat{d}^{f_0}_\mu - d^{f_0; g_0}_\mu \right) + \frac{1}{d^{f_0; g_0}_\mu} \prod_{i=1}^{d-1} I^{f_0; g_0}_{\mu_{h_j(i)}\mu_{\ell_j(i)}}
        \left( \hat{I}_{\mu_1\lambda_1} - I^{g_0}_{\mu_1\lambda_1} \right)
        \Biggr\}
    \end{align*}
    where $d^{f_0; g_0}_\mu = \det \left( C^{f_0; g_0}_{\mu; \mu} \right)$ and $\hat{d}^{f_0}_\mu = \det \left( \widehat{C}^{f_0}_{\mu; \mu} \right)$.
    Under $P^{(n)}_{\mub, \boldsymbol{0}; g_0}$ as $n\rightarrow\infty$, the quantity above is $o_P(1)$ which can be proved using Proposition~\ref{prop: estimate quantities} for the first and last term, manipulations in the spirit of \eqref{eq: expectation_order_1n_term} combined with Lemma 4.4 of \cite{kreiss_adaptive_1987} for the difference of determinants in the second term, Slutsky's theorem and the fact that all quantities are bounded.
    The other entries of the matrix can be dealt with similarly.\\

    \noindent (ii) 
    Showing that 
    \begin{equation}\label{eq: var_estimated_mu}
        \hat{V}_{f_0} ( \hat{\mub}^{(n)} )  - V_{f_0} (\hat{\mub}^{(n)} ) 
        = o_P(1) \one_{d\times d}
    \end{equation} and
    \begin{equation} \label{eq: var_true_mu}
    V_{f_0} ( \hat{\mub}^{(n)} )  - V^{f_0}_{g_0} (\mub ) 
    = o_P(1) \one_{d\times d},
    \end{equation}
    under $P^{(n)}_{\mub, \boldsymbol{0}; g_0}$ as $n\rightarrow\infty$ proves the result, where
    \begin{equation*}
            V_{f_0}\left(\mub \right)
    = \frac{1}{n} \sum_{i=1}^n \Tilde{\Delta}_{f_0;g_0;\lambda;i}^{(n)} (\mub)\Tilde{\Delta}_{f_0;g_0;\lambda;i}^{(n)} (\mub)',
    \end{equation*} 
    for 
    \begin{equation}\label{eq: def_efficient_central_seq1}
        \Tilde{\Delta }_{f_0;g_0;\lambda; i}^{(n)} (\mub) = \Delta_{\lambda; i}(\mub) - C^{g_0}_{\mu; \lambda} \left( C^{f_0; g_0}_{\mu; \mu} \right) ^{-1} \Delta_{f_0; \mu; i}(\mub).
    \end{equation}
    Considering \eqref{eq: var_true_mu}, it can be proved along the same lines as for Proposition \ref{prop: estimate quantities} that
    \begin{align*}
        & \frac{1}{n} \sum_{i=1}^n \Delta_{\lambda; i} ( \hat{\mub}^{(n)} ) \Delta_{\lambda; i} ( \hat{\mub}^{(n)} )' 
        = \E_{g_0} \Bigl[ \Delta_{\lambda} \left( \mub \right) \Delta_{\lambda} \left( \mub \right)'  \Bigr] + o_P(1) \one_{d\times1} \\
        & \frac{1}{n} \sum_{i=1}^n \Delta_{f_0; \mu;i} ( \hat{\mub}^{(n)} ) \Delta_{f_0; \mu;i} ( \hat{\mub}^{(n)})' 
        = \E_{g_0} \Bigl[ \Delta_{f_0; \mu} ( \mub ) \Delta_{f_0; \mu} ( \mub)' \Bigr] + o_P(1) \one_{d\times1},
        \end{align*}
        and
        \begin{align*}
        & \frac{1}{n} \sum_{i=1}^n \Delta_{\lambda; i} ( \hat{\mub}^{(n)} ) \Delta_{f_0; \mu; i} ( \hat{\mub}^{(n)})' = \E_{g_0} \Bigl[ \Delta_{\lambda} ( \mub ) \Delta_{f_0; \mu} ( \mub )' \Bigr] + o_P(1) \one_{d\times1}
    \end{align*}
    under $P^{(n)}_{\mub, \boldsymbol{0}; g_0}$ as $n\rightarrow\infty$. This leads to
    \begin{align*}
        V_{f_0} ( \hat{\mub}^{(n)} )
        & = \frac{1}{n} \sum_{i=1}^n \left( \Delta_{\lambda; i}(\hat{\mub}^{(n)}) - C^{g_0}_{\mu; \lambda} \left( C^{f_0; g_0}_{\mu; \mu} \right) ^{-1} \Delta_{f_0; \mu; i}(\hat{\mub}^{(n)}) \right) \\
        & \hspace{3cm} \times \left( \Delta_{\lambda; i}(\hat{\mub}^{(n)}) - C^{g_0}_{\mu; \lambda} \left( C^{f_0; g_0}_{\mu; \mu} \right) ^{-1} \Delta_{f_0; \mu; i}(\hat{\mub}^{(n)}) \right)'\\
     & = \E_{g_0} \Bigl[ \tilde{\Delta}_{f_0; g_0; \lambda} ( \mub ) \tilde{\Delta}_{f_0; g_0; \lambda} ( \mub )' \Bigr] + o_P(1) \one_{d\times d}
    \end{align*}
    under $P^{(n)}_{\mub, \boldsymbol{0}; g_0}$ as $n\rightarrow\infty$. 
    We can conclude that \eqref{eq: var_true_mu} holds. \\
    
    \noindent Now, considering \eqref{eq: var_estimated_mu} and denoting $C := C^{g_0}_{\mu; \lambda} \left( C^{f_0; g_0}_{\mu; \mu} \right)^{-1} - \widehat{C}_{\mu; \lambda} \left( \widehat{C}^{f_0}_{\mu; \mu} \right)^{-1}$,
    \begin{align*}
    \hat{V}_{f_0} ( \hat{\mub}^{(n)} ) - & V_{f_0} (\hat{\mub}^{(n)} ) \\
        =  \frac{1}{n} \sum_{i=1}^n \Biggl\{ & \Delta_{ \lambda; i} ( \hat{\mub}^{(n)} ) \Delta_{f_0; \mu; i} (\hat{\mub}^{(n)})' C' + C \Delta_{f_0; \mu; i} (\hat{\mub}^{(n)}) \Delta_{\lambda; i} (\hat{\mub}^{(n)})' \\
        & - \widehat{C}_{\mu; \lambda} \left( \widehat{C}^{f_0}_{\mu; \mu} \right)^{-1} \Delta_{f_0; \mu; i} \Delta_{f_0; \mu; i}' C' - C \Delta_{f_0; \mu; i} \Delta_{f_0; \mu; i}' \left( C^{f_0; g_0}_{\mu; \mu} \right)^{-1} C^{g_0}_{\mu; \lambda} 
        \Biggr\}.
    \end{align*}
    By \eqref{eq: convergence of matrices}, $C = o_P(1) \one_{d\times d}$ and since all other involved quantities are $O_P(1)$ under $P^{(n)}_{\mub, \boldsymbol{0}; g_0}$ as $n\rightarrow\infty$ (following similar arguments as done in this and earlier proofs), we get that
    \eqref{eq: var_estimated_mu} is true. Hence the announced result of the Proposition follows. 
\end{proof}

\subsection{Proof of Theorem~\ref{thm: optimality unspecified median}}\label{appendix: proof optimality unspecified median}

\begin{proof}
    \textbf{(i)} By the CLT, 
\begin{equation*}
   \Tilde{\Delta}^{(n)}_{f_0;g_0; \lambda} (\mub) \xrightarrow{\mathcal{D}} \mathcal{N}_d \left( \boldsymbol{0}, V_{g_0}^{f_0}(\mub) \right)
\end{equation*}
for $V_{g_0}^{f_0}(\mub)$ as defined in \eqref{eq: variance} under $P^{(n)}_{\mub, \boldsymbol{0}; g_0}$ as $n\rightarrow\infty$.
So, for $Z \sim \mathcal{N}_d\left( \boldsymbol{0}, I_d \right)$, $I_d$ the $d\times d$ identity matrix, using Proposition \ref{prop: unspecified_median_esimates} and Slutsky's lemma,
\begin{align*}
    Q^{\ast (n)}_{f_0} (\hat{\mub})
    & = \left(\Tilde{\Delta}_{f_0; \lambda}^{(n)\ast} (\hat{\mub}^{(n)}) \right)' \left(\hat{V}_{f_0}(\hat{\mub}^{(n)}) \right)^{-1} \Tilde{\Delta}_{f_0; \lambda}^{(n)\ast} (\hat{\mub}^{(n)}) \\
    & = \left( \Tilde{\Delta}_{f_0;g_0; \lambda}^{(n)} (\mub) \right)' \left(V_{g_0}^{f_0}(\mub) \right)^{-1} \Tilde{\Delta}_{f_0;g_0; \lambda}^{(n)} (\mub) + o_P(1) \\
    & = \left( \Tilde{\Delta}_{f_0;g_0; \lambda}^{(n)} (\mub) \right)' \left(V_{g_0}^{f_0} (\mub) \right)^{-1/2} \left(V_{g_0}^{f_0} (\mub) \right)^{-1/2} \Tilde{\Delta}_{f_0;g_0; \lambda}^{(n)} (\mub) + o_P(1) \\
    & = Z' Z + o_P(1) \xrightarrow{\mathcal{D}} \chi^2_d
\end{align*}
under $P^{(n)}_{\mub, \boldsymbol{0}; g_0}$ as $n\rightarrow\infty$.
Since this holds under any $g_0$, the result holds under $\mathcal{H}^{(n)}_0$.

\textbf{(ii)} Using Proposition \ref{prop: unspecified_median_esimates}(i) and the CLT, we readily have
\begin{equation*}
    \Tilde{\Delta}^{(n) \ast}_{f_0; \lambda} (\hat{\mub}^{(n)}) = \Tilde{\Delta}^{(n)}_{f_0;g_0; \lambda} (\mub) + o_P(1)
    \xrightarrow{\mathcal{D}} \mathcal{N}_d \Bigl( 
    \boldsymbol{0}, V_{g_0}^{f_0}(\mub)
    \Bigr)
\end{equation*}
under $P^{(n)}_{\mub, \boldsymbol{0}; g_0}$ as $n\rightarrow\infty$.
From the ULAN property we have
\begin{align*}
    \Lambda :& = \log \frac{dP^{(n)}_{\mub + n^{-1/2} \boldsymbol{\tau}^{(n)}_\mu, n^{-1/2} \boldsymbol{\tau}^{(n)}_\lambda; g_0}}{dP^{(n)}_{\mub,\boldsymbol{0};g_0}}  
    = \boldsymbol{\tau}^{(n)}{'}
    \Delta_{g_0}^{(n)}(\mub)
    - \frac{1}{2}
    \boldsymbol{\tau}^{(n)}{'}
    \Gamma_{g_0}
    \boldsymbol{\tau}^{(n)} 
    + o_P(1)
\end{align*}
under $P^{(n)}_{\mub, \boldsymbol{0}; g_0}$ as $n\rightarrow\infty$, where $\Gamma_{g_0}$ is defined in \eqref{eq: def: Fisher Information}.
Under $P^{(n)}_{\mub,\boldsymbol{0};g_0}$, it holds that

\begin{equation*}
    \Lambda \xrightarrow{\mathcal{D}} \mathcal{N} \Bigl(- \frac{1}{2}
    \boldsymbol{\tau}'
    \Gamma_{g_0}
    \boldsymbol{\tau} , \quad  
    \boldsymbol{\tau}'
    \Gamma_{g_0}
    \boldsymbol{\tau} \Bigr)
\end{equation*}
 as $n\rightarrow\infty$ thanks to the CLT, with $\boldsymbol{\tau}=\lim_{n\rightarrow\infty}\boldsymbol{\tau}^{(n)}$.
Using again Proposition \ref{prop: unspecified_median_esimates} and the fact that $\Tilde{\Delta}^{(n)}_{f_0;g_0; \lambda} (\mub)$ was constructed such that $\cov(\Tilde{\Delta}^{(n)}_{f_0;g_0; \lambda} (\mub), \Delta_{g_0; \mu}^{(n)}(\mub))$ is zero, we can evaluate 
$$  \text{Cov}\Bigl(\Tilde{\Delta}^{(n) \ast}_{f_0; \lambda} (\hat{\mub}^{(n)}), \Lambda \Bigr)
      \xrightarrow{n \rightarrow\infty} 
    C_{g_0}^{f_0}(\mub)
    \boldsymbol{\tau}_\lambda
$$
under $P^{(n)}_{\mub, \boldsymbol{0}; g_0}$, with $C_{g_0}^{f_0}(\mub) = \text{Cov} ( \Tilde{\Delta}^{(n)}_{f_0;g_0; \lambda} (\mub), \Delta_{\lambda}^{(n)}(\mub))$.
Thus, again by CLT the joint distribution of $\Tilde{\Delta}^{(n) \ast}_{f_0; \lambda} (\hat{\mub}^{(n)})$ and $\Lambda$ is given by
\begin{equation*}
    \begin{pmatrix}
        \Tilde{\Delta}^{(n) \ast}_{f_0; \lambda} (\hat{\mub}^{(n)}) \\ \Lambda
    \end{pmatrix}
        \xrightarrow{\mathcal{D}} \mathcal{N}_{d+1} \Biggl( 
        \begin{pmatrix}
            \boldsymbol{0} \\ - \frac{1}{2}
    \boldsymbol{\tau}'
    \Gamma_{g_0}
    \boldsymbol{\tau}
        \end{pmatrix}, \quad 
    \begin{pmatrix}
        V_{g_0}^{f_0}(\mub) & C_{g_0}^{f_0}(\mub)
    \boldsymbol{\tau}_\lambda \\
    C_{g_0}^{f_0}(\mub)
    \boldsymbol{\tau}_\lambda & \boldsymbol{\tau}' \Gamma_{g_0}
    \boldsymbol{\tau}
    \end{pmatrix}
        \Biggr).
\end{equation*}
under $P^{(n)}_{\mub, \boldsymbol{0}; g_0}$ as $n\rightarrow\infty$.
Now, since $P^{(n)}_{\mub, 0; g_0}$ and $P^{(n)}_{\mub + n^{-1/2} \boldsymbol{\tau}^{(n)}_\mu, n^{-1/2} \boldsymbol{\tau}^{(n)}_\lambda; g_0}$ are mutually contiguous, using the \textit{Third Le Cam Lemma}, which can be found in \cite{verdebout_modern_2017} (Proposition 5.2.2) and holds thanks to the ULAN property, we find
\begin{equation*}
    \Tilde{\Delta}^{(n) \ast}_{f_0; \lambda} (\hat{\mub}^{(n)})
    \xrightarrow{\mathcal{D}} \mathcal{N}_d
    \Bigl( 
    C_{g_0}^{f_0}(\mub)
    \boldsymbol{\tau}_\lambda, 
    V_{g_0}^{f_0}(\mub)
    \Bigr)
\end{equation*}
under $P^{(n)}_{\mub + n^{-1/2} \boldsymbol{\tau}^{(n)}_\mu, n^{-1/2} \boldsymbol{\tau}^{(n)}_\lambda; g_0}$ as $n \rightarrow \infty$. Thanks to Proposition~\ref{prop: unspecified_median_esimates}(ii) and contiguity, it thus holds that
\begin{equation*}
    W = \hat{V}_{f_0}(\hat{\mub}^{(n)})^{-1/2} \Tilde{\Delta}^{(n) \ast}_{f_0; \lambda} (\hat{\mub}^{(n)})
    \xrightarrow{\mathcal{D}} \mathcal{N}_d \Biggl(V_{g_0}^{f_0}(\mub)^{-1/2}C_{g_0}^{f_0}(\mub)
    \boldsymbol{\tau}_\lambda, 
    I_d \Biggr)
\end{equation*}
under $P^{(n)}_{\mub + n^{-1/2} \boldsymbol{\tau}^{(n)}_\mu, n^{-1/2} \boldsymbol{\tau}^{(n)}_\lambda; g_0}$ as $n \rightarrow \infty$, where $\hat{V}_{f_0}(\hat{\mub}^{(n)})$ is defined in \eqref{eq: variance estimate}. 
Thus, 
\begin{equation*}
    Q^{\ast (n)}_{f_0}
    = W' W 
    \xrightarrow{\mathcal{D}} 
    \chi^2_d(\kappa)
\end{equation*}
under $P^{(n)}_{\mub + n^{-1/2} \boldsymbol{\tau}^{(n)}_\mu, n^{-1/2} \boldsymbol{\tau}^{(n)}_\lambda; g_0}$ as $n \rightarrow \infty$, where 
$$\kappa = \biggl( V_{g_0}^{f_0}(\mub)^{-1/2}C_{g_0}^{f_0}(\mub)
    \boldsymbol{\tau}_\lambda \biggr)' V_{g_0}^{f_0}(\mub)^{-1/2}C_{g_0}^{f_0}(\mub)
    \boldsymbol{\tau}_\lambda = \boldsymbol{\tau}'_\lambda C_{g_0}^{f_0}(\mub) V_{g_0}^{f_0}(\mub)^{-1} C_{g_0}^{f_0}(\mub)
    \boldsymbol{\tau}_\lambda.$$

\textbf{(iii)} In (i) we proved that $Q_{f_0}^{\ast(n)} = Q^{(n)}_{f_0} + o_P(1)$ under $P^{(n)}_{\mub, \boldsymbol{0}; g_0}$ as $n\rightarrow\infty$ so the result follows for all $f_0 \in \mathcal{F}$ from the optimality of the $f_0$-parametric test $\phi^{(n)}_{f_0}$.
\end{proof}

\subsection{Proof of Theorem~\ref{theorem: stein's method unspecified}} \label{sec: proof steins method unspecified}

\begin{proof}
All expectations in the proof are with respect to $g_0$. 
We omit this from the notation for simplicity. 
Using the triangle inequality
\begin{align}
    \left|\E \left[ h ( Q_{f_0}^{\ast (n)} ) \right] - \E \left[ h\left(\chi^2_d\right) \right] \right| 
    \ \leq \ & \left|\E \left[ h(Q_{f_0}^{\ast (n)} ) \right] -\E \left[ h(Q_{f_0}^{\ast (n)} (\mub)) \right] \right| \label{eq: difference of statistics unspecified estimated mu} \\
    & + \left| \E \left[ h(Q_{f_0}^{\ast (n)} (\mub)) \right] -\E \left[ h(Q_{f_0; g_0}^{(n)} (\mub)) \right] \right| \label{eq: difference of statistics unspecified} \\
    & + \left|\E \left[ h(Q_{f_0; g_0}^{(n)} (\mub)) \right] - \E \left[ h\left(\chi^2_d\right) \right] \right| \label{eq: steins method parametric unspecified}
\end{align}
where 
\begin{equation*}
    Q_{f_0}^{\ast (n)} (\mub) := \left(\Tilde{\Delta}_{f_0; \lambda}^{(n)\ast} (\mub)  \right)' \left(\hat{V}_{f_0} (\mub) \right)^{-1} \Tilde{\Delta}_{f_0; \lambda}^{(n)\ast} (\mub)
\end{equation*}
is the test statistic evaluated using the true value of $\mub$
and
\begin{equation*}
    Q_{f_0; g_0}^{(n)} (\mub) = \Tilde{\Delta}_{f_0;g_0;\lambda}^{(n)} (\mub)' 
    V_{g_0}^{f_0} \left( \mub \right)  ^{-1}
    \Tilde{\Delta}_{f_0;g_0;\lambda}^{(n)} (\mub)
\end{equation*}
is the parametric version of $Q_{f_0}^{\ast (n)}$.
The easiest term to bound is
\eqref{eq: steins method parametric unspecified}, which can be bounded using Theorem 2.4 of \cite{gaunt_rate_2023} by taking $g(\boldsymbol{W}) = \boldsymbol{W}' \boldsymbol{W}$, where $\boldsymbol{W} = (W_1, \ldots, W_d)'$ and $W_j = \frac{1}{\sqrt{n}} \sum_{l=1}^n X_{lj}$ for $j=1,\ldots,d$, with $X_{lj}$ as defined in the statement of Theorem~\ref{theorem: stein's method unspecified}.
The bound obtained is 
\begin{equation}\label{eq: stein_unspecified_easy_bound}
    \left|\E \left[ h(Q_{f_0; g_0}^{(n)} (\mub)) \right] - \E \left[ h\left(\chi^2_d\right) \right] \right| \leq C_3/n
\end{equation}
with $C_3$ as given in the statement of Theorem~\ref{theorem: stein's method unspecified}.\\

Considering the expression in the right-hand side of \eqref{eq: difference of statistics unspecified estimated mu}, using a first-order Taylor expansion of $h(Q_{f_0}^{\ast (n)})$ about $Q_{f_0}^{\ast (n)} (\mub)$, we obtain that $h(Q_{f_0}^{\ast (n)}) = h(Q_{f_0}^{\ast (n)} (\mub)) + \left( Q_{f_0}^{\ast (n)} - Q_{f_0}^{\ast (n)} (\mub) \right) h^{(1)} (Q^\dag)$, where $Q^\dag$ is a random variable between $Q_{f_0}^{\ast (n)}$ and $Q_{f_0}^{\ast (n)} (\mub)$. Therefore
\begin{align*}
    \left|\E \left[ h(Q_{f_0}^{\ast (n)}) \right] -\E \left[ h(Q_{f_0}^{\ast (n)} (\mub)) \right] \right|
    & = \left|\E \left[ \left( Q_{f_0}^{\ast (n)} - Q_{f_0}^{\ast (n)} (\mub) \right) h^{(1)} (Q^\dag) \right] \right| \\
    & \leq \lVert h^{(1)} \rVert \E \left| Q_{f_0}^{\ast (n)} - Q_{f_0}^{\ast (n)} (\mub) \right|.
\end{align*}
So, our goal is to find an upper bound for $\E \left| Q_{f_0}^{\ast (n)} - Q_{f_0}^{\ast (n)} (\mub) \right|$.
Using a first order Taylor expansion, it holds that 
\begin{equation*}
    Q_{f_0}^{\ast (n)} = Q_{f_0}^{\ast (n)} (\mub) + (\nabla_{\mub} Q_{f_0}^{\ast (n)} (\mub^\ast))' (\hat{\mub}^{(n)} - \mub)
\end{equation*}
for some $\mub^{\ast}$ between $\hat{\mub}^{(n)}$ and $\mub$.
By Assumption~\ref{assumption_on_f0_unspecified_median}, $Q_{f_0}^{\ast (n)} (\mub)$ is continuously differentiable with respect to $\mub$, with a bounded derivative as it is a continuous function on a bounded domain.
Finally, using the Cauchy-Schwartz inequality,
we get
\begin{align}\label{eq: rate estimated mu statistic}
    \E \left| Q_{f_0}^{\ast (n)} - Q_{f_0}^{\ast (n)} (\mub) \right| 
    & \leq \E \left[ \left|(\nabla_{\mub} Q_{f_0}^{\ast (n)} (\mub^\ast))'(\hat{\mub}^{(n)} - \mub) \right| \right] \\
    & \leq \sqrt{\E \left[ \Big\lVert \left( \nabla_{\mub} Q_{f_0}^{\ast (n)} \right)' \Big\rVert_2^2 \right]} \sqrt{\E \left[  \Big \lVert \hat{\mub}^{(n)} - \mub \Big\rVert_2^2 \right]}.\nonumber
\end{align} 

The expression in \eqref{eq: difference of statistics unspecified} requires more work. 
For ease of presentation, for the rest of the proof we denote $Q_{f_0}^{\ast (n)} (\mub)$ by $Q^{\ast}$ and $Q_{f_0;g_0}^{(n)} (\mub)$ by $Q$.
Firstly, $\exists \Tilde{Q}$ between $Q^{\ast}$ and $Q$ such that $h(Q^{\ast}) = h(Q) + \left( Q^{\ast} - Q \right) h^{(1)} (\Tilde{Q})$ and so \eqref{eq: difference of statistics unspecified} can be written as 
\begin{equation*}
    \left|\E \left[ h(Q^{\ast}) \right] -\E \left[ h(Q) \right] \right|
    = \left| \E \left[ \left( Q^{\ast} - Q \right) h^{(1)} (\Tilde{Q}) \right] \right|
    \leq \lVert h^{(1)} \rVert \E \left| Q^{\ast} - Q \right|.
\end{equation*}
So, our aim is to find an upper bound for $\E \left| Q^{\ast} - Q \right|$.
    We use the notations 
    \begin{equation*}
    \left( V_{g_0}^{f_0} \left( \mub \right) \right)^{-1} = \begin{pmatrix}
        \gamma_{11} & \cdots & \gamma_{1d} \\
        \vdots & \ddots & \vdots \\
        \gamma_{d1} & \cdots & \gamma_{dd}
    \end{pmatrix}, \qquad \left(\hat{V}_{f_0}\left(\mub \right) \right)^{-1} = \begin{pmatrix}
        \hat{\gamma}_{11} & \cdots & \hat{\gamma}_{1d} \\
        \vdots & \ddots & \vdots \\
        \hat{\gamma}_{d1} & \cdots & \hat{\gamma}_{dd}
    \end{pmatrix}
\end{equation*}
and
\begin{equation*}
        \left(C^{f_0; g_0}_{\mu; \mu} \right)^{-1} = \begin{pmatrix}
        \beta_{11} & \cdots & \beta_{1d} \\
        \vdots & \ddots & \vdots \\
        \beta_{d1} & \cdots & \beta_{dd}
    \end{pmatrix}, \qquad
    \left(\tilde{C}^{f_0}_{\mu; \mu} \right)^{-1} = \begin{pmatrix}
        \Tilde{\beta}_{11} & \cdots & \Tilde{\beta}_{1d} \\
        \vdots & \ddots & \vdots \\
        \Tilde{\beta}_{d1} & \cdots & \Tilde{\beta}_{dd}
    \end{pmatrix},
    \end{equation*}
where $\tilde{C}^{f_0}_{\mu; \mu}$ is the same matrix as $\hat{C}^{f_0}_{\mu; \mu}$ using the true value, $\mub$, instead of the estimated one.
The matrices are symmetric, so it holds that for $i, j \in \{1,\ldots,d\}$, $\gamma_{ij} = \gamma_{ji}$, $\hat{\gamma}_{ij} = \hat{\gamma}_{ji}$, $\beta_{ij} = \beta_{ji}$ and $\hat{\beta}_{ij} = \hat{\beta}_{ji}$.
Using this notation, we can write
    \begin{equation*}
        Q = \frac{1}{n} \sum_{i=1}^d \sum_{j=1}^d \gamma_{ij} \sum_{k=1}^n \sum_{\ell=1}^n \Tilde{\Delta}_{ik} \Tilde{\Delta}_{j\ell}, 
        \qquad
        Q^{\ast} = \frac{1}{n} \sum_{i=1}^d \sum_{j=1}^d \hat{\gamma}_{ij} \sum_{k=1}^n \sum_{\ell=1}^n \Tilde{\Delta}_{ik}^{\ast} \Tilde{\Delta}_{j\ell}^{\ast}
    \end{equation*}
    where, for simplicity of notation, we define 
    \begin{equation}\label{eq: semi_parametric_central_seq_ij}
    \begin{split} 
      \Tilde{\Delta}_{ij} 
      & := \Tilde{\Delta}_{f_0; g_0;\lambda;ij} (\mub)  
      = \sin(\theta_{ij} - \mu_i) - I_{\mu_i\lambda_i} \sum_{k=1}^d \beta_{ik} \phi^{f_0}_{kj}(\thetab - \mub), \\
        \Tilde{\Delta}_{ij}^{\ast} & := \Tilde{\Delta}_{f_0;\lambda;ij}^{\ast} (\mub)
        = \sin(\theta_{ij} - \mu_i) - \Tilde{I}_{\mu_i\lambda_i} \sum_{k=1}^d \Tilde{\beta}_{ik} \phi^{f_0}_{kj}(\thetab - \mub),
    \end{split}
    \end{equation} 
    and
    $\Tilde{I}_{\mu_i \lambda_i} = \frac{1}{n} \sum_{j=1}^n \cos(\theta_{ij} - \mu_i)$.
    For the rest of the proof we denote $\phi^{f_0}_{kj}(\thetab - \mub)$ by $\phi^{f_0}_{kj}$.
    Now, using the triangle and the Cauchy-Schwarz inequalities, we get
    \begin{align}
        \E \left| Q^{\ast} - Q \right| & = \ \E \Biggl| \frac{1}{n} \sum_{i=1}^d \sum_{j=1}^d \hat{\gamma}_{ij} \sum_{k=1}^n \sum_{\ell=1}^n \Tilde{\Delta}_{ik}^{\ast} \Tilde{\Delta}_{j\ell}^{\ast} - \frac{1}{n} \sum_{i=1}^d \sum_{j=1}^d \gamma_{ij} \sum_{k=1}^n \sum_{\ell=1}^n \Tilde{\Delta}_{ik} \Tilde{\Delta}_{j\ell} \Biggr| \nonumber \\
        & \leq \ \frac{1}{n} \sum_{i=1}^d \sum_{j=1}^d \E \Biggl| \hat{\gamma}_{ij} \sum_{k=1}^n \sum_{\ell=1}^n \Tilde{\Delta}_{ik}^{\ast} \Tilde{\Delta}_{j\ell}^{\ast} - \gamma_{ij} \sum_{k=1}^n \sum_{\ell=1}^n \Tilde{\Delta}_{ik} \Tilde{\Delta}_{j\ell} \Biggr| \nonumber \\
        & \leq \ \frac{1}{n} \sum_{i=1}^d \sum_{j=1}^d \left\{ 
        \left( \E \left[ \left( \hat{\gamma}_{ij} \right)^2 \right] \right)^{1/2}
        \left( \E \left[ \left( \sum_{k=1}^n \sum_{\ell=1}^n \left( \Tilde{\Delta}_{ik}^{\ast} \Tilde{\Delta}_{j\ell}^{\ast} - \Tilde{\Delta}_{ik} \Tilde{\Delta}_{j\ell} \right) \right)^2 \right] \right)^{1/2} \right. \nonumber \\
        & \hspace{3cm} \left. + \ \left(\E \left[ \left( \hat{\gamma}_{ij} - \gamma_{ij} \right)^2 \right] \right)^{1/2} 
        \left( \E \left[ \left( \sum_{k=1}^n \sum_{\ell=1}^n \Tilde{\Delta}_{ik} \Tilde{\Delta}_{j\ell} \right)^2 \right] \right)^{1/2} \right\}. \label{eq: stein_unspecified_terms_to_bound}
    \end{align}

    The statement of the theorem follows by combining \eqref{eq: stein_unspecified_easy_bound}, \eqref{eq: rate estimated mu statistic}, \eqref{eq: stein_unspecified_terms_to_bound}.
    


\end{proof}

\subsection{Proof of Lemma~\ref{lemma: rate of diff of gammas unspecified}}
Before presenting the proof of Lemma~\ref{lemma: rate of diff of gammas unspecified}, we provide a lemma that is used in the proof.

\begin{lemma}\label{lemma: used_for_unspecified_steins}
    Define $\left(C^{f_0; g_0}_{\mu; \mu} \right)^{-1} = \begin{pmatrix}
        \beta_{11} & \cdots & \beta_{1d} \\
        \vdots & \ddots & \vdots \\
        \beta_{1d} & \cdots & \beta_{dd}
    \end{pmatrix}$ and $\left(\tilde{C}^{f_0}_{\mu; \mu} \right)^{-1} = \begin{pmatrix}
        \Tilde{\beta}_{11} & \cdots & \Tilde{\beta}_{1d} \\
        \vdots & \ddots & \vdots \\
        \Tilde{\beta}_{1d} & \cdots & \Tilde{\beta}_{dd}
    \end{pmatrix}$, where $\tilde{C}^{f_0}_{\mu; \mu}$ is the same matrix as $\hat{C}^{f_0}_{\mu; \mu}$ (as defined in \eqref{eq: def: cov_estimated}) but using the true value, $\mub$, instead of the estimated one and for this lemma we denote $\Tilde{I}_{\mu_i\lambda_i} = \frac{1}{n} \sum_{j=1}^n \cos(\theta_{ij} - \mu_i)$. 
    For $i, j, l_1, l_2 \in \{1,\ldots,d\}$, the following hold, where all expectations are taken with respect to $g_0$
    \begin{enumerate}[(i)]
        \item For any $X_{ik_1}, Y_{jk_2}$ such that $\E\left[ X_{ik_1} \right] = 0$ and $\E\left[ Y_{jk_2} \right] = 0$ $\forall k_1, k_2 \in \{1,\ldots,n\}$, we have $$\E \left[ \left( \sum_{k_1 = 1}^n \sum_{k_2 = 1}^n X_{ik_1} Y_{jk_2} \right)^2 \right] = O(n^2).$$

        \item For any $X_{ik_1}, Y_{jk_2}$ such that $\E\left[ X_{ik_1} \right] = 0$ and $\E\left[ Y_{jk_2} \right] = 0$ $\forall k_1, k_2 \in \{1,\ldots,n\}$, we have $$\E \left[ \left( \sum_{k_1 = 1}^n \sum_{k_2 = 1}^n X_{ik_1} Y_{jk_2} \right)^4 \right] = O(n^4).$$

        \item $\E \left[ \left( \Tilde{I}_{\mu_i \lambda_i} \Tilde{\beta}_{ij} - I^{g_0}_{\mu_i \lambda_i} \beta_{ij} \right)^2 \right] = O\left( \frac{1}{n} \right)$.
        
        \item $\E \left[ \left( \Tilde{I}_{\mu_i \lambda_i} \Tilde{\beta}_{ij} - I^{g_0}_{\mu_i \lambda_i} \beta_{ij} \right)^4 \right] = O\left( \frac{1}{n^2} \right)$.

        \item $\E \left[ \left( \Tilde{I}_{\mu_i \lambda_i} \Tilde{I}_{\mu_j \lambda_j} \Tilde{\beta}_{il_1} \Tilde{\beta}_{il_2} - I^{g_0}_{\mu_i \lambda_i} I^{g_0}_{\mu_j \lambda_j} \beta_{il_1} \beta_{il_2} \right)^2 \right] = O\left( \frac{1}{n} \right)$.

        \item $\E \left[ \left( \Tilde{I}_{\mu_i \lambda_i} \Tilde{I}_{\mu_j \lambda_j} \Tilde{\beta}_{il_1} \Tilde{\beta}_{il_2} - I^{g_0}_{\mu_i \lambda_i} I^{g_0}_{\mu_j \lambda_j} \beta_{il_1} \beta_{il_2} \right)^4 \right] = O\left( \frac{1}{n^2} \right)$.

         \item For $k\in\{1,\ldots,n\}$, $\E \left[ \Tilde{\Delta}_{f_0; \lambda;ik}^{\ast} (\mub) \Tilde{\Delta}_{f_0; \lambda;jk}^{\ast} (\mub) - \Tilde{\Delta}_{f_0;g_0;\lambda;i1} (\mub) \Tilde{\Delta}_{f_0;g_0;\lambda;j1} (\mub) \right] = O\left( \frac{1}{\sqrt{n}} \right)$, where $\Tilde{\Delta}_{f_0; \lambda;ik}^{\ast} (\mub)$ and $\Tilde{\Delta}_{f_0;g_0;\lambda;i1} (\mub)$ denote the $k^\text{th}$ elements of the vectors $\Tilde{\Delta}_{f_0; \lambda;k}^{\ast} (\mub)$ and $\Tilde{\Delta}_{f_0;g_0;\lambda;1} (\mub)$, respectively. 
    \end{enumerate}
\end{lemma}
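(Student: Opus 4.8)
The plan is to handle the seven claims in three groups, exploiting throughout that the observations $\{\thetab_k\}_{k=1}^n$ are iid and that all building blocks ($\sin$, $\cos$, $\phi_m^{f_0}$ and its derivative) are bounded on the compact domain $[-\pi,\pi)^d$ by Assumptions~\ref{assumption_on_f0} and~\ref{assumption_on_f0_unspecified_median}. Each factor $X_{ik}$, $Y_{jk}$ depends only on the $k$-th observation, so factors with distinct observation indices are independent.

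For parts (i) and (ii) I would expand the power and count the surviving summands. Writing $\E[(\sum_{k_1,k_2} X_{ik_1}Y_{jk_2})^2] = \sum_{k_1,k_2,k_1',k_2'} \E[X_{ik_1}Y_{jk_2}X_{ik_1'}Y_{jk_2'}]$ and grouping the four factors according to the observation they depend on, any index occurring in a single factor yields an isolated, independent, mean-zero factor and kills the expectation; hence a nonzero term requires each distinct index to occur in at least two of the four slots, leaving at most two distinct indices. Thus only $O(n^2)$ index-tuples survive, each with bounded expectation, giving $O(n^2)$. Part (ii) is identical with eight slots: nonzero terms have at most four distinct indices, so $O(n^4)$ tuples survive and the bound follows.

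For the remaining parts I would first record the moment rates of the empirical ingredients. The entry $\tilde I_{\mu_i\lambda_i}=\frac{1}{n}\sum_k\cos(\theta_{ik}-\mu_i)$ is a sample mean of bounded iid variables with mean $I^{g_0}_{\mu_i\lambda_i}$ by \eqref{eq:Imulambda}, and the entries of $\tilde C^{f_0}_{\mu;\mu}$ are the sample means $\hat I^{f_0}_{\mu_j\mu_k}$ of \eqref{eq: prop estimate quantities eq2} evaluated at the true $\mub$, with means the entries of $C^{f_0;g_0}_{\mu;\mu}$; hence each such average $\hat m$ with limit $m$ satisfies $\E[(\hat m-m)^2]=O(1/n)$, $\E[(\hat m-m)^4]=O(1/n^2)$ and $\E[\hat m^p]=O(1)$. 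To pass from these to the inverse entries $\tilde\beta_{ij}$, I would use the cofactor-over-determinant representation together with the Taylor expansion of the reciprocal determinant exactly as in \eqref{eq: steins_method_taylor_expansion_determinant}; the cofactor and determinant differences are polynomials in the $\hat m-m$ and, by the same counting as in the proof of Lemma~\ref{lemma: rate of diff of gammas}, are $O(1/\sqrt n)$ in $L^2$ and $L^4$, so that $\E[(\tilde\beta_{ij}-\beta_{ij})^2]=O(1/n)$ and $\E[(\tilde\beta_{ij}-\beta_{ij})^4]=O(1/n^2)$, while $\E[\tilde\beta_{ij}^4]=O(1)$.

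Granting these, parts (iii)--(vi) follow by a telescoping decomposition and Cauchy--Schwarz/H\"older. For (iii) I would split $\tilde I\tilde\beta-I\beta=(\tilde I-I)\beta+I(\tilde\beta-\beta)+(\tilde I-I)(\tilde\beta-\beta)$: the first two terms are $O(1/\sqrt n)$ in $L^2$ and the cross term is $O(1/n)$, so the square has expectation $O(1/n)$; (iv) is the same argument in $L^4$. Parts (v) and (vi) telescope the four-fold product by replacing one hatted factor at a time, each step contributing one $O(1/\sqrt n)$ difference against factors of bounded moments, giving $O(1/n)$ and $O(1/n^2)$ respectively. Finally (vii) reduces to (iii): from \eqref{eq: semi_parametric_central_seq_ij}, $\tilde\Delta^{\ast}_{ik}-\tilde\Delta_{ik}=-\sum_m(\tilde I_{\mu_i\lambda_i}\tilde\beta_{im}-I^{g_0}_{\mu_i\lambda_i}\beta_{im})\phi^{f_0}_{mk}$ is $O(1/\sqrt n)$ in $L^2$ by (iii) and boundedness of $\phi^{f_0}_{mk}$; writing $\tilde\Delta^{\ast}_{ik}\tilde\Delta^{\ast}_{jk}-\tilde\Delta_{ik}\tilde\Delta_{jk}=(\tilde\Delta^{\ast}_{ik}-\tilde\Delta_{ik})\tilde\Delta^{\ast}_{jk}+\tilde\Delta_{ik}(\tilde\Delta^{\ast}_{jk}-\tilde\Delta_{jk})$, taking expectations and applying Cauchy--Schwarz bounds each term by $O(1/\sqrt n)$, and $\E[\tilde\Delta_{ik}\tilde\Delta_{jk}]=\E[\tilde\Delta_{i1}\tilde\Delta_{j1}]$ by identical distribution. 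The main obstacle is the control of the inverse-matrix entries $\tilde\beta_{ij}$: since inversion divides by the empirical determinant $\hat d$, establishing the bounded fourth moment $\E[\tilde\beta_{ij}^4]=O(1)$ and the $L^2$/$L^4$ rates rigorously requires showing that $\hat d$ stays away from its nonzero limit $d$ with sufficiently high probability, which is precisely the point where the expansion around \eqref{eq: steins_method_taylor_expansion_determinant} must be handled with care.
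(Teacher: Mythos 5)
Your proposal is correct in substance and rests on the same three pillars as the paper's own proof: the independence/mean-zero index-counting for (i)--(ii), the cofactor-over-determinant representation combined with the Taylor expansion of the reciprocal determinant (as around \eqref{eq: steins_method_taylor_expansion_determinant}) for the moment rates, and triangle/Cauchy--Schwarz reductions invoking (iii) and (v) for (vii). Where you diverge is in the bookkeeping for (iii)--(vi): you first isolate per-entry rates $\E[(\tilde{\beta}_{ij}-\beta_{ij})^2]=O(1/n)$, $\E[(\tilde{\beta}_{ij}-\beta_{ij})^4]=O(1/n^2)$ and moment bounds on $\tilde{\beta}_{ij}$, and then telescope products such as $\tilde{I}\tilde{\beta}-I\beta$ term by term. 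The paper never separates these objects; it bounds the combined quantity $\E[(\tilde{I}_{\mu_\alpha\lambda_\alpha}\tilde{d}^\mu_{\alpha\beta}-I^{g_0}_{\mu_\alpha\lambda_\alpha}d^\mu_{\alpha\beta})^2]$ directly as a $2d$-fold sum of products of sample means, with the cosine average entering the counting as just one more factor and the top-level cancellation supplied by \eqref{eq:Imulambda}. Your version is more modular (the $\tilde{\beta}$ rates are reusable across (iii)--(vi)), but it costs extra moment hypotheses that the paper's organization sidesteps: for the fourth-moment claims (iv) and (vi), telescoping against ``factors of bounded moments'' actually requires $L^8$-type control of $\tilde{\beta}_{ij}$ via H\"older, not merely $\E[\tilde{\beta}_{ij}^4]=O(1)$ as you state; this is obtainable by the same counting argument but should be made explicit. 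Finally, the delicate point you candidly flag --- that dividing by the random determinant $\tilde{d}^\mu$ requires it to stay away from zero so that the stochastic remainder $o(|\tilde{d}^\mu-d^\mu|)$ can legitimately be integrated --- is treated in the paper at exactly the same level of informality as in Lemma~\ref{lemma: rate of diff of gammas}: the expansion is performed and the remainder absorbed into the $O(1/n)$ without a separate lower-bound argument, so your proposal neither introduces nor resolves a gap relative to the published proof.
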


\begin{proof}
    (i) It holds that
    \begin{align*}
        \E & \left[ \left( \sum_{k_1 = 1}^n \sum_{k_2 = 1}^n X_{ik_1} Y_{jk_2} \right)^2 \right] \\
        = & \sum_{k_1 = 1}^n \sum_{k_2 = 1}^n \sum_{k_3 = 1}^n \sum_{k_4 = 1}^n \E \left[ X_{ik_1} X_{ik_2} Y_{jk_3} Y_{jk_4}  \right] \\
        = & \sum_{k_1 = 1}^n \sum_{k_2 = 1}^n \sum_{k_3 = 1}^n \E \left[ X_{ik_1} X_{ik_2} Y^2_{jk_3} \right]
        + \sum_{k_1 = 1}^n \sum_{k_2 = 1}^n \sum_{k_3 = 1}^n \sum_{k_4 \neq k_3} \E \left[ X_{ik_1} X_{ik_2} Y_{jk_3} Y_{jk_4}  \right] \\
         = & \sum_{k_1 = 1}^n \sum_{k_2 = 1}^n \E \left[ X_{ik_1} X_{ik_2} Y^2_{jk_2}  \right] 
         + \sum_{k_1 = 1}^n \sum_{k_2 = 1}^n \sum_{k_3 \neq k_2} \E \left[ X_{ik_1} X_{ik_2} Y^2_{jk_3}  \right] \\
         & + \sum_{k_1 = 1}^n \sum_{k_2 = 1}^n \sum_{k_3 \neq k_2} \E \left[ X_{ik_1} X_{ik_2} Y_{jk_3} Y_{jk_2}  \right]
         + \sum_{k_1 = 1}^n \sum_{k_2 = 1}^n \sum_{k_3 = 1}^n \sum_{k_4 \neq k_3, k_2} \E \left[ X_{ik_1} X_{ik_2} Y_{jk_3} Y_{jk_4}  \right] \\
         = & \sum_{k_1 = 1}^n \E \left[ X^2_{ik_1} Y^2_{jk_1} \right] 
         + \sum_{k_1 = 1}^n \sum_{k_2 \neq k_1} \E \left[ X_{ik_1} \right] \E \left[ X_{ik_2} Y^2_{jk_2} \right] \\
         & + \sum_{k_1 = 1}^n \sum_{k_2 \neq k_1}^n \E \left[ X_{ik_1} Y^2_{jk_1} \right] \E \left[ X_{ik_2} \right]
         + \sum_{k_1 = 1}^n \sum_{k_2 = 1}^n \sum_{k_3 \neq k_2, k_1} \E \left[ X_{ik_1} X_{ik_2} \right] \E \left[ Y^2_{jk_3}  \right] \\
         & + \sum_{k_1 = 1}^n \sum_{k_2 \neq k_1} \E \left[ X_{ik_1} Y_{jk_1} \right] \E \left[X_{ik_2} Y_{jk_2} \right]
         + \sum_{k_1 = 1}^n \sum_{k_2 = 1}^n \sum_{k_3 \neq k_2, k_1} \E \left[ X_{ik_1} X_{ik_2} Y_{jk_2} \right] \E \left[ Y_{jk_3} \right] \\
         & + \sum_{k_1 = 1}^n \sum_{k_2 \neq k_1} \sum_{k_3 \neq k_1} \E \left[ X_{ik_1} Y_{jk_1} \right] \E\left[ X_{ik_2} Y_{jk_3} \right]
         + \sum_{k_1 = 1}^n \sum_{k_2 = 1}^n \sum_{k_3 = 1}^n \sum_{k_4 \neq k_3, k_2, k_1} \E \left[ X_{ik_1} X_{ik_2} Y_{jk_3} \right] \E \left[ Y_{jk_4} \right] \\
         = & \ n \E \left[ X^2_{i1} Y^2_{j1} \right] 
         + \sum_{k_1 = 1}^n \sum_{k_3 \neq k_1} \E \left[ X^2_{ik_1} \right] \E \left[ Y^2_{jk_3}  \right] 
         + \sum_{k_1 = 1}^n \sum_{k_2 \neq k_1} \sum_{k_3 \neq k_2, k_1} \E \left[ X_{ik_1} \right] \E \left[ X_{ik_2} \right] \E \left[ Y^2_{jk_3}  \right] \\
         & + n(n-1) \left( \E \left[ X_{i1} Y_{j1} \right] \right)^2
         + \sum_{k_1 = 1}^n \sum_{k_2 \neq k_1} \E \left[ X_{ik_1} Y_{jk_1} \right] \E\left[ X_{ik_2} Y_{jk_2} \right] \\
         & + \sum_{k_1 = 1}^n \sum_{k_2 \neq k_1} \sum_{k_3 \neq k_2, k_1} \E \left[ X_{ik_1} Y_{jk_1} \right] \E\left[ X_{ik_2}  \right] \E\left[ Y_{jk_3} \right]\\
         = & \ n \E \left[ X^2_{i1} Y^2_{j1} \right] 
         + n(n-1) \E \left[ X^2_{i1} \right] \E \left[ Y^2_{j2}  \right] 
         + 2n(n-1) \left( \E \left[ X_{i1} Y_{j1} \right] \right)^2 \\
         = & \ O(n^2).
    \end{align*}
    
    (ii) The result can be proved in the same way as for (i).

    (iii) For $i,j\in\{1,\ldots,d\}$, we can write
    $\Tilde{\beta}_{ij} = (-1)^{i+j}{\Tilde{d}^\mu_{ij}} /{\Tilde{d}^\mu}$ and ${\beta}_{ij} = (-1)^{i+j}{d^\mu_{ij}}/d^\mu$ where
    $\Tilde{d}^\mu = \det\left( \tilde{C}^{f_0}_{\mu; \mu} \right), \Tilde{d}^\mu_{ij} = \det \left( \tilde{C}^{f_0}_{\mu; \mu; ij} \right), 
    d^\mu = \det\left( C^{f_0; g_0}_{\mu; \mu}  \right)$ and $d^\mu_{ij} = \det \left( C^{f_0; g_0}_{\mu; \mu; ij} \right)$. 
    Here, $\tilde{C}^{f_0}_{\mu; \mu; ij}$ and $C^{f_0; g_0}_{\mu; \mu; ij}$ are matrices $\tilde{C}^{f_0}_{\mu; \mu}$ and $C^{f_0; g_0}_{\mu; \mu}$ without row $i$ and column $j$, respectively.
    Using this notation, 
    \begin{equation*}
        \E \left[ \left( \Tilde{I}_{\mu_i \lambda_i} \Tilde{\beta}_{ij} - I^{g_0}_{\mu_i \lambda_i} \beta_{ij} \right)^2 \right] = 
        \E \left[ \left( \Tilde{I}_{\mu_i \lambda_i} \frac{\Tilde{d}^\mu_{ij}}{\Tilde{d}^\mu} - I^{g_0}_{\mu_i \lambda_i} \frac{d^\mu_{ij}}{d^\mu} \right)^2 \right].
    \end{equation*}
     Applying the  Taylor series expansion to $\frac{1}{\Tilde{d}^\mu}$ about $\frac{1}{d^{\mu}}$ along with straightforward manipulations yields
    \begin{align*}
        \frac{\Tilde{I}_{\mu_i \lambda_i}  \Tilde{d}^\mu_{ij}}{\Tilde{d}^\mu}
        = & \frac{I^{g_0}_{\mu_i \lambda_i} d^\mu_{ij}}{d^\mu}
        + \frac{\Tilde{I}_{\mu_i \lambda_i}  \Tilde{d}^\mu_{ij} - I^{g_0}_{\mu_i \lambda_i} d^\mu_{ij}}{d^\mu}
        - \frac{I^{g_0}_{\mu_i \lambda_i} d^\mu_{ij} \left( \Tilde{d}^\mu - d^\mu \right) }{(d^\mu)^2} \\
        & \hspace{0.5cm} - \frac{\left( \Tilde{I}_{\mu_i \lambda_i}  \Tilde{d}^\mu_{ij} - I^{g_0}_{\mu_i \lambda_i} d^\mu_{ij} \right) \left( \Tilde{d}^\mu - d^\mu \right) }{(d^\mu)^2}
        + o \left( | \Tilde{d}^\mu - d^\mu | \right),
    \end{align*}
    so, using the inequality $\left( \sum_{i=1}^n x_i \right)^2 \leq n\sum_{i=1}^n x_i^2$, we get
    \begin{align}
        \E & \left[ \left( \Tilde{I}_{\mu_i \lambda_i} \frac{\Tilde{d}^\mu_{ij}}{\Tilde{d}^\mu} - I^{g_0}_{\mu_i \lambda_i} \frac{d^\mu_{ij}}{d^\mu} \right)^2 \right] \nonumber \\
        & \leq \ 4 \Biggl[ \frac{1}{\left( d^\mu \right)^2} \E \left[ \left( \Tilde{I}_{\mu_i \lambda_i} \Tilde{d}^\mu_{ij} - I^{g_0}_{\mu_i \lambda_i} d^\mu_{ij} \right)^2 \right]
        + \frac{ \left( I^{g_0}_{\mu_i \lambda_i} d^\mu_{ij} \right)^2}{(d^\mu)^4} \E \left[ \left( \Tilde{d}^\mu - d^\mu \right)^2 \right] \nonumber\\
        & \hspace{1.2cm} + \frac{1}{(d^\mu)^4} \E \left[ \left( \Tilde{I}_{\mu_i \lambda_i} \Tilde{d}^\mu_{ij} - I^{g_0}_{\mu_i \lambda_i} d^\mu_{ij} \right)^2 \left( \Tilde{d}^\mu - d^\mu \right)^2 \right] 
        + o \left( \E \left[ \left( \Tilde{d}^\mu - d^\mu \right)^2 \right] \right) \Biggr]. \label{eq: mid_step_equation_lemma_unspecified}
    \end{align}
    We will focus on proving that $\E \left[ \left( \Tilde{I}_{\mu_i \lambda_i} \Tilde{d}^\mu_{ij} - I^{g_0}_{\mu_i \lambda_i} d^\mu_{ij} \right)^2 \right] = O \left(\frac{1}{n} \right)$. 
    The proof that the second term of the bound in \eqref{eq: mid_step_equation_lemma_unspecified} is $O\left(\frac{1}{n}\right)$ is similar and easier than the proof for the first term, and so it is skipped.
    The fact that those two terms are $O \left(\frac{1}{n} \right)$ implies that the last two terms  of the bound in \eqref{eq: mid_step_equation_lemma_unspecified} can be upper-bounded by an $O\left( \frac{1}{n} \right)$ term, which then proves the desired result.

     The exact expression of the determinant $d^\mu_{ij}$ is not important.
     For $\alpha, \beta \in \{1,\ldots,d\}$
    we can write the determinant as
    \begin{equation*}
        d^\mu_{\alpha \beta} = \sum_{k = 1}^{(d-1)!} \prod_{i = 1}^{d-1} (-1)^k J_{ik}
    \end{equation*}
    where $J_{ik} = I^{g_0}_{\mu_{h_k(i)} \mu_{\ell_k(i)}}$, with $h_k(i)$ and $\ell_k(i)$ being (possibly different) linear functions of $i$ that take values in $\{1,\ldots,d\}$. 
    These functions are of the form $h_k(i) = \{c\pm i\}(\mod d) + 1$ for $c\in \{1,\ldots,d\}$.
    Similarly,
    \begin{equation*}
        \Tilde{d}^\mu_{\alpha \beta} 
        = \sum_{k = 1}^{(d-1)!} \prod_{i = 1}^{d-1} \frac{1}{n} \sum_{j=1}^n (-1)^k \hat{J}_{ik; j}
        = \frac{1}{n^{d-1}} \sum_{k =1}^{(d-1)!} \sum\limits_{ \substack{ \scriptscriptstyle{j_\eta=1} \\ \scriptscriptstyle{\eta \in B}}}^n \prod_{i \in B} (-1)^k \hat{J}_{ik; j_\eta}
    \end{equation*}
    where $\hat{J}_{ik; j} = \frac{\partial}{\partial\theta_{\ell_k(i)}} \phi^{f_0}_{h_k(i)j}$ and $B=\{1,\ldots,d-1\}$.
    It immediately holds that $\E \left[ \hat{J}_{ik; j} \right] = {J}_{ik}$.
    These general expressions for the determinant allow us to calculate the necessary orders without using the explicit functions of $h_k(i)$ and $\ell_k(i)$. Note that the sum over $\eta$ and $j_\eta$ is a product of $(d-1)$ sums of $n$ summands each. Define $B[-i]$ to be the set $B$ without the $i^\text{th}$ element, then
    \begin{align}
         \E & \left[ \left( \Tilde{I}_{\mu_\alpha \lambda_\alpha} \Tilde{d}^\mu_{\alpha\beta} - I^{g_0}_{\mu_\alpha \lambda_\alpha} d^\mu_{\alpha\beta} \right)^2 \right] \nonumber\\
        = & \ \E \left[ \left( \frac{1}{n} \sum_{\ell=1}^n \cos(\theta_{\alpha\ell} - \mu_\alpha) \frac{1}{n^{d-1}} \sum_{k =1}^{(d-1)!} \sum\limits_{ \substack{ \scriptscriptstyle{j_\eta=1} \nonumber\\ \scriptscriptstyle{\eta \in B}}}^n \prod_{i \in B} (-1)^k \hat{J}_{ik; j_\eta} - I^{g_0}_{\mu_\alpha \lambda_\alpha} d^\mu_{\alpha \beta} \right)^2 \right] \nonumber\\
        = & \ \frac{1}{n^{2d}} \sum_{\ell_1,\ell_2=1}^n \sum_{k_1, k_2 = 1}^{(d-1)!} \sum\limits_{ \substack{ \scriptscriptstyle{j^1_{\eta_1}, j^2_{\eta_2}=1} \\ \scriptscriptstyle{\eta_1, \eta_2 \in B}}}^n \E \left[ \prod_{y=1}^2 \left( \cos(\theta_{\alpha\ell_y} - \mu_\alpha) \prod_{i \in B} (-1)^{k_y} \hat{J}_{ik_y; j^y_{\eta_y}} - \frac{I^{g_0}_{\mu_\alpha \lambda_\alpha} d^\mu_{\alpha \beta}}{(d-1)!} \right) \right] \nonumber\\
        = & \ \frac{1}{n^{2d}} \sum_{\ell_1,\ell_2=1}^n \sum_{k_1, k_2 = 1}^{(d-1)!} \sum\limits_{ \substack{ \scriptscriptstyle{j^1_{\eta_1}, j^2_{\eta_2}=1} \\ \scriptscriptstyle{\eta_1, \eta_2 \in B} \\ \scriptscriptstyle{j^2_{1} \neq j^2_{2}}}}^n \E \left[ \prod_{y=1}^2 \left( \cos(\theta_{\alpha\ell_y} - \mu_\alpha) \prod_{i \in B} (-1)^{k_y} \hat{J}_{ik_y; j^y_{\eta_y}} - \frac{I^{g_0}_{\mu_\alpha \lambda_\alpha} d^\mu_{\alpha \beta}}{(d-1)!} \right) \right] \nonumber\\
        & + \ \frac{1}{n^{2d}} \sum_{\ell_1,\ell_2=1}^n \sum_{k_1, k_2 = 1}^{(d-1)!} \sum\limits_{ \substack{ \scriptscriptstyle{j^1_{\eta_1}, j^2_{\eta_2}=1} \\ \scriptscriptstyle{\eta_1 \in B} \\ \scriptscriptstyle{\eta_2 \in B[-1]} \\ \scriptscriptstyle{j^2_{1} = j^2_{2}}}}^n \E \left[ \prod_{y=1}^2 \left( \cos(\theta_{\alpha\ell_y} - \mu_\alpha) \prod_{i \in B} (-1)^{k_y} \hat{J}_{ik_y; j^y_{\eta_y}} - \frac{I^{g_0}_{\mu_\alpha \lambda_\alpha} d^\mu_{\alpha \beta}}{(d-1)!} \right) \right] .\label{eq: lemma_unspecified_steins_expectation_order_1n_term}
    \end{align}
    The second term in \eqref{eq: lemma_unspecified_steins_expectation_order_1n_term} is the product of $2d-1$ sums from $1$ up to $n$. 
    So, this term is $O\left(\frac{1}{n}\right)$. 
    Concerning the first term, we define 
    \begin{equation*}
        C := \left\{(j^1_{\eta_1}, j^2_{\eta_2}, \ell_1, \ell_2): \\ \eta_1, \eta_2 \in B, 
        \hspace{-1cm}\begin{array}{c}
           j^1_{1} \in \{1,\ldots,n\}, \\
           j^1_{2} \in \{1,\ldots,n\} \setminus \left\{j^1_{1}\right\}, \\
           \vdots \\
           j^1_{d-1} \in \{1,\ldots,n\} \setminus \left\{j^1_{1}, \ldots, j^1_{d-2}\right\}  \\
            j^2_{1} \in \{1,\ldots,n\} \setminus \left\{j^1_{1}, \ldots, j^1_{d-1} \right\}, \\ 
            \vdots \\
            j^2_{d-1} \in \{1,\ldots,n\} \setminus \left\{j^1_{1}, \ldots, j^1_{d-1}, j^2_{1}, \ldots, j^2_{d-2}\right\} \\
            \ell_1 \in \{1,\ldots,n\} \setminus \left\{j^1_{1}, \ldots, j^1_{d-1}, j^2_{1}, \ldots, j^2_{d-1}\right\} \\
            \ell_2 \in \{1,\ldots,n\} \setminus \left\{j^1_{1}, \ldots, j^1_{d-1}, j^2_{1}, \ldots, j^2_{d-1}, \ell_1\right\} 
        \end{array}
        \right\}.
    \end{equation*}
    Building upon the independence between random quantities (ensured thanks to taking different indices in $C$), we obtain
    \begin{align*}
        \E & \left[ \left( \Tilde{I}_{\mu_\alpha \lambda_\alpha} \Tilde{d}^\mu_{\alpha\beta} - I^{g_0}_{\mu_\alpha \lambda_\alpha} d^\mu_{\alpha\beta} \right)^2 \right] \nonumber\\
        = & \ \frac{1}{n^{2d}} \sum_{k_1, k_2 = 1}^{(d-1)!} \sum\limits_{ \substack{ \scriptscriptstyle{j^1_{\eta_1}, j^2_{\eta_2}, \ell_1, \ell_2 \in C} \\ \scriptscriptstyle{\eta_1, \eta_2 \in B}}}^n \E \left[ \prod_{y=1}^2 \left( \cos(\theta_{\alpha\ell_y} - \mu_\alpha) \prod_{i \in B} (-1)^{k_y} \hat{J}_{ik_y; j^y_{\eta_y}} - \frac{I^{g_0}_{\mu_\alpha \lambda_\alpha} d^\mu_{\alpha \beta}}{(d-1)!} \right) \right] + \ O \left( \frac{1}{n} \right) \\
        = & \ \frac{1}{n^{2d}} \sum_{k_1, k_2 = 1}^{(d-1)!} \sum\limits_{ \substack{ \scriptscriptstyle{j^1_{\eta_1}, j^2_{\eta_2}, \ell_1, \ell_2 \in C} \\ \scriptscriptstyle{\eta_1, \eta_2 \in B}}}^n \prod_{y=1}^2 \left( \E \left[ \cos(\theta_{\alpha\ell_y} - \mu_\alpha) \right] \prod_{i \in B} (-1)^{k_y} \E \left[  \hat{J}_{ik_y; j^y_{\eta_y}} \right] - \frac{I^{g_0}_{\mu_\alpha \lambda_\alpha} d^\mu_{\alpha \beta}}{(d-1)!} \right) + \ O \left( \frac{1}{n} \right) \\
        = & \ \frac{n(n-1)\cdots(n-2d+1)}{n^{2d}} \left( \sum_{k = 1}^{(d-1)!} \left( \E \left[ \cos(\theta_{\alpha 1} - \mu_\alpha) \right] \prod_{i \in B} (-1)^{k} J_{ik} - I^{g_0}_{\mu_\alpha \lambda_\alpha} d^\mu_{\alpha \beta} \right) \right)^2 + \ O \left( \frac{1}{n} \right),
    \end{align*}
    where the last equality follows from \eqref{eq:Imulambda}.
    So, $\E \left[ \left( \Tilde{I}_{\mu_i \lambda_i} \Tilde{d}^\mu_{ij} - I^{g_0}_{\mu_i \lambda_i} d^\mu_{ij} \right)^2 \right] = O \left( \frac{1}{n} \right)$ and the result follows.

    (iv), (v), (vi) The results can be proved in a similar way to (iii).

    (vii) For notational simplicity, in this proof we denote $\phi^{f_0}_{\ell i}(\thetab - \mub)$ by $\phi^{f_0}_{\ell i}$.
    Before providing the proof, we remind the reader of the notation used:
    \begin{align*}
        \Tilde{\Delta}_{f_0; \lambda;ik}^{\ast} (\mub) = \sin(\theta_{ik} - \mu_i) - \Tilde{I}_{\mu_i\lambda_i} \sum_{\ell = 1}^d \Tilde{\beta}_{i\ell} \phi^{f_0}_{\ell k}, \quad
        \Tilde{\Delta}_{f_0; g_0; \lambda;ik} (\mub) = \sin(\theta_{ik} - \mu_i) - I^{g_0}_{\mu_i\lambda_i} \sum_{\ell = 1}^d \beta_{i\ell} \phi^{f_0}_{\ell k}.
    \end{align*}
    Using the triangle and Cauchy-Schwarz inequalities, we obtain
    \begin{align}
        \E & \left[ \Tilde{\Delta}_{f_0; \lambda;ik}^{\ast} (\mub) \Tilde{\Delta}_{f_0; \lambda;jk}^{\ast} (\mub) - \Tilde{\Delta}_{f_0;g_0;\lambda;i1} (\mub) \Tilde{\Delta}_{f_0;g_0;\lambda;j1} (\mub) \right] \nonumber \\
        = & \ \E \left[ \Tilde{\Delta}_{f_0; \lambda;ik}^{\ast} (\mub) \Tilde{\Delta}_{f_0; \lambda;jk}^{\ast} (\mub) - \Tilde{\Delta}_{f_0;g_0;\lambda;ik} (\mub) \Tilde{\Delta}_{f_0;g_0;\lambda;jk} (\mub) \right] \nonumber \\
        = & \ \E \left[ - \sum_{\ell=1}^d \sin(\theta_{jk} - \mu_j) \phi^{f_0}_{\ell k} \left( \Tilde{I}_{\mu_i \lambda_i} \Tilde{\beta}_{i\ell} - I^{g_0}_{\mu_i\lambda_i} \beta_{i\ell} \right) - \sum_{\ell=1}^d \sin(\theta_{ik} - \mu_i) \phi^{f_0}_{\ell k} \left( \Tilde{I}_{\mu_j \lambda_j} \Tilde{\beta}_{j\ell} - I^{g_0}_{\mu_j\lambda_j} \beta_{j\ell} \right) \right. \nonumber \\
        & \hspace{1.5cm} \left. + \sum_{\ell_1=1}^d \sum_{\ell_2=1}^d \phi^{f_0}_{\ell_1 i} \phi^{f_0}_{\ell_2 i} \left( \Tilde{I}_{\mu_j\lambda_j} \Tilde{I}_{\mu_i\lambda_i} \Tilde{\beta}_{j\ell_1} \Tilde{\beta}_{i\ell_2} - I^{g_0}_{\mu_j\lambda_j} I^{g_0}_{\mu_i\lambda_i} \beta_{j\ell_1} \beta_{i\ell_2} \right) \right] \nonumber \\
        \leq & \ \sum_{\ell=1}^d \E \biggl| \sin(\theta_{jk} - \mu_j) \phi^{f_0}_{\ell k} \left( \Tilde{I}_{\mu_i \lambda_i} \Tilde{\beta}_{i\ell} - I^{g_0}_{\mu_i \lambda_i} \beta_{i\ell} \right) \biggr| \nonumber \\
        & \hspace{1cm}
        + \sum_{\ell=1}^d \E \biggl| \sin(\theta_{ik} - \mu_i) \phi^{f_0}_{\ell k} \left( \Tilde{I}_{\mu_j\lambda_j} \Tilde{\beta}_{j\ell} - I^{g_0}_{\mu_j\lambda_j} \beta_{j\ell} \right) \biggr| \nonumber \\
        & \hspace{1cm} + \sum_{\ell_1=1}^d \sum_{\ell_2=1}^d \E \biggl| \phi^{f_0}_{\ell_1 k} \phi^{f_0}_{\ell_2 k} \left( \Tilde{I}_{\mu_j\lambda_j} \Tilde{I}_{\mu_i\lambda_i} \Tilde{\beta}_{j\ell_1} \Tilde{\beta}_{i\ell_2} - I^{g_0}_{\mu_j\lambda_j} I^{g_0}_{\mu_i\lambda_i} \beta_{j\ell_1} \beta_{i\ell_2} \right) \biggr| \nonumber \\
        \leq & \ \sum_{\ell=1}^d \left( \E \left[ \left( \sin(\theta_{jk} - \mu_j) \phi^{f_0}_{\ell k} \right)^2 \right] \right)^{1/2} \left( \E \left[ \left( \Tilde{I}_{\mu_i\lambda_i} \Tilde{\beta}_{i\ell} - I^{g_0}_{\mu_i\lambda_i} \beta_{i\ell} \right)^2 \right] \right)^{1/2} \nonumber \\
        & \hspace{1cm}
        + \sum_{\ell=1}^d \left( \E \left[ \left( \sin(\theta_{ik} - \mu_i) \phi^{f_0}_{\ell k} \right)^2 \right] \right)^{1/2} \left( \E \left[ \left( \Tilde{I}_{\mu_j\lambda_j} \Tilde{\beta}_{j\ell} - I^{g_0}_{\mu_j\lambda_j} \beta_{j\ell} \right)^2 \right] \right)^{1/2} \nonumber \\
        & \hspace{1cm} + \sum_{\ell_1=1}^d \sum_{\ell_2=1}^d \left( \E \left[ \left( \phi^{f_0}_{\ell_1 k} \phi^{f_0}_{\ell_2 k} \right)^2 \right] \right)^{1/2} \left( \E \left[ \left( \Tilde{I}_{\mu_j\lambda_j} \Tilde{I}_{\mu_i\lambda_i} \Tilde{\beta}_{j\ell_1} \Tilde{\beta}_{i\ell_2} - I^{g_0}_{\mu_j\lambda_j} I^{g_0}_{\mu_i\lambda_i} \beta_{j\ell_1} \beta_{i\ell_2} \right)^2 \right] \right)^{1/2} \nonumber 
    \end{align}
    Noting that $\E \left[ \left( \sin(\theta_{jk} - \mu_j) \phi^{f_0}_{\ell k} \right)^2 \right] = O(1)$, $\E \left[ \left( \phi^{f_0}_{\ell_1 k} \phi^{f_0}_{\ell_2 k} \right)^2 \right] = O(1)$ and using (iii) and (v), the result follows.
\end{proof}

\begin{proof}[Proof of Lemma~\ref{lemma: rate of diff of gammas unspecified}]
All expectations in the proof are with respect to $g_0$. 
We omit this from the notation for simplicity. \\
    
\noindent (i) By Assumption~\ref{assumption_on_f0_unspecified_median}, $Q_{f_0}^{\ast (n)} (\mub)$ is continuously differentiable with respect to $\mub$, with a bounded derivative as it is a continuous function on a bounded domain.
So, $\E \left[ \Big\lvert \left( \nabla_{\mub} Q_{f_0}^{\ast (n)} \right)' \Big\rvert^2 \right]$ can be bounded by the constant 
\begin{equation}\label{eq: bound for grad Q}
    C_4 = \sup_{j=1,\ldots,d} \Bigl\lVert \nabla_{\mub} Q_{f_0}^{\ast (n)} \Bigr\rVert_2^2.
\end{equation}

\noindent (ii) 
    It holds that $\E \left[ \Tilde{\Delta}_{ik} \right] = 0$, so, using Lemma~\ref{lemma: used_for_unspecified_steins}(i), $$\E \left[ \left( \sum_{k=1}^n \sum_{\ell=1}^n \Tilde{\Delta}_{ik} \Tilde{\Delta}_{j\ell} \right)^2 \right] \leq C_5 n^2$$
    for some universal constant $C_5$ that depends on $f_0$ and $g_0$.\\

\noindent (iii) 
    Changing the indices of the sums for convenience, we find
    \begin{align}
        \E & \left[ \left( \sum_{k_1=1}^n \sum_{k_2=1}^n \left( \Tilde{\Delta}_{ik_1}^{\ast} \Tilde{\Delta}_{jk_2}^{\ast} - \Tilde{\Delta}_{ik_1} \Tilde{\Delta}_{jk_2} \right) \right)^2 \right] \nonumber \\
        & = \ \E \left[ \left( -\sum_{k_1=1}^n \sum_{k_2=1}^n \sum_{\ell=1}^d \sin(\theta_{ik_1} - \mu_i) \phi^{f_0}_{\ell k_2} \left( \Tilde{I}_{\mu_j\lambda_j} \Tilde{\beta}_{j\ell} - I_{\mu_j\lambda_j} \beta_{j\ell} \right) \right.\right. \nonumber\\
        & \hspace{1.5cm} -\sum_{k_1=1}^n \sum_{k_2=1}^n \sum_{\ell=1}^d \sin(\theta_{jk_2} - \mu_j) \phi^{f_0}_{\ell k_1} \left( \Tilde{I}_{\mu_i\lambda_i} \Tilde{\beta}_{i\ell} - I_{\mu_i\lambda_i} \beta_{i\ell} \right) \nonumber\\
        & \hspace{1.5cm} \left. \left. + \sum_{k_1=1}^n \sum_{k_2=1}^n \sum_{\ell_1=1}^d \sum_{\ell_2=1}^d \phi^{f_0}_{\ell_1 k_1} \phi^{f_0}_{\ell_2 k_2} \left( \Tilde{I}_{\mu_i\lambda_i} \Tilde{I}_{\mu_j\lambda_j} \Tilde{\beta}_{i\ell_1} \Tilde{\beta}_{j\ell_2} - I_{\mu_i\lambda_i} I_{\mu_j\lambda_j} \beta_{i\ell_1} \beta_{j\ell_2} \right) \right)^2 \right] \nonumber\\
        & \leq \ (2d+d^2) \left\{ \E \left[ \left(\sum_{k_1=1}^n \sum_{k_2=1}^n \sin(\theta_{ik_1} - \mu_i) \phi^{f_0}_{\ell k_2} \left( \Tilde{I}_{\mu_j\lambda_j} \Tilde{\beta}_{j\ell} - I_{\mu_j\lambda_j} \beta_{j\ell} \right) \right)^2 \right] \right. \label{eq: term_B_term_A}\\
        & \hspace{2.5cm} + \E \left[ \left( \sum_{k_1=1}^n \sum_{k_2=1}^n \sin(\theta_{jk_2} - \mu_j) \phi^{f_0}_{\ell k_1} \left( \Tilde{I}_{\mu_i\lambda_i} \Tilde{\beta}_{i\ell} - I_{\mu_i\lambda_i} \beta_{i\ell} \right) \right)^2 \right] \label{eq: term_B_term_B}\\
        & \hspace{2.5cm} + \left. \E \left[ \left( \sum_{k_1=1}^n \sum_{k_2=1}^n \phi^{f_0}_{\ell_1 k_1} \phi^{f_0}_{\ell_2 k_2} \left( \Tilde{I}_{\mu_i\lambda_i} \Tilde{I}_{\mu_j\lambda_j} \Tilde{\beta}_{i\ell_1} \Tilde{\beta}_{j\ell_2} - I_{\mu_i\lambda_i} I_{\mu_j\lambda_j} \beta_{i\ell_1} \beta_{j\ell_2} \right) \right)^2 \right] \right\}. \label{eq: term_B_term_C}
    \end{align}
    Considering the first term of the sum, \eqref{eq: term_B_term_A}, by the Cauchy-Schwartz inequality we readily obtain
    \begin{align*}
        \E \Biggl[ \Biggl( & \sum_{k_1=1}^n \sum_{k_2=1}^n \sin(\theta_{ik_1} - \mu_i) \phi^{f_0}_{\ell k_2} \left( \Tilde{I}_{\mu_j\lambda_j} \Tilde{\beta}_{j\ell} - I_{\mu_j\lambda_j} \beta_{j\ell} \right) \Biggr)^2 \Biggr] \\
        & \leq \left( \E \left[ \left(\sum_{k_1=1}^n \sum_{k_2=1}^n \sin(\theta_{ik_1} - \mu_i) \phi^{f_0}_{\ell k_2} \right)^4 \right] \right)^{1/2} 
        \left( \E \left[ \left( \Tilde{I}_{\mu_j\lambda_j} \Tilde{\beta}_{j\ell} - I_{\mu_j\lambda_j} \beta_{j\ell} \right)^4 \right] \right)^{1/2} \\
        & = \ O(n),
    \end{align*}
    where we used Lemma~\ref{lemma: used_for_unspecified_steins}(ii) and (iv).
    Similarly, it can be proved that $\eqref{eq: term_B_term_B} = O(n)$.
    Considering \eqref{eq: term_B_term_C} and using Lemma~\ref{lemma: used_for_unspecified_steins}(ii) and (vi) yields
    \begin{align*}
        \E & \left[ \left( \sum_{k_1=1}^n \sum_{k_2=1}^n \phi^{f_0}_{\ell_1 k_1} \phi^{f_0}_{\ell_2 k_2} \left( \Tilde{I}_{\mu_i\lambda_i} \Tilde{I}_{\mu_j\lambda_j} \Tilde{\beta}_{i\ell_1} \Tilde{\beta}_{j\ell_2} - I_{\mu_i\lambda_i} I_{\mu_j\lambda_j} \beta_{i\ell_1} \beta_{j\ell_2} \right) \right)^2 \right] \\
        & \leq \left( \E \left[ \left( \sum_{k_1=1}^n \sum_{k_2=1}^n \phi^{f_0}_{\ell_1 k_1} \phi^{f_0}_{\ell_2 k_2} \right)^4 \right] \right)^{1/2} \left(\E \left[\left( \Tilde{I}_{\mu_i\lambda_i} \Tilde{I}_{\mu_j\lambda_j} \Tilde{\beta}_{i\ell_1} \Tilde{\beta}_{j\ell_2} - I_{\mu_i\lambda_i} I_{\mu_j\lambda_j} \beta_{i\ell_1} \beta_{j\ell_2} \right)^4 \right] \right)^{1/2} \\
        & = \ O(n).
    \end{align*}
    Putting all ends together, we get
    \begin{equation*}
        \E \left[ \left( \sum_{k=1}^n \sum_{\ell=1}^n \left( \Tilde{\Delta}_{ik}^{\ast} \Tilde{\Delta}_{j\ell}^{\ast} - \Tilde{\Delta}_{ik} \Tilde{\Delta}_{j\ell} \right) \right)^2 \right] \leq C_6 n
    \end{equation*}
    for some universal constant $C_6$ that depends on $f_0$ and $g_0$.\\

    \noindent (iv) 
    Since the $\gamma_{ij}$s are the entries of the inverse of a matrix, we can write
    $\hat{\gamma}_{ij} = (-1)^{i+j}{\hat{d}^v_{ij}} /{\hat{d}^v}$ and ${\gamma}_{ij} = (-1)^{i+j}{d^v_{ij}}/d^v$ for
    $\hat{d}^v = \det\left( \hat{V}_{f_0}\left(\mub \right) \right), \hat{d}^v_{ij} = \det \left( \hat{V}_{f_0}\left(\mub \right)_{(ij)}\right), 
    d^v = \det\left( V_{g_0}^{f_0} \left( \mub \right) \right)$ and $d^v_{ij} = \det \left( V_{g_0}^{f_0} \left( \mub \right)_{(ij)} \right)$, where $A_{(ij)}$ denotes the matrix $A$ after removing the $i^\text{th}$ row and $j^\text{th}$ column.
    Thus
    \begin{equation*}
        \E \left[ \left( \hat{\gamma}_{ij} - \gamma_{ij} \right)^2 \right] 
        = \E \left[ \left( (-1)^{i+j} \frac{\hat{d}^v_{ij}}{\hat{d}^v} - (-1)^{i+j} \frac{d^v_{ij}}{d^v} \right)^2 \right].
    \end{equation*}
    Applying the Taylor expansion to $1/\hat{d}^v$ about $d^v$ along with straightforward manipulations gives us
    \begin{equation}\label{eq: gamma_taylor_unspecified_steins}
        \frac{\hat{d}^v_{ij}}{\hat{d}^v}
        = \frac{d^v_{ij}}{d^v}
        + \frac{\hat{d}^v_{ij} - d^v_{ij}}{d^v}
        - \frac{d^v_{ij}(\hat{d}^v - d^v)}{(d^v)^2}
        - \frac{(\hat{d}^v_{ij} - d^v_{ij})(\hat{d}^v - d^v)}{(d^v)^2}
        + o \left( |\hat{d}^v - d^v| \right),
    \end{equation}
    and hence
    \begin{align}
        \E \left[ \left( \frac{\hat{d}^v_{ij}}{\hat{d}^v} - \frac{d^v_{ij}}{d^v} \right)^2 \right]
        \leq 4 \Biggl[ &
        \frac{1}{\left( d^v \right)^2} \E\left[ \left( \hat{d}^v_{ij} - d^v_{ij} \right)^2 \right] 
        + \frac{\left( d^v_{ij} \right)^2}{\left( d^v \right)^4} \E\left[ \left( \hat{d}^v - d^v \right)^2 \right] \nonumber\\
        & + \frac{1}{\left( d^v \right)^4} \E\left[ \left( \hat{d}^v_{ij} - d^v_{ij} \right)^2 \left( \hat{d}^v - d^v \right)^2 \right] 
        + o \left( \E\left[ \left( \hat{d}^v - d^v \right)^2 \right] \right)
        \Biggr]. \label{eq: mid_step_equation_unspecified}
    \end{align}
    Similarly to the proof of Theorem~\ref{thm: steins method specified}, we will focus on proving that $\E\left[ \left( \hat{d}^v_{ij} - d^v_{ij} \right)^2 \right] = O \left(\frac{1}{n} \right)$, as the result follows from there. 
    Indeed, apart from showing the order of the first term of the bound in \eqref{eq: mid_step_equation_unspecified}, such a result yields two more outcomes. 
    Firstly, it also implies that the second and last term is $O\left(\frac{1}{n}\right)$. Secondly, this means that the third  term of the bound in \eqref{eq: mid_step_equation_unspecified} 
    can be upper-bounded by an $O\left( \frac{1}{n} \right)$ term, which then proves the required result.\\

\noindent In our calculations now, the focus is put on the order of the determinant, and not on its exact expression, because the last term of \eqref{eq: mid_step_equation_unspecified} already prevents us from calculating an explicit constant for the bound of \eqref{eq: difference of statistics unspecified}.
The determinant of a $d\times d$ matrix is given by $d!$ summands, each being a product of $d$ elements of the matrix.
For $\alpha, \beta \in \{1,\ldots,d\}$
we can write 
\begin{equation}
\label{eq:d_ab_unspecified}
    d^{v}_{\alpha \beta} = \sum_{k = 1}^{(d-1)!} \prod_{i=1}^{d-1} (-1)^{k} J_{ik}
\end{equation}
where $J_{ik} = \E \left[ \Tilde{\Delta}_{h_k(i)1} \Tilde{\Delta}_{\ell_k(i)1} \right]$ 
with $h_k(i)$ and $\ell_k(i)$ being (possibly different) linear functions of $i$ that take values in $\{1,\ldots,d\}$. 
These functions are of the form $h_k(i) = \{c\pm i\}\mod d + 1$ for $c\in \{1,\ldots,d\}$.
Similarly, 
\begin{align*}
    \hat{d}^{v}_{\alpha \beta} 
    = & \sum_{k=1}^{(d-1)!} \prod_{i = 1}^{d-1} \frac{1}{n} \sum_{j=1}^n (-1)^{k} \hat{J}_{ik;j}
    = \ \frac{1}{n^{d-1}} \sum_{k = 1}^{(d-1)!}
    \sum\limits_{ \substack{ \scriptscriptstyle{j_\eta=1} \\ \scriptscriptstyle{\eta \in B} }}^n \prod_{i \in B} (-1)^{k} \hat{J}_{ik;j_\eta}
\end{align*}
where $\hat{J}_{ik; j} = \Tilde{\Delta}_{h_k(i)j}^{\ast} \Tilde{\Delta}_{\ell_k(i)j}^{\ast}$ and $B = \{1,\ldots,d-1\}$.
    By Lemma~\ref{lemma: used_for_unspecified_steins} (vii), it holds that $\E \left[ \hat{J}_{ik; j} \right] - {J}_{ik} = O \left( \frac{1}{\sqrt{n}} \right)$, which is the biggest difference of this proof compared to that of Theorem~\ref{thm: steins method specified}.
    It also holds that $\Tilde{\Delta}_{kj}^{\ast} = O(1)$.
The above general expressions for the determinant allow us to calculate the necessary orders without using the explicit functions of $h_k(i)$ and $\ell_k(i)$.
Note that the sum over $\eta$ and $j_\eta$ is a product of $(d-1)$ sums of $n$ summands each.
Define $B[-i]$ to be the set $B$ without the $i^\text{th}$ element,
then, following the same steps as in Theorem~\ref{thm: steins method specified}, we obtain
\begin{align}
    & \E \left[\left(\hat{d}^{v}_{\alpha\beta} - d^{v}_{\alpha\beta} \right)^2\right] \nonumber\\
    & = \ \frac{1}{n^{2d-2}} \sum_{k_1,k_2 = 1}^{(d-1)!} 
    \sum\limits_{ \substack{\scriptscriptstyle{j_{\eta_1},l_{\eta_2}=1} \\ 
    \scriptscriptstyle{\eta_1, \eta_2 \in B} \\ 
    \scriptscriptstyle{l_1 \neq l_{2}}}}^n
    \E \Biggl[ \left( \prod_{i\in B} (-1)^{k_1} \hat{J}_{ik_1;j_{\eta_1}} 
    - \frac{d^{v}_{\alpha\beta}}{(d-1)!} \right) 
    \left( \prod_{i\in B} (-1)^{k_2} \hat{J}_{ik_2;l_{\eta_2}} 
    - \frac{d^{v}_{\alpha\beta}}{(d-1)!} \right) \Biggr] \nonumber \\
    & \hspace{0.2cm} + \frac{1}{n^{2d-2}} \sum_{k_1,k_2 = 1}^{(d-1)!}
    \sum\limits_{\substack{ \scriptscriptstyle{j_{\eta_1},l_{\eta_2}=1} \\ \scriptscriptstyle{\eta_1 \in B} \\ \scriptscriptstyle{\eta_2 \in B[-1]} \\ \scriptscriptstyle{l_{1} = l_{2}}}}^n
    \E \Biggl[ \left( \prod_{i\in B} (-1)^{k_1} \hat{J}_{ik_1;j_{\eta_1}} 
    - \frac{d^{v}_{\alpha\beta}}{(d-1)!} \right)
    \left( \prod_{i\in B} (-1)^{k_2} \hat{J}_{ik_2;l_{\eta_2}} 
    - \frac{d^{v}_{\alpha\beta}}{(d-1)!} \right) \Biggr] \label{eq: expectation_order_1n_term_unspecified} 
\end{align}
The second term in \eqref{eq: expectation_order_1n_term_unspecified} is the product of $2d-3$ sums from $1$ up to $n$. 
So, this term is $O\left(\frac{1}{n}\right)$. 
Similarly, for all other terms that involve indices that are equal to each other, we see that they can contain at most $2d-3$ sums from $1$ up to $n$ and so are at most $O(\frac{1}{n})$. 
It remains to consider the case where all indices are not equal to each other. To this end, working with the same set as in \eqref{eq: set_c}, having recourse to independence under different indices and using the above-mentioned fact that we have that $\E \left[ \hat{J}_{ik; j} \right] - {J}_{ik} = O \left( \frac{1}{\sqrt{n}} \right)$, lengthy calculations (similar to previous proofs, hence we spare the details to the reader) give
\begin{align*}
    & \E \left[\left(\hat{d}^{v}_{\alpha\beta} - d^{v}_{\alpha\beta} \right)^2\right] \nonumber \\
    & = \ \frac{n(n-1) \ldots (n-2d+3)}{n^{2d-2}} 
    \left( \sum_{k = 1}^{(d-1)!} \prod_{i\in B} \left( (-1)^k J_{ik} + O \left( \frac{1}{\sqrt{n}} \right) \right)
    - d^{v}_{\alpha\beta} \right)^2 + O \left( \frac{1}{n} \right) \nonumber \\
    & = \ \frac{n(n-1) \ldots (n-2d+3)}{n^{2d-2}} 
    \left( \sum_{k = 1}^{(d-1)!} \prod_{i\in B} (-1)^k J_{ik}
    - d^{v}_{\alpha\beta} + O \left( \frac{1}{\sqrt{n}} \right) \right)^2 + O \left( \frac{1}{n} \right) \nonumber \\
    & = O\left( \frac{1}{n} \right),
\end{align*}
where the last equality is a result of the definition of $d^{v}_{\alpha\beta}$ as in \eqref{eq:d_ab_unspecified}. Thus, it holds that
\begin{equation*}
    \E_{g_0} \left[ \left( \hat{\gamma}_{ij} - \gamma_{ij} \right)^2 \right] \leq C_7/n
\end{equation*}
for some universal constant $C_7$ that depends on $f_0$ and $g_0$.\\

\noindent (v)
Direct manipulations yield
$$
        \E \left[ \hat{\gamma}^2_{ij} \right]
         \leq 2  \E \left[ \gamma^2_{ij} \right] +2 \E \left[ \left(\hat{\gamma}^2_{ij}-\gamma^2_{ij}\right) \right]     
        \leq C_8
$$ 
which follows from (iv) and the boundedness of $\gamma_{ij}$, for some universal constant $C_8$ that depends on $f_0$ and $g_0$.
\end{proof}

\section{Further simulation results} \label{sec: supp_simulations}

In this section, we present further simulation results. 
The set-up of the simulations is the same as explained in Section~\ref{sec: simulations} of the main paper.

\begin{table}[htbp]\centering
\caption{Percentage of rejections for $\phi^{\ast (n); \mu}$ when $d=2$. The data are generated using the model mentioned, $n$ represents the sample size and $\lambdab$ the value of the skewness parameter.\label{tab: simulations_2_known_supp}}
\begin{tabular}{@{}rrrrrrrr@{}} 
\toprule
& \multicolumn{1}{c}{$\lambdab$} & {\hspace*{0.1cm} $(0,0)$} & {$(0.1,0)$} &
{$(0.1,0.1)$} & {$(0.2,0.1)$} & {$(0.2,0.2)$}\\ 
{Model} & \multicolumn{1}{c}{$n$} & \\ \midrule
& 200 & 0.042 & 0.097 & 0.115 & 0.286 & 0.334 \\
$I_{0.6;0.9}$ & 500 & 0.046 & 0.185 & 0.232 & 0.648 & 0.724 \\
& 1000 & 0.034 & 0.319 & 0.434 & 0.918 & 0.954 \\
\\
& 200 & 0.047 & 0.109 & 0.153 & 0.435 & 0.542 \\
$S_{1; 5; 0.3}$ & 500 & 0.048 & 0.219 & 0.326 & 0.823 & 0.917 \\
& 1000 & 0.047 & 0.427 & 0.633 & 0.989 & 0.997 \\
\\
& 200 & 0.047 & 0.116 & 0.140 & 0.382 & 0.512 \\
$C_{1; 5; 0.3}$ & 500 & 0.056 & 0.231 & 0.329 & 0.816 & 0.898 \\
& 1000 & 0.051 & 0.462 & 0.612 & 0.980 & 0.994 \\
\\
& 200 & 0.061 & 0.125 & 0.231 & 0.518 & 0.777 \\
$BWC_{0.1; 0.1; 0.1}$ & 500 & 0.045 & 0.251 & 0.573 & 0.919 & 0.993 \\
& 1000 & 0.058 & 0.506 & 0.857 & 1.000 & 1.000 \\
\bottomrule
\end{tabular}
\end{table}

\begin{table}[htbp]\centering
\caption{Percentage of rejections for $\phi^{\ast (n); \mu}$ when $d=3$. The data are generated using the model mentioned, $n$ represents the sample size and $\lambdab$ the value of the skewness parameter.\label{tab: simulations_3_known_supp}}
\begin{tabular}{@{}rrrrrrrr@{}} 
\toprule
& \multicolumn{1}{c}{$\lambdab$} & {$(0,0,0)$} & {$(0.1,0,0)$} &
{$(0.2,0.1,0)$} & {$(0.2,0.2,0.2)$} \\ 
{Model} & \multicolumn{1}{c}{$n$} & \\ \midrule
& 200 & 0.043 & 0.097 & 0.399 & 0.805 \\
$I_{0.1; 0.2; 0.3}$ & 500 & 0.054 & 0.251 & 0.854 & 0.999 \\
& 1000 & 0.044 & 0.435 & 0.993 & 1.000 \\
\\
& 200 & 0.066 & 0.120 & 0.216 & 0.378 \\
$TWC_{1; 1.2; 0.5}^{0.1; 0.2; 0.3}$ & 500 & 0.058 & 0.246 & 0.548 & 0.794 \\
& 1000 & 0.045 & 0.452 & 0.845 & 0.983\\
\bottomrule
\end{tabular}
\end{table}

\begin{table}[htbp]\centering
\caption{Percentage of rejections for $\phi^{\ast (n); \mu}$ when $d=4,5,6,10,20$. The data are generated using the model mentioned, $n$ represents the sample size and $\lambdab$ the value of the skewness parameter. Note that the missing entries correspond to non-allowed settings where the sum of the components of $\lambdab$ would exceed 1.\label{tab: simulations_456_known_supp}}
\begin{tabular}{@{}rrrrrrrr@{}} 
\toprule
& & \multicolumn{1}{c}{$\lambdab$} & {$(0,\ldots,0)$} & {$(0.05,\ldots,0.05)$} &
{$(0.1,\ldots,0.1)$} & {$(0.15,\ldots,0.15)$} \\ 
{Model} & $d$ & \multicolumn{1}{c}{$n$} & \\ \midrule
& & 200 & 0.037 & 0.086 & 0.309 & 0.638 \\
$I_{0.1; \ldots; 0.1}$ & 4 & 500 & 0.046 & 0.191 & 0.722 & 0.978 \\
& & 1000 & 0.050 & 0.391 & 0.959 & 1.000 \\
\\
& & 200 & 0.050 & 0.077 & 0.213 & 0.455 \\
$I_{0.6; \ldots; 0.6}$ & 4 & 500 & 0.043 & 0.152 & 0.485 & 0.863\\
& & 1000 & 0.050 & 0.242 & 0.834 & 0.989 \\
\\
& & 200 & 0.046 & 0.107 & 0.313 & 0.639 \\
$MNNTS_4$ & 4 & 500 & 0.052 & 0.196 & 0.709 & 0.984 \\
& & 1000 & 0.050 & 0.379 & 0.968 & 1.000 \\
\\
& & 200 & 0.047 & 0.110 & 0.401 & 0.775 \\
$I_{0.1; \ldots; 0.1}$ & 6 & 500 & 0.047 & 0.231 & 0.827 & 0.998 \\
& & 1000 & 0.041 & 0.503 & 0.993 & 1.000 \\
\\
& & 200 & 0.037 & 0.071 & 0.252 & 0.568 \\
$I_{0.6; \ldots; 0.6}$ & 6 & 500 & 0.062 & 0.176 & 0.599 & 0.961 \\
& & 1000 & 0.062 & 0.285 & 0.918 & 1.000 \\
\\
& & 200 & 0.045 & 0.118 & 0.393 & 0.766 \\
$MNNTS_6$ & 6 & 500 & 0.054 & 0.259 & 0.830 & 0.998 \\
& & 1000 & 0.031 & 0.492 & 0.992 & 1.000 \\
\bottomrule
\end{tabular}
\end{table}

\begin{table}[htbp]\centering
\caption{Percentage of rejections for $\phi^{\ast (n)}_{f_0}$ when $d=2$. The data are generated using the model $g_0$ and the test statistic is evaluated using the model $f_0$, $n$ represents the sample size and $\lambdab$ the value of the skewness parameter.\label{tab: simulations_2_unknown_ind}}
\begin{tabular}{@{}rrrrrrrr@{}} 
\toprule
\multicolumn{2}{c}{$g_0 = I_{0.1; 0.1}$}\\
\midrule
& \multicolumn{1}{c}{$\lambdab$} & {\hspace*{0.1cm} $(0,0)$} & {$(0.1,0)$} &
{$(0.1,0.1)$} & {$(0.2,0.1)$} & {$(0.2,0.2)$}\\ 
$f_0$ & \multicolumn{1}{c}{$n$} & \\
\midrule
& 200 & 0.054 & 0.123 & 0.236 & 0.482 & 0.705 \\
$I_{0.1;0.1}$ & 500 & 0.057 & 0.270 & 0.506 & 0.876 & 0.984 \\
& 1000 & 0.050 & 0.485 & 0.797 & 0.996 & 1.000 \\
\\
& 200 & 0.059 & 0.134 & 0.249 & 0.510 & 0.743 & \\
$I_{0.6;0.9}$ & 500 & 0.063 & 0.271 & 0.514 & 0.895 & 0.987 \\
& 1000 & 0.048 & 0.503 & 0.819 & 0.999 & 1.000 \\
\midrule
& \multicolumn{1}{c}{$\lambdab$} & {$(0,0)$} & {$(0.2,0)$} &
{$(0.2,0.2)$} & {$(0.4,0.2)$} & {$(0.4,0.4)$}\\ 
\midrule
& 200 & 0.054 & 0.145 & 0.273 & 0.541 & 0.746 \\
& 500 & 0.049 & 0.184 & 0.330 & 0.617 & 0.822 \\
$S_{0.5; 0.5; 0.1}$ & 1000 & 0.054 &  0.205 & 0.363 & 0.666 & 0.914 \\
& 5000 & 0.036  & 0.213 & 0.462 & 0.943 & 1.000 \\
\\
& 200 & 0.055 & 0.184 & 0.207 & 0.498 & 0.590 \\
$BWC_{0.1; 0.1; 0.1}$ & 500 & 0.052 & 0.225 & 0.224 & 0.512 & 0.595 \\
& 1000 & 0.048 & 0.285 & 0.263 & 0.538 & 0.614 \\
& 5000  & 0.039 & 0.342 & 0.302 & 0.685 & 0.7050 \\
\\
& 200 & 0.049 & 0.274 & 0.280 & 0.572 & 0.639 \\
$BWC_{0.1; 0.5; 0.3}$ & 500 & 0.046 & 0.439 & 0.290 & 0.647 & 0.664 \\
& 1000 & 0.052 & 0.625 & 0.304 & 0.727 & 0.656 \\
& 5000 & 0.061 & 0.832 & 0.336 & 0.825 & 0.666 \\
\bottomrule
\end{tabular}
\end{table}

\begin{table}[htbp]\centering
\caption{Percentage of rejections for $\phi^{\ast (n)}_{f_0}$ when $d=2$. The data are generated using the the model $g_0$ and the test statistic is evaluated using the model $f_0$, $n$ represents the sample size and $\lambdab$ the value of the skewness parameter.\label{tab: simulations_2_unknown_bwc}}
\begin{tabular}{@{}rrrrrrrr@{}} 
\toprule
\multicolumn{2}{c}{$g_0 = BWC_{0.5; 0.5; 0.3}$}\\
\midrule
& \multicolumn{1}{c}{$\lambdab$} & {\hspace*{0.1cm} $(0,0)$} & {$(0.1,0)$} &
{$(0.1,0.1)$} & {$(0.2,0.1)$} & {$(0.2,0.2)$}\\ 
$f_0$ & \multicolumn{1}{c}{$n$} & \\
\midrule
& 500 & 0.053 & 0.091 & 0.121 & 0.272 & 0.365 \\
$I_{0.1;0.1}$ & 1000 & 0.057 & 0.114 & 0.182 & 0.461 & 0.674 \\
& 5000 & 0.052 & 0.424 & 0.780 & 0.991 & 1.000 \\
\\
& 500 & 0.045 & 0.087 & 0.123 & 0.264 & 0.428 \\
$BWC_{0.1; 0.1; 0.1}$ & 1000 & 0.056 & 0.158 & 0.211 & 0.533 & 0.707 \\
& 5000 & 0.046 & 0.582 & 0.826 & 0.997 & 1.000 \\
\\
& 500 & 0.061 & 0.104 & 0.180 & 0.357 & 0.519 \\
$BWC_{0.5; 0.5; 0.3}$ & 1000 & 0.058 & 0.193 & 0.272 & 0.613 & 0.811 \\
& 5000 & 0.049 & 0.719 & 0.900 & 1.000 & 1.000 \\
\midrule
& \multicolumn{1}{c}{$\lambdab$} & {$(0,0)$} & {$(0.2,0)$} &
{$(0.2,0.2)$} & {$(0.4,0.2)$} & {$(0.4,0.4)$}\\ 
\midrule
& 500 & 0.046 & 0.150 & 0.100 & 0.276 & 0.420 \\
$S_{0.5; 0.5; 0.1}$ & 1000 & 0.045 & 0.219 & 0.134 & 0.520 & 0.747 \\
& 5000 & 0.057 & 0.802 & 0.599 & 0.996 & 1.000 \\
\bottomrule
\end{tabular}
\end{table}

\begin{table}[htbp]\centering
\caption{Percentage of rejections for $\phi^{\ast (n)}_{f_0}$ when $d=2$. The data are generated using the the model $g_0$ and the test statistic is evaluated using the model $f_0$, $n$ represents the sample size and $\lambdab$ the value of the skewness parameter.\label{tab: simulations_2_unknown_mnnts}}
\begin{tabular}{@{}rrrrrrrr@{}} 
\toprule
\multicolumn{2}{c}{$g_0 = MNNTS_2$}\\
\midrule
& \multicolumn{1}{c}{$\lambdab$} & {\hspace*{0.1cm} $(0,0)$} & {$(0.1,0)$} &
{$(0.1,0.1)$} & {$(0.2,0.1)$} & {$(0.2,0.2)$}\\ 
$f_0$ & \multicolumn{1}{c}{$n$} & \\
\midrule
& 500 & 0.061 & 0.074 & 0.106 & 0.213 & 0.320 \\
$I_{0.1;0.1}$ & 1000 & 0.060 & 0.113 & 0.199 & 0.350 & 0.542 \\
& 5000 & 0.047 & 0.346 & 0.608 & 0.940 & 0.992 \\
\\
& 500 & 0.093 & 0.331 & 0.571 & 0.909 & 0.991 \\
$I_{0.6; 0.9}$ & 1000 & 0.066 & 0.532 & 0.829 & 0.995 & 1.000 \\
& 5000 & 0.050 & 0.992 & 1.000 & 1.000 & 1.000 \\
\\
& 500 & 0.043 & 0.063 & 0.125 & 0.223 & 0.359 \\
$BWC_{0.1; 0.1; 0.1}$ & 1000 & 0.040 & 0.090 & 0.198 & 0.372 & 0.623 \\
& 5000 & 0.051 & 0.240 & 0.701 & 0.971 & 1.000 \\
\\
& 500 & 0.051 & 0.088 & 0.098 & 0.167 & 0.241 \\
$BWC_{0.5; 0.5; 0.3}$ & 1000 & 0.045 & 0.085 & 0.149 & 0.240 & 0.443 \\
& 5000 & 0.047 & 0.201 & 0.540 & 0.846 & 0.979 \\
\midrule
& \multicolumn{1}{c}{$\lambdab$} & {$(0,0)$} & {$(0.2,0)$} &
{$(0.2,0.2)$} & {$(0.4,0.2)$} & {$(0.4,0.4)$}\\ 
\midrule
& 500 & 0.048 & 0.046 & 0.064 & 0.103 & 0.329 \\
$S_{0.5; 0.5; 0.1}$ & 1000 & 0.035 & 0.065 & 0.072 & 0.137 & 0.486 \\
& 5000 & 0.048 & 0.067 & 0.084 & 0.355 & 0.964 \\
\bottomrule
\end{tabular}
\end{table}

\begin{table}[htbp]\centering
\caption{Percentage of rejections for $\phi^{\ast (n)}_{f_0}$ when $d=3$. The data are generated using the model $g_0$ and the test statistic is evaluated using the model $f_0$,  $n$ represents the sample size and $\lambdab$ the value of the skewness parameter.\label{tab: simulations_3_unknown_iid}}
\begin{tabular}{@{}rrrrrrrr@{}} 
\toprule
\multicolumn{2}{c}{$g_0 = I_{0.1; 0.1; 0.1}$}\\
\midrule
& \multicolumn{1}{c}{$\lambdab$} & {$(0,0,0)$} & {$(0.1,0,0)$} &
{$(0.2,0.1,0)$} & {$(0.2,0.2,0.2)$} \\ 
$f_0$ & \multicolumn{1}{c}{$n$} & \\ \midrule
& 200 & 0.055 & 0.121 & 0.444 & 0.835 \\
$I_{0.1; 0.1; 0.1}$ & 500 & 0.047 & 0.234 & 0.843 & 0.999 \\
& 1000 & 0.061 & 0.422 & 0.992 & 1.000 \\
\\
& 200 & 0.051 & 0.121 & 0.428 & 0.766 \\
$I_{0.1; 0.2; 0.3}$ & 500 & 0.039 & 0.232 & 0.829 & 0.986 \\
& 1000 & 0.061 & 0.432 & 0.991 & 1.000 \\
\\
& 200 & 0.046 & 0.091 & 0.342 & 0.777 \\
$TWC_{5; 2; 0.1}^{0.1; 0.1; 0.1}$ & 500 & 0.054 & 0.196 & 0.808 & 0.999 \\
& 1000 & 0.043 & 0.367 & 0.984 & 1.000 \\
\\
& 200 & 0.060 & 0.105 & 0.307 & 0.727 \\
$TWC_{1; 1.2; 0.5}^{0.1; 0.2; 0.3}$ & 500 & 0.061 & 0.145 & 0.548 & 0.982 \\
& 1000 & 0.051 & 0.244 & 0.753 & 1.000 \\
\bottomrule
\end{tabular}
\end{table}

\begin{table}[htbp]\centering
\caption{Percentage of rejections for $\phi^{\ast (n)}_{f_0}$ when $d=3$. The data are generated using the model $g_0$ and the test statistic is evaluated using the model $f_0$,  $n$ represents the sample size and $\lambdab$ the value of the skewness parameter.\label{tab: simulations_3_unknown_supp}}
\begin{tabular}{@{}rrrrrrrr@{}} 
\toprule
\multicolumn{2}{c}{$g_0 = MNNTS_3$}\\
\midrule
& \multicolumn{1}{c}{$\lambdab$} & {$(0,0,0)$} & {$(0.1,0,0)$} &
{$(0.2,0.1,0)$} & {$(0.2,0.2,0.2)$} \\ 
$f_0$ & \multicolumn{1}{c}{$n$} & \\ 
\midrule
& 200 & 0.078 & 0.095 & 0.142 & 0.210 \\
$I_{0.1; 0.1; 0.1}$ & 500 & 0.052 & 0.089 & 0.202 & 0.397 \\
& 1000 & 0.058 & 0.109 & 0.324 & 0.646 \\
\\
& 200 & 0.056 & 0.077 & 0.213 & 0.644 \\
$TWC_{1; 1.2; 0.5}^{0.1; 0.2; 0.3}$ & 500 & 0.054 & 0.097 & 0.265 & 0.967 \\
& 1000 & 0.059 & 0.088 & 0.446 & 1.000 \\
\bottomrule
\end{tabular}
\end{table}

\begin{table}[htbp]\centering
\caption{Percentage of rejections for $\phi^{\ast (n)}_{f_0}$ when $d=4,5,6$. The data are generated using the model $g_0$ and the test statistic is evaluated using the model $f_0$,  $n$ represents the sample size and $\lambdab$ the value of the skewness parameter. \label{tab: simulations_456_unknown_mnnts}}
\begin{tabular}{@{}rrrrrrrr@{}} 
\toprule
\multicolumn{3}{c}{$g_0 = MNNTS_d$}\\
\midrule
& & \multicolumn{1}{c}{$\lambdab$} & {$(0,\ldots,0)$} & {$(0.05,\ldots,0.05)$} &
{$(0.1,\ldots,0.1)$} & {$(0.15,\ldots,0.15)$} & {$(0.2,\ldots,0.2)$} \\ 
{$f_0$} & $d$ & \multicolumn{1}{c}{$n$} & \\
\midrule
& & 200 & 0.078 & 0.058 & 0.065 & 0.075 & 0.108 \\
$I_{0.1; \ldots; 0.1}$ & 4 & 500 & 0.049 & 0.052 & 0.102 & 0.156 & 0.257 \\
& & 1000 & 0.071 & 0.080 & 0.131 & 0.362 & 0.617 \\
\\
& & 200 & 0.093 & 0.074 & 0.177 & 0.469 & 0.801 \\
$I_{0.6; \ldots; 0.6}$ & 4 & 500 & 0.069 & 0.134 & 0.595 & 0.945 & 0.999 \\
& & 1000 & 0.056 & 0.320 & 0.937 & 1.000 & 1.000 \\
\\
& & 200 & 0.075 & 0.066 & 0.059 & 0.072 & 0.119 \\
$I_{0.1; \ldots; 0.1}$ & $5$ & 500 & 0.064 & 0.057 & 0.099 & 0.176 & 0.287 \\
& & 1000 & 0.067 & 0.051 & 0.166 & 0.396 & 0.692 \\
\\
& & 200 & 0.084 & 0.067 & 0.181 & 0.517 & 0.841 \\
$I_{0.6; \ldots; 0.6}$ & $5$ & 500 & 0.063 & 0.154 & 0.667 & 0.975 & 1.000 \\
& & 1000 & 0.061 & 0.385 & 0.971 & 1.000 & 1.000 \\
\\
& & 200 & 0.078 & 0.066 & 0.070 & 0.068 & - \\
$I_{0.1; \ldots; 0.1}$ & 6 & 500 & 0.065 & 0.061 & 0.083 & 0.182 & - \\
& & 1000 & 0.054 & 0.062 & 0.182 & 0.424 & - \\
\\
& & 200 & 0.090 & 0.079 & 0.236 & 0.560 & - \\
$I_{0.6; \ldots; 0.6}$ & 6 & 500 & 0.053 & 0.159 & 0.716 & 0.989 & - \\
& & 1000 & 0.065 & 0.403 & 0.982 & 1.000 & - \\
\bottomrule
\end{tabular}
\end{table}

\begin{table}[htbp]\centering
\caption{Percentage of rejections for $\phi^{\ast (n)}_{f_0}$ when $d=4,5,6,10,20$. The data are generated using the model $g_0$ and the test statistic is evaluated using the model $f_0$,  $n$ represents the sample size and $\lambdab$ the value of the skewness parameter. \label{tab: simulations_456_unknown_ind}}
\begin{tabular}{@{}rrrrrrrr@{}} 
\toprule
\multicolumn{3}{c}{$g_0 = I_{0.6; \ldots; 0.6}$}\\
\midrule
& & \multicolumn{1}{c}{$\lambdab$} & {$(0,\ldots,0)$} & {$(0.05,\ldots,0.05)$} &
{$(0.1,\ldots,0.1)$} & {$(0.15,\ldots,0.15)$} & {$(0.2,\ldots,0.2)$} \\ 
{$f_0$} & $d$ & \multicolumn{1}{c}{$n$} & \\
\midrule
& & 200 & 0.049 & 0.097 & 0.280 & 0.622 & 0.891 \\
$I_{0.1; \ldots; 0.1}$ & 4 & 500 & 0.038 & 0.166 & 0.675 & 0.972 & 1.000 \\
& & 1000 & 0.061 & 0.387 & 0.943 & 1.000 & 1.000 \\
\\
& & 200 & 0.073 & 0.061 & 0.133 & 0.309 & 0.525 \\
$I_{0.6; \ldots; 0.6}$ & 4 & 500 & 0.063 & 0.098 & 0.384 & 0.812 & 0.978 \\
& & 1000 & 0.054 & 0.204 & 0.786 & 0.989 & 1.000 \\
\\
& & 200 & 0.061 & 0.098 & 0.318 & 0.638 & 0.929 \\
$I_{0.1; \ldots; 0.1}$ & $5$ & 500 & 0.044 & 0.193 & 0.720 & 0.989 & 0.999 \\
& & 1000 & 0.051 & 0.457 & 0.977 & 1.000 & 1.000 \\
\\
& & 200 & 0.076 & 0.072 & 0.139 & 0.356 & 0.615 \\
$I_{0.6; \ldots; 0.6}$ & $5$ & 500 & 0.052 & 0.124 & 0.427 & 0.867 & 0.992 \\
& & 1000 & 0.050 & 0.226 & 0.840 & 0.997 & 1.000 \\
\\
& & 200 & 0.053 & 0.101 & 0.322 & 0.741 & - \\
$I_{0.1; \ldots; 0.1}$ & 6 & 500 & 0.064 & 0.220 & 0.805 & 0.997 & - \\
& & 1000 & 0.046 & 0.486 & 0.994 & 1.000 & - \\
\\
& & 200 & 0.090 & 0.063 & 0.117 & 0.345 & - \\
$I_{0.6; \ldots; 0.6}$ & 6 & 500 & 0.058 & 0.131 & 0.467 & 0.905 & - \\
& & 1000 & 0.053 & 0.227 & 0.872 & 1.000 & - \\
\\
& & 200 & 0.052 & 0.127 & 0.450 & - & - \\
$I_{0.1; \ldots; 0.1}$ & 10 & 500 & 0.051 & 0.257 & 0.941 & - & - \\
& & 1000 & 0.046 & 0.590 & 1.000 & - & - \\
\\
& & 200 & 0.102 & 0.067 & 0.191 & - & - \\
$I_{0.6; \ldots; 0.6}$ & 10 & 500 & 0.064 & 0.124 & 0.649 & - & - \\
& & 1000 & 0.068 & 0.341 & 0.979 & - & - \\
\\
& & 200 & 0.048 & 0.133 & - & - & - \\
$I_{0.1; \ldots; 0.1}$ & 20 & 500 & 0.052 & 0.426 & - & - & - \\
& & 1000 & 0.052 & 0.829 & - & - & - \\
& & 5000 & 0.052 & 1.000 & - & - & - \\
\\
& & 200 & 0.084 & 0.079 & - & - & - \\
$I_{0.6; \ldots; 0.6}$ & 20 & 500 & 0.064 & 0.185 & - & - & - \\
& & 1000 & 0.060 & 0.504 & - & - & - \\
& & 5000 & 0.044 & 1.000 & - & - & - \\
\bottomrule
\end{tabular}
\end{table}


\end{document}